\documentclass[10pt]{amsart}  % eventually amsart

\usepackage{epsf,latexsym,amsmath,amssymb,amscd,datetime}
\usepackage{amsmath,amsthm,amssymb,enumerate,eucal,url,calligra,mathrsfs}
\usepackage{subcaption}
\usepackage{graphicx}
\usepackage{color}

\DeclareMathOperator{\SHom}{\mathscr{H}\text{\kern -3pt {\calligra\large om}}\,}
\DeclareMathOperator{\SExt}{\mathscr{E}\text{\kern -2pt {\calligra\large xt}}\,\,}

\newcommand{\naturals}{{\mathbb N}}

\newcommand{\mec}[1]{{\bf #1}}	% for vector 𝐤 and 𝐦  with 𝐤 ⋅𝐦  = k 
\newcommand{\bec}[1]{{\boldsymbol #1}}	% for greek letters

\usepackage{mathrsfs}
\usepackage{amssymb}
\usepackage{dsfont}
\usepackage{verbatim}
\usepackage{url}

\theoremstyle{plain}
\newtheorem{theorem}{Theorem}[section]
\newtheorem{lemma}[theorem]{Lemma}
\newtheorem{proposition}[theorem]{Proposition}
\newtheorem{corollary}[theorem]{Corollary}

\theoremstyle{definition}
\newtheorem{definition}[theorem]{Definition}

\newtheorem{xca}{Exercise}[section]
\newtheorem{example}[theorem]{Example}

\newtheorem{notation}[theorem]{Notation}

\newtheorem{convention}[theorem]{Convention}

\newtheorem{remark}[theorem]{Remark}

\newcommand{\isom}{\simeq} % later this should, perhaps, be changed to =
\newcommand{\ignore}[1]{}

\newcommand{\Hom}{{\rm Hom}}
\newcommand{\integers}{{\mathbb Z}}

\DeclareMathAlphabet{\mathcal}{OMS}{cmsy}{m}{n}

\newcommand\cA{\mathcal{A}}
\newcommand\cB{\mathcal{B}}
\newcommand\cC{\mathcal{C}}
\newcommand\cD{\mathcal{D}}

\newcommand\cF{\mathcal{F}}
\newcommand\cG{\mathcal{G}}

\newcommand\cI{\mathcal{I}}

\newcommand\cL{\mathcal{L}}
\newcommand\cM{\mathcal{M}}
\newcommand\cN{\mathcal{N}}
\newcommand\cO{\mathcal{O}}
\newcommand\cP{\mathcal{P}}

\newcommand\cS{\mathcal{S}}

\newcommand\frakm{\mathfrak{m}}

\newcommand\fraks{\mathfrak{s}}

\def\from{\colon}

\def\isom{\simeq}

\def\eqdef{\overset{\text{def}}{=}}

\DeclareMathOperator{\coker}{coker}

\DeclareMathOperator{\Ext}{Ext}

\def\Hom{\qopname\relax o{Hom}}

\def\implies{\Rightarrow}

\DeclareRobustCommand
  \rddots{\mathinner{\mkern1mu\raise\p@
    \vbox{\kern7\p@\hbox{.}}\mkern2mu
    \raise4\p@\hbox{.}\mkern2mu\raise7\p@\hbox{.}\mkern1mu}}

\newcommand\xhookrightarrow[2][]{\ext@arrow 0062{\hookrightarrowfill@}{#1}{#2}}
\def\hookrightarrowfill@{\arrowfill@\lhook\relbar\rightarrow}

\usepackage{relsize}
\usepackage{tikz}
\usetikzlibrary{math,matrix,arrows,decorations.pathmorphing}
\usepackage{tikz-cd}
\usetikzlibrary{cd}

\usepackage[pdftex,colorlinks,linkcolor=blue,citecolor=brown]{hyperref}
\usepackage{blkarray}
\usepackage{array}
\usetikzlibrary{shapes.misc}

\tikzset{cross/.style={cross out, draw=black, minimum size=2*(#1-\pgflinewidth), inner sep=0pt, outer sep=0pt},
cross/.default={1pt}}

\usepackage[colorinlistoftodos,prependcaption,textsize=small]{todonotes}
\begin{document}

\title[Duality and a Canonical Sheaf]
{Duality and a Canonical Sheaf in Periodic Riemann Functions}

\author{Nicolas Folinsbee}
\address{Department of Mathematics, University of British Columbia,
        Vancouver, BC\ \ V6T 1Z2, CANADA. }
\curraddr{}
\email{{\tt nicolasfolinsbee@gmail.com}}
\thanks{Research supported in part by an NSERC grant.}

\author{Joel Friedman}
\address{Department of Computer Science, 
        University of British Columbia, Vancouver, BC\ \ V6T 1Z4, CANADA. }
\curraddr{}
\email{{\tt jf@cs.ubc.ca}}
\thanks{Research supported in part by an NSERC grant.}

% \date{\today} % , at \currenttime  (get rid of time in final version)}
% \date{July 24, 2024} % , at \currenttime  (get rid of time in final version)}

% \subjclass[2010]{Primary: 05C38, 14H55.  Secondary: 55N30} from Nick's thesis
\subjclass[2010]{Primary: 05C99, 55N99. Secondary: 14H55.}

\keywords{Riemann function,
Riemann's theorem, graph Riemann-Roch theorem, Betti numbers,
Euler characteristic, cohomology, duality}

\begin{abstract}
Let $f\from\integers^2\to\integers$
be a Riemann function whose weight $W$ is a perfect
matching.  Then there is a family of sheaves of $k$-vector
spaces $\{\cM_{W,\mec d}\}_{\mec d\in\integers^2}$
on a five-point topological
that models $f$ in that $f(\mec d)=b^0(\cM_{W,\mec d})$
and that
$$
b^1(\cM_{W,\mec d})=
f^\wedge_\mec K(\mec d-\mec K)
$$
for any $\mec K\in\integers^2$.
Hence a Riemann-Roch formula for $f$ is equivalent to
an Euler characteristic computation of $\cM_{W,\mec d}$.

If $f$ and $W$ are $r$-periodic, then the sheaves
$\cM_{W,\mec d}$ become $\cO_r$-modules of finite type
for a natural sheaf of rings $\cO=\cO_r$.
We show that in this case there is a ``canonical 
$\cO$-module'' $\omega=\omega_W$ and a pairing
for $i=0,1$,
$$
H^i(\cM_{W,\mec 0}\otimes \cF)
\times
\Ext^{1-i}(\cF,\cM_{W^\wedge_\mec L,\mec K})\to 
H^1(\omega)\isom k
$$
that is perfect when $\mec L=\mec K+\mec 1$ and
$\cF$ is a certain type of line bundle or
a certain type of skyscraper sheaf.
In particular when $\cF$ is a line bundle, we realize
the above formula for $b^1(\cM_{W,\mec d})$ as a duality
theorem akin to Serre duality.

We show that canonical $\cO$-module $\omega_W$ is a rather exceptional
element in a family of
tensor products of two modules $\cM\otimes_\cO\cM'$, where
$\cM$ and $\cM'$ vary over $\cO_r$-modules of the form
$\cM_{W',\mec d}$.

This article doesn't assume any background in sheaf theory; rather
we describe all our sheaves as a ``diagrams of vector spaces,''
where each diagram is essentially a sheaf of vector spaces on a fixed
topological space of five points.
\end{abstract}

\maketitle
\setcounter{tocdepth}{3}
\tableofcontents

% This is started from scratch.  I'll probably borrow Sections_*

% Note: for cat_latex to create one file:

\section{Introduction}

In \cite{baker_norine}, Baker and Norine proved what they called a
``graph Riemann-Roch'' formula, which is an equality that
resembles the classical Riemann-Roch formula.
There is a large literature on this formula
\cite{cori_le_borgne,backman, Mohammadi,Caporaso,folinsbee_friedman_weights}
and related formulas, e.g., 
\cite{backman,Gathmann, Hladk, James, amini2013, manjunath2012,
amini_manjunath,Cools}.

The classical Riemann-Roch formula has a modern proof
where the formula represents an expression for the Euler characteristic
of a sheaf, namely a line bundle over the appropriate Riemann surface.
Baker and Norine \cite{baker_norine}
asked whether their formula could be
viewed as an ``Euler characteristic.''
Folinsbee and Friedman \cite{folinsbee_friedman_euler} gave a positive
answer with a few caveats.

Let us provide some details.
We emphasize that we do not assume the reader has seen sheaf theory:
all sheaves are described explicitly as a certain ``diagrams'' of vector
spaces in Section~\ref{se_main}.

Earlier, Folinsbee and Friedman \cite{folinsbee_friedman_weights} defined
a {\em Riemann function} to be any function 
$f\from\integers^n\to\integers$ such that
\begin{enumerate}
\item
$f(\mec d)=0$ whenever $\deg(\mec d)\eqdef d_1+\cdots+d_n$ is
sufficiently small, and
\item for some $C\in\integers$ --- called the {\em offset of $f$} --- we have
$f(\mec d)=\deg(\mec d)+C$ whenever $\deg(\mec d)$ is sufficiently
large.
\end{enumerate}
A Baker-Norine {\em rank function} \cite{baker_norine}
is an example, provided that we
add one to the function.
As such Riemann functions are vast generalizations --- with far less
structure --- of
the Baker-Norine rank functions and related functions.
The classical {\em Riemann's formula} (and {\em Riemann-Roch formula})
for algebraic curves
also give rise to Riemann functions.

Given a Riemann function, $f$, as above,
for any $\mec K\in\integers^n$, define
$f^\wedge_{\mec K}\from\integers^n\to\integers$ via the formula
\begin{equation}\label{eq_generalized_RR_formula_intro}
f(\mec d)-f_\mec K^\wedge(\mec K-\mec d) = \deg(\mec d)+C
\end{equation}
where $C$ is the offset of $f$; we easily see that
$$
f_\mec K^\wedge(\mec d)=
f(\mec K-\mec d) - C - \deg(\mec K-\mec d)
$$
and that $f_\mec K^\wedge$ is also a Riemann function.
We refer to \eqref{eq_generalized_RR_formula_intro} as a
{\em Riemann-Roch formula}.

The Baker-Norine formula (and classical Riemann-Roch formula)
implies that under certain conditions there exists a $\mec K\in\integers^n$
such that $f_\mec K^\wedge=f$.
Folinsbee-Friedman \cite{folinsbee_friedman_weights} wrote this equivalently as
follows: one easily sees that for any Riemann function 
$f\from\integers^n\to\integers$ there is a unique function
$W\from\integers^n\to\integers$ such that
$$
f(\mec d) = \sum_{\mec d'\le\mec d} W(\mec d').
$$
We call $W$ the {\em weight function} (or simply {\em weight}) of $f$.
It turns out that $f_\mec K^\wedge=f$ iff for $\mec L=\mec K+\mec 1$ 
(here $\mec 1=(1,\ldots,1)$) we
have $W^*_\mec L=(-1)^n W$ where
$W^*_\mec L$ is defined by $W^*_\mec L(\mec d)=W(\mec L-\mec d)$.
Given the relationship between $f_\mec K^\wedge$ and $f$, it is simpler
to write $W^*_\mec L=(-1)^n W$.
Folinsbee-Friedman \cite{folinsbee_friedman_weights} also showed that
if $G$ is a complete graph, then the weight functions associated to
the Baker-Norine rank functions have a very simple form.
This yielded a (second) simple formula for the Baker-Norine rank function
on complete graphs, $G$, as an alternative to the 
earlier formula of Cori and Le Borne formula
\cite{cori_le_borgne,cori_le_borgne2}.

Returning to \eqref{eq_generalized_RR_formula_intro}, we ask whether
there is a ``sheaf'' --- we leave this vague for now --- 
$\cM=\cM_\mec d=\cM_{\mec d,k}$,
such that if $b^i(\cM)$ is the $i$-th Betti number of $\cM$, then
\begin{equation}
\label{eq_betti_zero_cM_is_f_betti_one_is_f_wedge_sub_K}
b^0(\cM_\mec d)=f(\mec d), \quad
b^1(\cM_\mec d)=f_\mec K^\wedge(\mec K-\mec d),
\end{equation} 
and $b^i(\cM_\mec d)=0$ if $i\ne 0,1$.  If so, then the left-hand-side
of \eqref{eq_generalized_RR_formula_intro}
equals the {\em Euler characteristic} of $\cM_\mec d$,
and the right-hand-side is a simple expression of $\mec d$.

Folinsbee-Friedman \cite{folinsbee_friedman_euler} showed that if
$W$ is a {\em perfect matching}, meaning a non-negative
weight function of a Riemann function $f\from\integers^2\to\integers$,
then such $\cM_\mec d=\cM_{W,\mec d,k}$ exist, and can be built from $W$.
In fact, for every field, $k$, one can build sheaves of
$k$-vector spaces (on a five-point topological space\footnote{
  A sheaf of vector spaces on a five-point topological space is equivalent
  to the category of a certain
  type of diagram of five vector spaces with certain linear maps
  between these spaces.  Hence these sheaves are quite simple in nature.
  }), $\{\cM_{W,\mec d,k}\}_{\mec d\in\integers^2}$.
(Since the sheaves $\cM_{\mec d,k}$ depend on $k$ in a simple manner,
much like in \cite{friedman_memoirs_hnc},
we tend to suppress the field $k$.)
Moreover, the $\cM_\mec d=\cM_{W,\mec d,k}$
fit together into certain {\em short
exact sequences} involving skyscraper sheaves.
Finally, there is a duality theorem that describes an isomorphism
\begin{equation}\label{eq_duality_theorem_for_H_i_and_H_one_minus_i}
H^i(\cM_{W^*_\mec L,\mec K-d})'\to H^{1-i}(\cM_{W,\mec d}) 
\end{equation} 
where $\,'$ denotes the dual $k$-vector space.

Moreover,
Folinsbee and Friedman \cite{folinsbee_friedman_euler} showed that
all Riemann functions $\integers^n\to\integers$
and resulting Riemann-Roch formulas
express the Euler characteristic of a family of sheaves that can be
pieced together from those involving perfect matchings, 
provided
that one is willing to work with 
{\em virtual sheaves}, meaning formal differences of sheaves.
This is a fairly technical undertaking: there are many choices involved
in building general sheaves for 
Riemann functions $\integers^n\to\integers$
from those involving perfect matchings,
and one has to show that the resulting virtual sheaves
are independent of all choices.

Hence Riemann functions whose weights are perfect matchings
are the building blocks that can express the
Riemann-Roch formulas associated to any Riemann function
$\integers^n\to\integers$
as an Euler characteristic equation of virtual sheaves.

One ultimate goal of writing Baker-Norine formulas as expressing
the Euler characteristics of sheaves would be to get a ``simpler''
proof of their Riemann-Roch formula.
It is unclear if the sheaf theory
of \cite{folinsbee_friedman_euler} will be able to do this:
one reason is that its
``duality theorem'' for perfect matchings (akin
to Serre duality) gives an isomorphism
$$
H^1(\cM)\isom \Hom(\cM,\omega)'.
$$
where the ``canonical sheaf,'' $\omega$, is rather simple and independent of
the perfect matching.
Hence $\omega$ does not encode anything interesting about the
Riemann function; so in the case of Baker-Norine formulas, $\omega$
doesn't encode anything
interesting about the underlying graph.

The point of this article is to give a more remarkable
duality theorem, one that more closely resembles Serre duality.
We restrict our attention to Riemann
functions $f\from\integers^2\to\integers$ whose 
weights $W\from\integers^2\to\integers$ are perfect matchings; in
addition we assume that $f$ is
{\em $r$-periodic} in the sense of
\cite{folinsbee_friedman_weights}: namely,
$f(\mec d+(r,-r))=f(\mec d)$
for all $\mec d$, or equivalently
$W(\mec d+(r,-r))=W(\mec d)$.
In this case the sheaves $\cM_{W,\mec d}=\cM_{W,\mec d,k}$ turn out to be
$\cO_r=\cO_{r,k}$-modules of finite type
for a simple sheaf of rings $\cO_{r,k}$.\footnote{
  By contrast, the sheaves $\{\cM_{W,d}\}$ are not of finite type
  as $k$-diagrams.
  }

Let us briefly describe our duality theorem:
first, we show that if $W$ is an $r$-periodic perfect matching,
then for any $\cO_r$-module $\cF$, and fixed
$\mec K\in\integers^2$ and $\mec L=\mec K+\mec 1$,
for $i=0,1$ we have a
pairing
\begin{equation}\label{eq_pairing_intro}
H^i(\cM_{W,\mec 0}\otimes\cF) \times \Ext^{1-i}(\cF,\cM_{W^*_\mec L,\mec K})
\to H^1(\omega_{W,\mec K}) 
\end{equation} 
where
$$
\omega_{W,\mec K} \eqdef \cM_{W,\mec 0}\otimes\cM_{W^*_\mec L,\mec K}.
$$
This is constructed from: 
\begin{enumerate}
\item
a canonical map
$$
\Ext^{1-i}(\cF,\cM_{W^*_\mec L,\mec K})
\to
\Ext^{1-i}(\cM_{W,\mec 0}\otimes\cF,
\cM_{W,\mec 0}\otimes\cM_{W^*_\mec L,\mec K})
=
\Ext^{1-i}(\cM_{W,\mec 0}\otimes\cF,\omega_{W,\mec K}) ;
$$
\item
the Yoneda pairing
$$
H^i(\cM_{W,\mec 0}\otimes\cF)\times 
\Ext^{1-i}(\cM_{W,\mec 0}\otimes\cF,\omega_{W,\mec K})
\to H^1(\omega_{W,\mec K}) ;
$$
and
\item
the fact that $H^1(\omega_{W,\mec K})\isom k$.
\end{enumerate}
We will then show that for certain values of $\cF$,
namely certain line bundles, $\cL_\mec d=\cL_{\mec d,r,k}$
and certain skyscraper sheaves, 
\eqref{eq_pairing_intro} is a perfect pairing.
Applying this to $\cF=\cL_\mec d$ and using
$$
\cM_{W,\mec 0}\otimes\cL_\mec d\isom \cM_{W,\mec d}
$$
and
$$
\Ext^{1-i}(\cL_{\mec d},\cM_{W^*_\mec L,\mec K})\isom
\Ext^{1-i}(\cO,\cM_{W^*_\mec L,\mec K-\mec d})\isom
H^{1-i}(\cM_{W^*_\mec L,\mec K-\mec d})
$$
gives 
$$
H^i(\cM_{W,\mec d}) \isom 
H^{1-i}(\cM_{W^*_\mec L,\mec K-\mec d})',
$$
which again shows \eqref{eq_duality_theorem_for_H_i_and_H_one_minus_i},
although it is now realized as a very special case of
one choice of $\cF$ for which
\eqref{eq_pairing_intro} is a perfect pairing.

We wish to make a number of remarks on the above.

\begin{remark}
We say that an $\cO_r$-module, $\cF$, satisfies {\em strong
duality} if for $i=0,1$, 
\eqref{eq_pairing_intro} is a perfect pairing.
There is a ``two out of three principle,'' namely if
$0\to\cF_1\to\cF_2\to\cF_3\to 0$ is a short exact sequence,
and two of $\cF_1,\cF_2,\cF_3$ satisfy strong duality, then
so does the third one.
This can be used to give a class of $\cO_r$-modules that
satisfy strong duality larger than merely $\cL_{\mec d,r}$ and certain
skyscraper sheaves.
Hence strong duality seems to hold for a rich set of examples
$\cF$.
\end{remark}

\begin{remark}
If $W,W'$ are perfect matchings, and $\mec d,\mec d'\in\integers^2$,
then
\begin{equation}\label{eq_tensor_product_of_cM_W_and_cM_W_prime}
\cM_{W,\mec d}\otimes\cM_{W',\mec d}
\end{equation} 
has infinite zeroth Betti number, $b^0$; moreover it has vanishing
first Betti number $b^1$ unless $W'=W^*_\mec L$ for some
$\mec L\in\integers^2$.  Hence the canonical sheaf
$$
\omega_{W,\mec K} = \cM_{W,\mec 0}\otimes\cM_{W^*_\mec L,\mec K}
$$
has a property that is exceptional.
\end{remark}

\begin{remark}
If $W$ is an $r$-periodic matching, then 
$\cM_{W^*_\mec L,\mec K}$ is independent --- up to isomorphism ---
of $\mec K,\mec L$ with
$\mec L=\mec K+\mec 1$.  For this reason
$\omega_{W,\mec K}$ depends only $W$ up to isomorphism.
However, as $W$ varies, it seems likely that 
the $\omega_W$ take on non-isomorphic values.
In Subsection~\ref{su_omega_depends_on_W} we discuss this further.
\end{remark}

\begin{remark}
Theorem~3.1 of \cite{folinsbee_friedman_euler} shows that the any
$r$-periodic
Riemann function $f\from\integers^2\to\integers$ can be modeled by
a virtual sheaf built from sheaves $\cM_{W_i,\mec d}$ where the $W_i$
are $r$-periodic perfect matchings.
We don't know if the virtual sheaf one gets 
for $f$ is unique up to isomorphism as an $\cO_r$-module.
(This uniqueness is proven in \cite{folinsbee_friedman_euler} for
a general $f$ not assumed to be $p$-periodic, as diagrams of
$k$-vector spaces.)
Similarly for Riemann functions
$f\from\integers^n\to\integers$.
We hope to address this in a future article.
\end{remark}

Again, we emphasize that
in this article we assume no prior knowledge of sheaves and no knowledge
of algebraic geometry or topology.  We explain everything we do in
terms of ``diagrams'' of vector spaces, of rings, and of algebras
that we define from scratch
(as was done in \cite{folinsbee_friedman_euler}).
We include some
optional sections where we explain how our diagrams relate to modern sheaf
theory.

The rest of this article is organized as follows.
In Section~\ref{se_main} we state our main results.
In Section~\ref{se_basic_o_modules}
we give the diagrams (sheaves) of rings, $\cO_r=\cO_{r,k}$ which are
of interest to us, and give some basic facts about $\cO$-modules.
In Section~\ref{se_canonical} we prove various properties of
the canonical diagram $\omega_{W,\mec K}$.
In Section~\ref{se_duality} we prove a duality theorem that proves that
for $\cF=\cL_{\mec d}$, 
\eqref{eq_pairing_intro} is a perfect pairing for 
$i=1$, by a long computation that assumes that the Yoneda pairing
$$
H^1(\cF)\times\Hom_{\cO_r}(\cF,\cG) \to H^1(\cG)
$$
is given by a certain formula.
Section~\ref{se_strong_duality} formulates ``strong duality,'' which
is a stronger property than the duality in Section~\ref{se_duality}.
Section~\ref{se_strong_duality} then proves strong duality for
$\cF$ being $\cL_\mec d$ and certain skyscraper diagrams,
as well as proving the ``two out of three principle'' for strong
duality.
Section~\ref{se_strong_duality}
requires significantly
more background 
on the homological algebra of $\cO$-modules than the previous sections;
some of this background is relegated 
to Appendix~\ref{ap_yoneda_pairing_etc}.

We remark that some of the foundations that we develop in this article
are based on the unpublished manuscript 
\cite{folinsbee_friedman_two_vertex}.
\section{Main Results}
\label{se_main}

In this section we make our notation precise and state our main theorems.

\subsection{Basic Notation}

We use $\integers$ to denote the integers, and
$\naturals=\integers_{\ge 1}=\{1,2,\ldots\}$
for the natural numbers.
For $n\in\naturals$ we use $[n]$ to denote $\{1,\ldots,n\}$.
For 
$\mec d=(d_1,\ldots,d_n)\in\integers^n$,
the {\em degree} of $\mec d$ is defined as
$\deg(\mec d)=d_1+\cdots+d_n$, and endow $\integers^n$ with its usual
partial order, writing $\mec d'\le\mec d$ to mean $d_i'\le d_i$ for
all $i\in[n]$.

\subsubsection{Direct Sums and Maps}
\label{su_direct_sums_and_maps}

If $S$ is a set and $k$ a field, then, as usual, $k^{\oplus S}$ denotes the
set of maps $f\from S\to k$ such that $f(s)=0$ for all but finitely
many $s$ in $S$; we call $k^{\oplus S}$
the {\em direct sum of $S$ copies of $k$};
for $s\in S$, we
use $\mec e_s\in k^{\oplus S}$ to denote the element that
is $1$ on $s$ and $0$ elsewhere; we refer to $\mec e_s$ as the
{\em standard basis vector at $s$} (with $k^{\oplus S}$ understood)\footnote{
  If $s\in S$ and $S\subset S'$, then $\mec e_s$ can refer to an element
  of $k^{\oplus S}$ or $k^{\oplus S'}$; so we must always be clear on
  what is the
  ambient set containing $s$.
  }.
If $\pi\from S\to T$ a map of sets, then we use
$k^{\oplus \pi}$ 
to denote the unique
linear map $k^{\oplus S}\to k^{\oplus T}$ 
taking $\mec e_s$ to $\mec e_{\pi(s)}$.

\subsection{Riemann Functions}

We refer the reader to Section~2 of 
\cite{folinsbee_friedman_euler} and Section~2 of
\cite{folinsbee_friedman_weights} for proofs of the statements
in this subsection and for examples
of Riemann functions.

\begin{definition}\label{de_Riemann_function}
By a {\em Riemann function} we mean a function
$f\from\integers^n\to\integers$ such that:
\begin{enumerate}
\item 
$f(\mec d)=0$ for $\deg(\mec d)$ sufficiently small, and
\item
for some $C\in\integers$---called the {\em offset of $f$}---we have
$f(\mec d)=\deg(\mec d)+C$ for $\deg(\mec d)$ sufficiently large.
\end{enumerate}
\end{definition}

\begin{definition}
Let
$f\from\integers^n\to\integers$ be a Riemann function of offset $C$, and
$\mec K\in\integers^n$.  We define the {\em $\mec K$-dual of $f$} to be
the function
$f^\wedge_{\mec K}\from\integers^n\to\integers$ given by
$$
f_\mec K^\wedge(\mec d)\eqdef 
f(\mec K-\mec d) - C - \deg(\mec K-\mec d).
$$
\end{definition}
We easily see that $f_\mec K^\wedge$ is a Riemann function with offset
$-\deg(\mec K)-C$, and that
\begin{equation}\label{eq_Riemann_Roch_formula}
\forall\mec d\in\integers^n,\quad
f(\mec d)-f_\mec K^\wedge(\mec K-\mec d) =\deg(\mec d)+C.
\end{equation} 
We refer to \eqref{eq_Riemann_Roch_formula}
as a {\em Riemann-Roch formula for $f$}.
Hence $f_\mec K^\wedge(\mec K-\mec d)$ is independent of $\mec K$.

\begin{definition}
We say that a Riemann function $f\from\integers^n\to\integers$ is
{\em self-dual} if for some $\mec K$ we have $f^\wedge_{\mec K}=f$.
\end{definition}

The point of articles such as
\cite{baker_norine,amini_manjunath} is to study certain Riemann functions
of interest,
$f$, and determine if such $f$ are self-dual;
more precisely,
these articles define ``rank functions'' $r\from\integers^n\to\integers$
such that $f=1+r$ is a Riemann function.
In our approach we do not require a Riemann function to be self-dual;
hence we consider a much wider class of functions $\integers^n\to\integers$
with far less structure.

Our motivation
for the term {\em Riemann function} is the classical {\em Riemann's theorem}
for curves.

For examples of Riemann functions we refer the reader to
Section~2.3 of \cite{folinsbee_friedman_euler}
and
Sections~2.5 and~2.6 of \cite{folinsbee_friedman_weights}.

\subsection{Weights of Riemann Functions}

For details and proofs of the material in this subsection, we refer
the reader to Section~3 of \cite{folinsbee_friedman_weights}.

We say that a function $g\from\integers^n\to\integers$
is {\em initially zero} if $g(\mec d)=0$ when $\deg(\mec d)$ is sufficiently
small; e.g., a Riemann function is initially zero.
If $f\from\integers^n\to\integers$ is any initially zero function, then
there is a unique initially zero
function $W\from\integers^n\to\integers$ such that
$$
f(\mec d) = \sum_{\mec d'\le\mec d} W(\mec d');
$$
we call $W$ the {\em weight of $f$}.

More formally, for any initially zero function 
$W\from\integers^n\to\integers$ we define the function 
$\sigma W\from\integers^n\to\integers$ via
$$
(\fraks W)(\mec d) = \sum_{\mec d'\le\mec d} W(\mec d');
$$
for any function $f\from\integers^n\to\integers$ we define
$\frakm f\from\integers^n\to\integers$ via 
$$
(\frakm f)(\mec d) = \sum_{I\subset[n]} (-1)^{|I|} f(\mec d-\mec e_I)
$$
where $\mec e_I$ is the vector of $1$'s and $0$'s that is $1$ on the
components in $I$ (see Proposition~17 \cite{folinsbee_friedman_weights}).
Proposition~17 of \cite{folinsbee_friedman_weights} says that if
$f$ and $W$ are initially zero, then $f=\fraks W$ iff $W=\frakm f$.
We will later use Theorem~30 of \cite{folinsbee_friedman_weights} which
states that for all Riemann functions $f\from\integers^n\to\integers$
and $\mec L,\mec K\in\integers^n$ such that $\mec L=\mec K+\mec 1$
($\mec 1$ is the all $1$'s vector),
\begin{equation}\label{eq_weight_of_f_dual_is_W_dual}
\frakm f^\wedge_\mec K = (-1)^n W^*_\mec L,
\end{equation} 
where $W^*_\mec L$ from $\integers^n\to\integers$ is given by
\begin{equation}\label{eq_define_dual_weight}
W^*_\mec L(\mec d)=W(\mec L-\mec d).
\end{equation}

\subsection{Periodic Functions}

\begin{definition}
For $r\in\naturals$, we say that $f\from\integers^n\to\integers$
is {\em $r$-periodic} if
for all $i,j\in[n]$ we have
$$
\forall \mec d\in\integers^n,\ i,j\in[n],
\quad
f(\mec d)=f(\mec d+r\mec e_i-r\mec e_j).
$$
\end{definition}

\begin{proposition}
A function $f$ that is initially zero is $r$-periodic iff its weight, $W$ is.
\end{proposition}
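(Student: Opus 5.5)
The plan is to use the explicit inverse formulas relating $f$ and $W$: for initially zero functions, $f=\fraks W$ if and only if $W=\frakm f$ (Proposition~17 of \cite{folinsbee_friedman_weights}). Both operators commute with translations, and translations preserve being initially zero, since $\deg(\mec d+r\mec e_i-r\mec e_j)=\deg(\mec d)$. So the whole argument reduces to a translation-equivariance check in each direction.

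For a vector $\mec v\in\integers^n$, let $\tau_{\mec v}$ denote the shift $(\tau_{\mec v}g)(\mec d)=g(\mec d+\mec v)$. We only need $\mec v=r\mec e_i-r\mec e_j$, which has degree $0$, so $\tau_{\mec v}$ sends initially zero functions to initially zero functions.

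\emph{($\Leftarrow$) Suppose $W$ is $r$-periodic.} Then
$$
(\fraks W)(\mec d+\mec v)=\sum_{\mec d'\le \mec d+\mec v}W(\mec d')
=\sum_{\mec d''\le\mec d}W(\mec d''+\mec v)
=\sum_{\mec d''\le\mec d}W(\mec d'')=(\fraks W)(\mec d),
$$
using the substitution $\mec d'=\mec d''+\mec v$, which is a bijection between $\{\mec d'\le\mec d+\mec v\}$ and $\{\mec d''\le\mec d\}$. This sum is finite: because $W$ is initially zero, only finitely many terms with $\mec d'\le\mec d$ are nonzero. Hence $f=\fraks W$ is $r$-periodic.

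\emph{($\Rightarrow$) Suppose $f$ is $r$-periodic.} Then
$$
(\frakm f)(\mec d+\mec v)=\sum_{I\subset[n]}(-1)^{|I|}f(\mec d+\mec v-\mec e_I)=\sum_{I\subset[n]}(-1)^{|I|}f(\mec d-\mec e_I)=(\frakm f)(\mec d),
$$
by periodicity applied to each point $\mec d-\mec e_I$. So $W=\frakm f$ is $r$-periodic. This direction does not even need the initially-zero hypothesis.

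There is no real obstacle here. The only points needing care are the finiteness and reindexing of the sum defining $\fraks$, and the justification that $W=\frakm f$ is the weight, which comes from the uniqueness part of Proposition~17 of \cite{folinsbee_friedman_weights}.
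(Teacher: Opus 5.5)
Your proposal is correct and follows essentially the same route as the paper, which (deferring to Proposition~2.3 of \cite{folinsbee_friedman_euler}) observes that translations commute with $\fraks$ and $\frakm$, so $f$ and $W=\frakm f$ are invariant under the same translations. Your reindexing of the sum defining $\fraks$ and your termwise computation for $\frakm$ spell out that commutation explicitly for $\mec v=r\mec e_i-r\mec e_j$.
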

\begin{proof}
See Proposition~2.3 \cite{folinsbee_friedman_euler}; the basic idea
is that $f$ and $W$ have the same set of translations, since any translation
commutes with $\fraks$ and with $\frakm$.
\end{proof}

\begin{remark}
Riemann functions based on the classical Riemann's Theorem
(see Section~2.6 of \cite{folinsbee_friedman_weights}) are not
generally $r$-periodic.
By contrast, the Baker-Norine rank functions and related functions
(see Section~2.5 of \cite{folinsbee_friedman_weights}) are 
$r$-periodic.
\end{remark}

\begin{remark}
If $f\from\integers^n\to\integers$ is a Riemann function, then
for any $\mec K\in\integers^n$ and $\mec L=\mec K+\mec 1$
(where $\mec 1$ is the all $1$'s vector), we have
$f_\mec K^\wedge=f$ iff $W_\mec L^*=(-1)^n W$, where
$W_\mec L^*$ is the function defined by
$$
W_\mec L^*(\mec d) = W(\mec L-\mec d)
$$
(see Theorem~30 of \cite{folinsbee_friedman_weights}).
Of course, $(W_\mec L^*)_\mec L^*=W$.
Section~6 of
\cite{folinsbee_friedman_weights} shows that the Baker-Norine rank
function for complete graphs has a remarkably simple weight function.
\end{remark}

\subsection{Riemann functions $\integers^2\to\integers$ that are 
Perfect Matchings}

We model Riemann functions $\integers^2\to\integers$ by starting
with a particularly simple case of functions, related to what we call
{\em perfect matchings}.

\begin{definition}
\label{de_perfect_matching}
We say that $W\from\integers^2\to\integers$ is {\em of bounded support}
if for some $C\in\naturals$ we have $W(\mec d)=0$
whenever $|\deg(\mec d)|>C$.
We say that $W\from\integers^2\to\integers$ is a {\em perfect matching}
if (1) $W$ is of bounded support, and
(2) there is a bijection $\pi\from\integers\to\integers$ such that
$$
W(i,j)= 
\left\{ \begin{array}{ll} 1 & \mbox{if $j=\pi(i)$, and} \\
0 & \mbox{otherwise.}
\end{array} \right. 
$$
If so, we
call $\pi$ the {\em bijection associated to $W$}, and $W$ the
{\em weight function} (or merely {\em weight}) {\em associated to $\pi$}.
\end{definition}
We easily see that in the above definition, $W$ is $r$-periodic iff
$\pi$ satisfies $\pi(i+r)=\pi(i)-r$ for all $i\in\integers$.
We also see that if $W$ is a perfect matching, then $\pi(a)+a$
is bounded above and below (above by $C$ and below by $-C$ for
the same $C$ in Definition~\ref{de_perfect_matching}).

\begin{remark}
Proposition~2.4 of \cite{folinsbee_friedman_euler} characterizes the
Riemann functions $f\from\integers^2\to\integers$ such that $W=\frakm f$
is a perfect matching.
It also shows that $f$ is a Riemann function, then
$W=\frakm f$ is everywhere non-negative iff $W$ is a perfect matching.
\end{remark}

\begin{remark}
If $G$ is a graph consisting of two vertices joined by $r$ edges,
and $f$ is one plus the Baker-Norine rank function, then $f$
is $r$-periodic.  Moreover $W=\frakm f$ is a perfect matching
whose associated bijection $\pi$ is given uniquely by
$\pi(i)=i$ for $i=0,1,\ldots,r-1$ and $\pi(i+r)=\pi(i)-r$ for all
$i\in\integers$.
This was the motivation for a lot of the theory developed
in \cite{folinsbee_friedman_two_vertex}.
\end{remark}

\subsection{$k$-Diagrams and Perfect Matchings}

We now introduce the main structure that is the way we speak of
Betti numbers, skyscrapers, etc.
The following definition assumes no experience with sheaf theory,
although we will soon explain this connection.

We begin by reviewing some of the definitions and main results
in Section~4 of
\cite{folinsbee_friedman_euler}; see also Section~4 of
\cite{folinsbee_friedman_two_vertex}.

\subsubsection{$k$-Diagrams}

\begin{figure}
$$
\begin{tikzpicture}[scale=0.75]
% \node at (8,2) {$B=B_1\oplus B_3\oplus B_2$};
% \node at (8,0) {$A=A_1\oplus A_2$};
\node (B1) at (0,2) {$\cF(B_1)$};
\node (B2) at (0,-2) {$\cF(B_2)$};
\node (B3) at (0,0) {$\cF(B_3)$};
\node (A1) at (8,1) {$\cF(A_1)$};
\node (A2) at (8,-1) {$\cF(A_2)$};
\draw [->] (B1) -- (A1) node [midway,above] {$\cF(\rho_{1,1})$} ;
\draw [->] (B2) -- (A2) node [midway,below] {$\cF(\rho_{2,2})$} ;
\draw [->] (B3) -- (A1) node [midway,above] {$\cF(\rho_{3,1})$} ;
\draw [->] (B3) -- (A2) node [midway,below] {$\cF(\rho_{3,2})$} ;
\end{tikzpicture}
$$
\caption{Our Diagrams}
\label{fi_our_diagrams_new_again}
\end{figure}

\begin{definition}\label{de_diagram_k_vs}  % vs = vector space
Let $k$ be a field.
By a {\em diagram of $k$-vector spaces}, or simply a
{\em $k$-diagram} we mean a collection, $\cF$,
of data consisting of:
\begin{enumerate}
\item
five $k$-vector spaces, 
$$
\cF(B_1),\cF(B_2),\cF(B_3),\cF(A_1),\cF(A_2)
$$
called the {\em values} of $\cF$;
and
\item
$k$-linear maps $\cF(\rho_{i,j})\from \cF(B_i)\to \cF(A_j)$ for the pairs 
$(i,j)$ where
\begin{equation}\label{eq_restriction_pairs_that_exist}
(i,j) \in \{ (1,1),(2,2),(3,1),(3,2) \}
\end{equation} 
(i.e., $\cF(\rho_{1,2})$ and $\cF(\rho_{2,1})$ don't exist); we call the
$\cF(\rho_{ij})$ the {\em restriction maps} of $\cF$.
\end{enumerate}
We similarly define a {\em diagram of rings} 
replacing ``$k$-vector spaces'' above by ``rings,''
and ``$k$-linear maps'' above by ``morphisms of rings.''
We similarly define a {\em diagram of $k$-algebras}, 
{\em of abelian groups}, etc.
To a diagram of $k$-vector spaces, $\cF$, we associate the $k$-vector spaces
$$
\cF(B)=\cF(B_1)\oplus\cF(B_2)\oplus\cF(B_3),\quad
\cF(A)=\cF(A_1)\oplus\cF(A_2),
$$
and the
linear transformation
$\cF(\partial)\from \cF(B)\to \cF(A)$, called the
{\em differential of $\cF$}, given by
\begin{equation}\label{eq_formula_for_cF_partial}
\cF(\partial)(b_1,b_2,b_3) =
\bigl(
\cF(\rho_{1,1})(b_1)-\cF(\rho_{3,1})(b_3) ,
% \cF(\rho_{3,2})(b_3)-\cF(\rho_{2,2})(b_2)
\cF(\rho_{2,2})(b_2) - \cF(\rho_{3,2})(b_3)
\bigr),
\end{equation} 
and define the {\em zeroth and first cohomology groups of $\cF$}
to be, respectively
$$
H^0(\cF) \eqdef \ker\bigl( \cF(\partial) \bigr), 
\quad
H^1(\cF) \eqdef \coker\bigl( \cF(\partial) \bigr),
$$
i.e.,  
the kernel and cokernel of
$\cF(\partial)$.
By a {\em global section of $\cF$} we mean any tuple
$(b_1,b_2,b_3,a_1,a_2)$ such that
$\cF(\rho_{ij})b_i=a_j$ whenever $\cF(\rho_{ij})$ is defined;
we use $\Gamma(\cF)$ to denote the set of global sections of $\cF$.
We easily see that the map
$(b_1,b_2,b_3,a_1,a_2)\mapsto (b_1,b_2,b_3)$ gives an isomorphism
$\Gamma(\cF)\to H^0(\cF)$.
%
% If $(b_1,b_2,b_3)\in H^0(\cF)$, and $a_j=\cF(\rho_{jj})b_j$ for 
% $j=1,2$, 
% then the tuple $(b_1,b_2,b_3,a_1,a_2)$ satisfies
% $\cF(\rho_{ij})b_i=a_j$ whenever $\cF(\rho_{ij})$ is defined,
% and we refer to $(b_1,b_2,b_3,a_1,a_2)$ as a
% {\em global section of $\cF$}.
We define the {\em zeroth Betti number} and {\em first Betti number of
$\cF$} to be, respectively
$$
b^0(\cF) = \dim\bigl( H^0(\cF) \bigr),
\quad
b^1(\cF) = \dim\bigl( H^1(\cF) \bigr),
$$
and the {\em Euler characteristic of $\cF$} to be
$$
\chi(\cF) \eqdef b^0(\cF) - b^1(\cF).
$$
\end{definition}

% It follows that global sections are in one-to-one correspondence with
% elements of $H^0(\cF)$.

\begin{convention}
When we speak of a {\em $k$-vector space}
or a {\em $k$-diagram} without prior reference to $k$, we understand
$k$ to be an arbitrary field.
\end{convention}

\begin{convention}
Let $\cF$ be a $k$-diagram.
We say that $\rho_{i,j}$ and $\cF(\rho_{i,j})$ {\em exist}
if \eqref{eq_restriction_pairs_that_exist} holds; otherwise
they {\em do not exist}.
At times we refer to $\cF(\rho_{i,j})$ as the {\em restriction $\rho_{i,j}$}
when $\cF$ is clear.  At times we drop the comma between $i$ and $j$
in $\rho_{i,j}$ and $\cF(\rho_{i,j})$.
\end{convention}

At this point we have given the abstract definition, with no motivation.
The motivation is that we can model perfect matchings with $k$-diagrams,
as we now see.

\subsubsection{$k$-Diagrams in Perfect Matchings}

Recall the definitions and notation in 
Subsubsection~\ref{su_direct_sums_and_maps}.

\begin{definition}
\label{de_cM_W_d_when_W_is_a_perfect_matching}
Let $W\from \integers^2\to\integers$ be a perfect matching.
For each $\mec d\in\integers^2$ we use $\cM_{W,\mec d}$ to denote the
following $k$-diagram:
\begin{enumerate}
\item 
for $i=1,2$,
$\cM_{W,\mec d}(B_3)=\cM_{W,\mec d}(A_i)=k^{\oplus\integers}$,
and $\cM_{W,\mec d}(B_i)=k^{\oplus\integers_{\le d_i}}$ 
where $\integers_{\le d}$ denotes the set of integers $\le d$.
\item
For $i=1,2$, 
if $\iota_i$ denotes the inclusion $\integers_{\le d_i}\to\integers$,
then
$\rho_{i,i}=k^{\oplus \iota_i}$.
\item 
For $i=1,2$,
$\rho_{3,i}$ is the identity map.
\end{enumerate}
\end{definition}
We depict these $k$-diagrams in Figure~\ref{fi_cM_W_d}.
\begin{figure}
$$
\begin{tikzpicture}[scale=0.60]
\node (B1) at (0,2) {$\cM_{W,\mec d}(B_1)=k^{\oplus\integers_{\le d_1}}$};
\node (B2) at (0,-2) {$\cM_{W,\mec d}(B_2)=k^{\oplus\integers_{\le d_2}}$};
\node (B3) at (0,0) {$\cM_{W,\mec d}(B_3)=k^{\oplus \integers}$};
\node (A1) at (10,1) {$k^{\oplus\integers}=\cM_{W,\mec d}(A_1$)};
\node (A2) at (10,-1) {$k^{\oplus\integers}=\cM_{W,\mec d}(A_2$)};
\draw [->] (B1) -- (A1) node [midway,above] {$\rho_{1,1}=k^{\rm inclusion}$} ;
\draw [->] (B2) -- (A2) node [midway,below] {$\rho_{2,2}=k^{\rm inclusion}$} ;
\draw [->] (B3) -- (A1) node [midway,above] {$\rho_{3,1}={\rm identity}$} ;
\draw [->] (B3) -- (A2) node [midway,below] {$\rho_{3,2}={\rm identity}$} ;
\end{tikzpicture}
$$
\caption{The $k$-Diagram $\cM_{W,\mec d}$.
Notice that this description of the values and
restrictions of $\cM_{W,\mec d}$ as a $k$-diagram is simplest.
However, when we work with $\cO_r$-modules, it will be easier
to slightly rename these values and the restriction maps.
Compare with Figure~\ref{fi_M_W_d_as_cO_r_modules}.
}
\label{fi_cM_W_d}
\end{figure}

\begin{remark}
Figure~\ref{fi_cM_W_d} is the simplest way to understand the
$\cM_{W,\mec d}$ as $k$-diagrams.  However when we think of
$\cM_{W,\mec d}$ is an $\cO_r$-module, it will be convenient to
modify the values of $\cM_{W,\mec d}$ to reflect their
$\cO_r$-module structure
(see Subsection~\ref{su_cO_modules}
and Figure~\ref{fi_M_W_d_as_cO_r_modules}).
\end{remark}

\begin{theorem}
\label{th_perfect_matching_betti_numbers}
Let $W\from\integers^2\to\integers$ be a perfect matching, and
$\mec d\in\integers^2$.
Let $f=\fraks W$ (which is therefore a Riemann function)
and let $C$ be the offset of $f$.
Then for any $\mec K\in\integers^2$ we have
\begin{align}
\label{eq_b_zero_of_cM_W_mec_d_equals_f_of_mec_d}
b^0\bigl( \cM_{W,\mec d} \bigr) &= (\fraks W)(\mec d) =f(\mec d), \\
\label{eq_b_one_of_cM_W_mec_d_equals_f_sup_wedge_K_etc}
b^1\bigl( \cM_{W,\mec d} \bigr) &= f_\mec K^\wedge(\mec K-\mec d), \\
\nonumber
\chi\bigl( \cM_{W,\mec d} \bigr) &=  \deg(\mec d)+C.
\end{align}
\end{theorem}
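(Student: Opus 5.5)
The plan is to compute $H^0$ and $H^1$ of $\cM_{W,\mec d}$ directly from Definition~\ref{de_diagram_k_vs}. In each case I expect to find an explicit set of pairs of integers whose size is the Betti number, and then to match these counts with $f$ and $f^\wedge_\mec K$. One point of interpretation comes first. As literally written, Definition~\ref{de_cM_W_d_when_W_is_a_perfect_matching} does not mention $W$. I will read it in the only way that makes $W$ enter: the basis of $\cM_{W,\mec d}(B_3)=k^{\oplus\integers}$ is indexed by the matched pairs $(a,\pi(a))$, $a\in\integers$. Under this reading, $\rho_{3,1}$ sends $\mec e_a\mapsto\mec e_a$, and $\rho_{3,2}$ sends $\mec e_a\mapsto\mec e_{\pi(a)}$, which is "the identity" once $A_2$ is reindexed by $\pi$. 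This bookkeeping is the step I would state most carefully. It is the only non-formal point, and the rest is counting.

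For $H^0$, take $(b_1,b_2,b_3)\in\ker\cM_{W,\mec d}(\partial)$ with $b_3=\sum_a c_a\mec e_a$, a finite sum. The condition forces $b_1=\sum_a c_a\mec e_a$, so every $a$ with $c_a\ne0$ satisfies $a\le d_1$. Likewise $b_2=\sum_a c_a\mec e_{\pi(a)}$, so every such $a$ satisfies $\pi(a)\le d_2$. Conversely, any $b_3$ supported on such $a$ gives a global section. Hence
$$
b^0(\cM_{W,\mec d})=\#\{a\in\integers : a\le d_1,\ \pi(a)\le d_2\}=\sum_{\mec d'\le\mec d}W(\mec d')=(\fraks W)(\mec d).
$$
This count is finite, because $|a+\pi(a)|$ is bounded.

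For $H^1$, first quotient $\cM_{W,\mec d}(A)$ by the images of $B_1$ and $B_2$. This leaves $k^{\oplus\integers_{>d_1}}\oplus k^{\oplus\integers_{>d_2}}$, and the further relations come from $B_3$: the image of $\mec e_a$ is $\bar{\mec e}_a-\bar{\mec e}_{\pi(a)}$. Each $a$ involves exactly one basis vector on each side, so the relations decouple pair by pair. There are three cases.

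\begin{enumerate}
\item If $a>d_1$ and $\pi(a)>d_2$, the two generators $\bar{\mec e}_a$ and $\bar{\mec e}_{\pi(a)}$ are identified, and the pair contributes one dimension.
\item If exactly one of $a>d_1$, $\pi(a)>d_2$ holds, the single surviving generator is killed, and the pair contributes nothing.
\item If neither holds, there is nothing to kill or identify, and the pair contributes nothing.
\end{enumerate}

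Since $\pi$ is a bijection, these contributions are independent, so $b^1=\#\{a: a>d_1,\ \pi(a)>d_2\}$. This is finite, again by bounded support.

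It remains to identify this count with $f^\wedge_\mec K(\mec K-\mec d)$. By \eqref{eq_weight_of_f_dual_is_W_dual} with $n=2$, the weight of $f^\wedge_\mec K$ is $W^*_\mec L$ with $\mec L=\mec K+\mec 1$. Therefore
$$
f^\wedge_\mec K(\mec K-\mec d)=\sum_{\mec d'\le \mec K-\mec d}W(\mec L-\mec d')=\sum_{\mec e\ge \mec d+\mec 1}W(\mec e),
$$
which is exactly $\#\{a: a\ge d_1+1,\ \pi(a)\ge d_2+1\}$. Finally, $\chi=b^0-b^1=f(\mec d)-f^\wedge_\mec K(\mec K-\mec d)=\deg(\mec d)+C$ by the Riemann--Roch formula \eqref{eq_Riemann_Roch_formula}.
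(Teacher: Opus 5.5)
Your proof is correct. Your reading of Definition~\ref{de_cM_W_d_when_W_is_a_perfect_matching}, with $\rho_{3,2}(\mec e_a)=\mec e_{\pi(a)}$, is the one the paper intends; compare Definition~\ref{de_cM_W_d} and Example~\ref{ex_cM_W_d_as_O_r_module_W_is_a_perfect_matching}. There is one harmless slip. With the sign convention \eqref{eq_formula_for_cF_partial}, $\mec e_a\in\cM_{W,\mec d}(B_3)$ maps to $(-\mec e_a,-\mec e_{\pi(a)})$. So the relation is $(\bar{\mec e}_a,0)=-(0,\bar{\mec e}_{\pi(a)})$, as in $(x_1^{L_1},0)=(0,-x_2^{L_2})$ in Theorem~\ref{th_H_one_equals_k_in_rare_case}. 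No dimension changes.

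Your route differs from the one the paper points to. The paper only cites Theorem~4.2 of \cite{folinsbee_friedman_euler}. It describes that proof, in the subsubsection on mimicking Riemann--Roch, as obtaining $\chi$ from the exact sequences $0\to\cM_{W,\mec d}\to\cM_{W,\mec d+\mec e_i}\to\cS_i\to 0$. Note that once $b^0=f(\mec d)$ is known, the $b^1$ formula and the $\chi$ formula are equivalent by the very definition of $f^\wedge_\mec K$.

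You go the other way. You compute $b^1$ directly as the number of matched pairs $\mec a\ge\mec d+\mec 1$. This is exactly the basis \eqref{eq_basis_for_H_one_cM_W_mec_d} that the paper needs later, where it obtains it from $\cM_{W,\mec d}\isom\bigoplus_{W(\mec a)=1}\cI_{\mec d\ge\mec a}$. You then match this count with $f^\wedge_\mec K(\mec K-\mec d)$ via \eqref{eq_weight_of_f_dual_is_W_dual}, and deduce $\chi$ last.

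What your version buys:
\begin{itemize}
\item an explicit basis of $H^1$;
\item the identity $b^1(\cM_{W,\mec d})=b^0(\cM_{W^*_\mec L,\mec K-\mec d})$ exhibited as the bijection $\mec a\mapsto\mec L-\mec a$ of index sets, which is the combinatorics behind Corollary~\ref{co_hom_c0_to_cM_etc} and Theorem~\ref{th_weak_duality_for_cL_mec_d}.
\end{itemize}
The cost is that you import Theorem~30 of \cite{folinsbee_friedman_weights}, and you take for granted that $f$ is a Riemann function.

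Both dependencies disappear if you use the combinatorial shadow of the exact-sequence argument. Raising $d_1$ by one does exactly one of two things. If $\pi(d_1+1)\le d_2$, it adds $a=d_1+1$ to your $H^0$ index set. Otherwise it removes $a=d_1+1$ from your $H^1$ index set. Either way $b^0-b^1$ rises by exactly $1$, and the same holds for $d_2$. Your $H^1$ set is empty for $\deg(\mec d)\gg 0$, and your $H^0$ set is empty for $\deg(\mec d)\ll 0$. This shows at once that $f$ is a Riemann function and that $b^0-b^1=\deg(\mec d)+C$. The $b^1$ formula then follows from the definition of $f^\wedge_\mec K$.
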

\begin{proof}
See Theorems~4.2 (Subsection~4.6) of \cite{folinsbee_friedman_euler}.
\end{proof}

\subsubsection{Duality Theorems}

There is a large class of theorems that can be called ``duality theorems.''
Here we formalize the minimal requirements of such a theorem
in our context.

Let $W$ be a perfect matching and $\mec K,\mec L\in\integers^2$
with $\mec L=\mec K+\mec 1$.
If we apply
\eqref{eq_b_zero_of_cM_W_mec_d_equals_f_of_mec_d}
with $W$ replaced by $W^*_\mec L$ and $\mec d$ replaced with
$\mec K-\mec d$, and we 
\eqref{eq_b_one_of_cM_W_mec_d_equals_f_sup_wedge_K_etc}, we get that
$$
f_\mec K^\wedge(\mec K-\mec d) = b^0(\cM_{W^*_\mec L,\mec K-\mec d});
$$
combining this with
\eqref{eq_b_one_of_cM_W_mec_d_equals_f_sup_wedge_K_etc},
we get that
\begin{equation}\label{eq_duality_in_terms_of_Betti_numbers}
b^1\bigl( \cM_{W,\mec d} \bigr) = 
b^0\bigl( \cM_{W^*_\mec L,\mec K-\mec d}).
\end{equation} 
By a {\em duality theorem} we mean any theorem that provides a
perfect pairing of $k$-vector spaces:
\begin{equation}\label{eq_duality_theorem_means_this}
H^1(\cM_{W,\mec d}) \times H^0\bigl( \cM_{W^*_\mec L,\mec K-\mec d}) \to k
\end{equation} 
for all perfect matchings $W$ and $\mec K,\mec L,\mec d\in\integers^2$
with $\mec L=\mec K+\mec 1$;
since the vector spaces in
\eqref{eq_duality_theorem_means_this} are finite dimensional,
it is equivalent to give an
isomorphism
\begin{equation}\label{eq_duality_in_main_results_section}
H^1(\cM_{W,\mec d})' \xrightarrow{\isom}
H^0\bigl( \cM_{W^*_\mec L,\mec K-\mec d})
\end{equation} 
where $\,'$ denotes the dual of the $k$-vector space;
such an isomorphism immediately implies 
\eqref{eq_duality_in_terms_of_Betti_numbers}.

\subsubsection{Mimicking the classical Riemann-Roch theorem}

[These next two subsubsections can be skipped; they describe the
relation of the above material from
\cite{folinsbee_friedman_euler} with classical sheaf theory and
the Riemann-Roch theorem.]

One interesting part of the proof of Theorem~4.2 of
\cite{folinsbee_friedman_euler} is that 
it mimics the modern proof of the Riemann-Roch theorem.
Namely for any $W,\mec d$ as above
for $i=1,2$ it is easy to see
that there is a {\em short exact sequence} of $k$-diagrams
(we define this in the next section)
$$
0 \to \cM_{W,\mec d} \to \cM_{W,\mec d+\mec e_i} \to \cS_i \to 0
$$
where $\cS_i$ is a {\em skyscraper $k$-diagram} 
(Definition~\ref{de_cS_one_two} below);
this immediately implies that
\begin{equation}\label{eq_euler_characteristic_comparison}
\chi\bigl( \cM_{W,\mec d+\mec e_i} \bigr)
=
\chi\bigl( \cM_{W,\mec d} \bigr)+ 1.
\end{equation} 

This mimics the equation in the middle of page~296 of \cite{hartshorne},
where there is an exact sequence
$$
0 \to \cL(D) \to \cL(D+P) \to k(P) \to 0,
$$
from which one concludes that 
$$
\chi\bigl( \cL(D+P) \bigr) = 
\chi\bigl( \cL(D) \bigr) + 1.
$$

\subsubsection{Connection to Classical Sheaf Theory and Grothendieck's
Sheaf Theory}

The reader familiar with sheaf theory can see that $k$-diagrams are
just sheaf theory on a certain five-point topological space.
Let us give some details; the reader can refer to
Subsubsection~10.8.3 of \cite{folinsbee_friedman_euler} or
Section~4 of \cite{folinsbee_friedman_two_vertex} for more details.

It may help to understand some general underlying principle:
if $X$ is any finite
set, and $\cO$ is a topology on $X$, then each point $P\in X$ lies
in a unique smallest open subset $U_P$ containing $P$.
% [For intuition it may be
% simpler to think of 
% those $(X,\cO)$ that are $T_0$-spaces, i.e.,
% where $U_P=U_Q$ implies $P=Q$.] 
The sets $\{U_P\}_{P\in X}$ form a basis for the topology, and
the $\{U_P\}_{P\in X}$ also become the objects of category $\cC_X$,
with an arrow $U_P\to U_Q$ iff $U_P\subset U_Q$.
Now any sheaf, $\cF$, of $k$-vector spaces on $(X,\cO)$ restricts
to a contravariant functor $\cF|_{\cC_X}$ from $\cC_X$ to the category of 
$k$-vector spaces (which is Grothendieck's notion of a presheaf of
$k$-vector spaces on the category $\cC_X$);
it is not hard to see that the map $\cF\mapsto \cF|_{\cC_X}$ is
an equivalence of categories
(for a proof see \cite{friedman_cohomology}, Theorem~2.1, which
explains this as a special case of the Comparison Lemma 
\cite{sga4.1} Exp.~III, Theorem~4.1).

As a special case of the last paragraph,
let
$X={A_1,A_2,B_1,B_2,B_3}$; for $i=1,2$ let
$$
U_{A_i} = \{ A_i \}, \quad
U_{B_i} = \{ A_i,B_i\},
$$ 
and let
$$
U_{B_3} = \{A_1,A_2,B_3\}.
$$
Then if $\cO$ is the topology on $X$ with 
basis $U_P$ with $P=A_1,A_2,B_1,B_2,B_3$,
then $\{A_1\}$ and $\{A_2\}$ are open subsets, and
$\{B_1\},\{B_2\},\{B_3\}$ are closed subsets.
Let $\cC$ be the category whose objects are $U_P$ with $P\in X$.
Then for any sheaf, $\cF$, of $k$-vector spaces on $(X,\cO)$,
for all $P\in X$, $\cF(U_P)$ is a vector space, and for each
inclusion $U_P\to U_Q$ (i.e., $U_{A_i}\to U_{B_j}$ for $i=j$ or $j=3$)
we get a map $\cF(U_Q)\to \cF(U_P)$.
This gives a diagram of $k$-vector spaces, $\cF|_\cC$.
The previous paragraph implies that
the map $\cF\to\cF|_\cC$ is an equivalence
of vector spaces (i.e., knowing $\cF|_\cC$, one can determine
$\cF$ up to isomorphism).

\subsection{Morphisms and Exact Sequences}

To explain the duality theorem of \cite{folinsbee_friedman_euler},
we need to
define {\em morphisms} of $k$-diagrams;
we refer the reader to 
Section~5 of \cite{folinsbee_friedman_euler} for more details and
examples.

\begin{definition}
\label{de_morphism_k_diagrams}
Let $\cF,\cG$ be two $k$-diagrams.
By a {\em morphism $\phi\from\cF\to\cG$}
we mean the data, $\phi$, of linear maps from each
value of $\cF$ to the corresponding
value on $\cG$ in a way that commutes with the restriction maps:
i.e., $\phi$ consists of
$k$-linear maps $\phi(B_i)\from \cF(B_i)\to\cG(B_i)$ for
$i=1,2,3$ and $\phi(A_j)\from\cF(A_j)\to\cG(A_j)$ for $j=1,2$
such that $\cG(\rho_{ij})\phi(B_i)=\phi(A_j)\cF(\rho_{ij})$ whenever
$\cF(\rho_{ij}),\cG(\rho_{ij})$ exist (i.e., $i=j$ or $i=3$ and any $j$).
For $k$-diagrams $\cF,\cG$ we use $\Hom(\cF,\cG)$ to denote the
set of morphism $\cF\to\cG$.
We similarly define a morphism $\cF\to\cG$ when $\cF,\cG$ are
two diagrams of rings, two diagrams of abelian groups, etc.
\end{definition}
If $\cF,\cG$ are two $k$-diagrams,
then $\Hom(\cF,\cG)$ becomes a $k$-vector space in an evident fashion,
by the $k$-vector space structure on each linear map $\cF(P)\to\cG(P)$
for $P\in\{A_1,A_2,B_1,B_2,B_3\}$.

We illustrate a morphism of $k$-diagrams 
in Figure~\ref{fi_morphism_of_diagrams}.
\begin{figure}
$$
\begin{tikzpicture}[scale=0.50,font=\small]
\node (B1) at (0,4) {$\cF(B_1)$};
\node (B2) at (0,-4) {$\cF(B_2)$};
\node (B3) at (0,0) {$\cF(B_3)$};
\node (A1) at (8,2) {$\cF(A_1)$};
\node (A2) at (8,-2) {$\cF(A_2)$};
\draw [->] (B1) -- (A1) node [midway,above] {$\cF(\rho_{1,1})$} ;
\draw [->] (B2) -- (A2) node [midway,below] {$\cF(\rho_{2,2})$} ;
\draw [->] (B3) -- (A1) node [midway,above] {$\cF(\rho_{3,1})$} ;
\draw [->] (B3) -- (A2) node [midway,below] {$\cF(\rho_{3,2})$} ;
\node (BB1) at (15,4) {$\cG(B_1)$};
\node (BB2) at (15,-4) {$\cG(B_2)$};
\node (BB3) at (15,0) {$\cG(B_3)$};
\node (AA1) at (23,2) {$\cG(A_1)$};
\node (AA2) at (23,-2) {$\cG(A_2)$};
\draw [->] (BB1) -- (AA1) node [midway,above] {$\cG(\rho_{1,1})$} ;
\draw [->] (BB2) -- (AA2) node [midway,below] {$\cG(\rho_{2,2})$} ;
\draw [->] (BB3) -- (AA1) node [midway,above] {$\cG(\rho_{3,1})$} ;
\draw [->] (BB3) -- (AA2) node [midway,below] {$\cG(\rho_{3,2})$} ;
\draw [->,ultra thick] (B1) -- (BB1) node [pos=0.75,above] {$\phi(B_1)$};
\draw [->,ultra thick] (B2) -- (BB2) node [pos=0.75,above] {$\phi(B_2)$};
\draw [->,ultra thick] (B3) -- (BB3) node [pos=0.75,above] {$\phi(B_3)$};
\draw [->,ultra thick] (A1) -- (AA1) node [pos=0.2,above] {$\phi(A_1)$};
\draw [->,ultra thick] (A2) -- (AA2) node [pos=0.2,above] {$\phi(A_2)$};
\node (F) at (4,-6) {\Huge$\cF$};
\node (G) at (19,-6) {\Huge$\cG$};
\draw [->,ultra thick] (F) -- (G) node [midway,above] {\Large$\phi$};
\end{tikzpicture}
$$
\caption{A morphism of diagrams $\phi\from\cF\to\cG$, depicted in thick lines}
\label{fi_morphism_of_diagrams}
\end{figure}

[The following remark can be skipped until Section~\ref{se_strong_duality}.]
If $\phi\from\cF\to\cG$ is a morphism, then we define the
{\em kernel of $\phi$}, the {\em cokernel of $\phi$}, and
the {\em image of $\phi$} as their value-by-value $k$-diagrams;
e.g., $\ker(\phi)$ is the sub-$k$-diagram of $\cF$ whose value at
$P=\{A_1,A_2,B_1,B_2,B_3\}$
is $\ker(\phi(P))$.\footnote{
  In Section~\ref{se_strong_duality} we will use some more technical
  points of homological algebra.  At that point we want to see that
  the category of $k$-diagrams (whose objects are $k$-diagrams and whose
  morphisms are as described above) is an Abelian category in which kernels,
  cokernels, and images are computed in value-by-value.
  The reader can either check this by hand, or refer to
  \cite{sga4.1}, the sentence following Proposition~I.3.1.
  See also \cite{friedman_memoirs_hnc}.
}
In particular, a sequence of morphisms $0\to\cF_1\to\cF_2\to\cF_3\to 0$
is a short exact sequence iff it is value-by-value, i.e.,
iff for all $P=\{A_1,A_2,B_1,B_2,B_3\}$, the sequence
$$
0 \to \cF_1(P)\to\cF_2(P)\to\cF_3(P)\to 0
$$
is a short exact sequence.
The ``strong duality'' theorem of Section~\ref{se_strong_duality}
will make extensive use of short exact sequences and the resulting
long exact sequences of Ext groups.

\subsection{Duality for $k$-Diagrams}

In \cite{folinsbee_friedman_euler}, the following duality theory was
described for $k$-diagrams.
Let $\underline k$ and $\omega=\omega_k$ be the following diagrams:
\newcommand{\ficonsttikzpiece}[5]{
\begin{tikzpicture}[scale=0.40]
\node (B1) at (0,2) {$#1$};
\node (B2) at (0,-2) {$#2$};
\node (B3) at (0,0) {$k$};
\node (A1) at (3,1) {$k$};
\node (A2) at (3,-1) {$k$};
\draw [->] (B1) -- (A1) ;
\draw [->] (B2) -- (A2) ;
\draw [->] (B3) -- (A1) ;
\draw [->] (B3) -- (A2) ;
\node at (1.5,-4){$#3$};
% \node at (1.5,-6){$\myrevision{b^0=#4}$};
% \node at (1.5,-7){$\myrevision{b^1=#5}$};
\end{tikzpicture}
}
$$
\ficonsttikzpiece{k}{k}{\underline{k}}{1}{0}
\quad
\ficonsttikzpiece{0}{0}{\omega=\omega_k}{0}{1}
$$
(The $k$-diagram $\omega$ is also called $\underline k_{/B_1,B_2}$ in 
\cite{folinsbee_friedman_euler}.)
Theorem~9.1 of \cite{folinsbee_friedman_euler} showed that
for any $k$-diagram, $\cF$, we have
\begin{equation}\label{eq_k_vec_space_omega_represents_H_one_dual}
H^1(\cF)' \isom \Hom(\cF,\omega).
\end{equation} 
Hence $\omega$ ``represents'' the functor
$\cF\mapsto H^1(\cF)'$
(and plays the role of the canonical sheaf in
curve theory, e.g., \cite{hartshorne}, middle of page~295).

Theorem~9.2 of \cite{folinsbee_friedman_euler} then shows that 
if $W$ is a perfect matching, and if
$\mec K,\mec L\in\integers^2$ satisfy $\mec L=\mec K+\mec 1$, then
a 
natural isomorphism
$$
\Hom(\omega,\omega)\to\Hom(\underline k,\underline k)
$$
yields an isomorphism
$$
\Hom(\cM_{W,\mec d},\omega) 
\to
\Hom(\underline k,\cM_{W^*_\mec L,\mec K-\mec d}) 
\isom
H^0(\cM_{W^*_\mec L,\mec K-\mec d}).
$$
Combining this with \eqref{eq_k_vec_space_omega_represents_H_one_dual} gives
an isomorphism
$$
H^1(\cM_{W,\mec d})'\to H^0(\cM_{W^*_\mec L,\mec K-\mec d}).
$$
This is a ``duality theorem,'' for then this shows that
$$
b^1(\cM_{W,\mec d}) =  b^0(\cM_{W^*_\mec L,\mec K-\mec d}),
$$
which proves
the formula
$b^1(\cM_{W,\mec d}) = f^\wedge_\mec K(\mec K-\mec d)$
in Theorem~\ref{th_perfect_matching_betti_numbers}
as a consequence of
\eqref{eq_k_vec_space_omega_represents_H_one_dual}
and the fact that $b^0(\cM_{W,\mec d})=f(\mec d)$ in
Theorem~\ref{th_perfect_matching_betti_numbers}.

\eqref{eq_k_vec_space_omega_represents_H_one_dual} represents a
``weaker'' duality theory in that the canonical sheaf
$\omega$ is independent of $W$ and does not reflect any special structure
of $W$.
We expect that a ``stronger'' duality theory would
have a canonical diagram that would involve the special structure of $W$.
This is what we do next, when $W$ is a periodic perfect matching.

\subsection{$\cO$-Modules}
\label{su_cO_modules}

The notion of an {\em $\cO_X$-module} is a standard notion in the context
of a {\em ringed space}, $(X,\cO_X)$ (see, e.g.,
\cite{hartshorne} Section~II.5, or
\cite{ega1} Section~0.4);
in this subsection we explain what this amounts to in our context.
The reason we do this is that if $W\from\integers^2\to\integers$ is
an $r$-periodic perfect matching, then 
the $\cM_{W,\mec d}=\cM_{W,\mec d,k}$ become {\em $\cO_r$-modules}
(of finite type)
where $\cO_r=\cO_{r,k}$ is an interesting diagram of rings;
this is crucial to the rest of this paper, i.e., to our newer duality
theory.
The rough idea is explained in Subsection~10.10
of \cite{folinsbee_friedman_euler}, but is not further pursued there.

\begin{definition}
Let $k$ be a field and $r\in\naturals$.  We define the $k$-diagram
$\cO=\cO_{r,k}$ (or $\cO_r$ with $k$ understood), as follows:
\begin{equation}\label{eq_bigger_O_module_for_r_periodic_values}
\forall i=1,2,\quad
\cO(A_i)=k[x_i,1/x_i], \ \cO(B_i)=k[y_i],
\ \cO(B_3)=k[v,1/v],
\end{equation}
and 
whose restriction maps take $v$ to $x_1^r,x_2^{-r}$ and take $y_i$ to $1/x_i$,
i.e.,
\begin{equation}\label{eq_bigger_O_module_for_r_periodic_restrictions}
\cO(\rho_{i,i})(y_i)=1/x_i\mbox{\ (for $i=1,2$)}, \quad
\cO(\rho_{3,1})(v)=x_1^r, \quad
\cO(\rho_{3,2})(v)=x_2^{-r} 
\end{equation}
(and all restriction maps are the identity on $k$).
\end{definition}
Since the values of $\cO_{r,k}$ are rings, and its restriction maps
are morphisms of rings, we view $\cO_{r,k}$ as a diagram of rings or,
more precisely, a diagram of $k$-algebras.
We depict $\cO_{r,k}$ in 
Figure~\ref{fi_diagrams_rings_short_description_section}.

\begin{figure}
$$
\begin{tikzpicture}[scale=0.75]
% \node (B1) at (0,2) {$\cO_r(B_1)=k[1/x_1]$};
\node (B1) at (0,2) {$\cO_r(B_1)=k[y_1]$};
\node (B2) at (0,-2) {$\cO_r(B_2)=k[1/x_2]$};
\node (B3) at (0,0) {$\cO_r(B_3)=k[v,1/v]$};
\node (A1) at (8,1) {$\cO_r(A_1)=k[x_1,1/x_1]$};
\node (A2) at (8,-1) {$\cO_r(A_2)=k[x_2,1/x_2]$};
% \draw [->] (B1) -- (A1) node [midway,above] {${\rm inclusion}$};
\draw [->] (B1) -- (A1) node [midway,above] {$y_1\mapsto 1/x_1$};
% \draw [->] (B2) -- (A2) node [midway,below] {${\rm inclusion}$};
\draw [->] (B2) -- (A2) node [midway,below] {$y_2\mapsto 1/x_2$};
\draw [->] (B3) -- (A1) node [midway,above] {$v\mapsto x_1^r$};
\draw [->] (B3) -- (A2) node [midway,below] {$v\mapsto x_2^{-r}$};
\end{tikzpicture}
$$
\caption{The Diagram of Rings, $\cO_r=\cO_{r,k}$: this
$k$-diagram has more structure: its values are rings,
and restriction maps are also morphisms
of rings.  Hence $\cO_r=\cO_{r,k}$ is much larger and more structured
than $\underline k$ (the constant diagram whose values are $k$);
hence for any $\cO$-modules $\cF,\cG$,
$\Hom_\cO(\cF,\cG)$ is much smaller than 
$\Hom_{\underline k}(\cF,\cG)$.  This smallness is crucial if
we want to get a stronger form of Serre duality.}
\label{fi_diagrams_rings_short_description_section}
\end{figure}

\begin{definition}
Let $\cO$ be a diagram of rings (recall 
Definition~\ref{de_diagram_k_vs}).
By an $\cO$-module we mean any diagram of abelian groups, $\cF$, such that
for each $P\in X=\{A_1,A_2,B_1,B_2,B_3\}$, 
$\cF(P)$ is endowed with the structure of an $\cO(P)$-module such that
the restriction maps respect the ring structure,
i.e., for each restriction map $\rho\from \cF(B)\to \cF(A)$, for each
$r\in \cO(B)$ and $m\in\cF(B)$ we have
$$
\cF(\rho)( rm ) = \bigl(\cO(\rho)(r)\bigr) \bigl( \cF(\rho)(m)\bigr).
$$
\end{definition}
For two $\cO$-modules $\cF,\cG$ we define $\Hom_\cO(\cF,\cG)$
to be the set of morphisms of diagrams $\phi$ such that for
for each $P\in X=\{A_1,A_2,B_1,B_2,B_3\}$, 
$\phi(P)$ is a morphism of $\cO(P)$-modules.

Note that $\cO_r=\cO_{r,k}$ are $k$-algebras, and hence 
any $\cO_{r,k}$ is automatically a $k$-diagram of vector spaces.

It will be helpful to keep a few examples of $\cO$-modules.
We start with the main example, which justifies the definition
of $\cO_{r,k}$.

\begin{example}
\label{ex_cM_W_d_as_O_r_module_W_is_a_perfect_matching}
Let $\cO=\cO_{r,k}$.  Then for any $r$-periodic perfect matching, $W$,
$\cM_{W,\mec d}=\cM_{k,W,\mec d}$ is an $\cO_{r,k}$-module as follows
(here we are slightly renaming the values of $\cM_{W,\mec d}$):
\begin{enumerate}
\item
We identify $\cM_{W,\mec d}(B_1)=k^{\oplus \integers_{\le d_1}}$
with the $k[y_1]$-module $k[y_1]y_1^{-d_1}$ in the evident fashion,
i.e., if $f\from \integers_{\le d_1}\to k$ is an element of
$k^{\oplus \integers_{\le d_1}}$, then we identify $f$ with
$\sum_i f(i)y_1^{-i}$.
Hence $\cM_{W,\mec d}(B_1)$ is a $k[y_1]$-module,
which makes it a $\cO_r(B_1)$-module.
\item
Similarly for $\cM_{W,\mec d}(B_2)$.
\item
We identify $\cM_{W,\mec d}(A_1)=k^{\oplus\integers}$ with
$k[x_1]$ by the map taking $f$ to $\sum_i f(i)x_1^i$.
The restriction map $\cM_{W,\mec d}(\rho_{1,1})$ takes
$y_1$ to $1/x_1$.
\item
Similarly for $\cM_{W,\mec d}(A_2)$ and $\cM_{W,\mec d}(\rho_{2,2})$.
\item
We identify $\cM_{W,\mec d}(B_3)=k^{\oplus\integers}$
with $\cM_{W,\mec d}(A_1)=k^{\oplus\integers}$ via the identity map,
and $\rho_{3,1}$ is the identity map.
Since 
$\cM_{W,\mec d}(B_3)$ is a $k[x_1,1/x_1]$-module,
it is also a $k[x_1^r,1/x_1^r]$-module, and identifying
$v$ with $x_1^r$, $\cM_{W,\mec d}(B_3)$ is a $k[v,1/v]$-module.
This makes $\cM_{W,\mec d}(B_3)$ isomorphic (non-canonically) to
with $k[v,1/v]^{\oplus r}$.
To define the restriction
$\cM_{W,\mec d}(\rho_{32})$ we take $x_1^a\in \cM_{W,\mec d}(B_3)$
to $x_2^b$ where $b$ is the unique integer with $W(a,b)=1$.
(Of course, we may equally well identify 
$\cM_{W,\mec d}(B_3)$ with $\cM_{W,\mec d}(A_2)$.)
\end{enumerate}
\end{example}
Another way to define $\cM_{W,\mec d}(B_3)$ is to (non-canonically)
choose integers $i_1,\ldots,i_r$, one in each class mod $r$
(for example, choose some $i_s\equiv s\pmod{r}$),
and then for $s\in[r]$ let $j_s$ be the unique integer
with $W(i_s,j_s)$.
We can then identify $\cM_{W,\mec d}(B_3)$ with 
$k[v,1/v]^{\oplus r}$, and define the $\rho_{3,1}$ restriction by mapping
$$
\bigl( f_1(v),\ldots,f_r(v) \bigr) \mapsto
\sum_{s=1}^r f_s(v) x_1^{i_s}
$$
and the $\rho_{3,2}$ restriction by mapping
$$
\bigl( f_1(v),\ldots,f_r(v) \bigr) \mapsto
\sum_{s=1}^r f_s(1/v) x_2^{j_s} .
$$
We depict this in Figure~\ref{fi_M_W_d_as_cO_r_modules}.

\begin{example}
Let $\cO$ be an arbitrary diagram of rings, and let $\cF,\cG$ be 
two $\cO$-modules.
Then the {\em tensor product of $\cF$ and $\cG$ over $\cO$},
denoted $\cF\otimes_\cO\cG$ is the $\cO$-module whose value
at $P\in\{A_1,A_2,B_1,B_3,B_2\}$ is
$\cF(P)\otimes_{\cO(P)}\cG(P)$, and whose restriction maps
are the evident tensor product of restriction maps.
We similarly define $\cF\oplus_\cO\cG$.
\end{example}

\begin{example}
Let $V$ be a $k$-vector space. The {\em constant diagram $V$},
denoted $\underline V$,
refers to the $k$-diagram all of whose values are $V$ and all of whose
restrictions are the identity maps.
Similarly for a ring $R$ and the constant diagram of rings,
$\underline R$.
If $k$ is a field, then it also is a ring.  Then
$\cO=\underline k$ is a diagram of rings.
We easily see that a $\underline k$-module is the same thing as a diagram
of $k$-vector spaces.
\end{example}

For the geometers we
remark that for
$r=1$, $\cO_r$ resembles the Riemann sphere with two closed points;
then the $\cM_{W,\mec d}$ are
$\cO_r$ line bundles. 
However, $r\ge 2$ this is a
much more mysterious space.\footnote{We thank Ehud de Shalit for a
  discussion of $\cO_{r,k}$, and Luc Illusie for questions regarding
  $\cM_{W,\mec d}$ and $\cO_{r,k}$-modules.
  }
[The $r\ge 2$ case of $\cO_r$ is not an orbifold; perhaps it is a 
cover of an orbifold reflecting a \v{C}ech cohomology computation or something
related.]

\begin{figure}
\begin{tikzpicture}[scale=0.75]
\node (B1) at (0,2) {$\cM_{W,\mec d}(B_1)=k[y_1]y_1^{-d_1}$};
\node (B2) at (0,-2) {$\cM_{W,\mec d}(B_2)=k[1/y_2]y_2^{-d_2}$};
\node (B3) at (0,0) {$\cM_{W,\mec d}(B_3)=k[v,1/v]^{\oplus r}$};
\node (A1) at (8,1) {$\cM_{W,\mec d}(A_1)=k[x_1,1/x_1]$};
\node (A2) at (8,-1) {$\cM_{W,\mec d}(A_2)=k[x_2,1/x_2]$};
\node at (-5,2) {Ring maps};
\node at (-5,1) {$y_1\mapsto 1/x_1$};
\node at (-5,0) {$y_2\mapsto 1/x_2$};
\node at (-5,-1) {$v\mapsto x_1^r$};
\node at (-5,-2) {$v\mapsto x_2^{-r}$};
\draw [->] (B1) -- (A1) node [midway,above] {$y_1^{-a}\mapsto x_1^a$};
\draw [->] (B2) -- (A2) node [midway,below] {$y_2^{-a}\mapsto x_2^a$};
\draw [->] (B3) -- (A1) node [midway,above] {$v^a\mec e_s\mapsto x_1^{i_s+ar}$};
\draw [->] (B3) -- (A2) node [midway,below] {$v^a\mec e_s\mapsto x_2^{j_s-ar}$};
\end{tikzpicture}
\caption{If $W$ is an $r$-periodic perfect matching, then
$\cM_{W,\mec d}$ has a natural structure as an $\cO_r$-module:
for $k\in[r]$ we set }
\label{fi_M_W_d_as_cO_r_modules}
\end{figure}

\subsection{Our First Duality Theorem}
\label{su_our_first_duality_theorem}

We now wish to describe our first duality theory, that is the subject
of Section~\ref{se_duality}.

Fix an $r$-periodic perfect matching $W$, and fix a
$\mec K\in\integers^2$ and set $\mec L=\mec K+\mec 1$.
Section~\ref{se_duality} begins by describing
a pairing for any $\cO_r$-module $\cF$ of the form:
$$
H^1(\cM_{W,\mec 0}\otimes\cF)\times
\Hom_{\cO_r}(\cF,\cM_{W^*_\mec L,\mec K})\to k .
$$
Later in Section~\ref{se_duality} we show
that this is a perfect pairing when $\cF$
is equal to certain {\em line bundles}
$\cL_\mec d=\cL_{\mec d,r,k}$
(which we define in Definition~\ref{de_cL_r_mec_d}).

The above pairing is built from the following two general
constructions steps, valid for any
diagram of rings $\cO$:
\begin{enumerate}
\item
For any $\cO$-modules $\cA,\cB,\cC$ there is a natural map
$$
\Hom_\cO(\cA,\cB)\xrightarrow{{\rm id}_\cC\otimes\cdot}
\Hom_\cO(\cC\otimes\cA,\cC\otimes\cB)
$$
where ${\rm id}_\cC$ is the identity map on $\cC$.
\item
For any $\cO$-modules $\cF,\cG$, there is a natural ``Yoneda pairing''
$$
H^1(\cF)\times \Hom_\cO(\cF,\cG) \to H^1(\cG).
$$
\end{enumerate}
Hence for any $\cO_r$-module $\cF$ we have a natural map
$$
\Hom(\cF,\cM_{W^*_\mec L,\mec K})
\to
\Hom\bigl(\cM_{W,\mec 0}\otimes\cF,
\cM_{W,\mec 0}\otimes \cM_{W^*_\mec L,\mec K}\bigr).
$$
So setting
$$
\omega_W = \cM_{W,\mec 0}\otimes \cM_{W^*_\mec L,\mec K},
$$
we get a natural map
$$
\Hom(\cF,\cM_{W^*_\mec L,\mec K})
\to \Hom\bigl(\cM_{W,\mec 0}\otimes\cF,\omega\bigr).
$$
Next, we have a Yoneda pairing
$$
H^1(\cM_{W,\mec 0}\otimes\cF)\times
\Hom\bigl(\cM_{W,\mec 0}\otimes\cF,\omega\bigr) \to H^1(\omega).
$$
It turns out that 
$$
H^1( \omega )\isom k
$$
(this is an involved computation).
Hence fixing an isomorphism to $k$, 
we get a natural pairing
$$
H^1(\cM_{W,\mec 0}\otimes\cF)\times
\Hom(\cF,\cM_{W^*_\mec L,\mec K})\to 
H^1(\cM_{W,\mec 0}\otimes\cF)\times
\Hom(\cM_{W,\mec 0}\otimes\cF,\omega)\to 
H^1(\omega)\to k.
$$
Next we verify that this pairing is a perfect pairing
when $\cF$ is a certain type of {\em line bundle},
$\cL_\mec d=\cL_{r,k,\mec d}$ (this is a straightforward but
tedious computation).
While the $\cL_\mec d$ are defined in 
Definition~\ref{de_cL_r_mec_d}, let us mention some of their basic properties:
$$
\cL_\mec d\otimes\cL_{\mec d'} \isom \cL_{\mec d+\mec d'},
\quad
\cL_\mec 0 \isom \cO_r,
\quad
\cM_{W,\mec d'}\otimes \cL_\mec d \isom
\cM_{W,\mec d'+\mec d},
$$
and for any $\cO_r$-modules $\cF,\cG$,
$$
\Hom_{\cO_r}(\cF,\cG)\isom 
\Hom_{\cO_r}(\cF\otimes\cL_\mec d,\cG\otimes\cL_\mec d).
$$
Since 
$$
H^1(\cM_{W,\mec 0}\otimes\cF)\times
\Hom_{\cO_r}(\cF,\cM_{W^*_\mec L,\mec K})\to  k
$$
is a perfect pairing for $\cF=\cL_\mec d$, this easily implies that there
is an isomorphism
$$
H^1(\cM_{W,\mec d})'\to \Hom_{\cO_r}(\cO_r,\cM_{W^*_\mec L,\mec K-\mec d})
\isom
H^0(\cM_{W^*_\mec L,\mec K-\mec d}).
$$
Hence we get a duality theorem, implying that
$$
b^1( \cM_{W,\mec d} ) 
=
b^0( \cM_{W^*_\mec L,\mec K-\mec d}).
$$

\begin{remark}
If $W,W'$ are $r$-periodic perfect matchings, then we have
that for any $\mec d,\mec d'\in\integers^2$
$$
b^1\bigl( \cM_{W,\mec d}\otimes \cM_{W',\mec d'} \bigr) = 0
$$
unless $W'=W^*_\mec L$ for some $\mec L\in\integers^2$.
Hence the definition of $\omega$ is a rather exceptional choice for
a ``canonical'' $\cO$-module.  
We also remark that
$$
b^0(\omega)=\infty,
$$
so there are properties of $\omega$ that are not mirrored in the classical
Riemann-Roch theorem (classically
the canonical sheaf is a line bundle, and therefore
has finite $b^0$ and $b^1$).
\end{remark}

\subsection{How Canonical is the Canonical $\cO_r$-Module $\omega_W$?}

It turns out that $\cM_{W^*_\mec L,\mec K}$, ranging over all
$\mec K,\mec L$ with $\mec L=\mec K+\mec 1$, is independent --- up to
isomorphism --- of the
particular pair $(\mec K,\mec L)$.  Hence
$$
\omega_{W,\mec K} = \cM_{W,\mec 0}\otimes \cM_{W^*_\mec L,\mec K}
$$
depends only on $W$.  It is an interesting question to see how
$\omega_W$
depends on $W$.
We will briefly addresss this in
Subsection~\ref{su_omega_depends_on_W}.
% It is easy to see that if $W'$ is a translate of $W$ or if 
% $W^*_\mec L$, then $\omega_W\isom\omega_{W'}$.
% More generally, the question of whether or not
% $\omega_W\isom\omega_{W'}$ as $\cO_r$-modules is related to the
% {\em difference multisets} of $W$ and $W'$.
% In particular, generally
% $\omega_W\not\isom\omega_{W'}$ for different $W,W'$, but there are 
% non-trivial cases 
% of $W,W'$ such that
% $\omega_W\isom\omega_{W'}$ but $W$ is not a translate of $W$ or 
% $W^*_\mec L$.  

\subsection{A Stronger Duality Theorem}

In Section~\ref{se_strong_duality} we prove a stronger duality
theorem; this requires more of a background in homological algebra
as it applies to $\cO_r$-modules, part of which is given in
Appendix~\ref{ap_yoneda_pairing_etc}.
Let us describe the main result.

If $\cO$ is any diagram of rings, and $\cA,\cB,\cC$ are any $\cO$-modules,
then the natural map
(of Subsection~\ref{su_our_first_duality_theorem})
$$
\Hom_\cO(\cA,\cB) \to 
\Hom_\cO(\cC\otimes_\cO\cA,\cC\otimes_\cO\cB)  
$$
extends to a map
of $\delta$-functors
$$
\Ext^i_\cO(\cA,\cB)\to
\Ext^i_\cO(\cC\otimes_\cO\cA,\cC\otimes_\cO\cB) 
$$
provided that $\cC$ is a {\em flat $\cO$-module}.
It is easy to see that if $W$ is an $r$-periodic perfect matching,
then $\cM_{W,0}$ is a flat $\cO_r$-module, and hence we get a map
$$
\Ext^i_\cO(\cF,\cM_{W^*_\mec L,\mec K})
\to
\Ext^i_\cO(\cM_{W,0}\otimes\cF,\omega_{W,\mec K}).
$$
Now there is a Yoneda pairing
(generalizing that of Subsection~\ref{su_our_first_duality_theorem})
$$
\Ext^i_\cO(\cA,\cB)\times \Ext^j_\cO(\cB,\cC)
\to
\Ext^{i+j}_\cO(\cA,\cC)
$$
which therefore gives a pairing
\begin{equation}\label{eq_strong_duality_pairing_in_main_tex}
H^i(\cM_{W,\mec 0}\otimes\cF)\times
\Ext^{1-i}_{\cO_r}(\cF,\cM_{W^*_\mec L,\mec K})\to 
H^1(\omega_{W,\mec K})\isom k
\end{equation} 
for any $\cO_r$-module, $\cF$.
We say that $\cF$ satisfies {\em strong duality}
if \eqref{eq_strong_duality_pairing_in_main_tex} is a 
perfect pairing for $i=0,1$.

It follows from general principles and the five-lemma that if
$0\to\cF_1\to\cF_2\to\cF_3\to 0$ is a short exact sequence, and if
two of $\cF_1,\cF_2,\cF_3$ satisfy strong duality, then so does
the third.

In Section~\ref{se_strong_duality}
we establish that a number of $\cF$ satisfy strong duality.
For example, in Section~\ref{se_duality} we've seen that
\eqref{eq_strong_duality_pairing_in_main_tex} is a perfect pairing for
$\cF=\cL_\mec d$ and $i=1$.
[This is true modulo some technicalities regarding the Yoneda pairing
that we are hiding for now.]
But for $\deg(\mec d)$ sufficiently small we can see that for
$\cF=\cL_\mec d$, both
$H^i(\cM_{W,\mec 0}\otimes\cF)$ and
$\Ext^{1-i}_{\cO_r}(\cF,\cM_{W^*_\mec L,\mec K})$
vanish for $i=0$.
Hence $\cL_\mec d$ satisfy strong duality for $\deg(\mec d)\ll 0$.
Now we use an exact sequence
$$
0 \to \cL_\mec d \to \cL_{\mec d+\mec e_1} \to \cS_1 \to 0
$$
where $\cS_1$ is a certain {\em skyscraper sheaf}
to show that (1) $\cS_1$ satisfies strong duality, and then
(2) $\cL_\mec d$ satisfies strong duality for all $\mec d$.

The ``two out of three principle'' shows that, for example, if
$\cF$ is the cokernel of a map of finite direct sums of
$\cO_r$-modules of the form $\cL_\mec d$, then $\cF$ also satisfies
strong duality.
Such $\cF$ are examples of $\cO_r$-modules that are {\em coherent}
in the usual sense of $\cO$-modules on a ringed topological 
$(X,\cO)$.\footnote{
  Hartshorne's textbook \cite{hartshorne} defines a coherent $\cO$-module
  only in the context of schemes; the more general definition 
  can be
  found in, for example, EGA~I, Section 0.5.3.
  }

\subsection{The Interest in Perfect Matchings}

It seems quite restrictive to study
the set of $f\from\integers^2\to\integers$ whose weight $W$
is a perfect matching.
As mentioned before,
the interest in such $f$ and $W$ is that to any Riemann function
$f\from\integers^n\to\integers$ one can associate a
{\em virtual sheaf} that is formally the sum and difference of
sheaves $\cM_{W,\mec d}$ where $W$ is a perfect matching.
Let us roughly describe the procedure, referring to 
\cite{folinsbee_friedman_euler} Subsections~1.3 and~1.4 for details.

First,
any Riemann function $f\from\integers^2\to\integers$ has
a weight $W$ that can be written as
\begin{equation}\label{eq_W_as_sum_and_difference_of_perfect_matchings_W}
W = W_1 + \cdots + W_\ell - \tilde W_1 - \cdots - \tilde W_{\ell-1}
\end{equation} 
for some $\ell$ and perfect matchings
$W_1,\ldots,W_\ell,\tilde W_1,\ldots,\tilde W_{\ell-1}$.
Assuming we work with {\em virtual} sheaves --- meaning
formal sums and differences --- of the sheaves 
$\cM_{W_i,\mec d}$ and $\cM_{\tilde W_i,\mec d}$, one 
gets $f(\mec d)$ and $f^\wedge_\mec K(\mec K-\mec d)$ as the
zeroth and first Betti numbers of a virtual sheaf built from
perfect matchings.
One then proves the virtual sheaf obtained is independent of how
one writes $W$ as the RHS of
\eqref{eq_W_as_sum_and_difference_of_perfect_matchings_W}.

Second, if $f\from\integers^n\to\integers$ is any Riemann function,
for any $\mec d\in\integers^n$ one gets a Riemann function
$\integers^2\to\integers$ obtained by varying two of the $n$ variables
in $\mec d$; i.e., for distinct $i,j\in[n]$, we consider
$g(a_1,a_2)\eqdef f(\mec d+a_1\mec e_i+a_2\mec e_j)$, where
$\mec e_i,\mec e_j$ are standard basis vectors.
Since $g\from\integers^2\to\integers$ is also a Riemann function,
one gets a virtual sheaf for $g(0,0)=f(\mec d)$.
One then proves that the virtual sheaf is independent of the choice
of $i,j$.

As alluded to in the introduction, the above was shown to hold in
\cite{folinsbee_friedman_euler} in the context of $k$-diagrams,
not $\cO_r$-modules.
If $W$ is an $r$-periodic weight function, then
Lemma~3.1 of \cite{folinsbee_friedman_euler} 
shows that 
\eqref{eq_W_as_sum_and_difference_of_perfect_matchings_W}
holds where the $W_i$ and $\tilde W_i$ are also $r$-periodic.
Hence we can define $\cM_{W,\mec d}$ as a formal difference of
sums of $\cO_r$-modules $\cM_{W_i,\mec d}$ and $\cM_{W_i',\mec d}$.
But \cite{folinsbee_friedman_euler} does not prove that
the resulting virtual (or formal difference of)
$\cO_r$-module is independent of these choices;
it only proves that for two different decompositions of
\eqref{eq_W_as_sum_and_difference_of_perfect_matchings_W},
the two formal difference are isomorphic as virtual $k$-diagrams
(not as virtual $\cO_r$-modules).
We hope to address this point in a future work.

\subsection{Additional Remarks and Future Work}

A good challenge for future work is to develop models that
explain generalized Riemann-Roch formulas as a type of sheaf
or diagram of $k$-vector spaces that does not have all the ad hoc
choices we make, and that does not need to pass to virtual
diagrams or virtual sheaves.

Another---perhaps independent challenge---is to use the theory
of diagrams or sheaves to give proofs of self-duality, such as
in the Baker-Norine formula \cite{baker_norine} and some more 
general situations, such as those studied by
Amini and Manjunath \cite{amini_manjunath}.

\section{Some Basic $\cO$-Modules}
\label{se_basic_o_modules}

In this section we describe some basic $\cO_r$-modules and some
operations on them that we will need.
In this section $k$ is a fixed field.

\subsection{Line Bundles and Tensor Products}

Let $\cO$ be an arbitrary $k$-diagram of rings.  Then if
$\cM_1,\cM_2$ are two $\cO$-modules, we define
$\cM_1\oplus\cM_2$ and
$\cM_1\otimes_\cO \cM_2$, respectively, to be the
$\cO$-modules whose values at
$P\in X=\{A_1,A_2,B_1,B_2,B_3\}$ are
$\cM_1(P)\oplus\cM_2(P)$ and $\cM_1(P)\otimes_{\cO(P)}\cM_2(P)$,
respectively,
and whose restriction maps are those induced by those of $\cM_1$ and $\cM_2$.

\begin{definition}\label{de_cL_r_mec_d}
Let $r\in\naturals$ and $\mec d\in\integers^2$.
The {\em line bundle of $\cO_{r,k}$ with $\mec d$ shift}, denoted
$\cL_{\mec d}=\cL_{r,\mec d,k}$ is the $\cO_{r,k}$-module $\cL$ whose values
are those of $\cO_{r,k}$, and with the following restriction maps:
\begin{enumerate}
\item $\rho_{1,1}$ takes $1\in\cL(B_1)$ to $x_1^{d_1}\in\cL(A_1)$
[and therefore, by the map $\cO_{r,k}(B_1)$ to $\cO_{r,k}(A_1)$,
takes $p(y_1)\in\cL(B_1)$ to $x_1^{d_1}p(1/x_1)\in\cL(A_1)$];
\item $\rho_{2,2}$ takes $1\in\cL(B_2)$ to $x_2^{d_2}\in\cL(A_2)$
[and therefore takes $p(y_2)\in\cL(B_2)$ to $x_2^{d_2}p(1/x_2)\in\cL(A_2)$].
\item $\rho_{3,1}$ takes $1\in\cL(B_3)$ to $1\in\cL(A_1)$
[and therefore takes $p(v)\in k[v^{\pm}]=\cL(B_3)$ to 
$p(x_1^r)\in k[x^{\pm}]=\cL(A_1)$;
and
\item $\rho_{3,2}$ takes $1\in\cL(B_3)$ to $1\in\cL(A_2)$.
\end{enumerate}
\end{definition}
Since $k$ is fixed in this section, we will generally omit it as a subscript
and simply write $\cO_r$ and $\cL_{\mec d,r}$.

We depict the $\cL_{r,\mec d}$ in 
Figure~\ref{fi_diagrams_cL_line_bundles}.
Note that $\cL_{r,\mec 0}=\cO_{r}$ (we do mean equality, not merely being
isomorphic).

\begin{figure}
$$
\begin{tikzpicture}[scale=0.75]
\node (B1) at (0,2) {$\cL_{r,\mec d}(B_1)=k[y_1]$};
\node (B2) at (0,-2) {$\cL_{r,\mec d}(B_2)=k[1/x_2]$};
\node (B3) at (0,0) {$\cL_{r,\mec d}(B_3)=k[v,1/v]$};
\node (A1) at (8,1) {$\cL_{r,\mec d}(A_1)=k[x_1,1/x_1]$};
\node (A2) at (8,-1) {$\cL_{r,\mec d}(A_2)=k[x_2,1/x_2]$};
\draw [->] (B1) -- (A1) node [midway,above] {$1\mapsto x_1^{d_1}$};
\draw [->] (B2) -- (A2) node [midway,below] {$1\mapsto x_2^{d_2}$};
\draw [->] (B3) -- (A1) node [midway,above] {$v\mapsto x_1^r$};
\draw [->] (B3) -- (A2) node [midway,below] {$v\mapsto x_2^{-r}$};
\end{tikzpicture}
$$
\caption{The $\cO_r$-Modules, $\cL_{r,\mec d}$.
These are invertible $\cO_r$-modules: they have the same values as
$\cO_r$, and we have $\cL_{r,\mec 0}=\cO_r$.
Since $y_1\mapsto x_1^{-1}$ in $\cO_r$,
the fact that $\cL_{r,\mec d}(\rho_{1,1})$ takes $1$ to $x_1^{d_1}$
implies that
$p(y_1)\in\cL_{r,\mec d}(B_1)$ is mapped to $x_1^{d_1}p(x_1^{-1})$
under $\cL_{r,\mec d}(\rho_{1,1})$.
Hence writing $1\mapsto x_1^{d_1}$ is more concise than describing
the entire map $\cL_{r,\mec d}(\rho_{1,1})\from k[y_1]\to k[x_1^{\pm}]$.
This concise notation is very helpful for the $\cO_r$-modules in this
article.}
\label{fi_diagrams_cL_line_bundles}
\end{figure}

\begin{remark}
Notice that since $\rho_{1,1}$ takes $1$ to $x_1^{d_1}$, by the
$\cO_r$-module structure on $\cL_{\mec d}$, $1$ times $p(y_1)$ must
be taken to $1$ times $p(1/x_1)$.
More generally, to specify a restriction map of an $\cO$-module, $\cM$, 
to specify $\rho_{i,j}$ it suffices to choose a generating set
of $\cM(B_i)$ and to say where each set element is mapped to in
$\cM(A_j)$.
It is usually far simpler to specify restriction maps this way, and
we generally do so in what follows; at times we write the entire map.
\end{remark}

The following proposition is easy.

\begin{proposition}
\label{pr_easy_cL_is_invertible_tensoring_with_cL_etc}
For any $\mec d,\mec d'\in\integers^2$
\begin{equation}\label{eq_tensor_product_of_Ls}
\cL_{r,\mec d}\otimes_{\cO_r} \cL_{r,\mec d'} \isom \cL_{r,\mec d+\mec d'},
\end{equation} 
and in particular
\begin{equation}\label{eq_cL_r_mec_d_has_inverse_same_minus_mec_d}
\cL_{r,\mec d}\otimes_{\cO_r} \cL_{r,-\mec d} \isom \cO_r.
\end{equation} 
If $W$ is any $r$-periodic perfect matching, then
\begin{equation}\label{eq_cM_W_mec_d_tensored_with_cL_mec_d_prime}
\cM_{W,\mec d}\otimes_{\cO_r} \cL_{r,\mec d'} = 
\cM_{W,\mec d+\mec d'}.
\end{equation} 
If $\cF,\cG$ are any $\cO_r$-modules, then
\begin{equation}\label{eq_hom_L_otimes_on_both_sides_does_nothing}
\Hom_{\cO_r}(\cF,\cG)
\isom
\Hom_{\cO_r}
(\cL_{r,\mec d}\otimes_{\cO_r}\cF,\cL_{r,\mec d}\otimes_{\cO_r}\cG) ,
\end{equation} 
and
\begin{equation}\label{eq_hom_L_otimes_cF_move_L_to_other_side}
\Hom_{\cO_r}(\cL_{r,\mec d}\otimes_{\cO_r}\cF,\cG) 
\isom
\Hom_{\cO_r}(\cF,\cL_{r,-\mec d}\otimes_{\cO_r}\cG) .
\end{equation} 
\end{proposition}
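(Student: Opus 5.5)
The plan is to verify every claim value by value, using that each $\cO_r(P)$ is a commutative ring and $\cL_{r,\mec d}(P)=\cO_r(P)$ is free of rank one with generator $1$. Tensoring with a free rank-one module is harmless: for any $\cO_r(P)$-module $M$ there is a canonical isomorphism $\cO_r(P)\otimes_{\cO_r(P)}M\to M$, $a\otimes m\mapsto am$. So the only thing to check at each value is what the induced restriction maps become under these identifications. The remark after Definition~\ref{de_cL_r_mec_d} lets us do this on generators: a restriction of a tensor product is determined by where $1\otimes 1$, or $1\otimes m$, is sent.

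For \eqref{eq_tensor_product_of_Ls}, the restriction $\rho_{1,1}$ of $\cL_{r,\mec d}\otimes\cL_{r,\mec d'}$ sends $1\otimes 1\mapsto x_1^{d_1}\otimes x_1^{d_1'}$, which is identified with $x_1^{d_1+d_1'}\in k[x_1^{\pm}]$. The same holds for $\rho_{2,2}$, and $\rho_{3,j}$ send $1\otimes1\mapsto 1$. These are exactly the restriction maps of $\cL_{r,\mec d+\mec d'}$, and the value-wise isomorphisms commute with restrictions because they are the multiplication maps. Taking $\mec d'=-\mec d$ and using $\cL_{r,\mec 0}=\cO_r$ gives \eqref{eq_cL_r_mec_d_has_inverse_same_minus_mec_d}.

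For \eqref{eq_cM_W_mec_d_tensored_with_cL_mec_d_prime}, use the $\cO_r$-module description in Example~\ref{ex_cM_W_d_as_O_r_module_W_is_a_perfect_matching}. At $B_3,A_1,A_2$ the line bundle restrictions send $1\mapsto 1$, so after the identification these values and the maps $\rho_{3,j}$ are unchanged. At $B_1$, $\cL(B_1)\otimes\cM_{W,\mec d}(B_1)$ becomes $k[y_1]y_1^{-d_1}$, and $\rho_{1,1}$ sends $y_1^{-a}\mapsto x_1^{d_1'}x_1^{a}$. Reparametrizing by the $k[y_1]$-module isomorphism $k[y_1]y_1^{-d_1}\to k[y_1]y_1^{-d_1-d_1'}$ given by multiplication by $y_1^{-d_1'}$, the restriction becomes $y_1^{-b}\mapsto x_1^{b}$. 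The case $B_2$ is the same. The result is literally $\cM_{W,\mec d+\mec d'}$ up to this canonical renaming, which is why equality is written. I expect this bookkeeping of the $B_i$ identifications to be the only mildly delicate step.

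For \eqref{eq_hom_L_otimes_on_both_sides_does_nothing}, send $\phi$ to ${\rm id}\otimes\phi$. Its inverse is ${\rm id}_{\cL_{-\mec d}}\otimes(\cdot)$ followed by the associativity isomorphisms and \eqref{eq_cL_r_mec_d_has_inverse_same_minus_mec_d}. Naturality of these isomorphisms shows the two composites are the identity. Equivalently, at each value ${\rm id}\otimes\phi(P)$ is just $\phi(P)$ under the identification $\cO(P)\otimes M\cong M$. Finally, \eqref{eq_hom_L_otimes_cF_move_L_to_other_side} follows by applying \eqref{eq_hom_L_otimes_on_both_sides_does_nothing} with $\cL_{r,-\mec d}$ and then using associativity together with $\cL_{-\mec d}\otimes\cL_{\mec d}\otimes\cF\cong\cF$.
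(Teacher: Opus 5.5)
Your proposal is correct. For \eqref{eq_tensor_product_of_Ls}, \eqref{eq_cL_r_mec_d_has_inverse_same_minus_mec_d} and \eqref{eq_hom_L_otimes_cF_move_L_to_other_side} it is the paper's argument. For \eqref{eq_hom_L_otimes_cF_move_L_to_other_side} you twist the left-hand side by $\cL_{r,-\mec d}$, whereas the paper twists $\Hom(\cF,\cL_{r,-\mec d}\otimes\cG)$ by $\cL_{r,\mec d}$; this makes no difference. For \eqref{eq_cM_W_mec_d_tensored_with_cL_mec_d_prime} you supply the bookkeeping the paper leaves to the reader, and your renaming by multiplication by $y_i^{-d_i'}$ at $B_i$ is exactly right.

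The one genuine difference is in \eqref{eq_hom_L_otimes_on_both_sides_does_nothing}. The paper matches $\phi(P)$ with ${\rm id}\otimes\phi(P)$ value by value. It then checks, square by square, that $\phi$ respects restrictions iff ${\rm id}\otimes\phi$ does. The key point is that, under $\cL_{r,\mec d}(P)\otimes\cF(P)\isom\cF(P)$, the restrictions of $\cL_{r,\mec d}\otimes\cF$ are those of $\cF$ multiplied by the unit $x_i^{d_i}$ (for $\rho_{i,i}$) or by $1$ (for $\rho_{3,j}$), and $\phi(A_i)$ is $k[x_i^{\pm}]$-linear.

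You instead use that $\cL_{r,-\mec d}\otimes(\cdot)$ is a quasi-inverse of $\cL_{r,\mec d}\otimes(\cdot)$. That argument works for any invertible module over any diagram of rings and never looks at restriction maps. The paper's route is more hands-on, but it shows explicitly why the twist is harmless.

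Two small remarks on your version. First, naturality of $\cL_{r,-\mec d}\otimes\cL_{r,\mec d}\otimes\cF\isom\cF$ by itself only shows that one composite is the identity. For the other composite you also need one of the following:
\begin{itemize}
\item naturality of $\cL_{r,\mec d}\otimes\cL_{r,-\mec d}\otimes\cF\isom\cF$, so that both functors are faithful and hence full; or
\item the direct check that ${\rm id}\otimes\phi=\psi$ for the $\phi$ you build from $\psi$. This is immediate here, since all the canonical isomorphisms are multiplication maps.
\end{itemize}
Second, your parenthetical ``equivalently, value by value'' only becomes a proof once you add the unit-twist observation above, which is precisely the content of the paper's square chase.
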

\begin{proof}
For $P\in X=\{A_1,A_2,B_1,B_2,B_3\}$ we have
$$
\cL_{r,\mec d}\otimes_{\cO_r} \cL_{r,\mec d'}(P)=\cO(P)\otimes_{\cO_r(P)}\cO(P)
$$
and hence there is an isomorphism for each $P$:
$$
\phi(P)\from \cO(P)\otimes_{\cO_r(P)}\cO(P) \to \cO(P)
\quad\mbox{given by}\quad
\phi(P)(1\otimes 1)= 1.
$$
Let us prove that the maps $\{\phi(P)\}_{P\in X}$ form an isomorphism
$\phi\from\cL_{r,\mec d}\otimes_{\cO_r} \cL_{r,\mec d'}
\to\cL_{r,\mec d+\mec d'}$ of $k$-diagrams.
Glancing at Figure~\ref{fi_morphism_of_diagrams}, we see that it suffices
to show that for each $i,j$ such that $\rho_{i,j}$ exists, we have a
commuting diagram
$$
\begin{tikzpicture}
\node (1) at (0,2) {$\cL_{r,\mec d}\otimes_{\cO_r} \cL_{r,\mec d'}(B_i)$};
\node (2) at (4,2) {$\cL_{r,\mec d+\mec d'}(B_i)$};
\node (3) at (0,0) {$\cL_{r,\mec d}\otimes_{\cO_r} \cL_{r,\mec d'}(A_j)$};
\node (4) at (4,0) {$\cL_{r,\mec d+\mec d'}(A_j)$};
\draw[->] (1) -- (2) node [midway,above] {\Small $\phi(B_i)$};
\draw[->] (3) -- (4) node [midway,below] {\Small $\phi(A_j)$};
\draw[->] (1) -- (3) node [midway,left]  {\Small $\cL_{r,\mec d}\otimes_{\cO_r} \cL_{r,\mec d'}(\rho_{i,j})$};
\draw[->] (2) -- (4) node [midway,right] {\Small $\cL_{r,\mec d+\mec d'}(\rho_{i,j})$};
\end{tikzpicture}
$$
So consider $\rho_{1,1}$, i.e., $i=j=1$:
the restriction map $\cL_{r,\mec d}\otimes_{\cO_r} \cL_{r,\mec d'}(\rho_{1,1})$
takes $1\otimes 1\in k[y_1]$ to
$$
x_1^{d_1}\otimes x_1^{d_1'} = x_1^{d_1+d_1'} (1\otimes 1)
$$
which therefore agrees with $\cL_{r,\mec d+\mec d'}$;
similarly for $\rho_{2,2}$.
The case of $\rho_{3,1}$ is similar, since
$\cL_{r,\mec d}(\rho_{3,1})$ and $\cL_{r,\mec d}$ each take $1$ to $1$;
and similarly for $\rho_{3,2}$.
This shows that $\{\phi(P)\}_{P\in X}$ gives an isomorphism
as in
\eqref{eq_tensor_product_of_Ls}.

Since $\cL_{r,\mec 0}=\cO_r$, 
\eqref{eq_tensor_product_of_Ls} implies
\eqref{eq_cL_r_mec_d_has_inverse_same_minus_mec_d}.

Similarly for \eqref{eq_cM_W_mec_d_tensored_with_cL_mec_d_prime},
whose details we leave to the reader.

[The rest, \eqref{eq_hom_L_otimes_on_both_sides_does_nothing} and
\eqref{eq_hom_L_otimes_cF_move_L_to_other_side}, 
are general properties of invertible
sheaves (e.g., \cite{hartshorne}, Exercise~II.5.1); however, for 
self-containment, we now give the details.]

To check \eqref{eq_hom_L_otimes_on_both_sides_does_nothing},
let $\phi\in\Hom(\cF,\cG)$ and, for brevity,
let $\cL=\cL_{r,\mec d}$; we therefore get maps
$\phi(P)\from \cF(P)\to \cG(P)$ for all
$P\in X=\{A_1,A_2,B_1,B_2,B_3\}$
that make Figure~\ref{fi_morphism_of_diagrams} commutative.
Since $\cL(P)=\cO(P)$ for all $P$, 
for each $P\in X$ there is a natural bijection between maps
$$
\phi(P)\from\cF(P)\to \cG(P)
$$
to maps
$$
{\rm id}_{\cO_r(P)}\otimes_{\cO_r(P)} \phi(P) 
\from \cL(P)\otimes_{\cO_r(P)}\cF(P)\to\cL(P)\otimes_{\cO_r(P)}\cG(P),
$$
namely the map taking 
$$
1\otimes f \mapsto 1\otimes \bigl(\phi(P)\bigr)(f).
$$
We claim that the family of maps $\{{\rm id}\otimes\phi(P)\}_{P\in X}$ 
form a morphism
of $\cO$-modules:
indeed, looking at Figure~\ref{fi_morphism_of_diagrams}, we see that
for each $\rho_{ij}$ there is a commutativity that we need to verify.
So consider, for example, the
$\rho_{1,1}$ commutativity: so consider the two squares:
$$
\begin{tikzpicture}
\node (1) at (0,2) {$\cF(B_1)$};
\node (2) at (3,2) {$\cG(B_1)$};
\node (3) at (0,0) {$\cF(A_1)$};
\node (4) at (3,0) {$\cG(A_1)$};
\draw[->] (1) -- (2) node [midway,above] {\Small $\phi(B_1)$};
\draw[->] (3) -- (4) node [midway,below] {\Small $\phi(A_1)$};
\draw[->] (1) -- (3) node [midway,left]  {\Small $\cF(\rho_{1,1})$};
\draw[->] (2) -- (4) node [midway,right] {\Small $\cG(\rho_{1,1})$};
\end{tikzpicture}
\qquad
\begin{tikzpicture}
\node (1) at (0,2) {$\cL\otimes\cF(B_1)$};
\node (2) at (3,2) {$\cL\otimes\cG(B_1)$};
\node (3) at (0,0) {$\cL\otimes\cF(A_1)$};
\node (4) at (3,0) {$\cL\otimes\cG(A_1)$};
\draw[->] (1) -- (2) node [midway,above] {\Small ${\rm id}\otimes\phi(B_1)$};
\draw[->] (3) -- (4) node [midway,below] {\Small ${\rm id}\otimes\phi(A_1)$};
\draw[->] (1) -- (3) node [midway,left]  {\Small $\cL\otimes\cF(\rho_{1,1})$};
\draw[->] (2) -- (4) node [midway,right] {\Small $\cL\otimes\cG(\rho_{1,1})$};
\end{tikzpicture}
$$
we claim that the left square commutes iff the right square does:
indeed, say that $f\in \cF(B_1)$ gives rise to $f',g,g'$ as follows:
$$
\begin{tikzpicture}
\node (1) at (0,2) {$f$};
\node (2) at (4,2) {$g=\phi(B_1)f$};
\node (3) at (0,0) {$f'=\cF(\rho_{1,1})f$};
\node (4) at (4,0) {$g'=\cG(\rho_{1,1})g$};
\draw[->] (1) -- (2) node [midway,above] {\Small $\phi(B_1)$};
\draw[->,dotted] (3) -- (4) node [midway,below] {\Small $\phi(A_1)$};
\draw[->,dotted] (3) -- (4) node [midway,above] {\Small ???};
\draw[->] (1) -- (3) node [midway,left]  {\Small $\cF(\rho_{1,1})$};
\draw[->] (2) -- (4) node [midway,right] {\Small $\cG(\rho_{1,1})$};
\end{tikzpicture}
$$
then the above square commutes iff $\phi(A_1)$ takes $f'$ to $g'$,
or, equivalently, $\phi(A_1)$ takes $x^{d_1}f'$ to $x^{d_1}g'$
(since $\cF,\cG$ are $\cO_r$-modules), which gives a commuting square
of values
$$
\begin{tikzpicture}
\node (1) at (0,2) {$1\otimes f$};
\node (2) at (3,2) {$1\otimes g$};
\node (3) at (0,0) {$x_1^{d_1}\otimes f'$};
\node (4) at (3,0) {$x_1^{d_1}\otimes g'$};
\draw[->] (1) -- (2) node [midway,above] {\Small ${\rm id}\otimes\phi(B_1)$};
\draw[->] (3) -- (4) node [midway,below] {\Small ${\rm id}\otimes\phi(A_1)$};
\draw[->] (1) -- (3) node [midway,left]  {\Small $\cL\otimes\cF(\rho_{1,1})$};
\draw[->] (2) -- (4) node [midway,right] {\Small $\cL\otimes\cG(\rho_{1,1})$};
\end{tikzpicture}
$$
conversely, a square of values as above gives 
a square of values
$$
\begin{tikzpicture}
\node (1) at (0,2) {$f$};
\node (2) at (3,2) {$g$};
\node (3) at (0,0) {$f'$};
\node (4) at (3,0) {$g'$};
\draw[->] (1) -- (2) node [midway,above] {\Small $\phi(B_1)$};
\draw[->] (3) -- (4) node [midway,below] {\Small $\phi(A_1)$};
\draw[->] (1) -- (3) node [midway,left]  {\Small $\cF(\rho_{1,1})$};
\draw[->] (2) -- (4) node [midway,right] {\Small $\cG(\rho_{1,1})$};
\end{tikzpicture}
$$
Doing similarly for the other $\rho_{i,j}$
proves \eqref{eq_hom_L_otimes_on_both_sides_does_nothing}.

Taking \eqref{eq_hom_L_otimes_on_both_sides_does_nothing} with
$\cF$ as is and $\cG$ replaced with $\cL_{r,-\mec d}\otimes_{\cO_r} \cG$,
and using
$$
\cL_{r,\mec d}\otimes_{\cO_r}\cL_{r,-\mec d}\otimes_{\cO_r}\cG \isom \cO \otimes_{\cO_r}\cG
\isom \cG
$$
establishes \eqref{eq_hom_L_otimes_cF_move_L_to_other_side}.
\end{proof}

\begin{remark}
More generally, if $\cO$ is an arbitrary diagram of rings, one can define
an {\em invertible $\cO$-module} (sometimes {\em $\cO$-line bundle})
as any $\cO$-module $\cL$ such that
there exists $\cL^{-1}$ such that $\cL\otimes\cL^{-1}\isom\cO$.
Hence the line bundles $\cL_{r,\mec d}$
above are special cases of an invertible
$\cO_r$-module.  However, a general invertible $\cO_r$-module, $\cL$,
does not require that $\cL(\rho_{3,1})$ and $\cL(\rho_{3,2})$ be
the identity maps, which is true for $\cL=\cL_{r,\mec d}$.
\end{remark}

\subsection{The Modules $\cM_{W,\mec d}$ for General Non-negative $W$}

For any function
$W\from\integers^2\to\integers_{\ge 0}\cup\{\infty\}$ of bounded support
(i.e., $W(\mec d)=0$ for $|\deg(\mec d)|$ sufficiently large),
and any $\mec d\in\integers^2$,
\cite{folinsbee_friedman_euler} defined a $k$-diagram
$\cM_{W,\mec d}$.
Let us review the construction, since we will need it
to define the canonical diagram $\omega_W$.
When $W$ is a perfect matching, then it will turn out that
the definition of $\cM_{W,\mec d}$ below is equivalent to the
definition in Section~\ref{se_main}.

\begin{definition}
Let $k$ be a field.  If $S$ is a set, we use $k^{\oplus S}$ to
denote the $k$-vector space that is direct sum of one copy of $k$ for
each element of $S$, i.e., whose elements are collections
$\{v_s\}_{s\in S}$ with $v_s\ne 0$ for at most finitely many
values of $s$; for $s\in S$, we use $\mec e_s$ to denote the
vector that is $1$ in component $s$ and $0$ elsewhere.
If $T$ is another set and $\alpha\from S\to T$ a map of sets, then
$\alpha$ gives rise to a unique
$k$-linear transformation, denoted $k^{\oplus\alpha}$,
from $k^{\oplus S}\to k^{\oplus T}$
taking $\mec e_s$ to $\mec e_{\alpha(s)}$.
If $S\subset T$, then the inclusion map $\iota\from S\to T$
gives an injection $k^{\oplus\iota}$ which
we call the {\em inclusion map (of $k^{\oplus S}$ to $k^{\oplus T}$)}.
\end{definition}
In the above one easily checks that if 
$\alpha$ is an injection, surjection, or bijection, then 
the same is true of $k^{\oplus\alpha}$.
Next we fix a convention for multisets (any reasonable 
convention would suffice).

\begin{definition}
Let $S_1,S_2$ be sets, and $W\from S_1\times S_2\to 
\integers_{\ge 0}\cup\{\infty\}$.
The {\em multiset on $S_1\times S_2$ with multiplicities $W$}
refers to the set
\begin{equation}\label{eq_multiset_convention}
{\rm Multi}(W)
= \{ (s_1,s_2,i) \in S_1\times S_2\times\naturals \ | \ i\le W(s_1,s_2) \} ,
\end{equation} 
where if $W(s_1,s_2)=\infty$, then we view all $i$ as satisfying 
$i\le W(s_1,s_2)$.
We refer to the maps 
${\rm Multi}(W)\to S_1$ and 
${\rm Multi}(W) \to S_2$
taking $(s_1,s_2,i)$ to, respectively, $s_1$ and $s_2$,
as, respectively, the {\em first and second projections}.
We use the notation $k^{\oplus W}$ to denote $k^{\oplus{\rm Multi}(W)}$,
which comes with maps 
\begin{equation}\label{eq_oplus_W_notation}
{\rm proj}_i \from k^{\oplus W}\to k^{\oplus S_i}
\end{equation} 
induced by the first and second projections.
The {\em support of $W$} is the set of $(s_1,s_2)\in S_1\times S_2$
such that $W(s_1,s_2)\ge 1$.
When $W$ takes on only the values $\{0,1\}$, then with mild abuse of notation
we may identify ${\rm Multi}(W)$ with its support, which is a subset of
$S_1\times S_2$, since in this case $W$ is determined by its support.
For $r\ge 1$ we say that $W$ is {\em $r$-periodic} if for all
$i,j\in\integers$ we have
$$
W(i,j) = W(i+r,j-r).
$$
\end{definition}

\begin{example}
If $W\from\integers^2\to\integers$ is a perfect matching,
and $\pi\from\integers\to\integers$ is its associated bijection,
then $k^{\oplus W}$ has one copy of $k$ for each pair
$(a_1,\pi(a_1))\in\integers^2$ varying over all $a_1\in\integers$.
In this case we may identify $k^{\oplus W}$ with 
$k^{\oplus\integers}$,
where the first projection is the identity map on 
$k^{\oplus\integers}$,
and the second projection is the map 
$k^{\oplus\integers}\to k^{\oplus\integers}$,
takes $\mec e_{a_1}$ to $\mec e_{\pi(a_1)}$.
Hence both maps
\eqref{eq_oplus_W_notation} are isomorphisms.
\end{example}

\begin{definition}\label{de_cM_W_d}
Let $k$ be a field,
$W\from\integers^2\to\integers_{\ge 0}\cup\{\infty\}$, 
and $\mec d\in\integers^2$.  
We use $\cM_{W,\mec d}$ to denote the following $k$-diagram
(Definition~\ref{de_diagram_k_vs}): 
\begin{enumerate}
\item 
for $i=1,2$,
$\cM_{W,\mec d}(B_i)=k^{\oplus\integers_{\le d_i}}$, 
$\cM_{W,\mec d}(A_i)=k^{\oplus\integers}$,
$\rho_{i,i}$ is the inclusion,
\item 
$B_3=k^{\oplus W}$, and for $j=1,2$, $\rho_{3,j}$ are
the projection maps
(as in \eqref{eq_oplus_W_notation}).
\end{enumerate}
\end{definition}
We depict these $k$-diagrams in Figure~\ref{fi_cM_W_d_sect_basic}.

\begin{figure}
$$
\begin{tikzpicture}[scale=0.60]
\node (B1) at (0,2) {$\cM_{W,\mec d}(B_1)=k^{\oplus\integers_{\le d_1}}$};
\node (B2) at (0,-2) {$\cM_{W,\mec d}(B_2)=k^{\oplus\integers_{\le d_2}}$};
\node (B3) at (0,0) {$\cM_{W,\mec d}(B_3)=k^{\oplus W}$};
\node (A1) at (10,1) {$k^{\oplus\integers}=\cM_{W,\mec d}(A_1$)};
\node (A2) at (10,-1) {$k^{\oplus\integers}=\cM_{W,\mec d}(A_2$)};
\draw [->] (B1) -- (A1) node [midway,above] {$\rho_{1,1}={\rm inclusion}$} ;
\draw [->] (B2) -- (A2) node [midway,below] {$\rho_{2,2}={\rm inclusion}$} ;
\draw [->] (B3) -- (A1) node [midway,above] {$\rho_{3,1}$} ;
\draw [->] (B3) -- (A2) node [midway,below] {$\rho_{3,2}$} ;
\end{tikzpicture}
$$
\caption{The $k$-Diagram $\cM_{W,\mec d}$.}
\label{fi_cM_W_d_sect_basic}
\end{figure}

Note that if $W$ is $r$-periodic, then $\cM_{W,\mec d}$ can be viewed as
a ring of $\cO_r$-modules: in other words, for
$P\in X=\{A_1,A_2,B_1,B_2,B_3\}$, we can view 
$\cM_{W,\mec d}(P)$ as an $\cO_r(P)$ module as follows:
\begin{enumerate}
\item
For $i=1,2$ we can identity $\cM_{W,\mec d}(A_i)=k^{\oplus \integers}$
with $k[x_i^{\pm}]$, i.e., where the element $\alpha\from\integers\to k$ in
$k^{\oplus \integers}$ is taken to
$$
\sum_{j\in\integers} \alpha(j)x_i^j.
$$
\item
Similarly, for
$i=1,2$ we can identity $\cM_{W,\mec d}(B_i)=k^{\oplus\integers_{\le d_i}}$
with the
$k[y_i]$ module $k[y_i]y^{d_1}\subset k[x_i^{\pm}]$ where
$y_i$ is identified with $1/x_i$; hence
$\beta\from\integers_{\le d_i}\to k$ is taken to
$$
\sum_{j\in\integers} \beta(j)y_i^j.
$$
\item
If $W$ is $r$-periodic, then for all $i,j$ we have
$$
W(i,j)=W(i+r,j-r).
$$
This gives us a map on ${\rm Multi}(W)$ 
\eqref{eq_multiset_convention} to itself taking
$(s_1,s_2,i)$ to $(s_1+r,s_2-r,i)$,
and therefore a map $k^{\oplus W}$ to itself.
We view $v\in k[v^\pm]$ as acting on $k^{\oplus W}$ as taking
$W(i,j)$ to $W(i+r,j-r)$; for any $q\in\integers$, this gives a
map $v^q$, taking $W(i,j)$ to $W(i+qr,j-qr)$, from which we get
a $k[v^\pm]$ action on $k^{\oplus W}$.
\end{enumerate}
Since our ring maps of $\cO_r$ take $y_i$ to $1/x_i$ and
$v$ to $x_1^r$ and $x_2^{-r}$, $\cM_{W,\mec d}$ becomes an $\cO_r$
module.

\begin{example}
If $W\from\integers^2\to\{0,1\}$ is a perfect matching, then
the definition of $\cM_{W,\mec d}$ in Definition~\ref{de_cM_W_d}
agrees with its definition in Section~\ref{se_main}
(i.e., Definition~\ref{de_cM_W_d_when_W_is_a_perfect_matching},
and its $\cO_r$-module structure explained in
Example~\ref{ex_cM_W_d_as_O_r_module_W_is_a_perfect_matching}).
\end{example}

\subsection{Tensor Products of Certain $\cO_r$-modules}

The point of this subsection is to explicitly describe
$\cM_{W,\mec d}\otimes_{\cO_r}\cM_{W',\mec d'}$.
We will do this in greater generality.

\begin{definition}
We say that a function $W\from\integers^2\to\integers$ 
has {\em bounded support} if for some $C_1,C_2\in\integers$ we have
$$
\forall \mec d\in\integers^2,
\qquad
W(\mec d)\ne 0 
\quad\implies\quad
C_1\le\deg(\mec d)\le C_2.
$$
\end{definition}

\begin{definition}
If $(i,j)\in\integers^2$, we use $\delta_{i,j}$ to denote the function
$\integers^2\to\integers_{\ge 0}$ that is $1$ on $(i,j)$ and $0$ elsewhere.
By the {\em simple $r$-periodic weight at $(i,j)$} we mean the
function $\integers^2\to\integers_{\ge 0}$ given by
$$
{\rm Simple}_{r;i,j} = \sum_{q\in\integers} 
\delta_{i+rq,j-rq}
$$
(and we often just write ${\rm Simple}_{i,j}$ 
when $r$ is understood).
\end{definition}
Hence ${\rm Simple}_{r;i,j}$ is $r$-periodic, and for any
$q\in\integers$
$$
{\rm Simple}_{r;i,j} = {\rm Simple}_{r;i+qr,j-qr}.
$$
Moreover, the only function $f\from\integers^2\to\integers_{\ge 0}$
that satisfies $f\le {\rm Simple}_{r;i,j}$
is either $f={\rm Simple}_{r;i,j}$ or $f=0$.
For these reasons, functions of the
form ${\rm Simple}_{r;i,j}$ are the ``simplest'' nonzero
$r$-periodic weight functions.

\begin{example}
Let $W$ be the $2$-periodic matching with $W(0,0)=W(1,1)=1$.
(Hence $W(2k,-2k)=W(2k+1,-2k+1)=1$ for all $k\in\integers$.)
Then
$$
W={\rm Simple}_{2;0,0}+{\rm Simple}_{2;1,1}
={\rm Simple}_{2;(0,1),(0,1)}.
$$
We depict these functions in Figure~\ref{fi_some_two_periodic_examples}.
\end{example}

\newcommand{\plotSomeWeightExamples}[4]{   
  \begin{tikzpicture}[scale=#1]
  \foreach \x in {-5,...,5} {
    \draw (\x * #2, -5.5 * #2) -- (\x * #2, 5.5 * #2);
    \draw (-5.5 * #2, \x * #2) -- (5.5 * #2,\x * #2);
  }
  \draw[very thick] (0 * #2, -5.5 * #2) -- (0 * #2, 5.5 * #2);
  \draw[very thick] (-5.5 * #2, 0 * #2) -- (5.5 * #2,0 * #2);
  \node at (0,-14) {#3};
  #4
  \end{tikzpicture}
}
\begin{figure}
$$
\plotSomeWeightExamples{0.15}{2}{$W={\rm Simple}_{2;(0,0),(1,1)}$}{
  \filldraw (0,0) circle (0.4);
  \filldraw (2,2) circle (0.4);
  \filldraw (4,-4) circle (0.4);
  \filldraw (4+2,-4+2) circle (0.4);
  \filldraw (8,-8) circle (0.4);
  \filldraw (8+2,-8+2) circle (0.4);
  \filldraw (-4,4) circle (0.4);
  \filldraw (-4+2,4+2) circle (0.4);
  \filldraw (-8,8) circle (0.4);
  \filldraw (-8+2,8+2) circle (0.4);
}
\quad
\plotSomeWeightExamples{0.15}{2}{${\rm Simple}_{2;(0,0)}$}{
  \filldraw (0,0) circle (0.4);
  \filldraw (4,-4) circle (0.4);
  \filldraw (8,-8) circle (0.4);
  \filldraw (-4,4) circle (0.4);
  \filldraw (-8,8) circle (0.4);
}
\quad
\plotSomeWeightExamples{0.15}{2}{$W={\rm Simple}_{2;(1,1)}$}{
  \filldraw (2,2) circle (0.4);
  \filldraw (4+2,-4+2) circle (0.4);
  \filldraw (8+2,-8+2) circle (0.4);
  \filldraw (-4+2,4+2) circle (0.4);
  \filldraw (-8+2,8+2) circle (0.4);
}
$$
\caption{$W$ is $2$-periodic that satisfies $W(2k,-2k)=W(2k+1,-2k+1)=1$ for
$k\in\integers$ and otherwise $W=0$.  We depict $W$ by showing in bold
points where $W=1$.  $W$ can be written as the sum of two $2$-periodic
functions, ${\rm Simple}_{2;(0,0)}$ plus ${\rm Simple}_{2;(1,1)}$, each
of which we depict above.  This is the unique way of writing $W$ as the
sum of two simple $2$-periodic functions.
Of course, ${\rm Simple}_{2;(0,0)}={\rm Simple}_{2;(2k,-2k)}$ for all
$k\in\integers$, so simple functions aren't uniquely described as
${\rm Simple}_{2;(i,j)}$.
}
\label{fi_some_two_periodic_examples}
\end{figure}

The following is easy.

\begin{proposition}
Let $W\from\integers^2\to\integers_{\ge 0}$ be an $r$-periodic degree
bounded
function.  Then $W$ is a finite sum of $r$-periodic simple functions
$$
W = \sum_{i=0}^{r-1} \sum_{j\in\integers} W(i,j){\rm Simple}_{r;i,j}.
$$
\end{proposition}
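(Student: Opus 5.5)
The plan is to verify the identity pointwise: evaluate both sides at an arbitrary $(a,b)\in\integers^2$ and check that they agree. First I will note that the right-hand side is a finite sum. Since $W$ is degree bounded, there are $C_1,C_2$ with $W(i,j)\ne 0$ only when $C_1\le i+j\le C_2$. For each fixed $i\in\{0,\ldots,r-1\}$, only finitely many $j$ satisfy this, namely those with $C_1-i\le j\le C_2-i$. So the double sum has finitely many nonzero terms, and it is a finite nonnegative integer combination of $r$-periodic simple functions.

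Next I will evaluate the right-hand side at $(a,b)$. By definition, ${\rm Simple}_{r;i,j}(a,b)$ equals the number of $q\in\integers$ with $(a,b)=(i+rq,j-rq)$. This is $1$ precisely when $a\equiv i\pmod r$ and $(i,j)=(a-rq,b+rq)$ for the (unique) $q=(a-i)/r$, and $0$ otherwise.

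Now fix $(a,b)$ and let $i_0\in\{0,\ldots,r-1\}$ be the residue of $a$ mod $r$. Set $q_0=(a-i_0)/r$. Then the only pair $(i,j)$ with $0\le i\le r-1$ and ${\rm Simple}_{r;i,j}(a,b)\ne0$ is $(i_0,\,b+rq_0)$. Hence the right-hand side at $(a,b)$ equals $W(i_0,b+rq_0)$.

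Finally, $r$-periodicity gives $W(i,j)=W(i+rq,j-rq)$ for all $q$, by iterating $W(i,j)=W(i+r,j-r)$ in both directions. Applying this with $q=q_0$ yields
\[
W(i_0,b+rq_0)=W(i_0+rq_0,\,b)=W(a,b),
\]
so the two sides agree at every point.

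The only step needing any care is the uniqueness of the representative $(i_0,b+rq_0)$. This is where the restriction $0\le i\le r-1$ in the outer sum is essential: it makes each orbit $\{(i+rq,j-rq)\}_q$ contribute exactly once, so nothing is double counted. I do not expect any real obstacle, since the statement is essentially bookkeeping of orbits under the translation $(i,j)\mapsto(i+r,j-r)$.
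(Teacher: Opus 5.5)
Your proof is correct and is essentially the paper's argument: the paper also gets finiteness from degree-boundedness. It then checks that the two sides agree on the fundamental domain $\{0,\ldots,r-1\}\times\integers$ and extends to all of $\integers^2$ using $r$-periodicity of both sides, whereas you move an arbitrary point $(a,b)$ back into that domain directly using only the periodicity of $W$, which is the same argument run in the opposite direction.
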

\begin{proof}
The sum on the left is finite, since $W$ is a weight function,
and hence for each $0\le i\le r-1$, there are only finitely many $j$
at which $W(i,j)$ is nonzero.
The sum on the left clearly agrees with $W$ at all $(i',j')$ with
$0\le i'\le r-1$ and $j'$ arbitrary.  Therefore the two sides agree
on all $(i',j')$ by the $r$-periodicity of both sides.
\end{proof}

Now we define a $\cO_r$-module for any $W$ that is the sum of simple,
$r$-periodic functions.

\begin{definition}
Let $r\in\naturals$, $\ell\in\integers_{\ge 0}$, $k$ be a field, 
$\mec i=(i_1,\ldots,i_\ell),\mec j=(j_1,\ldots,j_\ell)\in\integers^\ell$,
and $\mec d\in\integers^2$.
We define
the $\cO_r$-module
$$
\cN=\cN_{k,r;\mec i,\mec j;\mec d}
$$
to be the 
module such that:
\begin{enumerate}
\item 
for $P\in \{A_1,A_2,B_1,B_2\}$, $\cN(P)=\cO_r(P)$;
\item 
$\cN(B_3)= k[v^{\pm}]^{\ell}$;
\item 
the restriction map $\rho_{1,1}$ takes $1$ to $x_1^{d_1}$;
\item 
the restriction map $\rho_{2,2}$ takes $1$ to $x_2^{d_1}$;
\item 
for $1\le q\le \ell$,
the restriction map $\rho_{3,1}$ takes $\mec e_q$, the vector
$(0,\ldots,0,1,0,\ldots,0)\in k[v^{\pm}]^{\ell}$ to
$x_1^{i_q}$; and
\item
similarly $\rho_{3,2}$ takes $\mec e_q$ to $x_2^{j_q}$.
\end{enumerate}
\end{definition}

\begin{example}
$$
\cL_{r;\mec d}=\cN_{(0,0);\mec d},
$$
since for $\cN=\cN_{(0,0);\mec d}$ we have $\ell=1$, $\mec i=(0)$,
$\mec j=(0)$, and $\cN(B_3)=k[v^{\pm}]$ and takes $1$ to $x_1^0=1$ in 
$\cN(A_1)$ and to $x_2^0=1$ in $\cN(A_2)$.
\end{example}

\begin{example}
Let $W$ be an $r$-periodic perfect matching, with associated permutation
$\pi$.  Then
$$
\cM_{W,\mec d} = \cN_{(0,1,\ldots,r-1),(\pi(0),\ldots,\pi(r-1));\mec d}.
$$
\end{example}

If $\mec i\in\integers^\ell$ and $\mec i'\in\integers^{\ell'}$
we use the notation $\mec i{\bec +}\mec i'\in\integers^{\ell \ell'}$
to denote the vector whose components are the sum of those of
$\mec i$ and $\mec i'$:
$$
\mec i{\bec +}\mec i' = (i_1 + i'_1,i_2 + i'_1,\ldots,i_\ell + i'_{\ell'})
\in \integers^{\ell\ell'}
$$
The following proposition follows directly from the definitions.

\begin{proposition}
Let $r\in\naturals$, $\ell,\ell'\in\integers_{\ge 0}$, 
$\mec i,\mec j\in\integers^\ell$,
$\mec i',\mec j'\in\integers^{\ell'}$,
and
$\mec d,\mec d'\in\integers^2$.  Then
$$
\cN_{r;\mec i,\mec j;\mec d}
\otimes_{\cO_r}
\cN_{r;\mec i',\mec j';\mec d'}
\isom
\cN_{r;\mec i{\bec +}\mec i',\mec j{\bec +}\mec j';\mec d + \mec d'} .
$$
\end{proposition}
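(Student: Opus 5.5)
We compute the tensor product value by value and then check that the induced restriction maps agree with those of $\cN_{r;\mec i{\bec +}\mec i',\mec j{\bec +}\mec j';\mec d+\mec d'}$. This is the same pattern as the proof of \eqref{eq_tensor_product_of_Ls} in Proposition~\ref{pr_easy_cL_is_invertible_tensoring_with_cL_etc}.

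\emph{The values at $P\in\{A_1,A_2,B_1,B_2\}$.} Both factors equal $\cO_r(P)$ there, so the tensor product is $\cO_r(P)\otimes_{\cO_r(P)}\cO_r(P)\isom\cO_r(P)$ via $1\otimes1\mapsto1$.

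\emph{The value at $B_3$.} Here we have
$$
k[v^{\pm}]^{\ell}\otimes_{k[v^{\pm}]}k[v^{\pm}]^{\ell'}\isom k[v^{\pm}]^{\ell\ell'},
$$
a free module on the basis $\mec e_q\otimes\mec e'_{q'}$. We order the pairs $(q,q')$ compatibly with the indexing used in the definition of $\mec i{\bec +}\mec i'$. These maps $\phi(P)$ are isomorphisms of $\cO_r(P)$-modules.

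\emph{Restriction maps, step one: $\rho_{1,1}$ and $\rho_{2,2}$.} Since the tensor restriction is $\rho\otimes\rho$ and is $\cO_r$-semilinear, it suffices to check generators. For $\rho_{1,1}$, the element $1\otimes1$ goes to $x_1^{d_1}\otimes x_1^{d'_1}=x_1^{d_1+d'_1}(1\otimes1)$. This matches $\rho_{1,1}$ of the target module, and $\rho_{2,2}$ is handled the same way.

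\emph{Restriction maps, step two: $\rho_{3,1}$ and $\rho_{3,2}$.} For $\rho_{3,1}$, the generator $\mec e_q\otimes\mec e'_{q'}$ goes to $x_1^{i_q}\otimes x_1^{i'_{q'}}=x_1^{i_q+i'_{q'}}(1\otimes1)$. After the identification this is $x_1^{i_q+i'_{q'}}$, exactly the image of the corresponding basis vector in the target. The same holds for $\rho_{3,2}$ with the $j$'s.

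\emph{Commutativity.} Because the restrictions are module maps over the ring restrictions, agreement on generators gives commutativity of each square in Figure~\ref{fi_morphism_of_diagrams}. Hence $\{\phi(P)\}$ is an isomorphism of $\cO_r$-modules.

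The only point needing care is the $B_3$ identification, namely that the tensor product of free $k[v^{\pm}]$-modules is free on the products of basis vectors. The main obstacle is simply bookkeeping: the index ordering of $\mec i{\bec +}\mec i'$ must be matched consistently for both the $i$ and $j$ components, so that the same pair $(q,q')$ is used in $\rho_{3,1}$ and $\rho_{3,2}$.

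We also note that the definition of $\rho_{2,2}$ in $\cN$ should read $x_2^{d_2}$; the typo $x_2^{d_1}$ in the paper is corrected in the check above.
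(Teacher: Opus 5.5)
Your proposal is correct and is the direct verification the paper has in mind when it says the proposition \emph{follows directly from the definitions}: you identify each value of the tensor product, using $k[v^{\pm}]^{\ell}\otimes_{k[v^{\pm}]}k[v^{\pm}]^{\ell'}\isom k[v^{\pm}]^{\ell\ell'}$ at $B_3$, and check the restriction maps on generators, exactly as in the paper's proof that $\cL_{r,\mec d}\otimes_{\cO_r}\cL_{r,\mec d'}\isom\cL_{r,\mec d+\mec d'}$. You also rightly insist on using the same ordering of the pairs $(q,q')$ for both $\mec i{\bec +}\mec i'$ and $\mec j{\bec +}\mec j'$, and your correction of the $x_2^{d_1}$ typo to $x_2^{d_2}$ is appropriate.
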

We illustrate this proposition in a case of interest in 
Figure~\ref{fi_tensor_product_cM_W_and_cM_W_prime_example}.

\begin{figure}
$$
\plotSomeWeightExamples{0.12}{2}{\Small $W={\rm Simple}_{2;(0,0),(1,1)}$}{
  \filldraw (0,0) circle (0.4);
  \filldraw (2,2) circle (0.4);
  \filldraw (4,-4) circle (0.4);
  \filldraw (4+2,-4+2) circle (0.4);
  \filldraw (8,-8) circle (0.4);
  \filldraw (8+2,-8+2) circle (0.4);
  \filldraw (-4,4) circle (0.4);
  \filldraw (-4+2,4+2) circle (0.4);
  \filldraw (-8,8) circle (0.4);
  \filldraw (-8+2,8+2) circle (0.4);
}
\quad
\plotSomeWeightExamples{0.12}{2}{\Small $W'={\rm Simple}_{2;(0,1),(1,0)}$}{
  \filldraw (0,2) circle (0.4);
  \filldraw (2,0) circle (0.4);
  \filldraw (4,-4+2) circle (0.4);
  \filldraw (4+2,-4) circle (0.4);
  \filldraw (8,-8+2) circle (0.4);
  \filldraw (8+2,-8) circle (0.4);
  \filldraw (-4,4+2) circle (0.4);
  \filldraw (-4+2,4) circle (0.4);
  \filldraw (-8,8+2) circle (0.4);
  \filldraw (-8+2,8) circle (0.4);
}
\quad
\plotSomeWeightExamples{0.12}{2}{\Small $W\star_2 W'$}{
  \filldraw (0,2) circle (0.4);
  \filldraw (2,0) circle (0.4);
  \filldraw (4,-4+2) circle (0.4);
  \filldraw (4+2,-4) circle (0.4);
  \filldraw (8,-8+2) circle (0.4);
  \filldraw (8+2,-8) circle (0.4);
  \filldraw (-4,4+2) circle (0.4);
  \filldraw (-4+2,4) circle (0.4);
  \filldraw (-8,8+2) circle (0.4);
  \filldraw (-8+2,8) circle (0.4);
  \filldraw (2+0,2+2) circle (0.4);
  \filldraw (2+2,2+0) circle (0.4);
  \filldraw (2+4,2-4+2) circle (0.4);
  \filldraw (2+4+2,2-4) circle (0.4);
  \filldraw (2+8,2-8+2) circle (0.4);
%  \filldraw (2+8+2,2-8) circle (0.4);
  \filldraw (2-4,2+4+2) circle (0.4);
  \filldraw (2-4+2,2+4) circle (0.4);
%  \filldraw (2-8,2+8+2) circle (0.4);
  \filldraw (2-8+2,2+8) circle (0.4);
}
$$
\caption{
Here is an example of $W$ and $W'$ that are both $2$-periodic
weights; since $W'$ is also $1$-periodic, we have
$W\star_2 W'$ is also $1$-periodic.
$W,W'$ are both the sum of two simple $2$-periodic functions,
and $W\star_2 W'$ a sum of four.
Later we will see that since
$W'$ is not of
the form $W^*_\mec L$ for any $\mec L\in\integers^2$,
it follows that 
$\cF=\cM_{W,\mec 0}\otimes_{\cO_r}\cM_{W',\mec 0}$
cannot serve as a {\em canonical $\cO_r$-module} since
for any $\mec d$ (no matter how small), $\cG=\cF\otimes_{\cO_r}\cL_{r,\mec d}$
has $H^1(\cG)=0$.
}
\label{fi_tensor_product_cM_W_and_cM_W_prime_example}
\end{figure}

\begin{corollary}
Say that $W,W'$ from $\integers^2\to\integers_{\ge 0}$
have bounded support and are $r$-periodic.  Say that
$$
W = \sum_{q=1}^\ell {\rm Simple}_{r;i_q,j_q},
\quad
W' = \sum_{q=1}^{\ell'} {\rm Simple}_{r;i_q',j_q'}.
$$
Then for any $\mec d,\mec d'\in\integers^2$ we have
$$
\cM_{W,\mec d}\otimes_{\cO_r} \cM_{W',\mec d'} \isom \cM_{W'',\mec d+\mec d'},
$$
where
$$
W'' = \sum_{q=1}^\ell \sum_{q'=1}^{\ell'} 
{\rm Simple}_{r;i_q+i'_{q'},j_q+j'_{q'}}.
$$
\end{corollary}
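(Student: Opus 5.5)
The plan is to deduce the corollary from the preceding Proposition on tensor products of the modules $\cN$, by identifying each $\cM_{W,\mec d}$ with a suitable $\cN$. First I will show that for any $r$-periodic $W$ of bounded support written as
$$
W=\sum_{q=1}^\ell {\rm Simple}_{r;i_q,j_q},
$$
there is an isomorphism of $\cO_r$-modules $\cM_{W,\mec d}\isom\cN_{r;\mec i,\mec j;\mec d}$ with $\mec i=(i_1,\ldots,i_\ell)$ and $\mec j=(j_1,\ldots,j_\ell)$. This generalizes the example given for perfect matchings.

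At $A_1,A_2,B_1,B_2$ both modules have the same values, and the restriction maps $\rho_{1,1},\rho_{2,2}$ agree under the identifications $k^{\oplus\integers}\leftrightarrow k[x_i^{\pm}]$ and $k^{\oplus\integers_{\le d_i}}\leftrightarrow k[y_i]y_i^{-d_i}$. Under the latter, $k[y_i]y_i^{-d_i}$ corresponds to $\cO_r(B_i)=k[y_i]$ via multiplication by $y_i^{d_i}$, which turns the restriction $1\mapsto x_i^{d_i}$ into the inclusion.

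The substantive point is at $B_3$. I will decompose ${\rm Multi}(W)$ according to the decomposition of $W$: elements are triples $(s_1,s_2,m)$ with $m\le W(s_1,s_2)$. I fix a bijection between ${\rm Multi}(W)$ and the disjoint union over $q$ of the orbits $\{(i_q+rn,\,j_q-rn)\}_{n\in\integers}$. This is possible because $W(s_1,s_2)$ equals the number of $q$ for which $(s_1,s_2)$ lies in the $q$-th orbit. I choose the bijection compatibly with the shift $(s_1,s_2,m)\mapsto(s_1+r,s_2-r,m)$, that is, I assign the index $m$ consistently along each orbit; periodicity of $W$ makes this possible.

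This bijection gives a $k[v^\pm]$-linear isomorphism $k^{\oplus W}\isom k[v^\pm]^\ell$ sending the basis element corresponding to $(i_q+rn,j_q-rn)$ to $v^n\mec e_q$. Under it, $\rho_{3,1}$ sends $v^n\mec e_q$ to $x_1^{i_q+rn}=(x_1^r)^n x_1^{i_q}$, and $\rho_{3,2}$ sends it to $x_2^{j_q-rn}=(x_2^{-r})^n x_2^{j_q}$. These are exactly the restrictions of $\cN$, given that $v\mapsto x_1^r$ and $v\mapsto x_2^{-r}$. Compatibility of the $v$-action with the shift is the only thing to check carefully, and I expect this bookkeeping step to be the main obstacle.

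With the identification in hand, the rest is formal. I apply it to $W$ and $W'$, invoke the preceding Proposition to get
$$
\cN_{r;\mec i,\mec j;\mec d}\otimes_{\cO_r}\cN_{r;\mec i',\mec j';\mec d'}\isom\cN_{r;\mec i{\bec +}\mec i',\mec j{\bec +}\mec j';\mec d+\mec d'},
$$
and then apply the identification in reverse to $W''$. The components of $\mec i{\bec +}\mec i'$ and $\mec j{\bec +}\mec j'$ run over the pairs $(i_q+i'_{q'},\,j_q+j'_{q'})$, so the reverse identification yields $\cM_{W'',\mec d+\mec d'}$. Since $W''$ is a finite sum of simple functions, it is $r$-periodic, nonnegative and of bounded support, so the identification applies to it.
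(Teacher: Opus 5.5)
Your proposal is correct and follows the paper's intended argument: the corollary is stated without proof right after the proposition on $\cN_{r;\mec i,\mec j;\mec d}\otimes_{\cO_r}\cN_{r;\mec i',\mec j';\mec d'}$, and it is meant to follow by identifying $\cM_{W,\mec d}$ with $\cN_{r;\mec i,\mec j;\mec d}$, which generalizes the paper's perfect-matching example. Your orbit-by-orbit matching of ${\rm Multi}(W)$ with $k[v^{\pm}]^{\ell}$ at $B_3$, compatible with the shift by $(r,-r)$, supplies exactly the detail the paper leaves implicit.
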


\begin{corollary}
\label{co_r_periodic_convolution_gives_tensor_product_over_cO_r}
Say that $W,W'$ from $\integers^2\to\integers_{\ge 0}$
have bounded support and are $r$-periodic.
Define the {\em $r$-periodic convolution of $W$ and $W'$},
denoted $W\star_r W'$, to be the function
$\integers^2\to\integers_{\ge 0}$ give by
$$
W\star_r W'(\mec y) 
= \sum_{a_1=0}^{r-1}\sum_{a_2\in\integers} W(\mec a) W'(\mec y-\mec a).
$$
Then for any $\mec d,\mec d'\in\integers^2$ we have
$$
\cM_{W,\mec d}\otimes_{\cO_r}\cM_{W',\mec d'}
\isom
\cM_{W\star_r W',\mec d+\mec d'}.
$$
\end{corollary}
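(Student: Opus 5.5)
The plan is to reduce this corollary to the preceding one by decomposing $W$ and $W'$ into simple $r$-periodic weights, and then to check that the resulting double sum of simple weights is exactly $W\star_r W'$. By the proposition expressing an $r$-periodic function of bounded support as a finite sum of simple functions, we write
$$
W=\sum_{a_1=0}^{r-1}\sum_{a_2\in\integers} W(a_1,a_2)\,{\rm Simple}_{r;a_1,a_2},
\qquad
W'=\sum_{b_1=0}^{r-1}\sum_{b_2\in\integers} W'(b_1,b_2)\,{\rm Simple}_{r;b_1,b_2}.
$$
Both sums are finite, and we list the terms with multiplicity to get the form required by the previous corollary. Applying that corollary then gives
$$
\cM_{W,\mec d}\otimes_{\cO_r}\cM_{W',\mec d'}\isom\cM_{W'',\mec d+\mec d'},
$$
where
$$
W''=\sum_{a_1,a_2,b_1,b_2}W(\mec a)W'(\mec b)\,{\rm Simple}_{r;\mec a+\mec b}.
$$

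It remains to show $W''=W\star_r W'$ pointwise. Both sides are $r$-periodic: $W''$ because it is a sum of simple functions, and $W\star_r W'$ because substituting $\mec y\mapsto\mec y+(r,-r)$ can be absorbed into $W'$ by its periodicity. Fix $\mec y$. The coefficient of $\delta_{\mec y}$ in $W''$ is the sum of $W(\mec a)W'(\mec b)$ over $a_1,b_1\in[0,r-1]$ and $q\in\integers$ with $\mec a+\mec b+q(r,-r)=\mec y$.

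Next, use the periodicity of $W'$ to replace $\mec b+q(r,-r)$ by a single $\mec b'$ ranging over all of $\integers^2$, with $b'_1$ unrestricted. Then for fixed $\mec a$ the unique $\mec b'$ is $\mec y-\mec a$, and we get
$$
W''(\mec y)=\sum_{a_1=0}^{r-1}\sum_{a_2}W(\mec a)\,W'(\mec y-\mec a)=(W\star_r W')(\mec y).
$$
The change of variables $(b_1,q)\leftrightarrow b_1'$ is a bijection from $[0,r-1]\times\integers$ onto $\integers$. Bounded support guarantees that all sums are finite. This includes the convolution, since for fixed $a_1$ only finitely many $a_2$ contribute.

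The only real obstacle is the bookkeeping in this reindexing step, specifically making sure that the restriction $a_1\in[0,r-1]$ appears on exactly one factor, which is what prevents double counting. If the multiset/isomorphism convention required it, one would also note that $\cM_{W'',\cdot}$ depends only on the function $W''$ and not on how it is written as a sum of simple functions. This follows directly from Definition~\ref{de_cM_W_d}, since ${\rm Multi}(W'')$ depends only on $W''$ and reordering the simple summands gives an isomorphic $\cN$-module.
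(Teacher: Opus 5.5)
Your proof is correct and takes the same route the paper intends. The paper gives no separate proof: it treats the statement as an immediate consequence of decomposing $W$ and $W'$ into simple $r$-periodic weights and then applying the preceding corollary on $\cM_{W,\mec d}\otimes_{\cO_r}\cM_{W',\mec d'}$ for sums of simples; your reindexing $(b_1,q)\mapsto b_1+qr$, which absorbs the $(r,-r)$-translates into $W'$ and keeps the restriction $0\le a_1\le r-1$ on $W$ only, is exactly the bookkeeping the paper leaves implicit.
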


\begin{corollary}
Say that $W,W'$ are two $r$-periodic perfect matchings, with associated
bijections $\pi,\pi'$.
Then
$$
\cM_{W,\mec d}\otimes_{\cO_r} \cM_{W',\mec d'} \isom \cM_{W'',\mec d+\mec d'},
$$
where
$$
W'' = \sum_{q=0}^{r-1} \sum_{q'=0}^{r-1} 
{\rm Simple}_{r;q+q',\pi(q)+\pi'(q')}.
$$
\end{corollary}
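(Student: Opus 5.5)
This corollary should follow directly from the preceding corollary (the one expressing $\cM_{W,\mec d}\otimes_{\cO_r}\cM_{W',\mec d'}$ via sums of simple functions), once $W$ and $W'$ are written as sums of simple $r$-periodic weights. The plan is first to write each perfect matching as
$$
W=\sum_{q=0}^{r-1}{\rm Simple}_{r;q,\pi(q)},\qquad
W'=\sum_{q'=0}^{r-1}{\rm Simple}_{r;q',\pi'(q')},
$$
and then to substitute these decompositions into the preceding corollary, which yields $W''$ exactly as stated with $\ell=\ell'=r$.

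To justify the decomposition of $W$, I would apply the proposition expressing an $r$-periodic degree-bounded $W$ as $\sum_{i=0}^{r-1}\sum_{j}W(i,j){\rm Simple}_{r;i,j}$. For a perfect matching, $W(i,j)=1$ exactly when $j=\pi(i)$, and $W(i,j)=0$ otherwise. So for each $i\in\{0,\ldots,r-1\}$ the inner sum over $j$ collapses to the single term ${\rm Simple}_{r;i,\pi(i)}$. The same argument applies to $W'$ with $\pi'$.

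The hypotheses of the preceding corollary also need checking: $W,W'$ must be non-negative, of bounded support, and $r$-periodic. Non-negativity holds because a perfect matching takes only the values $0,1$. Bounded support is part of Definition~\ref{de_perfect_matching}. Periodicity is assumed, and it is equivalent to $\pi(i+r)=\pi(i)-r$.

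I expect no real obstacle here. The only point needing care is that the index set in the decomposition is a full set of residues mod $r$, namely $0,\ldots,r-1$. This is what makes each simple summand appear exactly once, so that the double sum in $W''$ has no repetitions or omissions beyond those intrinsic to the convolution.
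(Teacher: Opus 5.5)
Your proof is correct and follows essentially the paper's route. The paper gives no separate proof: it decomposes $W=\sum_{q=0}^{r-1}{\rm Simple}_{r;q,\pi(q)}$, equivalently $\cM_{W,\mec d}=\cN_{(0,\ldots,r-1),(\pi(0),\ldots,\pi(r-1));\mec d}$, and substitutes into the preceding corollary (the tensor-product proposition for the $\cN$'s) with $\ell=\ell'=r$, which is what you do.
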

Hence $\cM_{W'',\mec d+\mec d'}(B_3)$ is isomorphic to
$k[v^{\pm}]^{r^2}$.

\begin{remark}
\label{re_W_W_prime_tensor_produce_H_one_independent_of_cO}
If $W,W'$ are two perfect matchings that are not necessarily periodic,
we can define the tensor product of $k$-diagrams
$$
\cM_{W,\mec d}\otimes \cM_{W',\mec d'}
=\cM_{W,\mec d}\otimes_{\underline k} \cM_{W',\mec d'}.
$$
However, the $B_3$ value of this diagram is
$$
k^{\oplus W}\otimes k^{\oplus W'} ,
$$
and we easily see that this is isomorphic to $k^{\oplus W''}$ where
$W''=W*W'$ where 
$$
(W*W')(\mec s) \eqdef \sum_{\mec a\in\integers} W(\mec a) W'(\mec s-\mec a),
$$
and the sum above can be infinite.
(In fact, since $W,W'$ is of bounded support, $W*W'$ must be infinite.)
However, if $W,W'$ are $r$-periodic, then we can see that
\begin{equation}\label{eq_W_star_r_W_prime_positive_iff_asterisk}
\forall \mec s\in\integers^2,
\qquad
W\star_r W'(\mec s)>0 
\ \iff\ %
W* W'(\mec s)>0 .
\end{equation} 
In the next section we will see that $H^1(\cM_{W,\mec d})$ depends
only on the $\mec s$ where $W(\mec s)>0$, not on the particular value
of $W(\mec s)$ (even if $W(\mec s)=\infty$).
Hence it turns out that
$$
H^1(\cM_{W,\mec d}\otimes_{\underline k} \cM_{W',\mec d'})
=
H^1(\cM_{W,\mec d}\otimes_{\cO_r} \cM_{W',\mec d'}).
$$
Similarly if $W,W'$ are both $r$-periodic and $r'$-periodic, then
$$
H^1(\cM_{W,\mec d}\otimes_{\cO_r} \cM_{W',\mec d'})
=
H^1(\cM_{W,\mec d}\otimes_{\cO_{r'}} \cM_{W',\mec d'}).
$$
\end{remark}

\section{A Canonical $\cO$-Module}
\label{se_canonical}

Again, in this section $k$ will be a fixed field, and we will often
suppress $k$, writing, e.g., $\cO_r$ instead of $\cO_{r,k}$.

The basis for our duality theory will be an $\cO_r$-module, $\omega$, such that
\begin{enumerate}
\item $H^1(\omega)\isom k$; 
\item $\omega=\cM\otimes \cM'$ where $\cM=\cM_W$ and
$\cM_{W'}$ where $W$ is an arbitrary $r$-periodic perfect matching
and $W'$ is an appropriately chosen $r$-periodic perfect matching;
\item
$\omega$ allows up to set up a satisfactory ``duality theory.''
\end{enumerate} 
Although we leave (3) above vague for now, there is a rather striking
result for $\omega$ satisfying~(1) and~(2).

\begin{theorem}\label{th_H_one_vanishes_tensor_prod}
Let $W,W'$ be $r$-periodic perfect matchings.
If for all $\mec L\in\integers^2$ we have $W'\ne W^*_\mec L$,
then for all $\mec d,\mec d'\in\integers^2$ we have
$$
H^1( \cM_{W,\mec d} \otimes_{\cO_r} \cM_{W',\mec d'} ) = 0,
$$
\end{theorem}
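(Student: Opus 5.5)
By Corollary~\ref{co_r_periodic_convolution_gives_tensor_product_over_cO_r} we have
$\cM_{W,\mec d}\otimes_{\cO_r}\cM_{W',\mec d'}\isom\cM_{W'',\mec d+\mec d'}$ with $W''=W\star_r W'$. By \eqref{eq_W_star_r_W_prime_positive_iff_asterisk}, the support of $W''$ is the Minkowski sum $S+S'$, where $S=\{(b,\pi(b))\}$ and $S'=\{(x,\pi'(x))\}$ are the graphs of the associated bijections. So the plan is to first give a combinatorial criterion for $H^1(\cM_{V,\mec e})=0$ that depends only on the support of a bounded-support weight $V$, and then to check that criterion for $V=W''$ and $\mec e=\mec d+\mec d'$ unless $W'=W^*_\mec L$.

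\emph{Step 1 (criterion).} Modulo the images of $B_1,B_2$, the cokernel of $\cF(\partial)$ is $k^{\oplus\integers_{>e_1}}\oplus k^{\oplus\integers_{>e_2}}$. A basis vector of $k^{\oplus V}$ at a support point $(s_1,s_2)$ maps to $(e_{s_1},-e_{s_2})$, where a coordinate is dropped if $s_1\le e_1$ (resp.\ $s_2\le e_2$). Form the bipartite graph $G$ with vertex sets $X_0=\integers_{>e_1}$ and $Y_0=\integers_{>e_2}$.
\begin{enumerate}
\item Support points with $s_1>e_1$ and $s_2>e_2$ are edges of $G$.
\item Support points with exactly one coordinate above the threshold are ``half-edges'' at one vertex.
\end{enumerate}
Vertices touched by nothing trivially contribute to $H^1$; since the support of $V$ meets every row and column, every vertex is touched. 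Because $V$ has bounded support, only finitely many edges lie in the quadrant, so each component is finite. For a finite component, the span of the vectors $e_a-e_b$ has codimension one (detected by the alternating-sign functional), and a half-edge kills that functional. Hence $H^1(\cM_{V,\mec e})=0$ iff every connected component of $G$ that contains an edge also contains a half-edge. Equivalently, $H^1\neq0$ iff there are finite nonempty $X\subset\integers_{>e_1}$ and $Y\subset\integers_{>e_2}$ that are \emph{closed}: every support point in a column $a\in X$ has second coordinate in $Y$, and every support point in a row of $Y$ has first coordinate in $X$.

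\emph{Step 2 (closed sets force duality).} Suppose $(X,Y)$ is closed for $S+S'$. For $a\in X$, the column-$a$ values are
$$
\Phi_a=\{\pi(b)+\pi'(a-b):b\in\integers\}\subset Y.
$$
Symmetrically, for $c\in Y$, the set of $a$ with $c\in\Phi_a$ lies in $X$. I would like to show that some $\Phi_a$ is a singleton $\{c\}$. Then $\pi'(x)=c-\pi(a-x)$ for all $x$, i.e.\ $W'(i,j)=W(a-i,c-j)$, which says $W'=W^*_\mec L$ with $\mec L=(a,c)$, a contradiction.

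To get a singleton, I would run an extremal/counting argument on the bipartite incidence between $X$ and $Y$. Each $\Phi_a$ is an $r$-periodic-in-$b$ family, so it is determined by $b\in[0,r-1]$. Every $c\in Y$ lies in the same number of sets $\Phi_a$ counted with multiplicity over $b$ mod $r$, namely $r$, by bijectivity of $\pi,\pi'$ along anti-diagonals. Compare this with the degree-sum constraint: all points lie in a bounded degree band, while $X,Y$ are bounded below by $e_1,e_2$. Take $a=\max X$ and $c=\min Y$ (or the point of $S+S'$ in the component of maximal degree $a+c$), and argue that all $r$ summands through it must coincide.

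This extremal step is where I expect the main difficulty. If the naive choice of $(\max X,\min Y)$ fails, I would instead compare the sums $\sum_{a\in X}a$ and $\sum_{c\in Y}c$ computed two ways using the $r$ incidences per vertex. The goal is to deduce that $b+\pi(b)+(a-b)+\pi'(a-b)$ is constant, hence that all the $\Phi_a$ collapse to single points.
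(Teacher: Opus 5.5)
\textbf{There is a genuine gap: the main step of Step~2 is not proved.}

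Your Step~1 correctly re-derives Theorem~\ref{th_folinsbee_friedman_algorithm}. The final deduction of Step~2 is also correct and is how the paper ends its proof: a singleton $\Phi_a=\{c\}$ gives $W'=W^*_{(a,c)}$. The missing piece is the step in between, namely that a finite nonempty closed pair $(X,Y)$ forces some $\Phi_a$ to be a singleton. This is the entire content of the paper's Fundamental Lemma~\ref{le_fundamental_two_bijections}, and you offer only heuristics for it:
\begin{enumerate}
\item The degree-band remark gives nothing beyond finiteness of $X$ and $Y$.
\item You give no mechanism by which extremality of $(\max X,\min Y)$ would force the $r$ values $\pi(b)+\pi'(a-b)$ to coincide.
\item Sum identities such as $|X|\,\pi(b)+\sum_{a\in X}\pi'(a-b)=\sum_{c\in Y}c$ do hold for every $b$, because $a\mapsto \pi(b)+\pi'(a-b)$ restricts to a bijection $X\to Y$. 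But they are linear consequences of closedness and by themselves lose the order information the argument needs.
\item Your regularity count $|X|=|Y|$ is correct but does not close the gap either.
\end{enumerate}

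What is missing is an inequality-plus-rearrangement argument like the paper's. Let $a_-=\min X$, $a_+=\max X$, $c_-=\min Y$, $c_+=\max Y$, $\Delta_X=a_+-a_-$, $\Delta_Y=c_+-c_-$.
\begin{enumerate}
\item Compare the edges from $a_-$ and from $a_+$ that use the same $\pi'$-argument $x$. This gives $\pi(a_--x)+\pi'(x)\le c_+$ and $\pi(a_+-x)+\pi'(x)\ge c_-$, hence $\pi(u)-\pi(u+\Delta_X)\le \Delta_Y$ for all $u$.
\item Telescope along $u,u+\Delta_X,\ldots,u+(\ell-1)\Delta_X$ and use $|\pi(y)+y|\le C$. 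This gives $\ell\Delta_X-2C\le \ell\Delta_Y$, so $\Delta_X\le\Delta_Y$. The same argument at $c_\pm$, using $(\pi')^{-1}$, gives the reverse inequality. So $\Delta_X=\Delta_Y=:\Delta$.
\item Set $g(u)=\pi(u+\Delta)+\Delta-\pi(u)$. By step~1, $g\ge 0$. Its partial sums along every progression of step $\Delta$ are bounded by $2C$, so $g$ has finite support $F$.
\item The map $u\mapsto \pi(u+\Delta)+\Delta$ is a bijection that agrees with $\pi$ off $F$. Hence it maps $F$ onto $\pi(F)$, and comparing sums over $F$ forces $F=\emptyset$.
\item With $g\equiv 0$, both inequalities in step~1 are equalities for every $x$. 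That is, $\Phi_{a_-}=\{c_+\}$, and likewise $\Phi_{a_+}=\{c_-\}$.
\end{enumerate}
So your guessed pair $(\max X,\min Y)$ does turn out to work, but only as the output of this global argument, which uses bijectivity of $\pi$ and the asymptotics $\pi(x)=-x+O(1)$ along infinite progressions. Without this or an equivalent argument, the proof is missing its main step.
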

This theorem will be proved in
Subsection~\ref{su_proof_of_first_main_theorem}.

This theorem tells us that the only possible $\omega$ as above come from
$W'$ of the form $W^*_\mec L$.
Moreover we have the following result.

\begin{theorem}\label{th_H_one_equals_k_in_rare_case}
Let $W$ be a periodic perfect matchings, and let $r$ be the
period of $W$ (i.e., the minimum $r\ge 1$ such that $W$ is $r$-periodic).
Let $\mec K\in\integers^2$, $\mec L=\mec K+\mec 1$, 
$W'=W^*_\mec L$, and let
$\omega_{W,\mec K}=\cM_{W,\mec 0} \otimes \cM_{W',\mec K}$.  Then
$$
b^1( \omega_{W,\mec K} ) = 1,
$$
and 
a basis for (the one-dimensional) $H^1(\omega)$ is given by
$$
(x^{L_1},0)=(0,-x_2^{L_2})\in H^1(\omega)
\isom \omega(A)/\omega(\partial).
$$
More generally, for any $\mec d,\mec d'\in \integers^2$ 
let $\mec K'=\mec d+\mec d'$;
we have that
$$
b^1( \cM_{W,\mec d} \otimes \cM_{W',\mec d'} ) 
$$
equals the size of the set
$$
Q=\{ q\in\integers\ | \ L_1+qr > K_1' \mbox{\ and\ } L_2-qr > K_2' \}.
$$
Morever:
\begin{enumerate}
\item
A basis for $H^1(  \cM_{W,\mec d} \otimes \cM_{W',\mec d'} )$
is the set of
$(x_1^{L_1+qr},0)$ ranging over all $q\in Q$;
and similarly with
$(0,x_2^{L_2-qr})$ over $q\in Q$.
\item
If $(x_1^a,0)\in\omega_W(A)$ and $a\ne L_1+qr$ with $q\in Q$, then
the image of $(x_1^a,0)$ in $H^1(  \cM_{W,\mec 0} \otimes \cM_{W',\mec K'} )$
is zero; similarly for $(0,x_2^a)$ with $a\ne L_2-qr$ with $q\in Q$.
\end{enumerate}
\end{theorem}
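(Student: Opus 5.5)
The plan is to reduce to an explicit combinatorial computation of a cokernel. By Corollary~\ref{co_r_periodic_convolution_gives_tensor_product_over_cO_r} we have $\cM_{W,\mec d}\otimes_{\cO_r}\cM_{W',\mec d'}\isom\cM_{W\star_r W',\mec K'}$ with $\mec K'=\mec d+\mec d'$. Remark~\ref{re_W_W_prime_tensor_produce_H_one_independent_of_cO} says that $H^1$ depends only on the support $S$ of $W'' = W\star_r W'$, and by \eqref{eq_W_star_r_W_prime_positive_iff_asterisk} that support equals the support of $W*W'$. So we must compute the cokernel of $\cM_{W'',\mec K'}(\partial)$ directly from Definition~\ref{de_cM_W_d}.

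\emph{Graph description of $H^1$.} Modulo the image of $B_1\oplus B_2$, the space $\omega(A)=k[x_1^{\pm}]\oplus k[x_2^{\pm}]$ becomes the span of $(x_1^a,0)$ with $a>K_1'$ and $(0,x_2^b)$ with $b>K_2'$. The $B_3$ part adds the relations $(x_1^a,0)\equiv(0,x_2^b)$ for $(a,b)\in S$. Form the bipartite graph $G$ with vertex set $\integers\sqcup\integers$ and edge set $S$, and call a vertex \emph{killed} if $a\le K_1'$ on side~1 or $b\le K_2'$ on side~2. Then $H^1$ has one basis vector for each connected component of $G$ containing no killed vertex, represented by any vertex of that component. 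All of the statements (1) and (2) then follow once the components are identified.

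\emph{Identifying the components.} Write $\pi,\pi'$ for the bijections of $W,W'$. From $W'=W^*_\mec L$ we get $\pi'(a')=L_2-\pi(L_1-a')$. Hence the edges at a side-1 vertex $s$ are
$$
\bigl(s,\ \pi(c)+L_2-\pi(c-t)\bigr),\qquad t=s-L_1,\ c\in\integers .
$$
\begin{itemize}
\item If $t=qr$, then $r$-periodicity $\pi(c-qr)=\pi(c)+qr$ makes the target equal to $L_2-qr$ for every $c$. Symmetrically, $L_2-qr$ is joined only to $L_1+qr$. So $\{L_1+qr,\,L_2-qr\}$ is a one-edge component, and it is unkilled iff $q\in Q$.
\item If a component is infinite, it is killed. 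Since $W''$ has bounded support, $a+b$ is bounded on edges. An infinite component therefore has vertices tending to $-\infty$ on some side: either directly, or because a side tends to $+\infty$ and its partners on the other side tend to $-\infty$.
\end{itemize}
So it remains to show that every component not of the first type is infinite.

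\emph{Main obstacle: non-special components are infinite.} For $t\not\equiv 0\pmod r$, I would first show that $c\mapsto\pi(c)-\pi(c-t)$ is nonconstant. If it were constantly $D$, iterating $r$ times and using periodicity gives $D=-t$. Then $c\mapsto \pi(c)+c$ is $t$-periodic, hence $\gcd(t,r)$-periodic, so $W$ would be $\gcd(t,r)$-periodic, contradicting the minimality of $r$. This is where the minimal-period hypothesis enters. So each non-special vertex has at least two distinct neighbours.

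To get infinitude from this, I would use the translation symmetry $(a,b)\mapsto(a+r,b-r)$ of $G$. The aim is to show that the component of $s$ contains $s+r$ or $s-r$, by walking $s\to b_1\to s'\to b_2\to\cdots$ along the two distinct neighbours and tracking the differences $\pi(c)-\pi(c-t)$ modulo the periodicity. Once the component is invariant under a nonzero translation, it is infinite, hence killed. I expect this step to be the delicate one. If the direct walk argument stalls, the fallback is to show that any finite component would yield a finite-dimensional sub-diagram forcing $W$ to have a smaller period.

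\emph{Conclusion.} With the components identified, $b^1$ equals $|Q|$, and the basis and vanishing statements (1) and (2) follow. Taking $\mec d=\mec 0$, $\mec d'=\mec K$ gives $\mec K'=\mec K$, and $Q=\{0\}$ since $L_i=K_i+1$. This yields $b^1(\omega_{W,\mec K})=1$ with basis $(x_1^{L_1},0)=(0,-x_2^{L_2})$, the sign coming from the convention in \eqref{eq_formula_for_cF_partial}.
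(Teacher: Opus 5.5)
\textbf{There is a genuine gap at the step you flag yourself: you never prove that every non-special component is infinite, and that is the heart of the theorem.}

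The following parts of your proposal are correct:
\begin{itemize}
\item the reduction to the graph of Theorem~\ref{th_folinsbee_friedman_algorithm};
\item the identification of the one-edge components $\{L_1+qr,\,L_2-qr\}$;
\item the remark that an infinite component contains a killed vertex;
\item the use of minimality of $r$ to show that $c\mapsto\pi(c)-\pi(c-t)$ is nonconstant for $t\not\equiv 0\pmod r$. This is essentially the contrapositive of Lemma~\ref{le_the_Ls_such_that_pi_x_plus_pi_prime_etc}.
\end{itemize}

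Knowing that every non-special left vertex has at least two distinct neighbours is purely local, so it cannot give infinitude by itself. Finite bipartite components in which every vertex has degree at least two (a $4$-cycle, say) are combinatorially possible. Some global property of bounded-displacement bijections therefore has to enter. Your proposed walk towards $s\pm r$ comes with no mechanism, and it aims at more than you need, since unboundedness below suffices. The fallback about a ``finite-dimensional sub-diagram forcing a smaller period'' is not an argument.

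The missing ingredient is the paper's Fundamental Lemma (Lemma~\ref{le_fundamental_two_bijections}): if the component of a left vertex is bounded below, then all edges at that vertex go to a single right vertex. The argument is extremal.
\begin{itemize}
\item Bounded support makes such a component finite. Let its extreme values be $L_1^{\min},L_1^{\max}$ on the left and $L_2^{\min},L_2^{\max}$ on the right, with spreads $\Delta_1,\Delta_2$.
\item Compare the edges at $L_1^{\min}$ and $L_1^{\max}$ with the same $a$, and telescope using the bounded support of $\pi'$. This gives $\Delta_1\le\Delta_2$. The symmetric argument gives the reverse, so $\Delta_1=\Delta_2=:\Delta$.
\item Compare the edge of $L_1^{\min}$ indexed by $x$ with the edge of $L_1^{\max}$ indexed by $x+\Delta$; both contain the term $\pi'(L_1^{\min}-x)$. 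This gives $\pi(x)-\pi(x+\Delta)\le\Delta$ for all $x$.
\item Bounded support of $\pi$ forces equality off a finite set $X$. The maps $\pi$ and $x\mapsto\pi(x+\Delta)+\Delta$ are bijections that agree off $X$ and satisfy $\pi(x+\Delta)+\Delta>\pi(x)$ on $X$. They have the same image on $X$, so comparing their sums over $X$ forces $X=\emptyset$.
\item Both endpoints are now pinned: every edge at $L_1^{\min}$ lands on $L_2^{\max}$. So the component is a single edge and $\Delta=0$.
\end{itemize}
The printed proof of this lemma has sign slips in the $X_0$ step; the inequalities should read as above.

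With this lemma in hand, your nonconstancy observation rules out the single-edge alternative at non-special vertices. Their components then reach arbitrarily small vertices and are killed; this is Lemma~\ref{le_consequence_of_fundamental_lemma}, and it completes your count $b^1=|Q|$.

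A minor point: with the sign convention of \eqref{eq_formula_for_cF_partial}, the $B_3$ relations are $(x_1^a,0)\equiv(0,-x_2^b)$, not $(0,x_2^b)$. Your final basis statement already uses the correct sign.
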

This theorem will be proven in 
Subsection~\ref{su_proof_of_second_main_theorem}.

\begin{remark}
Theorem~\ref{th_H_one_equals_k_in_rare_case} also shows that
$$
b^1( \cM_{W,\mec 0}\otimes \cM_{W',\mec K-\mec d}) = 1
$$
provided that $\mec d\in[r'-1,r'-1]$, where $r'$ is the period of $W$;
hence for $r'\ge 2$, there would be other candidates for our
canonical diagram (if our only criteria was that $b^1$ should equal $1$).
However, it turns out that $\mec d=\mec 0$ is the right choice
of canonical diagram for our duality theorem.
[Of course, we also have that 
$$
H^1( \cM_{W,\mec 0}\otimes \cM_{W',\mec K-\mec d'}) = 1
$$
whenever $\mec d'$ equals one of the $\mec d$ above plus a
multiple of $(r,-r)$.]
\end{remark}

% \begin{remark}
% In the above theorem, if $\mec d,\mec d'\in\integers^2$
% are such that $\mec d+\mec d'=\mec K'$, then
% $$
% \cM_{W,\mec d}\otimes_{\cO_r}\cM_{W',\mec d'}
% \isom
% \cM_{W,\mec 0}\otimes_{\cO_r}\cL_{\mec d}\otimes_{\cO_r}\cM_{W',\mec d'}
% \isom
% \cM_{W,\mec 0}\otimes_{\cO_r}\cM_{W',\mec d+\mec d'}
% \isom
% \cM_{W,\mec 0}\otimes_{\cO_r}\cM_{W',\mec K'}
% $$
% Hence Theorems~\ref{th_H_one_vanishes_tensor_prod}
% and~\ref{th_H_one_equals_k_in_rare_case} apply to
% $\cM_{W,\mec d}\otimes_{\cO_r}\cM_{W',\mec d'}$.
% \end{remark}

\begin{remark}
The Folinsbee-Friedman algorithm in
Subsection~\ref{su_folinsbee_friedman} below, along with
\eqref{re_W_W_prime_tensor_produce_H_one_independent_of_cO},
show that 
if $W,W'$ are $r$-periodic perfect matchings, then
$H^1(\cM_{W,\mec d}\otimes\cM_{W',\mec d})$ whether one takes
the tensor product as $\cO_r$-modules or as $\underline k$-modules
(i.e., $k$-diagrams).
\end{remark}

\begin{definition}
Let $W$ be an $r$-periodic perfect matchings.
Let $\mec K\in\integers^2$ and $\mec L=\mec K+\mec 1$.
Then the {\em canonical $\cO$-module} on $W$ and $\mec K$ refers to
$$
\omega_{W,\mec K} =\omega_{W,\mec K,r}
\eqdef \cM_{W,\mec 0} \otimes_{\cO_r}
\cM_{W^*_\mec L,\mec K}.
$$
\end{definition}

We alert the reader that the only result we need in
Sections~\ref{se_duality} and~\ref{se_strong_duality}
is that
$$
H^1(\omega_{W,\mec K})\isom k.
$$
Hence Theorem~\ref{th_H_one_vanishes_tensor_prod} will not be needed there.
(The purpose of this theorem is to explain the special nature of
the canonical $\cO_r$-module.)
(However, the proofs of Theorem~\ref{th_H_one_vanishes_tensor_prod}
and~\ref{th_H_one_equals_k_in_rare_case} are both based on the same
fundamental 
Lemma~\ref{le_fundamental_two_bijections}.)

\begin{remark}
Since $H^1(\omega_{W,\mec K})$ is generated $(x^{L_1},0)$,
we see that $\omega_{W,\mec K}$ generally depends on $\mec K$.
However, we easily that for any $\mec K'\in\integers$ 
and $\mec L'=\mec K+\mec 1$ there is an
isomorphism 
$$
\phi\from\cM_{W^*_\mec L,\mec K} \xrightarrow{\quad\isom\quad}
\cM_{W^*_{\mec L'},\mec K'} ;
$$
namely for $i=1,2$, $\phi(B_i)$ takes $y_i^a$ to $y_i^{a+K_i-K_i'}$,
$\phi(A_i)$ takes $x_i^a$ to $x_i^{a-K_i'+K_i}$,
and $\phi(B_3)$ takes $\mec d$ to $\mec d+\mec L'-\mec L$;
to check that $\phi$ is an isomorphism we use the facts that
$$
W^*_\mec L(\mec d)=W(\mec L-\mec d)=W(\mec L'+\mec L-\mec L'-\mec d)
=
W^*_{\mec L'}(\mec d+\mec L'-\mec L)
$$
and that $\mec L'-\mec L=\mec K'-\mec K$.
It follows that $\omega_{W,\mec K}$ and $\omega_{W,\mec K'}$
are isomorphic.
In Subsection~\ref{su_omega_depends_on_W}
we give some remarks about how 
the isomorphism class of $\omega_{W,\mec K}$ is independent of $\mec K$
but might depend on $W$.
\end{remark}

In the next two subsections we prove the above theorems.
Recall \cite{folinsbee_friedman_euler} that if $W,W'$ are any
$r$-periodic perfect matchings, then
$$
\cM_W \otimes \cM_{W'} = \cM_{W''}
$$
where $W''$ is an $r$-periodic function $\integers^2\to\integers_{\ge 0}$
that can be written as the sum
$$
W'' \isom W_1+ \cdots + W_r
$$
where $W_1,\ldots,W_r$ are $r$-periodic perfect matchings.
As such it is easy to see that for $r\ge 2$ we have
$$
H^0(\cM_{W''})=\infty.
$$
Hence these $\cO$-modules are more chaotic than their counterparts
in algebraic geometry.

\begin{remark}
We also remark that for any $W,W'$,
$\gamma=\cM_W\otimes\cM_{W'}$ has the same values as
$\cM_{W,\mec d}$ (and as $\cO$) except at $B_3$; 
and 
$\gamma(B_3)=k[v^{\pm}]^{\oplus r^2}$
and $\cM(B_3)=k[v^{\pm}]^{\oplus r}$.
A crude dimension count suggests that the image of $\cM(B)$ in $\cM(A)$
is roughly 
``of the same size'' as $\cM(A)$:
indeed, the image of $\cM(B_3)$ in $\cM(A)$ gives a bijection between
$\cM(A_1)$ and $\cM(A_2)$, both isomorphic to $k^{\oplus\integers}$;
for $i=1,2$, the image of $\cM(B_i)$ in $\cM(A_i)$ is
$k^{\oplus \integers_{\le d_i}}$; hence the image of $\cM(B)$ in
$\cM(A)$ is roughly one copy of $k^{\oplus\integers}$ from $\cM(B_3)$
and roughly half a copy of $k^{\oplus\integers}$ for each $\cM(B_i)$.
However, for $r\ge 2$, since
$\gamma$ has the same values as $\cM_{W,\mec d}$ except that it has
``$(r-1)$ more copies of $k^{\oplus\integers}$ at $B_3$,''
it seems clear that $H^0(\gamma)=\infty$ and likely that
$H^1(\gamma)$ should equal $0$.
Hence one might expect $H^1(\gamma)$ to be $0$.
For this reason Theorem~\ref{th_H_one_equals_k_in_rare_case} seems
quite remarkable to us.
\end{remark}

\subsection{The Algorithm of Folinsbee-Friedman
\cite{folinsbee_friedman_euler} To
Compute $H^i(\cM_{W,\mec d})$}
\label{su_folinsbee_friedman}

To prove the main theorems in this section, we review the
method of Folinsbee-Friedman \cite{folinsbee_friedman_euler} to compute
$H^i(\cM_{W,\mec d})$ where 
$W\from\integers^2\to\integers_{\ge 0}\cup\{\infty\}$ 
is an arbitrary function.  
This is stated as Theorem~4.3 there, although the basis for
$H^1(\cM_{W,\mec d})$, which we need in this article,
is only apparent from the proof of Theorem~4.3, not its statement.

Recall that by definition,
$H^1(\cM_{W,\mec d})$ is the cokernel of the map
$$
\cM_{W,\mec d}(\partial) \from
\cM_{W,\mec d}(B_1)\oplus
\cM_{W,\mec d}(B_2)\oplus
\cM_{W,\mec d}(B_3)\to
\cM_{W,\mec d}(A_1)\oplus
\cM_{W,\mec d}(A_2),
$$
which, setting $\tau=\cM_{W,\mec d}(\partial)$, is the map
\begin{equation}\label{eq_tau_the_map_of_cM_W_prime_prime_partial}
\tau\from 
k^{\oplus\integers_{\le d_1}}\oplus
k^{\oplus\integers_{\le d_2}}\oplus
k^{{\rm Multi}(W)}
\to
k^{\oplus\integers}\oplus
k^{\oplus\integers}
\isom k^{\oplus\integers\times\{1,2\}}.
\end{equation} 
% $$
% \tau\from k^{\oplus( 
% \integers_{\le d_1}\,\amalg\,
% \integers_{\le d_2}\,\amalg\,
% {\rm Multi}(W))}
% \to
% k^{\oplus(\integers\,\amalg\,\integers)}.
% $$

\begin{theorem}
\label{th_folinsbee_friedman_algorithm}
Let 
$W\from\integers^2\to\integers_{\ge 0}\cup\{\infty\}$ 
be an arbitrary function.
Let
% according to Theorem~4.7 and its proof, we have the following algorithm:
\begin{enumerate}
\item 
$G'$ be the (bipartite) graph whose vertex set is $\integers\times\{1,2\}$
such that the vertices $(s_1,1)$ and $(s_2,2)$ are joined by $W(s_1,s_2)$
edges (this is therefore a multigraph);
\item
let $G$ be the graph obtained from $G'$ by collasping the vertices of
$\integers_{\le d_1}\times\{1\}$ and
$\integers_{\le d_2}\times\{2\}$ into a singe vertex $v_0$
(keeping as a self-loop any edge from
$\integers_{\le d_1}\times\{1\}$ to
$\integers_{\le d_2}\times\{2\}$).
\end{enumerate}
(Hence $G$ may have self-loops and is not generally bipartite.)
Also set $V_{\rm first},V_{\rm second}$ to be the subsets
$\integers_{\ge d_1+1}\times\{1\}$ and
$\integers_{\ge d_2+1}\times\{2\}$, so that $V$ is partitioned as
$\{v_0\}\amalg V_{\rm first}\amalg V_{\rm second}$.
Then
\begin{align}
\label{eq_betti_one_and_betti_zero_G}
b^1(\cM_{W,\mec d})& =b^0(G)-1 \\
\label{eq_betti_zero_and_betti_one_G}
b^0(\cM_{W,\mec d})& =b^1(G)  .
\end{align}
Moreover, let $\{G_i\}_{i\in I}$ be the connected components of $G$,
where $0\in I$ and $G_0$ is the connected component of $G$ containing
$v_0$; set $I'=I\setminus\{0\}$.
(Hence $G_i$ with $i\in I'$ are precisely the connected components of
$G'$ whose vertex set is disjoint from
$\integers_{\le d_1}\times\{1\}\cup
\integers_{\le d_2}\times\{2\}$.)
For each $i\in I'$, and $G_i=(V_i,E_i)$,
choose some $v_i\in V_i$.
Then 
$$
H^1(\cM_{W,\mec d}) \isom \bigoplus_{i\in I'} H^0(G_i)
\isom k^{\oplus I'},
$$
and a basis for $H^1(\cM_{W,\mec d})$ is 
$\{\mec e_{v_i}\}_{i\in I'}$,
the standard basis vectors for $k^{\oplus \integers\times\{1,2\}}$.
\end{theorem}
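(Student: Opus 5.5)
The plan is to recast $\tau=\cM_{W,\mec d}(\partial)$ from \eqref{eq_tau_the_map_of_cM_W_prime_prime_partial} as a signed incidence map of the graph $G'$, and then of $G$. Write $V'=\integers\times\{1,2\}$ and $\cM_{W,\mec d}(A)=k^{\oplus V'}$.

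By \eqref{eq_formula_for_cF_partial}, a basis vector $\mec e_{(s_1,s_2,i)}$ of $k^{\oplus W}$ (an edge of $G'$) maps to $(-\mec e_{s_1},\mec e_{s_2})$. This is $\pm$ the coboundary of that edge, namely $\mec e_{(s_2,2)}-\mec e_{(s_1,1)}$. A basis vector $\mec e_a$ of $k^{\oplus\integers_{\le d_1}}$ maps to $\mec e_{(a,1)}$, and similarly for $B_2$. Hence the image of $\tau$ is spanned by two sets: the vectors $\mec e_u$ for $u\in U\eqdef \integers_{\le d_1}\times\{1\}\cup\integers_{\le d_2}\times\{2\}$, and the vectors $\mec e_{h(e)}-\mec e_{t(e)}$ over the edges $e$ of $G'$.

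Quotienting $k^{\oplus V'}$ by the span of $\{\mec e_u\}_{u\in U}$ amounts to sending all of $U$ to $0$. Adjoin a vertex $v_0$ and send each $u\in U$ to $\mec e_{v_0}$, then quotient additionally by $\mec e_{v_0}$. The result is
$$
H^1(\cM_{W,\mec d})\isom \coker\bigl(k^{\oplus E(G)}\xrightarrow{\ \delta\ }k^{\oplus V(G)}\bigr)\big/ \langle \mec e_{v_0}\rangle .
$$
Here $\delta$ is the incidence map of $G$, and edges of $G'$ between two vertices of $U$ become self-loops, which map to $0$. The step I expect to need the most care is justifying this collapse when the sets involved are infinite.

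For a (possibly infinite, multi-)graph with finitely supported chains, the cokernel of $\delta$ is $\bigoplus_{i}k$, one copy for each connected component. The map is the augmentation: sum the coefficients over each component. A vertex $\mec e_v$ goes to the basis vector of its component. This is true because two vertices in the same component are joined by a finite path, so they agree in the cokernel. Conversely, the augmentation kills the image, and it is surjective. Quotienting further by $\mec e_{v_0}$ removes the summand for $G_0$. This leaves $k^{\oplus I'}$, and each chosen $\mec e_{v_i}$ maps to the generator of its summand, which gives the stated basis. In particular $b^1=b^0(G)-1$, interpreted as $|I'|$ when this is infinite.

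For $b^0$, the kernel of $\tau$ is the same as the kernel of $\delta$ on $k^{\oplus E(G)}$, because the coefficients on the $B_i$ summands are determined by the edge coefficients. Indeed, a vector $(b_1,b_2,b_3)$ lies in the kernel of $\tau$ iff $\delta(b_3)$, restricted to $V'\setminus U$, vanishes and $b_1,b_2$ cancel $\delta(b_3)$ on $U$. Since $b_1,b_2$ range freely over $k^{\oplus U}$, the kernel of $\tau$ is isomorphic to the space of finitely supported edge chains whose boundary vanishes off $U$. That is exactly the set of $1$-cycles of $G$, where $U$ is collapsed to $v_0$ and loops count as cycles. This space is $H_1(G)$, of dimension $b^1(G)$ in the usual sense of cycle space. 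The only remaining point is bookkeeping: the sign conventions and the multiplicities from ${\rm Multi}(W)$, which simply produce parallel edges.
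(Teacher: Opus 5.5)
Your route is the paper's: identify the $B_3$-part of $\tau$ with an incidence map of the graph, then read $H^1$ off the connected components. However, the identification is wrong as written. By \eqref{eq_formula_for_cF_partial}, $\tau(0,0,b_3)=(-\rho_{3,1}b_3,\,-\rho_{3,2}b_3)$. So the edge $(s_1,s_2,i)$ goes to $-(\mec e_{(s_1,1)}+\mec e_{(s_2,2)})$ (the paper writes $(-\mec e_{s_1},-\mec e_{s_2})$), not to $(-\mec e_{s_1},\mec e_{s_2})$. This is an \emph{unsigned} incidence vector, not $\pm(\mec e_{(s_2,2)}-\mec e_{(s_1,1)})$, so you are computing the cokernel of the wrong map. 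For unsigned vectors the component-sum augmentation does not kill the image: away from characteristic $2$, an edge vector has coefficient sum $-2$. Also, two vertices joined by a path of length $\ell$ agree in the cokernel only up to the sign $(-1)^\ell$. You cannot repair this on $G$ itself, because $G$ is no longer bipartite once $U$ is collapsed.

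The fix is exactly the rescaling step in the paper's proof. Since $G'$ is bipartite, apply, before collapsing, the automorphism of $k^{\oplus V'}$ that negates $\mec e_{(s,1)}$ for every $s\in\integers$. This automorphism has three relevant properties:
\begin{itemize}
\item it preserves $\langle \mec e_u\rangle_{u\in U}$, the image of $B_1\oplus B_2$;
\item it carries each true edge vector to $\pm(\mec e_{(s_2,2)}-\mec e_{(s_1,1)})$;
\item it changes each $\mec e_{v_i}$ only by a sign.
\end{itemize}
After this, your $H^1$ argument goes through verbatim. The collapse raises no infinitude issue either: $k^{\oplus V'}\to k^{\oplus V(G)}$ is surjective with kernel spanned by the finitely supported vectors $\mec e_u-\mec e_{u'}$.

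Your $b^0$ argument needs no change. The condition ``vanishes off $U$'' is checked separately on the left and right blocks, so it is insensitive to the sign error. On this point your proof does more than the paper, which simply cites Theorem~4.3 of \cite{folinsbee_friedman_euler} for both Betti number formulas. Your collapse-to-$v_0$ bookkeeping is also cleaner than the paper's sketch. The paper says edges with an endpoint in $U$ are ``sent to $0$'', which is false for edges with exactly one endpoint in $U$, and it drops the $v_0$-component without comment. In your version those edges become edges to $v_0$, and the $G_0$ summand is removed explicitly by quotienting by $\mec e_{v_0}$.
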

We depict $G'$ and $G$ in 
Figure~\ref{fi_G_G_prime_from_W_prime_prime}.

\begin{figure}
$$
\begin{tikzpicture}[scale=0.30]
\foreach \i in {-1,...,2} {
  \filldraw(0,\i*2) circle (5pt);
  \filldraw(6,\i*2) circle (5pt);
}
\node at (0,6) {$\vdots$};
\node at (6,6) {$\vdots$};
\node at (0,-4) {$\vdots$};
\node at (6,-4) {$\vdots$};
\node at (0,-8) {$\integers_{\rm left}=$};
\node at (0,-9.5) {$\integers\times\{1\}$};
\node at (6,-8) {$\integers_{\rm right}=$};
\node at (6,-9.5) {$\integers\times\{2\}$};
\node at (-1.5,0) {$s_1$};
\node at (7.6,2) {$s_2$};
\draw (0,0) -- (6,2);
\node at (-7.1,4) {\small there is an};
\node at (-6,2.5) {\small edge $\{s_1,s_2\}$ iff};
\node at (-6.5,1) {\small $W(s_1,s_2)>0$};
\node at (-8,-4) {\Huge $G'$};
\draw (0,2) -- (6,-2);
\draw (0,2) -- (6,4);
\draw (0,4) -- (6,0);
\end{tikzpicture}
\qquad\qquad
\begin{tikzpicture}[scale=0.30]
\foreach \i in {-1,...,2} {
  \filldraw(0,\i*2) circle (5pt);
  \filldraw(6,\i*2) circle (5pt);
}
\node at (0,6) {$\vdots$};
\node at (6,6) {$\vdots$};
\node at (0,-4) {$\vdots$};
\node at (6,-4) {$\vdots$};
\node at (0,-8) {$\integers_{\rm left}=$};
\node at (0,-9.5) {$\integers\times\{1\}$};
\node at (6,-8) {$\integers_{\rm right}=$};
\node at (6,-9.5) {$\integers\times\{2\}$};
% \node at (-1.5,0) {$s_1$};
% \node at (7.6,2) {$s_2$};
\draw (0,0) -- (6,2);
% \node at (-6,1) {\small if $W(s_1,s_2)>0$};
\node at (-5,-4) {\Huge $G$};
\draw (0,2) -- (6,-2);
\draw (0,2) -- (6,4);
\draw (0,4) -- (6,0);
\draw (-0.5,1) -- (0.5,1) -- (0.5,-4) -- (5.5,-4) -- (5.5,-1) -- (6.5,-1) -- (6.5,-6) -- (-0.5,-6) -- (-0.5,1);
\node at (3,-5) {$v_0$};
\node at (-2.5,0) {$d_1$};
\node at (-2.5,2) {$d_1+1$};
\node at (-2.5,4) {$d_1+2$};
\node at (8.5,-2) {$d_2$};
\node at (8.5,0) {$d_2+1$};
\node at (8.5,2) {$d_2+2$};
\end{tikzpicture}
$$
\caption{The graph $G'=(V',E')$: $V'$ consists of two copies of $\integers$,
one on the left, one on the right, plus $W(s_1,s_2)$ edges between 
$s_1\in\integers_{\rm left}$ and $s_2\in\integers_{\rm right}$.
$G$ is obtained from $G'$ by collasping the left vertices $\le d_1$
and the right vertices $\le d_2$ into a single vertex, $v_0$.
Therefore $G$ generally has self-loops about $v_0$, and $G$ is not
generally bipartite.
To compute 
$H^1(\cM_{W,\mec d})$ with $G'$ and $G$, 
multiple edges have no effect; so it is
enough to put an edge from $s_1$ on the left to $s_2$ on the right
when $W(s_1,s_2)>0$.
}
\label{fi_G_G_prime_from_W_prime_prime}
\end{figure}

\begin{proof}
Theorem~4.3 of \cite{folinsbee_friedman_euler} explicitly
describes the formulation of $G$ and $G'$, and states
\eqref{eq_betti_one_and_betti_zero_G}
and \eqref{eq_betti_zero_and_betti_one_G}.
The rest can be inferred from the proof of Theorem~4.3: let us review
the main
point.
Because for $i=1,2$ we have
$$
\coker\bigl(\cM_{W,\mec d}(\rho_{ii}) \bigr)
\isom\cM_{W,\mec d})(A_i)/\cM_{W,\mec d})(B_i)
\isom k^{\oplus\integers_{\ge d_i+1}},
$$
and because $\cM_{W,\mec d}(B_3)=k^{{\rm Multi}(W)}$, it follows that
the cokernel of $\tau$ above is the same as the cokernel
of the map
$$
\tilde\tau\from
% \cM_{W,\mec d}(B_3)=
k^{{\rm Multi}(W)}
\to
k^{\oplus\integers_{\ge d_1+1}}\oplus
k^{\oplus\integers_{\ge d_2+1}}
$$
each copy of $k$ coming from
a pair $(s_1,s_2)$ with $W(s_1,s_2)\ne 0$
is sent to $(-\mec e_{s_1},-\mec e_{s_2})$,
if $s_1\ge d_1+1$ and $s_2\ge d_2+1$,
but is sent to $0$ otherwise.
But this is the same as the map
$$
\bigoplus_{i\in I'} k^{\oplus E_i}
\to
\bigoplus_{i\in I'} k^{\oplus V_i}
$$
where each $e\in E_i$ corresponding to an $(s_1,s_2)$ with $W(s_1,s_2)>0$
is taken to $(-\mec e_{s_1},-\mec e_{s_2})$.
But each $G_i=(V_i,E_i)$ with $i\in I'$ is also bipartite, so
by scaling this map by negating the $V_i\cap \integers_{\ge d_1+1}$
vertices, this is equivalent to the map taking 
$e\in E_i$ as above to $(\mec e_{s_1},-\mec e_{s_2})$;
hence we get a direct sum of indicence matrices for $G_i$.
\end{proof}

\begin{notation}
In the formulation of $G'$ and $G$ above from $W$, we will use 
$\integers_{\rm left},\integers_{\rm right}$ to denote
$\integers\times\{1\},\integers\times\{2\}$, respectively,
and refer to vertices in $\integers_{\rm left}$ as {\em left vertices}
and similarly for $\integers_{\rm right}$.
\end{notation}

\begin{remark}
In computing $H^1(\cM_{W,\mec d})$ from $G$, we can replace any
multiple edge of $G$ with a single edge, since this doesn't change
the connected components of $G$.
\end{remark}

\begin{remark}
In Theorem~\ref{th_folinsbee_friedman_algorithm}, may be simpler to
think of $I'$ and the $G_i$ with $i\in I'$ as representing
the connected components of $G'$ that have no vertices
in $\integers_{\le d_1}$ on the left and none in
$\integers_{\le d_2}$ on the right.
This is because $G'$ is a simpler graph than $G$.
In fact, we will state our fundamental lemma in terms of $G'$.
\end{remark}

The following proposition is straightforward; it is involved in
how we state our fundamental lemma.

\begin{proposition}
\label{pr_equivalent_forms_of_left_right_connectivity}
Say that $W,W'$ are $r$-periodic
perfect matchings whose associated permutations
are $\pi,\pi'$.
Let $W''=W\star_r W'$.
\begin{enumerate}
\item
For all $\mec s\in\integers^2$, 
the following are equivalent:
\begin{enumerate}
\item
$W''(\mec s)>0$;
\item
there exists $\mec a\in\integers^2$
such that $W(\mec a)>0$ and $W'(\mec s-\mec a)>0$; and
% \item
% there exists $a_1\in\integers$ such that
% the unique $a_2$ satisfying $W(a_1,a_2)>0$ also satisfies
% $W'(s_1-a_1,s_2-a_2)>0$;
% \item
% there exists $a_1\in\integers$ such that
% $a_2=\pi(a_1)$ also satisfies
% $W'(s_1-a_1,s_2-a_2)>0$;
% \item
% there exists $a_2\in\integers$ such that
% the unique $a_1$ satisfying $W(a_1,a_2)>0$ satisfies
% $W'(s_1-a_1,s_2-a_2)>0$; and
\item
there exist $a,a'\in\integers$ such that
$$
s_1 = a+a', \quad
s_2 = \pi(a)+\pi'(a').
$$
\end{enumerate}
\item
For any $L_1,L_2\in\integers$, the following are equivalent:
\begin{enumerate}
\item
$\forall a\in\integers$, $\pi(a)+\pi'(L_1-a)=L_2$;
\item
$W''(L_1,s_2)>0$ implies that $s_2=L_2$;
\item
$W''(s_1,L_2)>0$ implies that $s_1=L_1$;
and
\item
$G'$ in Theorem~\ref{th_folinsbee_friedman_algorithm}
with $W=W''$ has a connected component consisting whose vertex set
consists entirely of
vertex $L_1$ on the left and vertex $L_2$ on the right.
\end{enumerate}
\end{enumerate}
\end{proposition}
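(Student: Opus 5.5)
The plan is to unwind the definitions using three facts: $W(\mec a)>0$ iff $\mec a=(a,\pi(a))$; $W,W'$ are nonnegative and $r$-periodic; and $\pi,\pi'$ are bijections. No cohomology is needed.

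For part~(1), I would first show (a)$\iff$(b). By definition,
$W''(\mec s)=\sum_{a_1=0}^{r-1}\sum_{a_2} W(\mec a)W'(\mec s-\mec a)$ is a sum of nonnegative terms. So $W''(\mec s)>0$ iff some term $W(\mec a)W'(\mec s-\mec a)$ is positive with $0\le a_1\le r-1$. By $r$-periodicity of both $W$ and $W'$, the product $W(\mec a)W'(\mec s-\mec a)$ is unchanged when $\mec a$ is replaced by $\mec a+q(r,-r)$. Any $\mec a\in\integers^2$ can be translated in this way to one with $a_1\in\{0,\ldots,r-1\}$, so the restriction on $a_1$ is harmless. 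This gives (a)$\iff$(b). Next, (b)$\iff$(c) follows by writing $\mec a=(a,\pi(a))$ and $\mec s-\mec a=(a',\pi'(a'))$, since these are exactly the points where $W$, respectively $W'$, is positive.

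For part~(2), I would use (1c) to describe two sets:
\[
\{s_2 : W''(L_1,s_2)>0\}=\{\pi(a)+\pi'(L_1-a) : a\in\integers\},
\]
and $\{s_1 : W''(s_1,L_2)>0\}$ is the set of $a+a'$ with $\pi(a)+\pi'(a')=L_2$. Both sets are nonempty: the first trivially, and the second because for each $a$ we may take $a'=\pi'^{-1}(L_2-\pi(a))$.

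Then (a)$\iff$(b) is immediate: (b) says the first set is contained in $\{L_2\}$, which is exactly (a).

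For (a)$\implies$(c), suppose $\pi(a)+\pi'(a')=L_2$. By (a), $\pi(a)+\pi'(L_1-a)=L_2$, so $\pi'(a')=\pi'(L_1-a)$. Injectivity of $\pi'$ gives $a+a'=L_1$.

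For (c)$\implies$(a), fix $a$ and set $a'=\pi'^{-1}(L_2-\pi(a))$. Then $W''(a+a',L_2)>0$, so (c) forces $a'=L_1-a$, which is exactly (a).

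Finally, for (d): in $G'$ built from $W''$, the left vertex $L_1$ is adjacent precisely to the right vertices $s_2$ with $W''(L_1,s_2)>0$. Similarly, the right vertex $L_2$ is adjacent precisely to the left vertices $s_1$ with $W''(s_1,L_2)>0$.

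If (b) and (c) hold, then the neighborhood of $L_1$ is exactly $\{L_2\}$ (by nonemptiness) and the neighborhood of $L_2$ is exactly $\{L_1\}$. Hence $\{L_1,L_2\}$ is a connected component.

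Conversely, if $\{L_1\text{ left},L_2\text{ right}\}$ is a component, its two vertices can only be adjacent to each other, which is (b) and (c).

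Since (b)$\iff$(c) was already shown, all four conditions are equivalent. I expect no real obstacle. The one point to watch is the use of nonemptiness together with bijectivity of $\pi'$ in (c)$\implies$(a) and in (d).
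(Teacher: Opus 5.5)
Your proposal is correct and follows essentially the same route as the paper. It proves (1a)$\iff$(1b) by nonnegativity plus $r$-periodic translation of $\mec a$, (1b)$\iff$(1c) by parametrizing the supports via $\pi,\pi'$, and part~(2) using injectivity and surjectivity of $\pi'$. The differences are cosmetic: you prove (2a)$\iff$(2c) directly where the paper argues (2b)$\iff$(2c) through negations, and you make explicit the nonemptiness of the neighborhoods of $L_1$ and $L_2$ needed for (2d), which the paper leaves implicit in ``by the definition of $G'$''.
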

\begin{proof}
(1): 
Corollary~\ref{co_r_periodic_convolution_gives_tensor_product_over_cO_r}
shows that $W''(\mec s)>0$ iff for some $a_1=0,\ldots,r-1$ and
$a_2\in\integers$ we have 
\begin{equation}\label{eq_W_at_a_positive_W_prime_at_etc}
W(\mec a)>0 \quad \mbox{and} \quad W'(\mec s- \mec a)>0.
\end{equation} 
The latter implies the (1b).
Now assume (1b), 
i.e., that
$\tilde a_1\in\integers$ and $\tilde a_2\in\integers$ satisfy
$$
W(\tilde a_1,\tilde a_2)>0
\quad\mbox{and}\quad
W(s_1-\tilde a_1,s_2-\tilde a_2)>0
$$
then for some $q\in\integers$ we have $0\le \tilde a_1-qr \le r-1$.
Since $W,W'$ are $r$-periodic, the latter implies that
$$
W(\tilde a_1-qk,\tilde a_2+qk)>0, 
\quad\mbox{and}\quad
W'(s_1-\tilde a_1+qk,s_2-\tilde a_2-qk)>0
$$
which implies 
\eqref{eq_W_at_a_positive_W_prime_at_etc}
for $a_1=\tilde a_1-qr$ and $a_2=\tilde a_2+qk$,
which makes $0\le a_1\le r-1$.
Hence (1a) and (1b) are equivalent.

% (1b) and~(1c) are equivalenet because for each $a_1\in\integers$
% there is a unique $a_2\in\integers$ such that $W(a_1,a_2)$>0
% (since $W$ is a perfect matching).
% 
% Similarly, (1b) and (1d) are equivalent.

Now we show that~(1b) and~(1c) are equivalenet: (1b) is equivalent to
$a_2=\pi(a_1)$ and $s_2-a_2=\pi'(s_1-a_1)$;
and setting $a_1'=s_1-a_1$, this equivalent the existence of $a_1,a_1'$
such that
$$
s_1=a_1+a_1' \quad\mbox{and}\quad
s_2=a_2 +\pi'(s_1-a_1);
$$
but in these last equations
$$
s_2=a_2+\pi'(s_1-a_1) = \pi(a_1)+\pi'(a_1') .
$$
Hence~(1b) and~(1c) are equivalent.

Hence (1a)--(1c) are equivalent.

(2): 
Let us show that (2b) and (2a) are equivalent, by showing that
their negations are equivalent:
the negation of (2b) is that there exists $s_2\ne L_2$ such that
$W''(L_1,s_2)>0$,
which by~(1) is equivalent to there exist $a,a'\in\integers$ such that
$L_1=a+a'$ and $\pi(a)+\pi(a')=s_2\ne L_2$; since $a'$ is determined
as $a'=L_1-a$, this is equivalent to the existence of
$a$ such that $\pi(a)+\pi'(L_1-a)\ne L_2$.
But this is the negation of (2a); hence
(2a) and (2b) are equivalent.

Similarly, the negation of (2c) is equivalent to the existence of
$a,a'\in\integers$ such that
$a+a'\ne L_1$ and $\pi(a)+\pi'(a')=L_2$.
But this implies $\pi(a)+\pi'(L_1-a)\ne L_2$, since $\pi'$ is a bijection.
Hence, by~(1c), $W(L_1,s_2)>0$ where
$$
s_2 = \pi(a)+\pi'(L_1-a) \ne L_2.
$$
Hence the negation of (2c) implies the negation of (2b).
By the symmetric argument, the negation of (2b) implies the
negation of (2c).
Hence (2b) and (2c) are equivalent, which are therefore also equivalent 
to (2a).

(2b) and (2c) are equivalent to (2d), by the definition of $G'$.
Hence (2a)--(2d) are equivalent.
\end{proof}

\subsection{The Fundamental Lemma}

The discussion in the last subsection 
motivates the fundamental lemma we now give.

\begin{lemma}\label{le_fundamental_two_bijections}
Let $\pi,\pi'$ be permutations of bounded support
(i.e., for some constants $C,C'$ we have
$|\pi(a)+a|\le C$ and $|\pi'(a)+a|\le C'$ for all $a\in\integers$).
Let $G'$ be the following bipartite graph:
\begin{enumerate}
\item
its vertex set is $V=\integers_{\rm left}\amalg\integers_{\rm right}$, 
where $\integers_{\rm left},\integers_{\rm right}$ are each a copy
of $\integers$;
\item 
each $a,a'\in\integers$ there is an edge joining 
$a+a'$ on the left (i.e., $a+a'\in\integers_{\rm left}$) to
$\pi(a)+\pi'(a')$ on the right (i.e., in $\integers_{\rm right}$)
\end{enumerate}
(multiple edges are unimportant here).
Then for any $L_1\in\integers$, either
\begin{enumerate}
\item
all edges from $L_1$ on the left are incident upon a single vertx $L_2$ on 
the right (in which case $\pi(x)+\pi'(L_1-x)=L_2$, i.e., if $W,W'$
are the weights associated to $\pi,\pi'$, then $W'=W_\mec L^*$), or
\item
for any $k\in\integers$, there is a path from $L_1$ on the left 
to some left or right vertex
whose value is at most $k$.
\end{enumerate}
\end{lemma}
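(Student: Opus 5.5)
The plan is to prove the contrapositive of (2). Suppose that from $L_1$ on the left one cannot reach vertices of arbitrarily small value. I will show that the connected component $C$ of $L_1$ in $G'$ is then a single edge $\{L_1,L_2\}$, which is alternative (1).

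\textbf{Step 1: such a component is finite.} Write $\pi(a)=-a+g(a)$ and $\pi'(a')=-a'+g'(a')$ with $|g|\le C$ and $|g'|\le C'$. Every edge joins $s=a+a'$ on the left to
$$
t=\pi(a)+\pi'(a')=-s+g(a)+g'(a')
$$
on the right, so $|s+t|\le C+C'$. If every vertex of $C$ has value at least $k$, then every left vertex $s$ of $C$ satisfies $s\le -k+C+C'$, because it has a right neighbour $t\ge k$. Likewise every right vertex of $C$ is at most $-k+C+C'$. Hence the left vertex set $S$ and the right vertex set $T$ of $C$ are finite.

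\textbf{Step 2: the tiling identity.} Because $C$ is a union of connected components, an edge $(a,a')$ has its left endpoint in $S$ if and only if its right endpoint is in $T$. Fixing $a$, this says
$$
S-a \;=\; (\pi')^{-1}\bigl(T-\pi(a)\bigr) \qquad\mbox{for all } a\in\integers .
$$
Since $\pi'$ is a bijection, this forces $|S|=|T|$. Symmetrically, fixing $a'$ gives $S-a'=\pi^{-1}\bigl(T-\pi'(a')\bigr)$ for all $a'$.

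\textbf{Step 3: show $|S|=1$.} This is the step I expect to be the main obstacle. Suppose instead that $|S|\ge 2$, and write $m=\min S$ and $M=\max S>m$.
\begin{enumerate}
\item First I would sum the identity of Step~2 over $S$. This gives that $\sum_{x\in S}\pi'(x-a)+|S|\pi(a)=\sum T$ is independent of $a$.
\item Differencing in $a$ then gives
$$
|S|\bigl(\pi(a+1)-\pi(a)\bigr)=\sum_{x\in S}\bigl(\pi'(x-a)-\pi'(x-a-1)\bigr).
$$
\item Next I would sum this over a long window $a\in[N_1,N_2]$. The right side telescopes in blocks, and the bounded-support estimates $\pi(a)=-a+O(1)$, $\pi'(a')=-a'+O(1)$ control the boundary terms.
\end{enumerate}
These sums alone only reproduce the average slope $-1$, so more information is needed. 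I would combine the identity with its mirror version from Step~2 and exploit extremality. The set $T-\pi(a)$ is pulled back by $\pi'$ to an exact translate of $S$. Comparing the pullbacks for two values of $a$ whose images $\pi(a)$ differ by less than $\operatorname{diam} T$ should show that $\pi'$ maps an interval-like translate of $S$ rigidly. I would then derive a contradiction from $\pi'$ being a bijection with slope $-1$ on average, which gives a collision or a gap. If this direct attack stalls, the fallback is a counting argument over the window $a,a'\in[-N,N]$. There I would compare the number of edges with left endpoint in $S+\integers$-shifts against the number hitting the corresponding right sets, and aim for $|S|^2=|S|$.

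\textbf{Conclusion.} Once $|S|=|T|=1$, the identity of Step~2 reads $\{L_1-a\}=(\pi')^{-1}\bigl(\{L_2-\pi(a)\}\bigr)$. That is, $\pi(a)+\pi'(L_1-a)=L_2$ for all $a$, which is alternative (1). By Proposition~\ref{pr_equivalent_forms_of_left_right_connectivity}(2) this is the same as $W'=W^*_{\mec L}$.
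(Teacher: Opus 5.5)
Steps~1 and~2 are correct: the component is finite, and $S-a=(\pi')^{-1}(T-\pi(a))$ gives $|S|=|T|$. Your final deduction once $|S|=1$ is also correct. The gap is Step~3, which is the whole content of the lemma and is not proved. The sums you write down only recover the average slope $-1$, as you note yourself, and the rest is heuristic.

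More seriously, the ingredients you propose cannot suffice as they stand. The identity of Step~2 and its mirror use only that $S\cup T$ is closed under adjacency. So they hold for every finite \emph{union} of components, and there $|S|\ge 2$ really occurs. Take $\pi=\pi'=-\mathrm{id}$, $S=\{0,1\}$, $T=\{0,-1\}$. Both identities hold, $\min S<\max S$, and $\pi'$ maps every translate of $S$ rigidly onto a translate of $T$, with no collision or gap. Hence any route to $|S|=1$ must use that $C$ is \emph{connected}, and your plan never does.

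The paper's proof supplies exactly this. Put $m=\min S$, $M=\max S$, $\Delta=M-m$.
\begin{enumerate}
\item First, $\Delta=\max T-\min T$. Bound the edges at $m$ above by $\max T$ and the edges at $M$ below by $\min T$, telescope, and use $\pi'(b)=-b+O(1)$. The reverse inequality follows symmetrically, with $\pi^{-1},(\pi')^{-1}$ and left and right swapped.
\item Next, the edges $(x,m-x)$ at $m$ and $(x+\Delta,m-x)$ at $M$ share $a'$. The same bounds give $\pi(x)-\pi(x+\Delta)\le\Delta$ for every $x$, strictly unless the edge at $m$ lands on $\max T$. Thus $\tilde\pi(x):=\pi(x+\Delta)+\Delta\ge\pi(x)$.
\item Since $\pi(x)=-x+O(1)$, the sums of the non-negative integers $\tilde\pi(x)-\pi(x)$ over any block of consecutive terms of a residue class modulo $\Delta$ telescope to $O(1)$. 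So $F=\{x:\tilde\pi(x)\ne\pi(x)\}$ is finite.
\item Both $\tilde\pi$ and $\pi$ are bijections agreeing off $F$, so $\tilde\pi(F)=\pi(F)$. Comparing sums over $F$ then forces $F=\emptyset$.
\item Hence every edge at $m$ goes to $\max T$. By injectivity of $\pi'$, the vertex $\max T$ is adjacent only to $m$. So $\{m,\max T\}$ is an entire component, and connectivity of $C$ gives $M=m$.
\end{enumerate}

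One caveat if you compare with the paper's text. Its displayed inequalities in this step have the sign of $\Delta$ reversed. It also claims $\tilde\pi=\pi$ off $X_0$, which at that stage is only $\tilde\pi\ge\pi$. Running the finiteness and permutation argument on $F\supseteq X_0$, as above, repairs this.
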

\begin{proof}
Say that the hypotheses~(1) and~(2) hold, but for some $L_1$ neither~(1) 
nor~(2) holds; we will derive a contradiction.
So let $S_1$ be all the left vertices in the connected component
of $L_1$ in $G'$, and $S_2$ all the right vertices.
Then $S_1$ must have a minimum element; but since $\pi,\pi'$ are
of bounded support, then $S_1$ must have a maximum element, for 
otherwise $L_1$ has a path to an arbitrarily large left vertex and
therefore a path to an arbitrarily small right vertex.
Let $L_1^{\rm min}$ and $L_1^{\rm max}$ be the minimum and maximum
values of $S_1$; 
similarly $S_2$ has a minimum and a maximum value which we respectively
denote $L_2^{\rm min}$ and $L_2^{\rm max}$.

Let 
$$
\Delta_1 = L_1^{\rm max}-L_1^{\rm min},\quad
\Delta_2 = L_2^{\rm max}-L_2^{\rm min};
$$
let us prove that $\Delta_1=\Delta_2$:
we have that for any $a\in\integers$
$$
\pi(a)+\pi'(L_1^{\rm min} - a) \le L_2^{\rm max},
$$
and
$$
\pi(a)+\pi'(L_1^{\rm max}-a)\ge L_2^{\rm min}.
$$
Subtracting these two equations gives
$$
\forall a\in\integers,
\quad 
\pi'(L_1^{\rm max}-a) - \pi'(L_1^{\rm min} - a) \ge -\Delta_2 ;
$$
substituting $b=L_1^{\rm min} - a$, we have that
$$
\forall b\in\integers,
\quad 
\pi'(b+\Delta_1) - \pi'(b) \ge -\Delta_2 .
$$
Taking $b$ to be 
$0,\Delta_1,\ldots,(\ell-1)\Delta_1$ for some $\ell\in\naturals$ and adding,
we have
$$
\pi'(\ell \Delta_1)-\pi'(0) \ge -\ell \Delta_2 .
$$
Since $\pi'$ is of bounded support, we have
$$
-\ell\Delta_1 - \pi'(\ell \Delta_1) \ge C'
$$
for some constant $C'$
(depending only on $\pi'$).  Adding the last two inequalities we get
$$
-\ell\Delta_1 -\pi'(0) \ge -\ell\Delta_2 + C'.
$$
Dividing by $\ell$ and
taking $\ell\to\infty$ we get $-\Delta_1\ge-\Delta_2$
or $\Delta_1\le \Delta_2$.
The symmetric argument shows that $\Delta_2\le \Delta_1$, and
hence $\Delta_1=\Delta_2$.

Now let $\Delta=\Delta_1=\Delta_2$;
since conclusion~(1) of the lemma does not hold, we have $\Delta>0$.
Let us show that this is impossible.

By assumption, $L_1^{\rm max}$ on the left is connected only to
$\ell$ on the right with $\ell\ge L_2^{\rm min}$, i.e., 
$$
\forall y\in\integers,
\quad
\pi(y)+\pi'(L_1^{\rm max}-y) \ge L_2^{\rm min}.
$$
Letting $x=y-\Delta$, we have 
\begin{equation}\label{eq_L_1_max_imposs}
\forall x\in\integers,
\quad
\pi(x+\Delta)+\pi'(L_1^{\rm min}-x) \ge L_2^{\rm min}.
\end{equation} 
Since $L_1^{\rm min}$ only has right vertices somewhere between
$L_2^{\rm min}$ and $L_2^{\rm max}$, if
$$
X_0=\{ x \ | \ \pi(x)+\pi'(L_1^{\rm min}-x) \le  L_2^{\rm max} - 1 \}
$$
then 
\begin{enumerate}
\item
$x\in X_0$ implies that
$$
\pi(x)+\pi'(L_1^{\rm min}-x) \le  L_2^{\rm max} - 1
$$
and hence, in view of \eqref{eq_L_1_max_imposs},
$$
\pi(x)-\pi(x+\Delta) \le -\Delta-1;
$$
and
\item
otherwise, 
\begin{equation}\label{eq_pi_and_pi_prime_connect_L_one_to_L_two}
\forall x\notin X_0,
\quad\pi(x)+\pi'(L_1^{\rm min}-x) = L_2^{\rm max}.
\end{equation}
and hence
$$
\forall x\notin X_0,
\quad
\pi(x)-\pi(x+\Delta) = -\Delta.
$$
\end{enumerate}
We aim to show that $X_0=\emptyset$.

First, let us show that $|X_0|$ is finite:
for any $a,B\in\naturals$,
if 
$$
s=s_{a,B} 
= \Bigl| \{a-B\Delta,\ a-(B-1)\Delta, \ldots, a+B(-1)\Delta\}\cap X_0 \Bigr|
$$
then
$$
\pi(a-B\Delta)-\pi(a+B\Delta) \le -(2B)\Delta-s.
$$
Hence
$$
s\le -(2B)\Delta + \pi(a+B\Delta) - \pi(a-B\Delta).
$$
Since $\pi$ is of bounded support, the right hand side above is bounded
for any fixed $a$ and $B\to\infty$;
but 
$$
s_{a,+\infty} 
\eqdef \lim_{B\to\infty} s_{a,B}
= \{ x\in X_0 \ | \ x\equiv a \pmod{\Delta} \},
$$
and each is bounded for fixed $a$.
Taking $a=0,1,\ldots,B-1$, we have
$$
|X_0| = s_{0,+\infty} + \cdots s_{B-1,+\infty}
$$
which is bounded.
Hence $X_0$ is finite.

Now we prove that $X_0$ is empty:  indeed, let
$$
\tilde\pi(y)=\pi(y+\Delta)+\Delta.
$$
Since $\pi$ is a permutation, so is $\tilde\pi$.  But
$\tilde\pi(x)=\pi(x)$ for $x\notin X_0$, and 
$\tilde\pi(x)\le\pi(x)-1$ for $x\in X_0$.
Hence, since $X_0$ is finite,
$$
\sum_{x\in X_0} \tilde\pi(x) \le -|X_0|+\sum_{x\in X_0}\pi(x) ;
$$
and so
\begin{equation}\label{eq_X_naught_bound}
|X_0| \le \sum_{x\in X_0}\pi(x) - \sum_{x\in X_0} \tilde\pi(x).
\end{equation} 
But
since $\tilde\pi$ and $\pi$ agree precisely on the complement of $X_0$,
we have 
$$
\tilde\pi(\integers\setminus X_0) = \pi(\integers\setminus X_0)
$$
(an equality of sets), and therefore we have the equality of sets
$$
\tilde\pi(X_0) = \pi(X_0).
$$
Hence
$$
\sum_{x\in X_0} \tilde\pi(x) = \sum_{x\in X_0}\pi(x),
$$
and hence \eqref{eq_X_naught_bound} implies that $|X_0|=0$.

Hence $X_0$ is empty.
Then \eqref{eq_pi_and_pi_prime_connect_L_one_to_L_two} implies that
$$
\forall x\in\integers,
\quad
\quad\pi(x)+\pi'(L_1^{\rm min}-x) = L_2^{\rm max}.
$$
Hence it follows that all edges from $L_1^{\rm min}$ on the left
join $L_2^{\rm max}$ on the right.
Hence $L_1$ can only be connected to $L_1^{\rm min}$ and $L_2^{\rm max}$
if $L_1=L_1^{\rm min}$.
Hence $\Delta=0$, which is precisely conclusion~(1) of the
lemma.
(Which is a contradiction, since we assumed both conclusions~(1)
and~(2) do not hold.)
\end{proof}

\subsection{An Easy Consequence of the Fundamental Lemma}

To use
Lemma~\ref{le_fundamental_two_bijections}
to prove Theorem~\ref{th_H_one_equals_k_in_rare_case}, we will need
some easy facts; we will combine them with
Lemma~\ref{le_fundamental_two_bijections} in the statement of
Lemma~\ref{le_consequence_of_fundamental_lemma} below.

\begin{definition}
Let $\pi$ be a bijection $\integers\to\integers$ of bounded support.
For any $r\in\integers$, we say that $\pi$ is $r$-periodic if
$$
\forall x\in\integers,\quad
\pi(x+r) = \pi(x) - r.
$$
The {\em period} of $\pi$ is the smallest $r\ge 1$ such that
$\pi$ is $r$-periodic; if no such $r$ exists, then we say that
$\pi$ is {\em unperiodic}.
\end{definition}

\begin{lemma}
\label{le_pi_and_pi_prime_have_the_same_period}
Let $\pi,\pi'$ be perfect matchings of bounded support.
Say that for some $L_1,L_2\in\integers$ and all $x\in\integers$ we have
$$
\pi(x)+\pi'(L_1-x) = L_2.
$$
Then either $\pi,\pi'$ are unperiodic, or they have the same period.
\end{lemma}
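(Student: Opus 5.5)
The identity $\pi(x)+\pi'(L_1-x)=L_2$ for all $x$ lets us write $\pi'$ in terms of $\pi$:
$$
\pi'(y)=L_2-\pi(L_1-y)\qquad\mbox{for all } y\in\integers .
$$
The plan is to show that $\pi$ is $r$-periodic if and only if $\pi'$ is $r$-periodic, for every $r\ge 1$. The lemma then follows at once. The set of $r\ge1$ for which $\pi$ is $r$-periodic equals the set for which $\pi'$ is. If this set is empty, both maps are unperiodic. Otherwise both sets have the same minimum, so the two periods coincide.

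First suppose $\pi$ is $r$-periodic, so $\pi(x+r)=\pi(x)-r$ for all $x$. Replacing $x$ by $x-r$ gives $\pi(x-r)=\pi(x)+r$. Then
$$
\pi'(y+r)=L_2-\pi(L_1-y-r)=L_2-\bigl(\pi(L_1-y)+r\bigr)=\pi'(y)-r,
$$
so $\pi'$ is $r$-periodic.

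For the converse, solve the identity for $\pi$ instead. Putting $x=L_1-y$ gives $\pi(x)=L_2-\pi'(L_1-x)$, which has the same form with the roles of $\pi$ and $\pi'$ exchanged. The same computation therefore shows that if $\pi'$ is $r$-periodic, so is $\pi$.

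The one point to check is the sign bookkeeping in the shift $\pi(x-r)=\pi(x)+r$. The bounded-support hypothesis is not needed.
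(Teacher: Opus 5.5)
Your proposal is correct and follows the paper's own argument: you show that $\pi$ and $\pi'$ are $r$-periodic for exactly the same $r\ge 1$ by substituting the identity and shifting by $r$, and you handle the converse via the symmetry of $\pi(x)+\pi'(L_1-x)=L_2$ in $\pi$ and $\pi'$. The conclusion about periods then follows as you say. You are also right that bounded support plays no role in this step.
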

\begin{proof}
Say that $\pi$ is $r$-periodic, i.e.,
\begin{equation}\label{eq_condition_pi_is_r_periodic_in_same_period_proof}
\forall x\in\integers,\quad
\pi(x+r)=\pi(x)-r.
\end{equation} 
Then the assumption that $\pi(x)=L_2-\pi'(L_1-x)$ for all $x$ implies
$$
\forall x\in\integers,\quad
\pi'(L_1-x+r) = L_2 - \pi(x-r)=L_2 - \pi(x)-r;
$$
since $\pi(x)=L_2-\pi'(L_1-x)$, the RHS above becomes
$$
L_2 - \bigl(L_2-\pi'(L_1-x) \bigr) - r = \pi'(L_1-x)-r;
$$
setting $y=L_1-x$, it follows that 
\begin{equation}\label{eq_pi_prime_in_y_is_r_periodic_if_pi_is}
\forall y\in\integers,\quad
\pi'(y+r) = \pi'(y)-r,
\end{equation} 
and therefore $\pi'$ is $r$-periodic.
Arguing backwards, we see that
\eqref{eq_pi_prime_in_y_is_r_periodic_if_pi_is} implies
\eqref{eq_condition_pi_is_r_periodic_in_same_period_proof}; hence
if $\pi'$ is $r$-periodic, then so is $\pi$.
Hence $\pi$ and $\pi'$ are $r$-periodic for the exact same set of
$r\in\integers$.
It follows that if either $\pi$ or $\pi'$ is periodic, then they have
the same period, and if one of them is unperiodic then so is the other.
\end{proof}

\begin{lemma}
\label{le_the_Ls_such_that_pi_x_plus_pi_prime_etc}
Let $\pi,\pi'$ be perfect matchings of bounded support.
Say that $\pi$ has period $r\ge 1$, and that for some $L_1,L_2\in\integers$
we have
$$
\forall x, \quad 
\pi(x)+\pi'(L_1-x) = L_2
$$
Then we have that for all $L_1',L_2'\in\integers$ the following are 
equivalent:
\begin{enumerate}
\item 
\begin{equation}\label{eq_pi_pi_prime_related_by_L_one_prime_L_two_prime}
\forall x, \quad 
\pi(x)+\pi'(L_1'-x) = L_2'
\end{equation} 
\item 
For some $q\in\integers$ we have $L_1'=L_1+qr$ and $L_2'=L_2-qr$.
\end{enumerate}
\end{lemma}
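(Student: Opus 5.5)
We prove (2)$\Rightarrow$(1) by a direct substitution, and (1)$\Rightarrow$(2) by comparing the two identities and using that $\pi$ has period exactly $r$. First note that by Lemma~\ref{le_pi_and_pi_prime_have_the_same_period}, $\pi'$ is also $r$-periodic, since $\pi$ is.

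\emph{(2) implies (1).} Suppose $L_1'=L_1+qr$ and $L_2'=L_2-qr$. The $r$-periodicity of $\pi'$ iterates to $\pi'(y+qr)=\pi'(y)-qr$ for every $q\in\integers$ (for negative $q$, rearrange the defining identity). Hence
$$
\pi(x)+\pi'(L_1'-x)=\pi(x)+\pi'(L_1-x)-qr=L_2-qr=L_2'.
$$

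\emph{(1) implies (2).} Suppose both identities hold for all $x$. Subtracting them gives
$$
\pi'(L_1'-x)-\pi'(L_1-x)=L_2'-L_2 \quad\mbox{for all } x.
$$
Substituting $y=L_1-x$ and setting $t=L_1'-L_1$, this reads $\pi'(y+t)=\pi'(y)+(L_2'-L_2)$ for all $y$. We must show that $L_2'-L_2=-t$ and that $r\mid t$.

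To get $L_2'-L_2=-t$, use bounded support. Iterating gives $\pi'(mt)=\pi'(0)+m(L_2'-L_2)$ for $m\in\naturals$. Since $|\pi'(a)+a|\le C'$, the quantity $\pi'(mt)+mt$ stays bounded, so $m\bigl((L_2'-L_2)+t\bigr)$ is bounded in $m$. This forces $L_2'-L_2=-t$. (If $t=0$, the subtracted identity gives $L_2'=L_2$ directly.) Therefore $\pi'(y+t)=\pi'(y)-t$ for all $y$, so $\pi'$ is $t$-periodic in the sense of the definition. Running the argument of Lemma~\ref{le_pi_and_pi_prime_have_the_same_period} backwards, $\pi$ is then also $t$-periodic, and likewise $|t|$-periodic.

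The main obstacle is the divisibility step: concluding $r\mid t$ from the $t$-periodicity of $\pi$ and the fact that $r$ is its period. The set of $s\in\integers$ with $\pi(x+s)=\pi(x)-s$ for all $x$ is a subgroup of $\integers$: it is closed under composition of shifts and under negation, by replacing $x$ with $x-s$. Since $r$ is the smallest positive element of this subgroup, the subgroup equals $r\integers$. Hence $t=qr$ for some $q\in\integers$, so $L_1'=L_1+qr$ and $L_2'=L_2-t=L_2-qr$, which is (2).
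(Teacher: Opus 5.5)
Your proof is correct and is essentially the paper's argument: you subtract the two identities, use the telescoping bound from bounded support to force the constant to be $-t$, and conclude via the minimality of the period $r$. The only differences are that you get divisibility $r\mid t$ from the subgroup of periods rather than by first reducing $L_1'-L_1$ to a residue $r'\in[1,r-1]$ and deriving a contradiction, and that you derive $L_2'=L_2-qr$ explicitly, which the paper leaves implicit.
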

\begin{proof}
(2) easily implies (1), since if $\pi$ is $r$-periodic, then so is
$\pi'$, and then for any $q\in\integers$ we have
$$
\pi(x)+\pi'(L_1'-x)=
\pi(x)+\pi'(L_1+qr-x) = \pi(x)+\pi'(L_1-x)-qr = L_2-qr = L_2'.
$$
Hence it suffices to show~(1) implies~(2).

If \eqref{eq_pi_pi_prime_related_by_L_one_prime_L_two_prime} holds and
$L_1'\ne L_1+kr$ for some $k\in\integers$, then we have
$L_1+kr < L_1' < L_1+kr+r$ for some $k\in\integers$;
set $r'=L_1'-L_1-kr$, so $1\le r'<r$.
Then, using the $r$-periodicity of $\pi'$, we have
$$
\forall x, \quad 
\pi(L_1-x)-L_2
=
\pi'(L_1'-x) - L_2' = \bigl( \pi'(L_1'-x-kr)+kr \bigr) - L_2'
= \pi'(L_1+r'-x) - L_2'+kr.
$$
Setting $y=L_1'-x$, we have for all $y$,
$$
\pi'(y)-\pi'(r'+y)=C
$$
for some constant $C$.  Now we claim that $C=r'$, for applying the
equation above to $y,y+r',\ldots,y+r'(a-1)$ for some $a\in\naturals$
and adding we get
$$
\pi'(y)-\pi'(ar'+y)=Ca;
$$
taking $a\to\infty$, and using the fact that $\pi'$ is bounded, we get
that $C=-r'$.
But then
$$
\forall y\in\integers,
\quad
\pi'(y)-\pi'(r'+y)=-r',
$$
and so $\pi'$ is $r'$-periodic for some $1\le r'<r$, and hence so
is $\pi$, which is impossible.
\end{proof}

Combining 
Lemmas~\ref{le_pi_and_pi_prime_have_the_same_period}
and~\ref{le_the_Ls_such_that_pi_x_plus_pi_prime_etc}
we immediately get an important conclusion.

\begin{lemma}
\label{le_consequence_of_fundamental_lemma}
Say that in Lemma~\ref{le_fundamental_two_bijections},
$\pi$ has period $r\ge 1$ and there exists $L_1,L_2\in\integers$
such that
$$
\forall x\in\integers,\quad
\pi(x)+\pi'(L_1-x) = L_2.
$$
Then for the graph $G'$ of Lemma~\ref{le_fundamental_two_bijections}
(and of Theorem~\ref{th_folinsbee_friedman_algorithm}),
for all $m\in\integers$:
\begin{enumerate}
\item
if $m=L_1+qr$ for some $q\in\integers$, then all edges from $m$ on the
left are adjacent to and only to the single vertex $L_2-qr$; and
\item 
otherwise, $m$ on the left is connected to some left or right vertex
of $G'$ of arbitrarily small value (and hence lies in the $v_0$
component of $G$).
\end{enumerate}
\end{lemma}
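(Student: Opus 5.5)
The plan is to apply Lemma~\ref{le_fundamental_two_bijections} to each left vertex $m$ of $G'$ and to use Lemma~\ref{le_the_Ls_such_that_pi_x_plus_pi_prime_etc} to decide which alternative of the fundamental lemma occurs. Fix $m\in\integers$. By Lemma~\ref{le_fundamental_two_bijections} with $L_1$ replaced by $m$, either (a) all edges from $m$ on the left go to a single right vertex $L_2'$, or (b) $m$ on the left is connected to left or right vertices of arbitrarily small value.

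In case (a), the edges from $m$ on the left are exactly those joining $m=a+a'$ to $\pi(a)+\pi'(a')$, with $a'=m-a$ and $a\in\integers$ arbitrary. So (a) is equivalent to
$$
\forall x\in\integers,\quad \pi(x)+\pi'(m-x)=L_2' .
$$
This is condition~(1) of Lemma~\ref{le_the_Ls_such_that_pi_x_plus_pi_prime_etc} with $(L_1',L_2')=(m,L_2')$. By hypothesis $\pi$ has period $r$ and satisfies the relation for $(L_1,L_2)$, so that lemma gives $m=L_1+qr$ and $L_2'=L_2-qr$ for some $q\in\integers$. (Lemma~\ref{le_pi_and_pi_prime_have_the_same_period} is used implicitly inside that lemma, to transfer $r$-periodicity from $\pi$ to $\pi'$.)

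Conversely, suppose $m=L_1+qr$. The implication (2)$\Rightarrow$(1) of Lemma~\ref{le_the_Ls_such_that_pi_x_plus_pi_prime_etc} gives $\pi(x)+\pi'(m-x)=L_2-qr$ for all $x$. Hence every edge from $m$ lands on $L_2-qr$, and every value of $x$ produces such an edge, so the vertex is indeed adjacent to it. This proves~(1).

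If $m$ is not of the form $L_1+qr$, the previous paragraph rules out alternative (a), so alternative (b) holds. That is conclusion~(2). Since $G$ is obtained by collapsing all left vertices $\le d_1$ and all right vertices $\le d_2$ into $v_0$, a path in $G'$ from $m$ to a vertex of value at most $\min(d_1,d_2)$ places $m$ in the $v_0$ component of $G$.

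I expect no real obstacle, since the work is done by the earlier lemmas. The only care needed is in translating ``all edges from $m$ go to one vertex'' into the functional identity quantified over all $x$. This uses that every $a\in\integers$ gives an edge, so the pairs $(a,m-a)$ exhaust the edges at $m$.
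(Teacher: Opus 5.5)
Your proposal is correct and follows the paper's own argument, which simply combines the dichotomy of Lemma~\ref{le_fundamental_two_bijections} with Lemma~\ref{le_the_Ls_such_that_pi_x_plus_pi_prime_etc} (resting on Lemma~\ref{le_pi_and_pi_prime_have_the_same_period}) to identify which left vertices fall into the first alternative. You additionally spell out two steps the paper leaves implicit: the translation of ``all edges from $m$ go to one vertex'' into the functional identity over all $x$, and why a path to small vertices in $G'$ puts $m$ in the $v_0$ component of $G$.
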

Note that in the above lemma (which assumes the hypotheses of
Lemma~\ref{le_fundamental_two_bijections}),
if $y$ is a 
vertex of $G'$ on the right, then is connected to some vertex on
the left 
(namely, if $b+b'=y$, then $y$ on the right is connected to
$\pi^{-1}(b)+(\pi')^{-1}(b')$ on the left, by (1c) of
Proposition~\ref{pr_equivalent_forms_of_left_right_connectivity}
applied symmetrically).
Hence Lemma~\ref{le_consequence_of_fundamental_lemma}
identifies all connected components of $G'$ and therefore
of $G$ in Theorem~\ref{th_folinsbee_friedman_algorithm}.

\subsection{Proof of Theorem~\ref{th_H_one_vanishes_tensor_prod}}
\label{su_proof_of_first_main_theorem}

\begin{proof}[Proof of Theorem~\ref{th_H_one_vanishes_tensor_prod}]
Say that $b^1(\cM_{W,\mec d} \otimes_{\cO_r} \cM_{W',\mec d'} ) >0$.
Then $\cM_{W,\mec d} \otimes_{\cO_r} \cM_{W',\mec d'}\isom
\cM_{W'',\mec d+\mec d'}$ where $W''=W\star_r W'$.
Then the graph $G'$ of the Folinsbee-Friedman algorithm
has a connected component whose vertices are disjoint 
from $\integers_{\le d_1+d_1'}$ on the left and 
$\integers_{\le d_2+d_2'}$ on
the right.
Then this connected component has a left vertex of $G'$,
say $L_1\in\integers_{\rm left}$.
Applying Lemma~\ref{le_fundamental_two_bijections}, we see that
conclusion~(1) must hold.  Hence
$\pi(x)+\pi'(L_1-x)=L_2$ for all $x\in\integers$.
It follows that for all $x,y\in\integers$,
$$
W(x,y)=1 \iff y=\pi(x) \iff y=L_2-\pi'(L_1-x)
\iff \pi'(L_1-x)=L_2-y
$$
$$
\iff
W'(L_1-x,L_2-y)=1.
$$
Hence $W(\mec d)=W'(\mec L-\mec d)$ for all $\mec d\in\integers^2$
and hence $W'=W^*_\mec L$.
\end{proof}

\subsection{Proof of Theorem~\ref{th_H_one_equals_k_in_rare_case}}
\label{su_proof_of_second_main_theorem}

\begin{proof}[Proof of Theorem~\ref{th_H_one_equals_k_in_rare_case}]
Let us prove the statement for a general $\mec K'=\mec d+\mec d'$.
Let $W''=W\star_r W'$.
Since $W'(\mec L-\mec d)=W(\mec d)$ for all $\mec d\in\integers^2$,
the proof in the last subsection shows that
$\pi(x)+\pi'(L_1-x)=L_2$ for all $x$.
It follows from
Lemma~\ref{le_consequence_of_fundamental_lemma} that the connected
components of $G'$ in Theorem~\ref{th_folinsbee_friedman_algorithm}
(with $W''$ replacing $W$) are either (1) connected to
some left $\integers_{\le K_1'}$ vertex or some right
$\integers_{\le K_2'}$ vertex, or
(2) are a single connected component consisting of a left vertex
$L_1+qr> K_1'$ and a right vertex $L_2-qr>K_2'$ (for some $q\in\integers$).
Combining this with Theorem~\ref{th_folinsbee_friedman_algorithm}
proves the claim regarding
the basis of $H^1$ for
$$
\cM_{W'',\mec d+\mec d'}\isom\cM_{W,\mec d}\otimes_{\cO_r}\cM_{W',\mec d'}
$$

For the special case of 
$\omega_{W,\mec K}=\cM_{W,\mec 0}\oplus_{\cO_r}\cM_{W',\mec K}$,
where $\mec K'=\mec K$, we know that the only $q\in\integers$
with $L_1+qr>K_1=L_1-1$ and $L_2-qr>K_2=L_2-1$ is $q=0$.
\end{proof}

\subsection{How Does $\omega_W$ Depend on $W$?}
\label{su_omega_depends_on_W}

We have noted that if $W$ is an $r$-periodic perfect matching,
then
$\omega_{W,\mec K}$ depends --- up to isomorphism ---
only on $W$.  It is interesting to ask how $\omega_W$ depends on $W$.
The remarks below show that when $\tilde W$ is a translation of $W$ or of
$W^*_\mec 0$, then $\omega_W\isom\omega_{\tilde W}$; otherwise we
don't know if $\omega_W\isom\omega_{\tilde W}$,
although below we argue that if $\omega_W\isom\omega_{\tilde W}$ then this
isomorphism would have to be somewhat ``exotic.''

Since 
$$
\omega_{W,\mec K} = \cM_{W,\mec 0}\otimes_{\cO_r} \cM_{W^*_\mec L,\mec K} ,
$$
let us restrict to $\mec L=\mec 0$ and $\mec K=\mec -1$.
Then
$$
\omega_{W,-\mec 1} \isom \cM_{W\star_r W^*_\mec 0,\mec -1}.
$$
But if $W''=W\star_r W^*_\mec 0$, then
$$
W''(\mec s) = \sum_{a_1=0}^{r-1}\sum_{a_2\in\integers}
W(\mec a) W^*_\mec 0(\mec s-\mec a)
=
\sum_{a_1=0}^{r-1}\sum_{a_2\in\integers}
W(\mec a) W(\mec a-\mec s)
$$
So if $A$ is the support of $W$ in $\integers^2$, then $W''$ is
supported on the set of $\mec s$ such that some $\mec a\in A$
has an $\mec a'\in A$ such that $\mec a'=\mec a-\mec s$, i.e.,
$\mec s=\mec a-\mec a'$.
Hence $W''$ is determined by the multiset of differences $A-A$,
where we consider $A-A$ as a multiset based on the values of $W''$ there 
(they may be greater than $1$).
For example, if $\tilde W$ is a translation of $W$, or a translation of 
$W^*_\mec 0$, then 
$$
W''=W\star_r W^*_\mec 0
\mbox{\quad and\quad}
\tilde W''=\tilde W\star_r {\tilde W}^*_\mec 0
$$
are equal, since the respective supports $A,\tilde A$ of $W,\tilde W$
satisfy $A-A=\tilde A-\tilde A$ as multisets.

Note that in the last paragraph, $\omega_W\isom\omega_{\tilde W}$ if
the supports of $A,\tilde A$ satisfy
$A-A=\tilde A-\tilde A$ as multisets.
We remark that it is well known that there are subsets 
$I,\tilde I\in\integers$
such that $I-I=\tilde I-\tilde I$ as multisets
and $\tilde I$ is not a translate of $I$ or of $-I$
(e.g., \cite{bloom,rosenblatt}).
However, we don't know if this is true, modulo $(r,-r)$, of sets
$A,\tilde A\subset\integers^2$ where $W,\tilde W$ corresponding to
$A,\tilde A$ are perfect matchings.

Note that if we can find such $A,\tilde A$ such that
$A-A=\tilde A-\tilde A$ as sets, not multisets
(the above multiplicity convention),
then we still get $\omega_W\isom\omega_{\tilde W}$, although via a
somewhat ``exotic'' isomorphism:
to see this, first note that
$\omega_W$ decomposes as the sum of two $\cO_r$ modules,
one that is nonzero only
on $B_3$, whose value there is the kernel, $K=K(\omega_W)$ of
$$
\omega_W(B_3)\to \omega_W(A_1)\oplus\omega_W(A_2).
$$
(this consists of $r(r-1)$ copies of $\cO_r(B_3)=k[v,1/v]$ as an
$k[v,1/v]$-module).
Hence if $A-A=\tilde A-\tilde A$ simply as sets,
then one could take an identity morphism from $\omega_W\to\omega_{\tilde W}$
on $A_1,A_2,B_1,B_2$, and take 
the identity on the $B_3$ values corresponding to the ``first'' nonzero
value 
of $W''=W\star_r W^*_\mec 0$ and $\tilde W\star_r {\tilde W}^*_\mec 0$.
One can then take 
an arbitrary $\cO_r$ module map from $K(\omega_W)$ to
$K(\omega_{\tilde W})$.
However, this is a rather ``exotic'' isomorphism.
Moreover, we don't know any such examples of $r$-periodic perfect matchings
$W,\tilde W$ whose supports, $A,\tilde A$
satisfy $A-A$ and $\tilde A-\tilde A$ agree as sets and not as
multisets.

We don't know if more ``exotic'' isomorphisms $\omega_W\to\omega_{\tilde W}$
can exist if $A-A\ne \tilde A-\tilde A$ as sets.

\section{Duality}
\label{se_duality}

Again, in this section $k$ will be a fixed field, and we will often
suppress $k$, writing, e.g., $\cO_r$ instead of $\cO_{r,k}$.

Let $W$ be an $r$-periodic weight, and let $\cF$ be an $\cO_r$-module.
In this section we will describe a natural (in $\cF$) pairing
\begin{equation}\label{eq_F_pairing_start_of_duality_section}
H^1_\cO(\cM_{W,\mec 0}\otimes \cF) \times 
\Hom_\cO(\cF,\cM_{W^*_\mec L,\mec K})
\to 
% H^1(\omega_W)\xrightarrow{\isom}
k 
\end{equation} 
whenever $\mec K=\mec L+\mec 1$ are elements of $\integers^2$.
We will show that when $\cF$ is a line bundle $\cL_{r,\mec d}$, then this
gives a perfect pairing of finite dimensional vector spaces.
Hence this gives an isomorphism
\begin{equation}\label{eq_duality_for_cL_mec_d}
\Hom_\cO(\cF,\cM_{W^*_\mec L,\mec K})
\xrightarrow{\isom}
H^1(\cM_{W,\mec 0}\otimes \cF)'
\end{equation} 
(where $\,'$ denotes the dual space as a $k$-vector space).
It will easily follow that we get an isomorphism
$$
H^0(\cM_{W^*_\mec L,\mec K-\mec d})
\xrightarrow{\isom}
H^1(\cM_{W,\mec d})',
$$
which is therefore a ``duality theorem,''
according to the discussion
around \eqref{eq_duality_in_main_results_section}.

In Section~\ref{se_strong_duality} we will strengthen this result in two
ways: 
(1) we will give a second ``duality'' formula that $\cF=\cL_{r,\mec d}$
satisfies, and
(2) we will show that there are more examples of $\cF$ for which both
duality statements hold, which includes certain ``skyscraper diagrams''
and any
{\em coherent} $\cO$-module.
Section~\ref{se_strong_duality} requires more background in homological
algebra; this section will proceed more ``naively.''

We begin by describing the ingredients we need to produce the
pairing \eqref{eq_pairing_cM_W_otimes_F_etc}

\begin{remark}
The $k$-vector spaces 
$$
\Hom_\cO(\cF,\cM_{W^*_\mec L}) 
\mbox{\quad and\quad}
\Hom_{\underline k}(\cF,\cM_{W^*_\mec L}) 
$$
are very different; in particular, we will later explain that
the second space infinite dimensional for $\cF=\cL_{r,\mec d}$.
Hence it is essential for us to work with Hom sets of $\cO$-modules
in \eqref{eq_F_pairing_start_of_duality_section} in our duality theorem.
\end{remark}

\subsection{A Yoneda Pairing}
\label{su_yoneda_pairing_simple_form_i_one_A_equals_cO}

In this subsection $\cO$ is an arbitrary diagram of $k$-algebras;
the only reason we insist on this is that we have only
defined $H^i(\cF)$ when $\cF$ is a $k$-diagram (of vector spaces).
However, had we defined $H^i(\cF)$ for any diagram of abelian groups,
$\cF$ (in the evident fashion), then the discussion below would be
valid for any diagram of rings, $\cO$.

There is a very simple way to define a map:
\begin{equation}\label{eq_simple_yoneda}
H^i(\cF) \times \Hom_\cO(\cF,\cG) \to H^1(\cG) ;
\end{equation} 
we will call this map a {\em Yoneda pairing}.
In Section~\ref{se_strong_duality} we will explain that the map we describe
is really an example of the well-known {\em Yoneda pairing}.
For now we just describe this map from scratch.

So let $\phi\from\cF\to\cG$ be a morphism 
of diagrams of $k$-vector spaces.
Then $\phi$ induces a map
$$
\cF(A_1)\oplus\cF(A_2) \xrightarrow{\phi(A_1)\oplus\phi(A_2)}
 \cG(A_1)\oplus\cG(A_2).
$$
Since $\phi$ is a morphism, one easily checks that this map yields a map
$$
H^1(\cF)=
\bigl( \cF(A_1)\oplus\cF(A_2) \bigr)/
\bigl( {\rm Image}(\cF(\rho_{\rm tot})) \bigr)
$$
to
$$
H^1(\cG)=
\bigl( \cG(A_1)\oplus\cG(A_2) \bigr)/
\bigl( {\rm Image}(\cG(\rho_{\rm tot})) \bigr) .
$$
This gives a pairing
$$
H^1(\cF) \times \Hom(\cF,\cG) \to H^1(\cG).
$$
To determine this pairing it suffices to determine how each
$\alpha\in H^1(\cF)$ and $\beta\in\Hom(\cF,\cG)$
are mapped to in $H^1(G)$, which is determined by the two morphisms
$$
\beta(A_1)\from \cF(A_1)\to\cG(A_1),
\quad
\beta(A_2)\from \cF(A_2)\to\cG(A_2),
$$
plus the equivalence classes of $\cG(A_1)\oplus\cG(A_2)$ modulo
the image of $\cG(\partial)$.

Of course, 
in case $\cF,\cG$ are $\cO$-modules where each value of $\cO$ is
a $k$-algebra, then
$$
\Hom_{\cO}(\cF,\cG)\subset \Hom_k(\cF,\cG),
$$
and the above Yoneda pairing is also a pairing
$$
H^1(\cF)\times \Hom_{\cO}(\cF,\cG) \to H^1(\cG).
$$

\subsection{Tensoring Hom}

Let $\cO$ be any diagram of rings,
and let $X=\{A_1,A_2,B_1,B_2,B_3\}$.
For any $\cO$-modules $\cA,\cB,\cC$ there is a natural map
\begin{equation}\label{eq_tensoring_a_morphism}
\Hom_\cO(\cA,\cB)\xrightarrow{{\rm id}_\cC\otimes\cdot}
\Hom_\cO(\cC\otimes\cA,\cC\otimes\cB)
\end{equation} 
where ${\rm id}_\cC$ is the identity map on $\cC$:
indeed, an element $\phi\in\Hom_\cO(\cA,\cB)$ is just a family
of maps $\phi=\{\phi_P\}_{P\in X}$, where 
$$
\phi_P \from \cA(P)\to\cB(P)
$$
is a map of $\cO(P)$-modules, and the $\{\phi_P\}$ are compatible
with the restriction maps of $\cA$ and $\cB$.  In this case there is a 
map
called ${\rm id}_{\cC}\otimes\phi$ which is a morphism
$\cC\otimes\cA\to\cC\otimes\cB$ whose map at $P\in X$ is the unique morphism
of $\cO(P)$-modules
taking $(m,a)$ to $(m,\phi(a))$
(we easily check that there is a unique such morphism).

\subsection{The Duality Map}

In this subsection we describe the duality map
\eqref{eq_duality_for_cL_mec_d} based on the
pairing
\eqref{eq_F_pairing_start_of_duality_section}.

The map \eqref{eq_tensoring_a_morphism} gives a
a natural map
$$
\Hom_\cO(\cF,\cM_{W^*_\mec L})
\to
\Hom\bigl(\cM_{W,\mec 0}\otimes\cF,
\cM_{W,\mec 0}\otimes \cM_{W^*_\mec L,\mec K}\bigr)
=
\Hom\bigl(\cM_{W,\mec 0}\otimes\cF,\omega_W).
$$
So fix an isomorphism
$$
H^1( \omega )\isom k .
$$
We therefore get a map
\begin{equation}\label{eq_pairing_cM_W_otimes_F_etc}
H^1(\cM_{W,\mec 0}\otimes\cF)\times
\Hom(\cF,\cM_{W^*_\mec L,\mec K})\to 
H^1(\cM_{W,\mec 0}\otimes\cF)\times
\Hom(\cM_{W,\mec 0}\otimes\cF,\omega)\to 
H^1(\omega)\to k.
\end{equation} 
This is our desired map \eqref{eq_F_pairing_start_of_duality_section}.

\begin{definition}\label{de_duality_meaning_the_weak_kind}
We say that an $\cO_r$-module $\cF$ {\em satisfies duality} if the
map
\begin{equation}\label{eq_duality_weak_in_the_definition_of_duality}
H^1(\cM_{W,\mec 0}\otimes\cF)\times
\Hom(\cF,\cM_{W^*_\mec L,\mec K})\to k
\end{equation} 
induced from \eqref{eq_pairing_cM_W_otimes_F_etc} is a perfect pairing.
\end{definition}

\subsection{The Main Computation}

In this subsection we show that 
\eqref{eq_pairing_cM_W_otimes_F_etc} is a perfect pairing when
$\cF=\cL_{r,\mec d}$.
This is a straightfoward calculation, but one has to carefully keep track
of things.

Let us first make some preliminary remarks and lemmas, which hold for any
$r$-periodic perfect matching, $W$, and choice of $\mec K\in\integers^2$
and $\mec L=\mec K+\mec 1$.

First, recall from Theorem~\ref{th_H_one_equals_k_in_rare_case}
that for $\omega=\omega_{W,\mec K}$,
$H^1(\omega)$ is by definition
$$
\omega(A) / {\rm Image}(\omega(\partial))
$$
and is the one-dimensional $k$-vector space
generated by $(x_1^{L_1},0)$, which in $H^1(\omega)$ is equivalent
to the vector $-(0,x_2^{L_2})$.

Second, we may identify
$$
\Hom(\cL_{r;\mec d},\cM_{W^*_\mec L,\mec K})
\isom
\Hom(\cO,\cM_{W^*_\mec L,\mec K-\mec d}).
$$

Third, 
$$
\Hom(\cO,\cM_{W^*_\mec L,\mec K-\mec d})
$$
may be described as a direct sum by the following general proposition.

\begin{proposition}\label{pr_Hom_cO_to_cM_W_prime_mec_d_etc}
Let
$W'$ be any $r$-periodic perfect matching, and $\pi'$ is the
associated permutation of $W'$.
For any $\mec d\in\integers^2$, let
$$
I_1 = \{ i_i \ | \ i_1\ge d_1,\ \pi'(i_1)\ge d_2 \}
$$
(which is finite since $\pi'$ is bounded).
Then for each
$\phi\in \Hom(\cO,\cM_{W',\mec d})$, the map
\begin{equation}\label{eq_phi_at_A_one_from_cO_to_cM_W_prime}
\phi(A_1) \from \cO(A_1) \to \cM_{W',\mec d}(A_1)
\end{equation} 
--- seeing as $\cO(A_1)=\cM_{W',\mec d}(A_1)=k[x_1^\pm]$ ---
must take $1\in \cO(A_1)$ to $p(x_1)\in \cM_{W',\mec d}(A_1)$ where $p$ is
the Laurent polynomial; then $p$ is necessarily of the form
\begin{equation}\label{eq_p_of_x_one_is_a_Laurent_poly_in_indices_I_one}
p(x_1)=\sum_{i_1\in I_1} c_{i_1} x_1^{i_1} .
\end{equation}
Conversely, for any $p$ as in
\eqref{eq_p_of_x_one_is_a_Laurent_poly_in_indices_I_one},
the map from $k[x^\pm]$ to itself (as a module over itself)
taking $1$ to  $p(x_1)$ determines a (unique) map
$\phi(A_1)$ in \eqref{eq_phi_at_A_one_from_cO_to_cM_W_prime},
which extends to a unique morphism $\phi$ as in
$\phi\in \Hom(\cO,\cM_{W',\mec d})$.
This sets up a bijection
$$
\Hom(\cO,\cM_{W',\mec d}) \to 
\biggl\{ p \ \biggm| \ p(x_1)=\sum_{i_1\in I_1} c_{i_1} x_1^{i_1} ,
\ c_{i_i}\in k
\biggr\} \isom k^{I_1}.
$$
\end{proposition}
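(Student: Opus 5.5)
The plan is to identify $\Hom_{\cO}(\cO,\cM_{W',\mec d})$ with the global sections of $\cM_{W',\mec d}$, and then read off the constraints from the restriction maps. Since each $\cO(P)$ is free of rank one over itself, an $\cO(P)$-module map $\phi(P)\from\cO(P)\to\cM_{W',\mec d}(P)$ is the same thing as a choice of element $m_P=\phi(P)(1)\in\cM_{W',\mec d}(P)$, and every choice of $m_P$ occurs. The restriction maps of $\cO$ send $1$ to $1$ (they are ring maps), and those of $\cM_{W',\mec d}$ are $\cO$-semilinear. Hence the compatibility squares of Definition~\ref{de_morphism_k_diagrams} hold for all of $\cO(B_i)$ as soon as they hold on $1$, i.e.\ iff $\cM_{W',\mec d}(\rho_{ij})(m_{B_i})=m_{A_j}$ for every existing $\rho_{ij}$. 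So $\phi$ corresponds exactly to a tuple $(m_{B_1},m_{B_2},m_{B_3},m_{A_1},m_{A_2})$ which is a global section, i.e.\ to an element of $\Gamma(\cM_{W',\mec d})\isom H^0(\cM_{W',\mec d})$.

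Next I will use the concrete description of Example~\ref{ex_cM_W_d_as_O_r_module_W_is_a_perfect_matching} to show that the tuple is determined by $p=m_{A_1}$. The map $\rho_{3,1}$ is a bijection $\cM_{W',\mec d}(B_3)\to k[x_1^{\pm}]$, so $m_{B_3}$ is the unique preimage of $p$. Then $m_{A_2}=\rho_{3,2}(m_{B_3})$. Because $\rho_{3,2}\rho_{3,1}^{-1}$ sends $x_1^{a}$ to $x_2^{\pi'(a)}$, this gives
$$
m_{A_2}=\sum_a c_a x_2^{\pi'(a)}
\quad\mbox{when}\quad
p=\sum_a c_a x_1^a .
$$
The maps $\rho_{1,1}$ and $\rho_{2,2}$ are injective, so $m_{B_1}$ and $m_{B_2}$ exist, and are then unique, iff $p$ lies in the image of $\rho_{1,1}$ and $m_{A_2}$ lies in the image of $\rho_{2,2}$. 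Both images are spanned by monomials, and $\pi'$ is a bijection. Therefore the conditions amount to requiring, coefficient by coefficient, that each exponent $i_1$ with $c_{i_1}\ne 0$ lie in the index set of $\cM_{W',\mec d}(B_1)$ and that $\pi'(i_1)$ lie in that of $\cM_{W',\mec d}(B_2)$. This is precisely the defining condition of $I_1$, and it yields the form \eqref{eq_p_of_x_one_is_a_Laurent_poly_in_indices_I_one}.

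For the converse, given any $p$ supported on $I_1$, the same formulas define the five elements $m_P$, and by the first paragraph these give a unique morphism $\phi$. Uniqueness of the extension comes from injectivity of $\rho_{1,1}$, $\rho_{2,2}$ and bijectivity of $\rho_{3,1}$. The assignment $\phi\mapsto p$ is visibly $k$-linear, which gives the bijection with $k^{I_1}$.

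Finiteness of $I_1$ follows because $|\pi'(a)+a|\le C'$: the two one-sided bounds on $i_1$ and on $\pi'(i_1)$ confine $i_1$ to a bounded interval.

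The main point requiring care is bookkeeping, not any real difficulty. One has to keep the conventions of the $\cO_r$-module identification straight: $y_i^{-a}\mapsto x_i^a$, the direction of the inequalities defining $\cM_{W',\mec d}(B_i)$, and the way $\rho_{3,2}$ transports monomials through $\pi'$. One also has to check that $\cO(B_3)$-linearity of $\phi(B_3)$ imposes nothing beyond $\phi(B_3)(1)=m_{B_3}$, which holds because $\cO(B_3)=k[v^{\pm}]$ is free on $1$.
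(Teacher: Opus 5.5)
Your proposal is correct and follows essentially the same route as the paper: identify $\phi$ with the compatible family $m_P=\phi(P)(1)$, then chase $p=m_{A_1}$ through $\rho_{3,1}$, $\rho_{3,2}$, $\rho_{1,1}$ and $\rho_{2,2}$. Your explicit use of the injectivity of $\pi'$ to read off the coefficients of $m_{A_2}$ is a step the paper leaves implicit. One bookkeeping point you should not gloss over: since $\cM_{W',\mec d}(B_i)=k^{\oplus\integers_{\le d_i}}$, the condition you actually obtain is $i_1\le d_1$ and $\pi'(i_1)\le d_2$, which is exactly what the paper's proof derives and what Corollary~\ref{co_hom_c0_to_cM_etc} uses, so the $\ge$ in the displayed definition of $I_1$ is a typo and you should say so rather than assert that your condition is ``precisely'' the one defining $I_1$.
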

\begin{proof}
Let $P\in X=\{A_1,A_2,B_1,B_2,B_3\}$.
First note that if $m\in\cM_{W',\mec d}(P)$, then there
is a unique morphism of $\cO(P)$-modules $\cO(P)\to \cM_{W',\mec d}(P)$
that maps $1$ to $m$.
Hence a $\phi\in\Hom(\cO,\cM_{W',\mec d})$
gives for each $P\in X$ an element $m_P\in \cM_{W',\mec d}(P)$
such that $\phi(P)(1)=m_P$.
Hence such a $\phi$ is equivalent to a family $\{m_P\}_{P\in X}$
such that the maps $1\mapsto m_P$ are compatible with the restriction
maps; i.e., give a commutative diagram 
as in Figure~\ref{fi_morphism_of_diagrams} with $\cF=\cO$ and
$\cG=\cM_{W'.\mec d}(P)$.

So consider a $\phi\in\Hom(\cO,\cM_{W',\mec d})$; then
$\phi(A_1)$ is a map of $k[x_1^\pm]$ to itself as a
$k[x_1^\pm]$; let
$p(x)=\phi(A_1) 1 \in k[x_1^\pm]$, and therefore
$$
p(x)=\sum_{i\in\integers} c_i x_1^i.
$$
Similarly $\phi(B_1)$ must take $1$ to $q(y_1)\in k[y_1]$.
Since $\phi$ is compatible with the restriction map
$\rho_{1,1}$, we have $\phi(B_1)$ must take $1$ to $q(y_1)$
and the commutativity of:
$$
\begin{tikzpicture}[scale=0.4]
\node (B11) at (0,4) {$\cO(B_1)=k[y_1]$};
\node (B12) at (20,4) {$\cM_{W',\mec d}(B_1)=k[y_1]$};
\draw[->] (B11) -- (B12) node [midway,above] {$1\mapsto q(y_1)$};
\node (A11) at (3,0) {$\cO(A_1)=k[x_1^\pm]$};
\node (A12) at (23,0) {$\cM_{W',\mec d}(A_1)=k[x_1^\pm]$};
\draw[->] (A11) -- (A12) node [midway,above] {$1\mapsto p(x_1)$};
\draw[->] (B11) -- (A11) node [midway,left] {$1\mapsto 1$};
\draw[->] (B12) -- (A12) node [midway,right] {$1\mapsto x_1^{d_1}$};
\end{tikzpicture}
$$
(see the top of Figure~\ref{fi_morphism_of_diagrams})
we see that the left arrow followed by the bottom arrow takes $1$ to
$p(x_1)$, and the top takes $1$ to $q(y_1)$, which the right arrow
takes to $x_1^{d_1}q(1/x_1)$.  Hence
$$
x_1^{d_1} q(1/x_1)=p(x_1).
$$
Hence
\begin{equation}\label{eq_phi_of_B_one_as_one_mapsto_q}
q(y_1)=y_1^{d_1}p(1/y_1) = y_1^{d_1} \sum_i c_i y_1^{-i}.
\end{equation} 
Since $q(y_1)\in k[y_1]$, we must have $c_i\ne 0$ implies that
$d_1-i\ge 0$, or, equivalently, $i\le d_1$.
Similarly chasing through the restrictions $\rho_{3,1}$ with the diagram:
$$
\begin{tikzpicture}[scale=0.4]
\node (B31) at (0,-4) {$\cO(B_3)=k[v^\pm]$};
% \node (B32) at (20,-4) {$\cM_{W',\mec d}(B_3)=k[v^\pm]^{\oplus r}$};
\node (B32) at (20,-4) {$\cM_{W',\mec d}(B_3)=k^{\oplus W}\isom k[v^\pm]^{\oplus r}$};
\draw[->] (B31) -- (B32) node [midway,above] {$1\mapsto \sum_i c_i(x_1^i,x_2^{\pi'(i)})$};
\node (A11) at (3,0) {$\cO(A_1)=k[x_1^\pm]$};
\node (A12) at (23,0) {$\cM_{W',\mec d}(A_1)=k[x_1^\pm]$};
\draw[->] (A11) -- (A12) node [midway,above] {$1\mapsto p(x_1)=\sum_i c_i x_1^i$};
\draw[->] (B31) -- (A11) node [midway,left] {$1\mapsto 1$};
\draw[->] (B32) -- (A12) node [midway,right] {$(x_1^i,x_2^{\pi'(i)}) \mapsto x_1^i$};
\end{tikzpicture}
$$
we see that $\phi(B_3)$ must be the map
$$
1 \mapsto \sum_i c_i \bigl( x_1^i , x_2^{\pi'(i)} \bigr),
$$
and chasing through $\rho_{3,2}$ we see that $\phi(A_2)$ must be the map
$$
1 \mapsto \sum_i c_i x_2^{\pi'(i)},
$$
and then chasing through $\rho_{2,2}$ as we did $\rho_{1,1}$, we see that
$\phi(B_2)$ must be the map $1\mapsto \tilde r(y_2)$ where
(compare with \eqref{eq_phi_of_B_one_as_one_mapsto_q})
$$
r(y_2) = y_2^{d_2} \sum_i c_i y_2^{-\pi'(i)}.
$$
Since $r(y_2)$ must lie in $k[y_2]$ we have $c_i\ne 0$ implies
$\pi'(i)\le d_2$.

Moreover, as long as $c_i\ne 0$ implies $i\le d_1$ and $\pi'(i)\le d_2$, then
$\phi$, which is determined by $\phi(A_1)$, is a valid morphism
$\cO\to\cM_{W',\mec d}$, seeing as each of the candidates for
$\phi(P)$ for 
$P\in X$ is indeed a valid morphism $\cO(P)\to\cM_{W',\mec d}(P)$ that
comprise $\phi$.
\end{proof}

We use Proposition~\ref{pr_Hom_cO_to_cM_W_prime_mec_d_etc} as follows.

\begin{corollary}\label{co_hom_c0_to_cM_etc}
Let $W$ be an $r$-periodic perfect matching, and
$\mec L,\mec K,\mec d\in\integers^2$ satisfy $\mec L=\mec K+\mec 1$.
Then a basis for
$\Hom\bigl(\cO_r,\cM_{W^*_\mec L,\mec K-\mec d}\bigr)$
is given by the unique 
$\phi\in\Hom\bigl(\cO_r,\cM_{W^*_\mec L,\mec K-\mec d}\bigr)$
such that
$\phi(A_1)$ is multiplication by $x_1^{i_1}$
where $i_1$ varies in the set
$$
I_1 = \{ i_1 \ | \ \mbox{\rm for the unique $\mec i=(i_1,i_2)$ with $W(\mec L-\mec i)=1$ we have $\mec i\le \mec K-\mec d$} \}.
$$
Hence, (substituting $\mec b=\mec L-\mec i$ and) setting
\begin{equation}\label{eq_B_one_for_Hom_basis}
b_1 = \{ b_1 \ | \ \mbox{\rm for the unique $\mec b=(b_1,b_2)$ with $W(\mec b)=1$ we have $\mec d+\mec 1\le \mec b$} \},
\end{equation} 
$\Hom\bigl(\cO_r,\cM_{W^*_\mec L,\mec K-\mec d}\bigr)$ has a basis
consisting of
\begin{equation}\label{eq_Hom_basis_phi_related_to_B_one}
\{ \phi\in \Hom\bigl(\cO_r,\cM_{W^*_\mec L,\mec K-\mec d}\bigr)
\ | \  \mbox{\rm $\phi(A_1)$ is multiplication by $x_1^{L_1-b_1}$} \} .
\end{equation} 
\end{corollary}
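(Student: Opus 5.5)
The plan is to apply Proposition~\ref{pr_Hom_cO_to_cM_W_prime_mec_d_etc} directly with $W'=W^*_\mec L$ and with $\mec d$ there replaced by $\mec K-\mec d$, and then change variables. First we identify the bijection $\pi'$ associated to $W^*_\mec L$. Since $W^*_\mec L(\mec i)=W(\mec L-\mec i)$, the unique $i_2$ with $W^*_\mec L(i_1,i_2)=1$ is the unique $i_2$ with $W(L_1-i_1,L_2-i_2)=1$. This gives $\pi'(i_1)=L_2-\pi(L_1-i_1)$, where $\pi$ is the bijection of $W$. Then $\pi'$ is a bijection, and $W^*_\mec L$ is again an $r$-periodic perfect matching, because $\pi'(i_1+r)=L_2-\pi(L_1-i_1-r)=L_2-\pi(L_1-i_1)-r$. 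So the proposition applies.

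Next, the proposition says that $\Hom(\cO_r,\cM_{W^*_\mec L,\mec K-\mec d})$ is in bijection with the Laurent polynomials $p(x_1)=\sum c_{i_1}x_1^{i_1}$ supported on the index set of $i_1$ with $i_1\le K_1-d_1$ and $\pi'(i_1)\le K_2-d_2$. The condition really is ``$\le$'': the proof of that proposition derives $i\le d_1$ and $\pi'(i)\le d_2$, and the ``$\ge$'' in its displayed definition of $I_1$ is a typo. This bijection is $k$-linear, since $\phi\mapsto\phi(A_1)(1)$ is linear. Hence the morphisms whose $\phi(A_1)$ is multiplication by a single monomial $x_1^{i_1}$, with $i_1$ in this set, form a basis. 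Writing $\mec i=(i_1,\pi'(i_1))$, the condition becomes: for the unique $\mec i$ with $W(\mec L-\mec i)=1$, we have $\mec i\le\mec K-\mec d$. That is the first claim.

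Finally, substitute $\mec b=\mec L-\mec i$, so $W(\mec b)=1$ and $i_1=L_1-b_1$. The inequality $\mec i\le\mec K-\mec d$ becomes $\mec L-\mec b\le\mec K-\mec d$. Since $\mec L-\mec K=\mec 1$, this is equivalent to $\mec d+\mec 1\le\mec b$. Because $b_1\mapsto\mec b=(b_1,\pi(b_1))$ is a bijection onto the support of $W$, the index set $I_1$ corresponds exactly to the set of $b_1$ in \eqref{eq_B_one_for_Hom_basis}, with $x_1^{i_1}=x_1^{L_1-b_1}$. This yields the basis \eqref{eq_Hom_basis_phi_related_to_B_one}. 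The set is finite because $b_1\ge d_1+1$ and $\pi(b_1)\ge d_2+1$, while $b_1+\pi(b_1)$ is bounded.

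The main obstacle is bookkeeping rather than anything conceptual. Care is needed with the sign conventions in $W^*_\mec L$ and with the apparent typo in the inequality direction in Proposition~\ref{pr_Hom_cO_to_cM_W_prime_mec_d_etc}, which should be read from its proof.
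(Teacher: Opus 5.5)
Your proposal is correct and follows the same route the paper intends: the corollary is stated as a direct consequence of Proposition~\ref{pr_Hom_cO_to_cM_W_prime_mec_d_etc}. You apply it with $W'=W^*_\mec L$, whose bijection is $\pi'(i_1)=L_2-\pi(L_1-i_1)$, and with $\mec K-\mec d$ in place of $\mec d$, then substitute $\mec b=\mec L-\mec i$. You are also right that the inequalities defining $I_1$ in that proposition should read $\le$, as its proof shows.
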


\begin{remark}\label{re_hom_cO_to_cM_versus_cL_to_cM_sub_minus_mec_d}
In Corollary~\ref{co_hom_c0_to_cM_etc}, if
$\phi\in \Hom(\cO_r,\cM_{W^*_\mec L,\mec K-\mec d})$ is given by
$\phi(A_1)$ maps $1\mapsto p(x_1)$, then the corresponding
$\phi'\in\Hom(\cL_{r,\mec d},\cM_{W^*_\mec L,\mec K})$ also has
$\phi'(A_1)$ maps $1\mapsto p(x_1)$, since the isomorphism
$$
\Hom(\cO_r,\cM_{W^*_\mec L,\mec K-\mec d})
\to
\Hom(\cL_{r,\mec d},\cM_{W^*_\mec L,\mec K})
$$
is just tensoring with $\cL_{r,\mec d}$ on both sides, which doesn't
affect $A_1$ since $\cL_{r,\mec d}(A_1)=\cO(A_1)$.
\end{remark}

\begin{theorem}
\label{th_weak_duality_for_cL_mec_d}
Let $k$ be a field and
$W\from\integers^2\to\{0,1\}$ be a perfect matching with associated
bijection $\pi$.  Let $\mec L\in\integers^2$ set $\mec K=\mec L-\mec 1$.
For an arbitrary $\cO_r$-module, $\cF$, consider the pairing of $k$-diagrams
\begin{equation}\label{eq_F_pairing}
H^1(\cM_{W,\mec 0}\otimes\cF) \times
\Hom_{\cO_r}(\cF,\cM_{W^*_{\mec L},\mec K})
\to 
H^1(\omega_{W,\mec K})
\xrightarrow{\isom} k
\end{equation} 
given by composing the map
\begin{align*}
\Hom_{\cO_r}(\cF,\cM_{W^*_{\mec L},\mec K})
& \to
\Hom_{\cO_r}(\cM_{W,\mec 0}\otimes\cF,
\cM_{W,\mec 0}\otimes\cM_{W^*_{\mec L},\mec K}) \\
& =
\Hom_{\cO_r}(\cM_{W,\mec 0}\otimes\cF,\omega_W)
\end{align*}
and the Yoneda map
$$
H^1(\cF\otimes\cM_{W,\mec 0}) \times
\Hom_{\cO_r}(\cM_{W,\mec 0}\otimes\cF,\omega_W)
\to
H^1(\omega_W)
$$
and the isomorphism
$$
H^1(\omega_W)\to k
$$
which takes
$$
(x_1^{L_1},0)\in \omega_W(A)
$$
to $1\in k$
(and therefore takes $(0,x_2^{L_2})$ to $-1$).
Consider the particular
case where $\cF$ is the line bundle $\cL_{\mec d}=\cL_{r,\mec d}$
for some $\mec d\in\integers^2$.
Then
\begin{enumerate}
\item 
If $(x_1^a,0)\in\cM_{W,\mec d}(A)$, and if
$\phi\in \Hom_{\cO_r}(\cO_r,\cM_{W^*_{\mec L},\mec K-\mec d})$ has
$\phi(A_1)=x_1^b$ for some $b\in\integers$, then under the pairing
\eqref{eq_F_pairing} and the natural isomorphisms
$$
\Hom_{\cO_r}(\cO_r,\cM_{W^*_{\mec L},\mec K-\mec d})
\isom
\Hom_{\cO_r}(\cL_{r,\mec d},\cM_{W^*_{\mec L},\mec K}),
$$
$((x_1^a,0),\phi)$ is taken to 
$(x_1^{a+b},0)\in \omega_{W,\mec K}(A)$.
\item
Hence $(x_1^a,\phi)$ above is taken to a non-zero element
of $H^1(\omega_{W,\mec K})$ iff $a+b=L_1$.
% \item
% The above~(1) and~(2)hold for 
% $$
% A_1, \ L_1, \ (x_1^a,0),\ (x_1^{a+b},0)
% $$
% everywhere replaced with, respectively, 
% $$
% A_2, \ L_2, \ (0,x_2^a),\ (0,-x_2^{a+b}) 
% $$
% (note that $(x_1^{L_1},0)=(0,-x_2^{L_2})$ in $\omega_W(A)$,
% so the map $(x_1^{L_1},0)\mapsto 1$ also takes
% $(0,x_2^{L_2})$ to $-1$).
\item
The pairing \eqref{eq_F_pairing} for $\cF=\cL_{k,\mec d}$ is a
perfect pairing.
Hence $\cL_{k,\mec d}$ satisfies duality
(Definition~\ref{de_duality_meaning_the_weak_kind}.)
\end{enumerate}
\end{theorem}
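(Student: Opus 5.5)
We track the pairing directly on $A_1$-values. Two ingredients are used throughout: the Yoneda pairing of Subsection~\ref{su_yoneda_pairing_simple_form_i_one_A_equals_cO}, which only sees $\beta(A_1)\oplus\beta(A_2)$, and the map ${\rm id}_{\cM_{W,\mec 0}}\otimes\cdot$ of \eqref{eq_tensoring_a_morphism}.

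For (1), we first identify $\cM_{W,\mec 0}\otimes\cL_{r,\mec d}$ with $\cM_{W,\mec d}$ via \eqref{eq_cM_W_mec_d_tensored_with_cL_mec_d_prime}. This is an isomorphism that is the identity on the $A_1$ value $k[x_1^\pm]$, since $\cL_{r,\mec d}(A_1)=\cO_r(A_1)$ and $m\otimes 1\mapsto m$. Next, by Remark~\ref{re_hom_cO_to_cM_versus_cL_to_cM_sub_minus_mec_d}, the $\phi'\in\Hom(\cL_{r,\mec d},\cM_{W^*_\mec L,\mec K})$ corresponding to $\phi$ still has $\phi'(A_1)$ equal to multiplication by $x_1^b$. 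Then ${\rm id}\otimes\phi'$ at $A_1$ is the map
$$
k[x_1^\pm]\otimes k[x_1^\pm]\to k[x_1^\pm]\otimes k[x_1^\pm],\qquad x_1^a\otimes 1\mapsto x_1^a\otimes x_1^b .
$$
Under the identification of $\omega_{W,\mec K}(A_1)$ with $k[x_1^\pm]$ (tensor of two copies of $k[x_1^\pm]$ over itself), this element is $x_1^{a+b}$. Applying the Yoneda pairing then sends the class of $(x_1^a,0)$ to the class of $(x_1^{a+b},0)\in\omega_{W,\mec K}(A)$. The remaining work here is bookkeeping of these identifications; no real difficulty is expected.

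For (2), we invoke Theorem~\ref{th_H_one_equals_k_in_rare_case} with $\mec K'=\mec K$, where $Q=\{0\}$. By part~(2) of that theorem, $(x_1^c,0)$ vanishes in $H^1(\omega_{W,\mec K})$ unless $c=L_1$. By the basis statement, $(x_1^{L_1},0)$ is a generator. Hence the image of $(x_1^{a+b},0)$ is nonzero exactly when $a+b=L_1$, and in that case it maps to $1\in k$.

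For (3), we compare bases. By Theorem~\ref{th_folinsbee_friedman_algorithm}, applied to the perfect matching $W$ with shift $\mec d$, the components of $G'$ avoiding the collapsed vertex $v_0$ are single edges $\{b_1,\pi(b_1)\}$ with $b_1\ge d_1+1$ and $\pi(b_1)\ge d_2+1$. We choose the left vertex $v_i=b_1$ as representative, so $H^1(\cM_{W,\mec d})$ has basis $\{(x_1^{b_1},0)\}$ indexed by exactly the set $B_1$ of \eqref{eq_B_one_for_Hom_basis}. Corollary~\ref{co_hom_c0_to_cM_etc} gives the basis $\{\phi_{b_1}\}_{b_1\in B_1}$ of the Hom space, where $\phi_{b_1}(A_1)$ is multiplication by $x_1^{L_1-b_1}$. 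By (1) and (2), the pairing of $(x_1^{a},0)$ with $\phi_{b_1}$, for $a,b_1\in B_1$, is $1$ if $a+(L_1-b_1)=L_1$, i.e.\ $a=b_1$, and $0$ otherwise. So the pairing matrix in these bases is the identity on the finite set $B_1$, hence the pairing is perfect.

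The main obstacle is in (3): we must make sure the left-vertex representatives really form a basis. This needs the single-edge components to be exactly those indexed by $B_1$. That follows because $W$ is a perfect matching, so $G'$ is a disjoint union of edges $\{b_1,\pi(b_1)\}$. An edge lies in the $v_0$ component precisely when $b_1\le d_1$ or $\pi(b_1)\le d_2$; otherwise it is its own component, which is the condition $\mec d+\mec 1\le\mec b$. We also need the sign convention for $(0,x_2^{L_2})$ to stay consistent, but this is irrelevant once we work only with $A_1$ representatives.
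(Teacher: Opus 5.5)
Your proposal is correct and follows essentially the paper's route. You track $A_1$-values through ${\rm id}\otimes\phi'$ and the Yoneda map for (1), invoke Theorem~\ref{th_H_one_equals_k_in_rare_case} with $Q=\{0\}$ for (2), and for (3) match the $H^1$ basis and the Hom basis of Corollary~\ref{co_hom_c0_to_cM_etc}, both indexed by $\{b_1 : W(\mec b)=1,\ \mec b\ge \mec d+\mec 1\}$, getting an identity pairing matrix. The only difference is where the basis of $H^1(\cM_{W,\mec d})$ comes from: you read it off the graph algorithm of Theorem~\ref{th_folinsbee_friedman_algorithm}, where the components of $G'$ are single edges, while the paper cites the decomposition of $\cM_{W,\mec d}$ into indicator diagrams from \cite{folinsbee_friedman_euler}. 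Your version is slightly more self-contained.
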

\begin{proof}
To prove (1), consider how a
$\phi$
fits into the pairing
\eqref{eq_F_pairing}: according to
Remark~\ref{re_hom_cO_to_cM_versus_cL_to_cM_sub_minus_mec_d},
we identify $\phi$ with the element
\begin{equation}\label{eq_phi_prime_in_Hom_L_r_d_M_W_star_etc}
\phi' \in 
\Hom(\cL_{r,\mec d}, \cM_{W^*_{\mec L},\mec K})\isom
\Hom(\cO,\cM_{W^*_{\mec L},\mec K-\mec d}),
\end{equation} 
where $\phi'(A_1)$ is multiplication by $x_1^b$ in
$$
\Hom_{k[x_1^\pm]}(k[x_1^\pm],k[x_1^\pm]).
$$
Hence
$$
({\rm Id}_{\cM_{W,\mec 0}}\otimes\phi')(A_1) 
\from \Hom_{k[x_1^\pm]}(k[x_1^\pm]\otimes k[x_1^\pm],
k[x_1^\pm]\otimes k[x_1^\pm])
\isom
\Hom_{k[x_1^\pm]}(k[x_1^\pm],k[x_1^\pm])
$$
is again multiplication by $x_1^b$,
and therefore takes $x_1^a\in k[x_1^\pm]=\cM_{W,d}(A_1)$
to $x_1^{a+b}\in \omega_{W,\mec K}(A_1)$.
Hence $(x_1^a,0)\in\cM_{W,\mec d}(A)$ is taken to
$(x_1^{a+b},0)\in\omega_W(A)$.

(2) follows from Theorem~\ref{th_H_one_equals_k_in_rare_case},
since the only $(x_1^c,0)$ that is nonzero in
$H^1(\omega_W)$ is $(x_1^{L_1},0)$.

% (3)~follows by the analogous argument on values of $A_2$.

To prove (3) let us first show that
$H^1(\cM_{W,\mec d})$ the following basis:
\begin{equation}\label{eq_basis_for_H_one_cM_W_mec_d}
\{ (x_1^{a_1},0) \ | \mbox{\rm for the unique $\mec a=(a_1,a_2)$ with $W(\mec a)=1$ we have $\mec d+\mec 1 \le \mec a$} \};
\end{equation} 
the reader can either prove this from scratch, or use the following
material from
\cite{folinsbee_friedman_euler}:
first, we have
\begin{equation}\label{eq_cM_W_mec_d_isom_sum_indicators}
\cM_{W,\mec d}\isom\cI_\mec d^{\oplus W}=
\bigoplus_{W(\mec a)=1} \cI_{\mec d\ge \mec a}
\end{equation} 
(see Definitions~6.1 and~6.2 for this notation,
and Proposition~6.1 of \cite{folinsbee_friedman_euler}
for the isomorphism).
We then have
$$
H^1(\cM_{W,\mec d}) \isom \bigoplus_{W(\mec a)=1} H^1(\cI_{\mec d\ge \mec a})
$$
(see end Section~4.1 of \cite{folinsbee_friedman_euler}).
Now according to Example~5.2 of \cite{folinsbee_friedman_euler},
$H^1(\cI_{\mec d\ge\mec a})$ is nonzero iff $\cI_{\mec d\ge\mec a}$
is a copy of $\underline k_{/B_1,B_2}$, which holds iff
$\mec d+\mec 1\le\mec a$;
moreover, the definition of $\cI_{\mec d\ge\mec a}$ shows that
if $\mec d+\mec 1\le\mec a$, then $H^1(\cI_{\mec d\ge\mec a})$
is generated by $(1,0)$ in $\cI_{\mec d\ge\mec a}(A_1)$,
which corresponds to $(x_1^{a_1},0)$ in $\cM_{W,\mec d}$
under the isomorphism of \eqref{eq_cM_W_mec_d_isom_sum_indicators}.
Hence \eqref{eq_basis_for_H_one_cM_W_mec_d} is a basis for
$H^1(\cM_{W,\mec d})$.

Now, compare
\eqref{eq_basis_for_H_one_cM_W_mec_d} with
\eqref{eq_B_one_for_Hom_basis}:
the set of $\mec a$ with $W(\mec a)=1$ and $\mec d+\mec 1\ge \mec a$
is a finite set (since $W(\mec a)=0$ if $\deg(\mec a)$ is sufficiently
large), and is the same condition that $\mec b$ in
\eqref{eq_B_one_for_Hom_basis} satisfies.
By~(2) above, $(x^{a_1+L_1-b_1},0)\in H^1(\omega_{W,\mec K})$ is nonzero iff
$a_1=b_1$; hence
\eqref{eq_F_pairing} with $\cF=\cL_{r,\mec d}$
is a perfect pairing
of finite dimensional $k$-vector spaces.
\end{proof}

\subsection{More Diagrams $\cF$ Satisfying Duality}

At this point we can conclude that other diagrams satisfy
\eqref{eq_F_pairing}.
We will only outline the ideas, because in the next section we
will prove much stronger results.

Say that $0\to\cF_1\to\cF_2\to\cF_3\to 0$ is an exact sequence
of $\cO_r$-modules, and both $\cF_1,\cF_2$ satisfy
\eqref{eq_F_pairing}.  Then we claim the same is true of
$\cF_3$.  To see this, one has to verify that our definition
of cohomology groups and the Yoneda pairing guarentees that we have
a commutative diagram
\newcommand{\joelsSmallCommutativeDiagramViaPairing}[3]{
\begin{tikzpicture}[scale=0.40]
\node (A0) at (#1 * 0.3,#2) {#3 $0$};
\node (A1) at (#1 * 1,#2) {#3 $\Hom(\cF_3,\cM_{W^*_\mec L,\mec K})$};
\node (A2) at (#1 * 2,#2) {#3 $\Hom(\cF_2,\cM_{W^*_\mec L,\mec K})$};
\node (A3) at (#1 * 3,#2) {#3 $\Hom(\cF_1,\cM_{W^*_\mec L,\mec K})$};
\draw[->] (A0) -- (A1);
\draw[->] (A1) -- (A2);
\draw[->] (A2) -- (A3);
\node (B0) at (#1 * 0.3,#2 *0) {#3 $0$};
\node (B1) at (#1 * 1,#2 *0) {#3 $H^1(\cM_{W,\mec 0}\otimes\cF_3)^*$};
\node (B2) at (#1 * 2,#2 *0) {#3 $H^1(\cM_{W,\mec 0}\otimes\cF_2)^*$};
\node (B3) at (#1 * 3,#2 *0) {#3 $H^1(\cM_{W,\mec 0}\otimes\cF_1)^*$};
\draw[->] (B0) -- (B1);
\draw[->] (B1) -- (B2);
\draw[->] (B2) -- (B3);
\draw[->] (A1) -- (B1);
\draw[->] (A2) -- (B2);
\draw[->] (A3) -- (B3);
\end{tikzpicture}
}
\begin{equation}\label{eq_short_commutative_duality_diagram}
\joelsSmallCommutativeDiagramViaPairing{10}{3}{}
\end{equation} 
If so, then the five-lemma shows that $\cF_3$ also satisfies
\eqref{eq_F_pairing}.
Let us give an important example.

\begin{definition}\label{de_cS_one_two}
For $i=1,2$, the {\em small skyscraper at $B_i$}, denoted $\cS_i$,
is the $\cO_r$-module whose values are $0$
everywhere except at $B_i$, where its value is
$k[y_1]/y_1 k[y_1]$.
\end{definition}
Hence $\cS_1,\cS_2$ are $\cO_r$-modules (whose definition is independent of
$r$), and as $k$-diagrams they
are the diagrams whose values are $0$ everywhere except at $B_i$,
where its value is $k$.
Note that $\cS_i$ is an example of a ``skyscraper diagram'' in the sense of
Subsubsection~\ref{su_skyscraper_diagrams_explained}.

\begin{example}
For each $r\in\naturals$ and $\mec d\in\integers^2$,
there is an exact sequence of $\cO_r$-modules
\begin{equation}\label{eq_skyscraper_S_one_exact_sequence_first}
0 \to \cL_{r,\mec d} \to \cL_{r,\mec d+\mec e_1} \to \cS_1 \to 0.
\end{equation} 
where the map $\cL_{r,\mec d} \to \cL_{r,\mec d+\mec e_1}$ is the inclusion,
and $\cS_1$ is therefore the quotient.
By the five-lemma, $\cS_1$ also satisfies duality.
There is an analogous short exact sequence with $\cS_2$, and similarly
$\cS_2$ satisfies duality.
\end{example}
We remark that 
one can equally well see that the $\cS_j$ satisfy duality by
verifying that for $j\in[2]$, both
$$
\Hom(\cS_j,\cM_{W^*_\mec L,\mec K}),
\quad
H^1(\cM_{W,\mec 0}\otimes\cS_j)
$$
vanish.

It follows that any diagram that can be written as the cokernel of a morphism
$\cF_1\to\cF_2$ where $\cF_1$ are sums of $\cL_{r,\mec d}$ and
$\cS_j$ above also satisfies \eqref{eq_F_pairing}.

We remark that the commutativity of
the diagram \eqref{eq_short_commutative_duality_diagram}
follows from results in the next section,
which appeal to results about the Yoneda pairing
using the powerful framework of derived
categories.

\section{Strong Duality}
\label{se_strong_duality}

Again, in this section $k$ will be a fixed field, and we will often
suppress $k$, writing, e.g., $\cO_r$ instead of $\cO_{r,k}$.

In Theorem~\ref{th_weak_duality_for_cL_mec_d} we showed that pairing
\eqref{eq_F_pairing_start_of_duality_section} is a perfect pairing
for $\cF=\cL_{r,\mec d}$, and we therefore get an isomorphism
\eqref{eq_duality_for_cL_mec_d} for this value of $\cF$.
The point of this section is to give
a stronger duality theorem: namely, we will build a pairing
$$
H^i(\cM_{W,\mec 0}\otimes\cF) \times
\Ext^{1-i}_{\cO_r}(\cF,\cM_{W^*_\mec L,\mec K})
\to k
$$
for any $\cF$ and $i=0,1$, which therefore gives a morphism
\begin{equation}\label{eq_strong_duality_arising_from_perfect_pairing}
\Ext^{1-i}_{\cO_r}(\cF,\cM_{W^*_\mec L,\mec K})
\to
H^i(\cM_{W,\mec 0}\otimes\cF)'
\end{equation} 
for $i=0,1$.
We will say that $\cF$ satisfies {\em strong duality} if this map
is an isomorphism for $i=0,1$;
for $i=1$, \eqref{eq_strong_duality_arising_from_perfect_pairing}
turns out to be just the map \eqref{eq_duality_for_cL_mec_d},
and so strong duality is, indeed, a stronger property than 
duality (Definition~\ref{de_duality_meaning_the_weak_kind}).

In this section we give examples of $\cO_r$-modules that satisfy
strong duality, namely
(1) the skyscraper diagrams $\cS_j$ for $j=1,2$
(Definition~\ref{de_cS_one_two}), and
(2) $\cL_{r,\mec d}$ for any $\mec d\in\integers^2$.
This is proven in Theorem~\ref{th_strong_duality_for_skyscrapers_and_cL_mec_d},
which is based on
Theorem~\ref{th_strong_duality_for_cL_mec_d_degree_suff_small}.

It is also a standard type of result that if
$0\to\cF_1\to\cF_2\to\cF_3\to 0$ is a short exact sequence, and
any two of the $\cF_i$ satisfy strong duality, then so does the third.
This is proven in Theorem~\ref{th_duality_commutative_diagram}.

To prove these theorems we wll need to first exploit the
{\em flatness} of $\cM_{W,\mec d}$; we explain
flatness in Subsection~\ref{su_flatness}, and its application to
Ext groups in
Subsection~\ref{su_flat_module_and_ext_groups}.
We also need a more general Yoneda pairing than in Section~\ref{se_duality};
this we develop in
Subsection~\ref{su_yodena_pairing_derived_category}.
The remaining subsections of this section prove
Theorems~\ref{th_duality_commutative_diagram},
\ref{th_strong_duality_for_cL_mec_d_degree_suff_small},
and~\ref{th_strong_duality_for_skyscrapers_and_cL_mec_d}.

Parts of this section will also refer to facts regarding
homological algebra and injective and projective resolutions
of $k$-diagrams developed in Appendix~\ref{ap_yoneda_pairing_etc}.

Throughout this section we will work either with $\cO_r$-modules or
$\cO$-modules where $\cO$ is a more general structure (either a 
diagram of rings or a diagram of $k$-algebras).
For brevity we omit $\cO_r$ or $\cO$ in subscripts when confusion is
unlikely; for example, we often write
$\Hom(\cF,\cG)$ instead of $\Hom_{\cO}(\cF,\cG)$ or $\Hom_{\cO_r}(\cF,\cG)$,
and similarly for $\Ext^i(\cF,\cG)$.

\subsection{Flatness}
\label{su_flatness}

Flatness is a standard concept in homological algebra: if $R$ is a ring
and $M$ an $R$-module, we say $M$ is {\em flat} if for every exact 
sequence of $R$-modules $0\to A\to B\to C\to 0$ we have that
$$
0 \to M\otimes_R A \to M\otimes_R B \to M\otimes_R C \to 0
$$
(obtained by tensoring with ${\rm id}_M$)
is also exact.
It is a standard result that a direct sum of copies of any ring, $R$,
is (a projective $R$-module and) a flat $R$-module;
see \cite{hilton} Proposition~III.7.4 (and its proof),
\cite{rotman} Proposition~3.46, or 
\cite{weibel} beginning of Section~3.2.

\begin{definition}
Let $\cO$ be a diagram of $k$-algebras.  We say that an $\cO$-module,
$\cM$, is {\em flat} if for every exact sequence of $\cO$-modules
$$
0\to\cF_1\to\cF_2\to\cF_3\to 0
$$
we have that
$$
0\to\cM\otimes_\cO\cF_1\to\cM\otimes_\cO\cF_2\to\cM\otimes_\cO\cF_3\to 0
$$
is exact.
\end{definition}
Here, of course, the maps $\cM\otimes_\cO\cF_i\to\cM\otimes_\cO\cF_{i+1}$
are the maps taking $(m,f)$ to $(m,f')$ where $f$ maps to $f'$ in the 
arrow $\cF_i\to\cF_{i+1}$.

\begin{lemma}
Let $\cO$ be a diagram of $k$-algebras.  Let $\cM$ be an $\cO$-module
such that for any 
$P\in X=\{A_1,A_2,B_1,B_2,B_3\}$,
$\cM(P)$ is isomorphic to a direct sum of copies of $\cO(P)$.
Then $\cM(P)$ is flat.
\end{lemma}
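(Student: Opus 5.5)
The plan is to reduce flatness of $\cM$ to the pointwise statement, using that tensor products and exactness of $\cO$-modules are both computed value-by-value. Recall that $(\cM\otimes_\cO\cF)(P)=\cM(P)\otimes_{\cO(P)}\cF(P)$ for each $P\in X=\{A_1,A_2,B_1,B_2,B_3\}$. Also, a sequence of $\cO$-modules is exact iff it is exact at each value. The second fact was stated for $k$-diagrams and transfers directly, since kernels and images of $\cO$-module morphisms are computed pointwise as $\cO(P)$-submodules.

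Given an exact sequence $0\to\cF_1\to\cF_2\to\cF_3\to 0$ of $\cO$-modules, for each $P$ I obtain an exact sequence of $\cO(P)$-modules $0\to\cF_1(P)\to\cF_2(P)\to\cF_3(P)\to 0$. Tensoring with $\cM$ gives, at $P$, the sequence obtained by applying $\cM(P)\otimes_{\cO(P)}(\cdot)$ to this one. The maps agree, since the maps $\cM\otimes\cF_i\to\cM\otimes\cF_{i+1}$ are defined pointwise by $(m,f)\mapsto(m,f')$. So it suffices to show that each $\cM(P)$ is a flat $\cO(P)$-module. Then every value sequence of the tensored diagram is exact, and hence the tensored sequence is exact as a sequence of $\cO$-modules.

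For the pointwise claim, the hypothesis gives $\cM(P)\isom\cO(P)^{\oplus S}$ for some index set $S$. As noted at the start of this subsection, a direct sum of copies of a ring $R$ is a flat $R$-module (\cite{hilton} Proposition~III.7.4, \cite{rotman} Proposition~3.46). This rests on $R\otimes_R A\isom A$ naturally, together with tensor commuting with direct sums, and a direct sum of injections being injective. Flatness is invariant under isomorphism, so each $\cM(P)$ is flat.

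There is no real obstacle here. The only point needing care is the claim that exactness and tensor products of $\cO$-modules are computed value-by-value, so that the restriction maps play no role in checking exactness. I would state this explicitly, citing the footnote on abelian categories of $k$-diagrams (\cite{sga4.1}).
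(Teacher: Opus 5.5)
Your proposal is correct and matches the paper's proof: you reduce exactness of the tensored sequence to exactness at each value $P$, and you use that $\cM(P)$, being a direct sum of copies of $\cO(P)$, is a flat $\cO(P)$-module. Your explicit remark that exactness and tensor products of $\cO$-modules are computed value-by-value is the same ingredient the paper uses, though the paper leaves it implicit.
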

\begin{proof}
Let $0\to\cF_1\to\cF_2\to\cF_3\to 0$ be a short exact sequence.
Then for each $P\in X=\{A_1,A_2,B_1,B_2,B_3\}$ we have an exact sequence
$$
0\to\cF_1(P)\to\cF_2(P)\to\cF_3(P)\to 0.
$$
Since $\cM(P)$ is isomorphic to a direct sum of copies of $\cO(P)$,
$\cM(P)$ is a (projective and) flat $\cO(P)$-module.  Therefore
$$
0\to\cM(P) \otimes_{\cO} \cF_1(P)
\to\cM(P) \otimes_{\cO} \cF_2(P)
\to\cM(P) \otimes_{\cO} \cF_3(P)\to 0.
$$
Since $P\in X$ is arbitrary, we have that
$$
0\to\cM\otimes\cF_1\to
\cM\otimes\cF_2\to
\cM\otimes\cF_3\to 0
$$
is exact.
\end{proof}
Of course, more generally $\cM$ is a flat $\cO$-module if
for all $P\in X$, $\cM(P)$ is a flat $\cO(P)$-module.

\subsection{The Map $\Hom(\cF,\cG)\to\Hom(\cM\otimes\cF,\cM\otimes\cG)$
and Ext Groups for Flat $\cM$}
\label{su_flat_module_and_ext_groups}

In this subsection we will describe a map
$$
\Ext^i(\cF,\cG) \to \Ext^i(\cM\otimes\cF,\cM\otimes\cG)
$$
that results from standard homological algebra whenever $\cM$ is flat.

At this point we will assume
the homological algebra in
\cite{weibel}, namely 
Theorem~2.7.6 on page~63, which states that
$\Ext^n_R(A,B)$, defined to be the right derived functors
of $B\mapsto\Hom(A,B)$ for $A$ fixed, is isomorphic to the
right derived functors of $A\mapsto\Hom(A,B)$ for $B$ fixed.
Note that by
Subsection~\ref{su_enough_injectives_and_projectives},
for any diagram of rings, $\cO$,
the category of $\cO$-modules has enough injectives and
projectives.
Hence, by the Freyd-Mitchell embedding theorem 
(see, e.g., \cite{weibel}, Theorem~1.61, page~25),
Theorem~2.7.6 (which holds for $R$-modules for a ring, $R$)
also holds for $\cO$-modules.

Let $\cO$ be any diagram of rings.  Then for $\cO$-modules $\cF,\cG$,
one defines $\Ext^i(\cF,\cG)$ for $i\ge 0$ as the right derived
functors of $\cG\mapsto \Hom(\cF,\cG)$ (with $\cF$ fixed);
by Theorem~2.7.6 of \cite{weibel} (page~63),
these functors are isomorphic to the right derived functors of
$\cF\mapsto\Hom(\cF,\cG)$ with $\cG$ fixed.
The groups $\cF\mapsto \Ext^i(\cF,\cG)$ is a universal set of
$\delta$-functors, since the category of $\cO$-modules has enough
projectives.

If $\cM$ is any $\cO$-modules, we get a natural map
\begin{equation}\label{eq_hom_cF_cG_to_tensoring_with_cM}
\Hom(\cF,\cG)\to\Hom(\cM\otimes\cF,\cM\otimes\cG)
\end{equation} 
by tensoring with ${\rm id}_M$; i.e., for each
$\phi\from\cF\to\cG$ we give a map
$$
{\rm id}_M\otimes\phi
\from \cM\otimes\cF \to \cM\otimes\cG
$$
which for each
$P\in X={A_1,A_2,B_1,B_2,B_3}$
is the map
$$
\cM(P)\otimes\cF(P) \xrightarrow{{\rm id}_M\otimes\phi(P)}
\cM(P)\otimes\cG(P)
$$
(taking $m\otimes f$ to $m\otimes(\phi(P)(f))$).

Now assume that $\cM$ is flat.  
Then the functors
$$
\cF\mapsto \Ext^i(\cM\otimes\cF,\cM\otimes\cG) 
$$
for $i\ge 0$ give another $\delta$-functor, since if
$0\to\cF_1\to\cF_2\to\cF_3\to 0$ is short exact, then
$0\to\cM\otimes\cF_1\to\cM\otimes\cF_2\to\cM\otimes\cF_3\to 0$
is short exact, and we get a long exact sequence
of groups $\Ext^i(\cM\otimes\cF_j,\cM\otimes\cG)$.

Since the functors $\cF\to\Ext^i(\cF,\cG)$ form a universal 
$\delta$-functor, the map
$$
\Ext^0(\cF,\cG)\to\Ext^0(\cM\otimes\cF,\cM\otimes\cG)
$$
(given by \eqref{eq_hom_cF_cG_to_tensoring_with_cM}), gives 
rise to a unique set of maps (as $i$ varies)
$$
\Ext^i(\cF,\cG)\to\Ext^i(\cM\otimes\cF,\cM\otimes\cG)
$$
of $\delta$-functors.
By definition this means that for every short exact sequence
$$
0\to\cF_1\to\cF_2\to\cF_3\to 0
$$
we have vertical arrows
%%%%%%%%%%%%%%%%%%%%%%%%%%%%%%%%%%%%%%%%%%%%%%%%%%%%%%%%%% 
\newcommand{\joelsDeltaFunctorCommutativeDiagram}[3]{
\begin{tikzpicture}[scale=0.50]
\node (A0) at (#1 * 0.3,#2) {#3 $0$};
\node (A1) at (#1 * 1,#2) {#3 $\Hom(\cF_3,\cG)$};
\node (A2) at (#1 * 2,#2) {#3 $\Hom(\cF_2,\cG)$};
\node (A3) at (#1 * 3,#2) {#3 $\Hom(\cF_1,\cG)$};
\node (A4) at (#1 * 4,#2) {#3 $\Ext^1(\cF_3,\cG)$};
\node (A5) at (#1 * 5,#2) {#3 $\Ext^1(\cF_2,\cG)$};
\node (A6) at (#1 * 6,#2) {#3 $\Ext^1(\cF_1,\cG)$};
\node (A7) at (#1 * 6.7,#2) {#3 $0$};
\draw[->] (A0) -- (A1);
\draw[->] (A1) -- (A2);
\draw[->] (A2) -- (A3);
\draw[->] (A3) -- (A4);
\draw[->] (A4) -- (A5);
\draw[->] (A5) -- (A6);
\draw[->] (A6) -- (A7);
\node (B0) at (#1 * 0.3,#2 *0) {#3 $0$};
\node (B1) at (#1 * 1,#2 *0) {#3 \tiny$\Hom(\cM\otimes\cF_3,\omega)$};
\node (B2) at (#1 * 2,#2 *0) {#3 \tiny$\Hom(\cM\otimes\cF_2,\omega)$};
\node (B3) at (#1 * 3,#2 *0) {#3 \tiny$\Hom(\cM\otimes\cF_1,\omega)$};
\node (B4) at (#1 * 4,#2 *0) {#3 \tiny$\Ext^1(\cM\otimes\cF_3,\omega)$};
\node (B5) at (#1 * 5,#2 *0) {#3 \tiny$\Ext^1(\cM\otimes\cF_2,\omega)$};
\node (B6) at (#1 * 6,#2 *0) {#3 \tiny$\Ext^1(\cM\otimes\cF_1,\omega)$};
\node (B7) at (#1 * 6.7,#2 *0) {#3 $0$};
\draw[->] (B0) -- (B1);
\draw[->] (B1) -- (B2);
\draw[->] (B2) -- (B3);
\draw[->] (B3) -- (B4);
\draw[->] (B4) -- (B5);
\draw[->] (B5) -- (B6);
\draw[->] (B6) -- (B7);
\draw[->] (A1) -- (B1);
\draw[->] (A2) -- (B2);
\draw[->] (A3) -- (B3);
\draw[->] (A4) -- (B4);
\draw[->] (A5) -- (B5);
\draw[->] (A6) -- (B6);
\end{tikzpicture}
}
%%%%%%%%%%%%%%%%%%%%%%%%%%%%%%%%%%%%%%%%%%%%%%%%%%%%%%%%%% 
$$
\joelsDeltaFunctorCommutativeDiagram{5}{3}{\SMALL}
$$
where $\omega=\cM\otimes\cG$.

\subsection{The Yoneda Pairing}
\label{su_yodena_pairing_derived_category}

Let $\cO$ be any diagram of rings.  Then the
{\em Yoneda pairing} is a map
\begin{equation}\label{eq_yoneda_pairing_start_of_Yoneda_subsection}
\Ext^i_\cO(A,B) \times \Ext^j_\cO(B,C) \to \Ext_\cO^{i+j}(A,C)
\end{equation} 
which is functorial in
$A,B,C$ in an appropriate sense.  

We will only need the pairing in the cases $i=0$ and $j=1$ and 
$i=1$ and $j=0$.  In these cases
there are two standard ways to describe this pairing.

The first way is simpler to work with: it involves working in the
derived category\footnote{
  Here we are working with the full derived category of $\cO$-modules,
  $\cD(\cO)$.  However, since all the $\cO$-modules of interest to us
  have finite projective and injective resolutions, we could equally
  well work in $\cD^+(\cO)$, $\cD^-(\cO)$, or $\cD^b(\cO)$ of the 
  derived category of $\cO$-modules with boundedness conditions;
  see, e.g., the beginnig of Subsection~III.2.5 of \cite{gelfand}
  for definitions.
  } of $\cO$-modules, whose foundations do most of
the work for us.  This is spelled out in
\cite{gelfand} Remark~III.5.4(b), page~166: namely,
we can identify $\Ext_\cO^i(X,Y)$ with 
$\Hom_{\cD(\cO)}(X[k],Y[i+k])$,
where $\cD(\cO)$ is the derived category of $\cO$-modules.
(For background on this and more references, see 
Subsection~\ref{su_background_on_Ext_and_derived_category}.)
Then for $\cO$-modules $A,B,C$, the map 
\eqref{eq_yoneda_pairing_start_of_Yoneda_subsection} is the composition
of $\Hom$'s in the derived category
$$
\Hom_{\cD(\cO)}(X[k],Y[k+i]) \times \Hom_{\cD(\cO)}(Y[k+i],Z[k+i+j])
\to
\Hom_{\cD(\cO)}(X[k],Z[k+i+j]) .
% \isom \Ext^{i+j}_\cO(X,Z).
$$
The wonderful thing about this approach is that we automatically know
that $\Hom$ is associative, which is the main computation we need
to prove
Proposition~\ref{pr_Yoneda_pairing_commutes}.\footnote{
  Another advantage of the derived category approach is that
  the cases $i=0$ and $j=0$ do not have to be treated differently
  from the $i,j>0$ cases, which the classical Yoneda Ext pairing
  requires.
  }

Therefore this is the approach we take to the Yoneda pairing.

The more classical approach to the Yoneda pairing --- which we won't use ---
is to define $E(X,Y)$ as the class of
``extensions'' of $X$ by $Y$, i.e., exact sequences
\begin{equation}\label{eq_Yoneda_ext_one}
0\to Y\to F\to X\to 0,
\end{equation} 
and to then show that (1) $E(X,Y)\isom \Ext^1(X,Y)$, and
(2) $E(X,Y)$ is a bifunctor in $X$ and $Y$; for the details to this
approach, see, for example,
\cite{hilton}, Sections~III.1 and~III.2.
This approach requires a lot less foundations, but the computations
that we need
are longer computations.
[To turn $E(X,Y)$ into a bifunctor, one needs to pullback 
\eqref{eq_Yoneda_ext_one} along a
morphism $X'\to X$ and pushout \eqref{eq_Yoneda_ext_one}
along a morphism $Y\to Y'$
(see \cite{hilton}, Section~III.1); this makes some computations we need
more involved.]\footnote{
  To construct the pairing 
  \eqref{eq_yoneda_pairing_start_of_Yoneda_subsection}, one can splice
  together sequences in the ``Yoneda Ext,'' which generalizes $E(X,Y)$
  above, in the cases $i,j>0$; however, since we only need the 
  cases $i+j=1$, we are never
  splicing together Yoneda Ext sequences.
  }

\subsubsection{First Type of Result: Prior Computations Are Correct}
\label{susu_prior_computations_are_correct}

In Definition~\ref{de_diagram_k_vs}
we defined $H^i(\cF)$, and in
Subsection~\ref{su_yoneda_pairing_simple_form_i_one_A_equals_cO}
we defined a pairing
$$
H^1(\cF)\times \Hom(\cF,\cG)\to H^1(\cG).
$$
To use the machinery of the Yoneda pairing, we have to check that
both these definitions coincide with the definitions given by the Yoneda
pairing above; i.e., we have to verify that:
\begin{enumerate}
\item 
$H^i(\cF) \eqdef \Ext^i_\cO(\cO,\cF)$ agrees with the definition of
$H^i(\cF)$ in Definition~\ref{de_diagram_k_vs}; we prove this in
Subsection~\ref{su_our_first_H_i_cF_definitions_consistent}.
\item
The pairing
$$
\Ext^1(\cO,\cF)\times \Ext^0(\cF,\cG)\to \Ext^1(\cO,\cG),
$$
given by the Yoneda pairing (as computed in the derived category)
agrees with the pairing
used in Theorem~\ref{th_weak_duality_for_cL_mec_d}
(i.e., in Subsection~\ref{su_yoneda_pairing_simple_form_i_one_A_equals_cO})
$$
H^1(\cF)\times \Hom(\cF,\cG)\to H^1(\cG),
$$
when defining $\Hom(\cF,\cG)\eqdef \Ext^0(\cF,\cG)$
(and $H^i(\cF) \eqdef \Ext^i_\cO(\cO,\cF)$);
we prove this in 
Subsection~\ref{su_first_definition_of_Yoneda_pairing_is_consistent}.
\end{enumerate}

\subsubsection{Second Needed Result: Two ``Adjointness'' Properties}

The second result we need is that
the pairing \eqref{eq_yoneda_pairing_start_of_Yoneda_subsection}
satisfies two properties that might be called ``adjointness'' properties.

To describe these properties,
for each $A,B,C$ and $i,j$, for each
$\alpha\in\Ext^i(A,B)$ and $\beta\in \Ext^j(B,C)$, let
$$
\langle \alpha, \beta\rangle
=\langle \alpha,\beta\rangle_{A,B,C,i,j}
$$
denote the image of $(\alpha,\beta)$ under the map
\eqref{eq_yoneda_pairing_start_of_Yoneda_subsection}.

\begin{enumerate}
\item 
If $f\from B_1\to B_2$ is any map, and
$$
f_* \from \Ext^i(A,B_1)\to \Ext^i(A,B_2)
\quad\mbox{and}\quad
f^* \from \Ext^j(B_2,C)\to \Ext^j(B_1,C)
$$
are the natural morphisms (of $\Ext$ as a bifunctor), then
for all $\alpha\in\Ext^i(A,B_1)$ and $\gamma\in \Ext^j(B_2,C)$
we need to prove that
\begin{equation}\label{eq_adjoint_first}
\langle \alpha, f^*\gamma \rangle = \langle f_*\alpha,\gamma \rangle.
\end{equation} 
In fact, we will only need the case $i+j=1$ (but this holds in general).
\item
If $0\to B_1\to B_2\to B_3\to 0$ is a short exact sequence,
and
$$
\delta_*\from \Ext^i(A,B_3)\to\Ext^{i+1}(A,B_1)
\quad\mbox{and}\quad
\delta^*\from \Ext^{j-1}(B_3,C)\to\Ext^j(B_1,C)
$$
are the connecting $\delta$-maps, then for all
$\alpha\in\Ext^i(A,B_3)$ and $\gamma\in\Ext^j(B_1,C)$ we have
\begin{equation}\label{eq_adjoint_second}
\langle \alpha, \delta^*\gamma \rangle = 
\langle \delta_* \alpha, \gamma \rangle.
\end{equation} 
\end{enumerate} 

For proofs of~\eqref{eq_adjoint_first}
and~\eqref{eq_adjoint_second}, see respectively
Subsections~\ref{su_proof_of_first_adjointness_property}
and~\ref{su_proof_of_second_adjointness_property}.

\subsubsection{Our Interest in Adjointness}

We use the above adjointness properties for one reason alone.

\begin{proposition}\label{pr_Yoneda_pairing_commutes}
Let $C$ be an $\cO_r$-module, and say that $H^1(C)\isom k$ and fix one
such isomorphism.  Then for any short exact sequence of $\cO_r$-modules
$0\to \cG_1\to \cG_2\to \cG_3\to 0$, the Yoneda pairing gives vertical
arrows in the diagram
%%%%%%%%%%%%%%%%%%%%%%%%%%%%%%%%%%%%%%%%%%%%%%%%%%%%%%%%%% 
\newcommand{\joelsCommutativeDiagramViaPairing}[3]{
\begin{tikzpicture}[scale=0.40]
\node (A0) at (#1 * 0.3,#2) {#3 $0$};
\node (A1) at (#1 * 1,#2) {#3 $\Hom(\cG_3,C)$};
\node (A2) at (#1 * 2,#2) {#3 $\Hom(\cG_2,C)$};
\node (A3) at (#1 * 3,#2) {#3 $\Hom(\cG_1,C)$};
\node (A4) at (#1 * 4,#2) {#3 $\Ext^1(\cG_3,C)$};
\node (A5) at (#1 * 5,#2) {#3 $\Ext^1(\cG_2,C)$};
\node (A6) at (#1 * 6,#2) {#3 $\Ext^1(\cG_1,C)$};
\node (A7) at (#1 * 6.7,#2) {#3 $0$};
\draw[->] (A0) -- (A1);
\draw[->] (A1) -- (A2);
\draw[->] (A2) -- (A3);
\draw[->] (A3) -- (A4);
\draw[->] (A4) -- (A5);
\draw[->] (A5) -- (A6);
\draw[->] (A6) -- (A7);
\node (B0) at (#1 * 0.3,#2 *0) {#3 $0$};
\node (B1) at (#1 * 1,#2 *0) {#3 $H^1(\cG_3)'$};
\node (B2) at (#1 * 2,#2 *0) {#3 $H^1(\cG_2)'$};
\node (B3) at (#1 * 3,#2 *0) {#3 $H^1(\cG_1)'$};
\node (B4) at (#1 * 4,#2 *0) {#3 $H^0(\cG_3)'$};
\node (B5) at (#1 * 5,#2 *0) {#3 $H^0(\cG_2)'$};
\node (B6) at (#1 * 6,#2 *0) {#3 $H^0(\cG_1)'$};
\node (B7) at (#1 * 6.7,#2 *0) {#3 $0$};
\draw[->] (B0) -- (B1);
\draw[->] (B1) -- (B2);
\draw[->] (B2) -- (B3);
\draw[->] (B3) -- (B4);
\draw[->] (B4) -- (B5);
\draw[->] (B5) -- (B6);
\draw[->] (B6) -- (B7);
\draw[->] (A1) -- (B1);
\draw[->] (A2) -- (B2);
\draw[->] (A3) -- (B3);
\draw[->] (A4) -- (B4);
\draw[->] (A5) -- (B5);
\draw[->] (A6) -- (B6);
\end{tikzpicture}
}
%%%%%%%%%%%%%%%%%%%%%%%%%%%%%%%%%%%%%%%%%%%%%%%%%%%%%%%%%% 
$$
\joelsCommutativeDiagramViaPairing{5}{3}{\SMALL}
$$
which is a commutative diagram
(where $\,'$ is the dual space),
and where we write $\Hom(\cG_j,C)$ in place of 
$\Ext^0(\cG_j,C)$, and $H^i(\cG_j)$ in place of $\Ext^i(\cO_r\cG_j)$.
\end{proposition}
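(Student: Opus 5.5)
The plan is to define the vertical arrows via the Yoneda pairing and then check each square of the diagram using the two adjointness properties \eqref{eq_adjoint_first} and \eqref{eq_adjoint_second}. For $\gamma\in\Ext^j(\cG_\ell,C)$ with $j\in\{0,1\}$, the vertical arrow sends $\gamma$ to the functional
\[
\alpha\mapsto \langle\alpha,\gamma\rangle\in \Ext^1(\cO_r,C)=H^1(C)\isom k,
\qquad \alpha\in \Ext^{1-j}(\cO_r,\cG_\ell)=H^{1-j}(\cG_\ell).
\]
By Subsubsection~\ref{susu_prior_computations_are_correct}, $H^i$ here agrees with the earlier definition. The top row is the long exact sequence of $\Ext^\bullet(\cdot,C)$ coming from $0\to\cG_1\to\cG_2\to\cG_3\to 0$. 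The bottom row is the $k$-dual of the long exact sequence
\[
0\to H^0(\cG_1)\to H^0(\cG_2)\to H^0(\cG_3)\xrightarrow{\delta_*} H^1(\cG_1)\to H^1(\cG_2)\to H^1(\cG_3)\to 0 .
\]
That sequence ends in $0$ because $\Ext^2(\cO_r,\cdot)=0$: $H^i$ is computed by a two-term complex, as in Definition~\ref{de_diagram_k_vs}. Its dual is exact because dualizing vector spaces is exact, and the order of the terms then matches the bottom row.

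Commutativity splits into two kinds of squares. The squares not involving a connecting map have horizontal maps $f^*$ on top and $(f_*)'$ on the bottom, for $f\from\cG_1\to\cG_2$ or $\cG_2\to\cG_3$. Going across and then down sends $\gamma$ to $\alpha\mapsto\langle\alpha,f^*\gamma\rangle$. Going down and then across sends it to $\alpha\mapsto\langle f_*\alpha,\gamma\rangle$. These agree by \eqref{eq_adjoint_first}, applied with $A=\cO_r$ and $i+j=1$.

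The middle square involves $\delta^*\from\Hom(\cG_1,C)\to\Ext^1(\cG_3,C)$ on top and $(\delta_*)'\from H^1(\cG_1)'\to H^0(\cG_3)'$ on the bottom. For $\gamma\in\Hom(\cG_1,C)$ and $\alpha\in H^0(\cG_3)=\Ext^0(\cO_r,\cG_3)$, we need $\langle\alpha,\delta^*\gamma\rangle=\langle\delta_*\alpha,\gamma\rangle$. This is exactly \eqref{eq_adjoint_second} with $A=\cO_r$, $i=0$ and $j=1$.

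The main obstacle is the sign in \eqref{eq_adjoint_second}: in the derived category, connecting maps carry sign conventions, and one may only get $\langle\alpha,\delta^*\gamma\rangle=\pm\langle\delta_*\alpha,\gamma\rangle$. I would handle this by accepting \eqref{eq_adjoint_second} as stated, since it is proved in the appendix. If a sign did appear, commutativity up to sign of one square would not affect any later five-lemma application, and replacing that one vertical map by its negative restores strict commutativity. Apart from this point, the argument is formal bookkeeping once the two adjointness properties are available.
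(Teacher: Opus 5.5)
Your proposal is correct and follows the paper's proof essentially step for step. As in the paper, the vertical maps come from the Yoneda pairing into $\Ext^1(\cO_r,C)=H^1(C)\isom k$, the four squares without connecting maps commute by \eqref{eq_adjoint_first}, and the middle square commutes by \eqref{eq_adjoint_second}; the paper packages your inline ``across-then-down versus down-then-across'' check as Lemma~\ref{le_pairings_and_commutative_diagram_abstract}. One small correction to your fallback remark on signs, which is moot since you rely on \eqref{eq_adjoint_second} as stated: negating a single vertical map would break the adjacent square. To absorb a sign you would instead negate all the vertical maps $\Ext^1(\cG_\ell,C)\to H^0(\cG_\ell)'$ on one side of the connecting square.
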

In this proposition it is important that $\Hom(\cG_j,C)$
and $H^i(\cG_j)$ are really shorthand notation for the Ext groups
$\Ext^0(\cG_j,C)$ and $\Ext^i(\cG_j,C)$; in
the proof of Theorem~\ref{th_strong_duality_for_cL_mec_d_degree_suff_small}
below, it is crucial to use
the results of Subsubsection~\ref{susu_prior_computations_are_correct}
to see that the Yoneda pairing computed in
Theorem~\ref{th_weak_duality_for_cL_mec_d} agrees with the Yoneda
pairing on Ext groups as defined in this section
(in terms of the derived category).

To prove this
proposition we will introduce the following easy lemma.
\begin{lemma}
\label{le_pairings_and_commutative_diagram_abstract}
% Let $A,B,C,D$ be $k$-vector spaces.
% Let $\cA,\cB,\cC,\cD$ be $k$-vector spaces.
Let $U,V,W,X$ be $k$-vector spaces.
Let $g\from U\to W$ and $h\from X\to V$ be linear maps, and let
$h'\from V'\to X'$ be the dual linear map.
Let
$$
\mu=\langle \cdot,\cdot \rangle_{VU}\from V\times U\to k, \quad
\nu=\langle \cdot,\cdot \rangle_{XW}\from X\times W\to k
$$
be two bilinear forms of
$k$-vector spaces, and let 
$\tilde\mu\from U\to V'$ and
$\tilde\nu\from W\to X'$.
Then the following are
equivalent:
\begin{enumerate}
\item for each $u\in U$ and
$x\in X$ we have
% $\nu(\alpha,g\gamma)=\mu(h\alpha,\delta)$;
$\langle x,g u\rangle_{XW}=\langle hx,u\rangle_{VU}$;
and
\item the following diagram commutes:
$$
\begin{tikzpicture}[scale=0.30]
\node (A1) at (0,4) {$U$};
\node (A2) at (6,4) {$W$};
\draw[->] (A1) -- (A2) node [midway,above] {$g$};
\node (B1) at (0,0) {$V'$};
\node (B2) at (6,0) {$X'$};
\draw[->] (B1) -- (B2) node [midway,above] {$h'$};
\draw[->] (A1) -- (B1) node [midway,left] {$\tilde\mu$};
\draw[->] (A2) -- (B2) node [midway,right] {$\tilde\nu$};
\end{tikzpicture}
$$
\end{enumerate}
\end{lemma}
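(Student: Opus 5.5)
We need to prove Lemma~\ref{le_pairings_and_commutative_diagram_abstract}. The plan is to unwind both conditions to the same identity by evaluating linear functionals. First we make the implicit definitions explicit. $\tilde\mu\colon U\to V'$ is $u\mapsto \langle\cdot,u\rangle_{VU}$, so $\tilde\mu(u)(v)=\langle v,u\rangle_{VU}$. Similarly $\tilde\nu(w)(x)=\langle x,w\rangle_{XW}$. The dual map $h'\colon V'\to X'$ is precomposition with $h$, that is, $h'(\lambda)=\lambda\circ h$.

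Condition~(2) says $h'\circ\tilde\mu=\tilde\nu\circ g$ as maps $U\to X'$. Two elements of $X'$ are equal iff they agree on every $x\in X$, so (2) holds iff for all $u\in U$ and $x\in X$
$$
\bigl(h'\tilde\mu(u)\bigr)(x)=\bigl(\tilde\nu(g u)\bigr)(x).
$$
We compute each side. The left side is $\tilde\mu(u)(hx)=\langle hx,u\rangle_{VU}$. The right side is $\langle x,gu\rangle_{XW}$. Hence (2) is equivalent to the identity $\langle x,gu\rangle_{XW}=\langle hx,u\rangle_{VU}$ for all $u,x$, which is exactly condition~(1). Both implications follow at once from this chain of equivalences.

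There is no genuine obstacle here. The only point requiring care is fixing the conventions (which slot of each pairing $\tilde\mu,\tilde\nu$ curry, and that $h'$ is precomposition), and then keeping track of the order of arguments in $\langle\cdot,\cdot\rangle_{VU}$ and $\langle\cdot,\cdot\rangle_{XW}$. No finiteness of dimension is needed, since we never use $V''\cong V$.
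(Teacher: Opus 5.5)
Your proof is correct and is essentially the paper's argument: both fix $u\in U$, identify $\tilde\nu(gu)$ with the functional $x\mapsto\langle x,gu\rangle_{XW}$ and $h'\tilde\mu(u)$ with $x\mapsto\langle hx,u\rangle_{VU}$, and observe that the square commutes exactly when these two functionals agree for every $u$. Your explicit statement of the currying conventions for $\tilde\mu,\tilde\nu$, which the lemma leaves implicit, is a welcome clarification.
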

\begin{proof}
Let us write down condition~(2), i.e.,
what it means for the diagram above to commute:
so let 
% $\alpha\in \cA$.  Then $f\alpha\in \cC$, and the image of $f\alpha$
$u\in U$.  Then $gu\in W$, and the image of $gu$
under
$\tilde\nu$ is the linear functional $x\mapsto \langle x,gu\rangle_{XW}$.
On the other hand, the image of $u$ under $\tilde\mu$ is the
linear functional $v\mapsto \langle v,u \rangle_{VU}$; the image of this
linear functional under $h'$ is therefore the linear map
$x\mapsto \langle hx,u\rangle_{VU}$.  Hence the above diagram commutes
iff for all $u\in U$ the two linear functionals
$$
x \mapsto \langle x,gu\rangle_{XW}
\quad\mbox{and}\quad
x \mapsto \langle hx,u \rangle_{VU}
$$
are the same linear functional on $X$.  This holds iff
condition~(1) of the proposition holds.
\end{proof}

\begin{proof}[Proof of Proposition~\ref{pr_Yoneda_pairing_commutes}]
The horizontal maps are given by long exact sequences of Ext groups
associated to any short exact sequence.
The vertical maps are given by the Yoneda pairing and the fact that
--- by Subsubsection~\ref{susu_prior_computations_are_correct} ---
we can define $H^i(G_j)$ to be $\Ext^i(\cO,G_j)$, 
and define $\Hom(\cG_j,C)$ to be $\Ext^0(\cG_j,C)$,
and then our computation
of $H^i(G_j)$ as $\Ext^i(\cO,G_j)$ agrees with that used in earlier
sections, namely with Definition~\ref{de_diagram_k_vs}.  We therefore
have to check that the diagram commutes.

% Stuff commented out below are now a proposition
% First, we claim the following general result: let 
% $A\times B\to k$ and $C\times D\to k$ be two bilinear forms on
% $k$-vector spaces,
% and let $f\from A\to C$ and $g\from B\to D$ be linear maps, and
% $g'\from D'\to B'$ be the dual linear map. 
First, 
according to 
\eqref{eq_adjoint_first}, if $B_1,B_2,C$ are any elements of the
derived category and $f\from B_1\to B_2$ a morphism,
for any $\alpha\in \Ext^{1-i}(\cO,B_1)$ and
$\gamma\in \Ext^i(B_2,C)$ we have
$$
\langle \alpha,f^*\gamma \rangle
=
\langle f_*\alpha,\gamma \rangle \in \Ext^1(\cO,C)\isom H^1(C),
$$
where $\langle \cdot,\cdot\rangle$ in
$\langle \alpha,f^*\gamma \rangle$ is the Yoneda pairing
$$
\Ext^{1-i}(\cO,B_1)\times \Ext^{i}(B_1,C) \to \Ext^1(\cO,C),
$$
and similarly 
the $\langle \cdot,\cdot\rangle$ in
$\langle f_*\alpha,\gamma \rangle$ is the Yoneda pairing
$$
\Ext^{1-i}(\cO,B_2)\times \Ext^{i}(B_2,C)\to \Ext^1(\cO,C).
$$
Now apply Lemma~\ref{le_pairings_and_commutative_diagram_abstract}
with $U,V,W,X$ there being, respectively,
$$
\Ext^i(B_2,C),\ H^{1-i}(B_2)\eqdef\Ext^{1-i}(\cO,B_2),
\ \Ext^i(B_1,C),\ H^{1-i}(B_1)\eqdef \Ext^{1-i}(\cO,B_1)
$$
(the two instances of $\eqdef$ above are using the results of
Subsubsection~\ref{susu_prior_computations_are_correct});
we get a commutative diagram:
$$
\begin{tikzpicture}[scale=0.30]
\node (A1) at (0,4) {$\Ext^i(B_2,C)$};
\node (A2) at (8,4) {$\Ext^i(B_1,C)$};
\node (B1) at (0,0) {$H^{1-i}(B_2)'$};
\node (B2) at (8,0) {$H^{1-i}(B_1)'$};
\draw[->] (A1) -- (B1);
\draw[->] (A1) -- (A2) node [midway,above] {$f^*$};
\draw[->] (B1) -- (B2) node [midway,below] {$(f_*)'$};
\draw[->] (A2) -- (B2);
\end{tikzpicture}
$$
Taking $f\from B_1\to B_2$ to be $\cG_2\to\cG_3$
(in the short exact sequence) gives a commutative diagram
$$
\begin{tikzpicture}[scale=0.30]
\node (A1) at (0,4) {$\Ext^i(\cG_3,C)$};
\node (A2) at (8,4) {$\Ext^i(\cG_2,C)$};
\node (B1) at (0,0) {$H^{1-i}(\cG_3)'$};
\node (B2) at (8,0) {$H^{1-i}(\cG_2)'$};
\draw[->] (A1) -- (B1);
\draw[->] (A1) -- (A2);
\draw[->] (B1) -- (B2);
\draw[->] (A2) -- (B2);
\end{tikzpicture}
$$
and similarly with $f\from B_1\to B_2$ to be $\cG_1\to\cG_2$ gives a 
commutative digram
$$
\begin{tikzpicture}[scale=0.30]
\node (A1) at (0,4) {$\Ext^i(\cG_2,C)$};
\node (A2) at (8,4) {$\Ext^i(\cG_1,C)$};
\node (B1) at (0,0) {$H^{1-i}(\cG_2)'$};
\node (B2) at (8,0) {$H^{1-i}(\cG_1)'$};
\draw[->] (A1) -- (B1);
\draw[->] (A1) -- (A2);
\draw[->] (B1) -- (B2);
\draw[->] (A2) -- (B2);
\end{tikzpicture}
$$
Taking $i=1$ we see that the two leftmost squares of the diagram
in the proposition commute,
and taking $i=0$ gives the two rightmost squares.

Hence it suffices to check that the middle square commutes.
But this follows from the analogous argument as above,
and with
\eqref{eq_adjoint_second}
replacing
\eqref{eq_adjoint_first}, and applying
Lemma~\ref{le_pairings_and_commutative_diagram_abstract} with
$U,V,W,X$ being
$$
\Hom(\cG_1,C)\eqdef\Ext^0(\cG_1,C), \ H^1(\cG_1)\eqdef \Ext^1(\cO,\cG_1),
\ \Ext^1(\cG_3,C),\ H^0(\cG_3)\eqdef \Ext^0(\cO,\cG_3).
$$
\end{proof}

\subsection{The Duality Commutative Diagram}

\begin{theorem}
\label{th_duality_commutative_diagram}
Let $\cO$ be a diagram of $k$-algebras, let $\cM$ be a flat $\cO$-module,
and let $\cM'$ be any $\cO$-module.
Say that $\omega=\cM\otimes\cM'$ satisfies $H^1(\omega)\isom k$,
and fix such an isomorphism.
Then for $i=0,1$ and any $\cO$-module, $\cF$, there is a natural map
$$
\Ext^i(\cF,\cM')\to H^{1-i}(\cM\otimes\cF)'
$$
such that for any short exact sequence
$0\to\cF_1\to\cF_2\to\cF_3$ of $\cO$-modules, the two resulting
long exact sequences (given by horizontal arrows) fit into a
commutative diagram:
\newcommand{\joelsDualityCommutativeDiagram}[3]{
\begin{tikzpicture}[scale=0.40]
\node (A0) at (#1 * 0.3,#2) {#3 $0$};
\node (A1) at (#1 * 1,#2) {#3 $\Hom(\cF_3,\cM')$};
\node (A2) at (#1 * 2,#2) {#3 $\Hom(\cF_2,\cM')$};
\node (A3) at (#1 * 3,#2) {#3 $\Hom(\cF_1,\cM')$};
\node (A4) at (#1 * 4,#2) {#3 $\Ext^1(\cF_3,\cM')$};
\node (A5) at (#1 * 5,#2) {#3 $\Ext^1(\cF_2,\cM')$};
\node (A6) at (#1 * 6,#2) {#3 $\Ext^1(\cF_1,\cM')$};
\node (A7) at (#1 * 6.7,#2) {#3 $0$};
\draw[->] (A0) -- (A1);
\draw[->] (A1) -- (A2);
\draw[->] (A2) -- (A3);
\draw[->] (A3) -- (A4);
\draw[->] (A4) -- (A5);
\draw[->] (A5) -- (A6);
\draw[->] (A6) -- (A7);
\node (B0) at (#1 * 0.3,#2 *0) {#3 $0$};
\node (B1) at (#1 * 1,#2 *0) {#3 $H^1(\cM\otimes\cF_3)'$};
\node (B2) at (#1 * 2,#2 *0) {#3 $H^1(\cM\otimes\cF_2)'$};
\node (B3) at (#1 * 3,#2 *0) {#3 $H^1(\cM\otimes\cF_1)'$};
\node (B4) at (#1 * 4,#2 *0) {#3 $H^0(\cM\otimes\cF_3)'$};
\node (B5) at (#1 * 5,#2 *0) {#3 $H^0(\cM\otimes\cF_2)'$};
\node (B6) at (#1 * 6,#2 *0) {#3 $H^0(\cM\otimes\cF_1)'$};
\node (B7) at (#1 * 6.7,#2 *0) {#3 $0$};
\draw[->] (B0) -- (B1);
\draw[->] (B1) -- (B2);
\draw[->] (B2) -- (B3);
\draw[->] (B3) -- (B4);
\draw[->] (B4) -- (B5);
\draw[->] (B5) -- (B6);
\draw[->] (B6) -- (B7);
\draw[->] (A1) -- (B1);
\draw[->] (A2) -- (B2);
\draw[->] (A3) -- (B3);
\draw[->] (A4) -- (B4);
\draw[->] (A5) -- (B5);
\draw[->] (A6) -- (B6);
\end{tikzpicture}
}
%%%%%%%%%%%%%%%%%%%%%%%%%%%%%%%%%%%%%%%%%%%%%%%%%%%%%%%%%% 
$$
\joelsDualityCommutativeDiagram{5}{3}{\Tiny}
$$
Moreover, one has the following ``two out of three'' principle:
say that two of $\cF_1,\cF_2,\cF_3$ satisfy strong duality, i.e., for
two of $j=1,2,3$ we have
$$
\Ext^i(\cF_j,\cM')\to H^{1-i}(\cM\otimes\cF_j)'
$$
is an isomorphism for both $i=0,1$; then all the $\cF_1,\cF_2,\cF_3$
satisfy strong duality.
\end{theorem}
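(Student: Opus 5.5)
The plan is to factor each vertical arrow of the diagram as a composite of two maps and check commutativity for each factor separately. The first factor is the map of $\delta$-functors
$$
\Ext^i(\cF,\cM')\to\Ext^i(\cM\otimes\cF,\cM\otimes\cM')=\Ext^i(\cM\otimes\cF,\omega)
$$
constructed in Subsection~\ref{su_flat_module_and_ext_groups}. It exists because $\cM$ is flat: the functors $\cF\mapsto\Ext^i(\cM\otimes\cF,\omega)$ then form a $\delta$-functor, while $\cF\mapsto\Ext^i(\cF,\cM')$ is universal. The second factor is the Yoneda pairing map
$$
\Ext^i(\cM\otimes\cF,\omega)\to H^{1-i}(\cM\otimes\cF)',
$$
which sends $\gamma$ to the functional $\alpha\mapsto\langle\alpha,\gamma\rangle\in\Ext^1(\cO,\omega)=H^1(\omega)\isom k$. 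The natural map in the statement is defined as this composite. Naturality in $\cF$ holds because each factor is natural: the first is a morphism of functors, and the second is natural by the first adjointness property \eqref{eq_adjoint_first}.

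For the commutative diagram, first tensor the given short exact sequence $0\to\cF_1\to\cF_2\to\cF_3\to 0$ with $\cM$. By flatness, $0\to\cM\otimes\cF_1\to\cM\otimes\cF_2\to\cM\otimes\cF_3\to 0$ is still exact. The diagram then splits into two ladders stacked vertically. The upper ladder has the long exact sequence of $\Ext^\bullet(\cF_j,\cM')$ on top and the long exact sequence of $\Ext^\bullet(\cM\otimes\cF_j,\omega)$ below it. It commutes by the very definition of a morphism of $\delta$-functors: the squares involving maps induced by $\cF_j\to\cF_{j'}$ commute by functoriality, and the square involving the connecting map $\Hom(\cF_1,\cM')\to\Ext^1(\cF_3,\cM')$ commutes because morphisms of $\delta$-functors commute with $\delta$. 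The lower ladder is exactly Proposition~\ref{pr_Yoneda_pairing_commutes}, applied with $C=\omega$ and $\cG_j=\cM\otimes\cF_j$. Stacking the two ladders gives the diagram in the statement.

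The bottom row needs a separate exactness check. It is the $k$-dual of the long exact sequence $0\to H^0(\cM\otimes\cF_1)\to H^0(\cM\otimes\cF_2)\to\cdots\to H^1(\cM\otimes\cF_3)\to 0$. Since dualizing over a field is exact, the bottom row is exact. The top row is the standard long exact Ext sequence. A minor point is that the rows terminate in $0$ because all $\Ext^{\ge 2}$ and $H^{\ge2}$ vanish; I would justify this from the homological dimension results in Appendix~\ref{ap_yoneda_pairing_etc}. This is not needed for the two-out-of-three step, where one can truncate instead.

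For the two-out-of-three principle, view each row as a six-term exact sequence and apply the five-lemma at each position where the middle term is the unknown one, with two known isomorphisms on each side.

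If $\cF_1$ and $\cF_2$ are known, then to treat $\Hom(\cF_3,\cM')$ use the terms $0,\,\Hom(\cF_3),\,\Hom(\cF_2),\,\Hom(\cF_1)$. To treat $\Ext^1(\cF_3,\cM')$ use $\Hom(\cF_2),\,\Hom(\cF_1),\,\Ext^1(\cF_3),\,\Ext^1(\cF_2),\,\Ext^1(\cF_1)$. The other two cases are handled symmetrically; at the ends of the rows this uses the terminal zeros.

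The main obstacle I expect is confirming the exact form of these terminal zeros, or else using the four-lemma (mono and epi halves) with padded zeros. Beyond that, the work is careful bookkeeping, assuming the adjointness properties \eqref{eq_adjoint_first} and \eqref{eq_adjoint_second}.
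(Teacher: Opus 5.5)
Your route is the paper's own. The vertical arrow is the $\delta$-functor map $\Ext^i(\cF,\cM')\to\Ext^i(\cM\otimes\cF,\omega)$ followed by the Yoneda map. The diagram is the $\delta$-functor ladder stacked on the ladder of Proposition~\ref{pr_Yoneda_pairing_commutes}, with $C=\omega$ and $\cG_j=\cM\otimes\cF_j$. The ``two out of three'' part is the five-lemma. Commutativity is fine. Your exactness check for the bottom row is also fine: its final $0$ is $H^2=\Ext^2(\cO,\cdot)=0$, which does follow from the length-one projective resolution of $\cO$ in Subsection~\ref{su_a_projective_resolution_of_cO}.

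The step that does not go through is the final $0$ of the top row.

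\emph{The appendix does not give it.} Appendix~\ref{ap_yoneda_pairing_etc} bounds $\Ext^{\ge 2}(\cO,\cdot)$ only, not $\Ext^{2}(\cF,\cM')$ for general $\cF$, and for $\cO_r$ such $\Ext^2$ can be nonzero. Take $M=k[v^{\pm}]/(v-1)$ and $\cF=\mathrm{Sky}(B_3,M)$. Use the exact sequence $0\to\mathrm{CoSky}(A_1,M\otimes_{R_3}S_1)\oplus\mathrm{CoSky}(A_2,M\otimes_{R_3}S_2)\to\mathrm{CoSky}(B_3,M)\to\cF\to 0$. Also use $\Ext^i_{\cO}(\mathrm{CoSky}(P,N),\cG)\isom\Ext^i_{\cO(P)}(N,\cG(P))$, which holds because $\mathrm{CoSky}(P,\cdot)$ is exact for $\cO_r$ and preserves projectives. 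Together these make $\Ext^2(\cF,\cM')$ the cokernel of a map $k^r\to k^r\oplus k^r$, hence nonzero. So $\Ext^1(\cF_2,\cM')\to\Ext^1(\cF_1,\cM')$ need not be onto.

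\emph{Truncating does not avoid it.} Because the bottom row does end in $0$, a short chase gives the following. When the unknown module is $\cF_1$ (resp.\ $\cF_2$), the vertical arrow at $\Ext^1(\cF_1,\cM')$ (resp.\ $\Ext^1(\cF_2,\cM')$) is an isomorphism \emph{if and only if} $\Ext^1(\cF_2,\cM')\to\Ext^1(\cF_1,\cM')$ is surjective. Equivalently, $\Ext^2(\cF_3,\cM')\to\Ext^2(\cF_2,\cM')$ must be injective.

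So your argument proves ``$\cF_1,\cF_2\Rightarrow\cF_3$'' outright. The other two cases need an extra hypothesis, such as $\Ext^2(\cF_3,\cM')=0$. The paper's one-line proof has exactly the same gap: its diagrams simply draw the $0$. This is harmless for its only use, Theorem~\ref{th_strong_duality_for_skyscrapers_and_cL_mec_d}. There $\cF_3=\cS_1$, and $\Ext^2(\cS_1,\cM')=0$. Indeed, it is the cokernel of $H^1(\cM_{W^*_\mec L,\mec K-\mec d-\mec e_1})\to H^1(\cM_{W^*_\mec L,\mec K-\mec d})$, which is onto because the underlying inclusion has cokernel supported only at $B_1$.
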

\begin{proof}
We combine the commutative diagram at the end of
Subsection~\ref{su_flat_module_and_ext_groups},
replacing $\cG$ with $\cM'$ (hence $\omega=\cM\otimes\cM'$)
with the commutative diagram
in Proposition~\ref{pr_Yoneda_pairing_commutes},
setting $\cG_i=\cM\otimes\cF_i$ and $C=\omega$.
This gives the desired commutative diagram.

The ``two out of three'' principle follows from the five-lemma.
\end{proof}

\subsection{The Fundamental Lemma}

\begin{lemma}\label{le_vanishing_both_strong_duality_deg_mec_d_small}
Fix an $r$-periodic perfect matching $W$ and an $\mec M\in\integers^2$.  For any
$\mec d$ with $\deg(\mec d)$ sufficiently small, we have
\begin{enumerate}
\item 
$H^0(\cM_{W,\mec M}\otimes \cL_{r,\mec d}) = 0$, and
\item 
$\Ext^1_{\cO_r}(\cL_{r,\mec d},\cM_{W,\mec M})=0$. 
\end{enumerate}
\end{lemma}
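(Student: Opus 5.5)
Part (1) is immediate from earlier results. By \eqref{eq_cM_W_mec_d_tensored_with_cL_mec_d_prime} we have $\cM_{W,\mec M}\otimes\cL_{r,\mec d}=\cM_{W,\mec M+\mec d}$. Theorem~\ref{th_perfect_matching_betti_numbers} then gives $b^0(\cM_{W,\mec M+\mec d})=f(\mec M+\mec d)$, where $f=\fraks W$. Since $f$ is a Riemann function, it vanishes once $\deg(\mec M+\mec d)$ is sufficiently small. Because $\mec M$ is fixed, this happens for $\deg(\mec d)$ sufficiently small. One could also see this directly from Proposition~\ref{pr_Hom_cO_to_cM_W_prime_mec_d_etc}: $H^0$ is spanned by indices $i$ with $i\le M_1+d_1$ and $\pi(i)\le M_2+d_2$, and this set is empty once $\deg$ is small, because $i+\pi(i)\ge -C$.

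For part (2), I would first reduce to $\cO_r$ itself. The isomorphism \eqref{eq_hom_L_otimes_cF_move_L_to_other_side}, together with the fact that $\cL_{r,\mec d}\otimes\cdot$ is an exact autoequivalence of $\cO_r$-modules (its inverse is $\cL_{r,-\mec d}\otimes\cdot$), shows that $\Ext^i(\cL_{r,\mec d},\cG)\isom\Ext^i(\cO_r,\cL_{r,-\mec d}\otimes\cG)$ as $\delta$-functors in $\cG$. Applying this gives
$$
\Ext^1(\cL_{r,\mec d},\cM_{W,\mec M})\isom \Ext^1(\cO_r,\cM_{W,\mec M-\mec d}) = H^1(\cM_{W,\mec M-\mec d}),
$$
where the last equality is the identification of $\Ext^i(\cO,\cdot)$ with the diagram cohomology of Definition~\ref{de_diagram_k_vs}. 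That identification is proved in Subsection~\ref{su_our_first_H_i_cF_definitions_consistent}, and I take it as given.

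It then remains to show that $H^1(\cM_{W,\mec M-\mec d})=0$ when $\deg(\mec d)$ is small, i.e.\ when $\deg(\mec M-\mec d)$ is large. Theorem~\ref{th_perfect_matching_betti_numbers} gives $b^1(\cM_{W,\mec e})=f^\wedge_{\mec K}(\mec K-\mec e)$, and this vanishes for $\deg(\mec e)\gg0$ because $f^\wedge_\mec K$ is a Riemann function. Alternatively, the basis \eqref{eq_basis_for_H_one_cM_W_mec_d} consists of those $\mec a$ with $W(\mec a)=1$ and $\mec a\ge\mec e+\mec 1$. This set is empty once $\deg(\mec e)$ exceeds the bound on the degree of the support of $W$.

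The main obstacle is the $\delta$-functor compatibility in the reduction step for (2): $\Ext^1$ must be computed in the category of $\cO_r$-modules, not as a diagram-level $H^1$. I would argue it by noting that tensoring with the invertible module $\cL_{r,\mec d}$ is an equivalence of abelian categories, so it preserves injective resolutions. The Hom isomorphism \eqref{eq_hom_L_otimes_cF_move_L_to_other_side} therefore extends to all derived functors.
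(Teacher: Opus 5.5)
Your proof is correct and follows essentially the same route as the paper. For (1) you use $b^0(\cM_{W,\mec M+\mec d})=f(\mec M+\mec d)=0$. For (2) you move $\cL_{r,\mec d}$ across the $\Ext$ to reduce to $H^1(\cM_{W,\mec M-\mec d})$; your exact-autoequivalence argument is the alternative the paper gives in a footnote to its citation of Hartshorne's Proposition~III.6.7.

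The only difference is the last step. The paper turns $H^1(\cM_{W,\mec M-\mec d})$ into $H^0(\cM_{W^*_{\mec L},\mec K-\mec M}\otimes\cL_{r,\mec d})$ by duality and reuses part (1). You instead show $H^1$ vanishes directly, via the $b^1$ formula or the explicit basis, which amounts to the same thing.
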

\begin{proof}
(1): we have
$$
\beta^0(\cM_{W,\mec M}\otimes \cL_{r,\mec d}) = \beta^0(\cM_{W,\mec M+\mec d})
= \bigl| 
\{ \mec d' \ | \ \mec d'\le \mec M+\mec d,\ W(\mec d')=1 \}
\bigr|
$$
which vanishes for $\deg(\mec d)$ sufficiently small, since $\mec M$ is
fixed and $W$ has
bounded support.

(2): 
fix any $\mec L=\mec K+\mec 1$.
By Proposition~III.6.7 of \cite{hartshorne} we have\footnote{
  To apply Proposition~III.6.7 of \cite{hartshorne} as is, we have
  to verify that $\cL_{r,-\mec d}$ 
  equals the dual sheaf of $\cL_{r,\mec d}$, i.e.,
  to verify that $\cL^\vee\eqdef\SHom_{\cO_r}(\cL_{r,\mec d},\cO_r)$
  equals $\cL_{r,-\mec d}$.
  We leave this to the interested reader, or notice that we have
  essentially alrealy verified this in
  Proposition~\ref{pr_easy_cL_is_invertible_tensoring_with_cL_etc},
  \eqref{eq_hom_L_otimes_cF_move_L_to_other_side}: indeed, this equation
  implies that
  for all $\cO_r$-modules, $\cF$, we have
  $\Hom(\cL_{r,\mec d}\otimes\cF,\cO)\isom \Hom(\cF,\cL_{r,-\mec d})$:
  moreover,
  Exercise~II.5.1 of \cite{hartshorne}, implies that for any ringed
  space $(X,\cO)$, and locally free $\cO$-module $\cL$ we have
  $\Hom(\cL\otimes\cF,\cO)\isom \Hom(\cF,\cL^\vee)$ where
  $\cL^\vee$ is the dual sheaf of $\cL$;
  hence $\cL_{r,\mec d}^\vee$ and $\cL_{r,-\mec d}$ represent the same
  functor, and are therefore isomorphic by Yoneda's lemma.
  One can also entirely circumvent Proposition~III.6.7 of 
  \cite{hartshorne}, by using the proofs of Lemma~III.6.6 
  and Proposition~III.6.7, which results in a more direct argument:
  namely, $\Ext^i(\cF\otimes\cL_{r,\mec d},\cG)\isom
  \Ext^i(\cF,\cL_{r,-\mec d}\otimes\cG)$ for all $i$ since they
  agree for $i=0$ by
  Proposition~\ref{pr_easy_cL_is_invertible_tensoring_with_cL_etc},
  \eqref{eq_hom_L_otimes_cF_move_L_to_other_side},
  and both vanish for all $\cG$ injective using
  \eqref{eq_hom_L_otimes_cF_move_L_to_other_side}
  and the exactness of tensoring by $\cL_{r,\mec d}$, which results from
  \eqref{eq_hom_L_otimes_on_both_sides_does_nothing}.
  }
$$
\Ext^1_{\cO_r}(\cL_{r,\mec d},\cM_{W,\mec M})
\isom \Ext^1_{\cO_r}(\cO_r,\cL_{r,-\mec d}\otimes\cM_{W,\mec M}),
$$
and by 
Proposition~\ref{pr_easy_cL_is_invertible_tensoring_with_cL_etc},
the right-hand-side is
$$
\isom
\Ext^1_{\cO_r}(\cO_r,\cM_{W,\mec M-\mec d}),
$$
and
according to \cite{hartshorne}, Proposition~III.6.3(c)
$$
\Ext^1_{\cO_r}(\cO_r,\cM_{W,\mec M-\mec d})
\isom
H^1(\cM_{W,\mec M-\mec d})
$$
which for any fixed $\mec L=\mec K+\mec 1$ is isomorphic to
$$
H^0(\cM_{W^*_\mec L,\mec K-\mec M+\mec d})
\isom
H^0(\cM_{W^*_\mec L,\mec K-\mec M}\otimes\cL_{r,\mec d})
$$
but by (1) (with $W,\mec M$ respectively replaced by
$W^*_\mec L,\mec K-\mec M$), this vanishes for $\deg(\mec d)$ 
sufficiently small (since $\mec K,\mec M$ are fixed).
\end{proof}

\subsection{Strong Duality for $\cL_\mec d$ with $\deg(\mec d)$ Sufficiently
Small}

\begin{theorem}
\label{th_strong_duality_for_cL_mec_d_degree_suff_small}
Let $W$ be an $r$-periodic perfect matching,
let $\cM=\cM_{W,\mec 0}$, and for some $\mec L=\mec K+\mec 1$
let $\cM'=\cM_{W^*_\mec L,\mec K}$.
Then for all $\mec d$ with $\deg(\mec d)$ sufficiently small we have 
that $\cF=\mec L_\mec d$ satisfies strong duality, i.e., the map
\begin{equation}\label{eq_strong_duality_equation_for_cL_mec_d_small}
\Ext^i(\cF,\cM')\to H^{1-i}(\cM\otimes\cF)'
\end{equation} 
is an isomorphism for $i=0,1$.
\end{theorem}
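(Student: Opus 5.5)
The proof splits into the two cases $i=0$ and $i=1$, which are handled quite differently.

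For $i=1$, the map \eqref{eq_strong_duality_equation_for_cL_mec_d_small} reads
$$
\Ext^1(\cL_{r,\mec d},\cM')\to H^0(\cM\otimes\cL_{r,\mec d})'.
$$
Both sides vanish when $\deg(\mec d)$ is sufficiently small.
\begin{enumerate}
\item The left side vanishes by Lemma~\ref{le_vanishing_both_strong_duality_deg_mec_d_small}(2), applied with $W$ replaced by $W^*_\mec L$ and $\mec M=\mec K$. This is legitimate because $W^*_\mec L$ is again an $r$-periodic perfect matching.
\item The right side vanishes by Lemma~\ref{le_vanishing_both_strong_duality_deg_mec_d_small}(1), applied with $\mec M=\mec 0$.
\end{enumerate}
Each of these two applications comes with its own threshold on $\deg(\mec d)$, so we take $\deg(\mec d)$ below the minimum of the two. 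A map between zero spaces is trivially an isomorphism.

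For $i=0$, the map is
$$
\Hom(\cL_{r,\mec d},\cM')\to H^1(\cM\otimes\cL_{r,\mec d})'.
$$
This is the map induced by the Yoneda pairing computed in the derived category. I will reduce it to Theorem~\ref{th_weak_duality_for_cL_mec_d}, which already shows that the naive pairing \eqref{eq_F_pairing} is perfect for $\cF=\cL_{r,\mec d}$, for every $\mec d$.

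The reduction requires checking that the map of the present section agrees with the map of Section~\ref{se_duality}. There are three points to verify.
\begin{enumerate}
\item The $\delta$-functor map $\Ext^0(\cF,\cM')\to\Ext^0(\cM\otimes\cF,\omega)$ is, in degree $0$, by construction the map ${\rm id}_\cM\otimes\cdot$ of \eqref{eq_tensoring_a_morphism}.
\item $H^i(\cG)$ defined as $\Ext^i(\cO_r,\cG)$ agrees with Definition~\ref{de_diagram_k_vs}.
\item The derived-category Yoneda pairing $\Ext^1(\cO,\cG_1)\times\Hom(\cG_1,\cG_2)\to\Ext^1(\cO,\cG_2)$ agrees with the pairing of Subsection~\ref{su_yoneda_pairing_simple_form_i_one_A_equals_cO}.
\end{enumerate}
Points (2) and (3) are exactly the consistency statements of Subsubsection~\ref{susu_prior_computations_are_correct}, which I take as given. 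We also fix the same isomorphism $H^1(\omega)\isom k$ in both settings. With these identifications the $i=0$ map coincides with the isomorphism \eqref{eq_duality_for_cL_mec_d}, and so it is an isomorphism.

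The main obstacle is this identification of pairings, namely making sure that the abstract map defined through $\delta$-functors and the derived category really restricts to the explicit map used in Theorem~\ref{th_weak_duality_for_cL_mec_d}. Once the consistency results are accepted, the remaining work is bookkeeping: both vanishing statements hold simultaneously for $\deg(\mec d)\ll 0$, and the $i=0$ statement holds for all $\mec d$.
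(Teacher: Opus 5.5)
Your proposal is correct and follows essentially the same route as the paper. The $i=0$ case comes from Theorem~\ref{th_weak_duality_for_cL_mec_d} together with the consistency results of Subsubsection~\ref{susu_prior_computations_are_correct}. For $i=1$, both sides vanish by the two parts of Lemma~\ref{le_vanishing_both_strong_duality_deg_mec_d_small}; your explicit bookkeeping (applying part~(2) to $W^*_{\mec L}$ with $\mec M=\mec K$, and taking the smaller of the two degree thresholds) simply spells out what the paper's two-line proof leaves implicit.
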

\begin{proof}
For $i=0$ this follows from
Theorem~\ref{th_weak_duality_for_cL_mec_d}
and Subsubsection~\ref{susu_prior_computations_are_correct}.
According to 
Lemma~\ref{le_vanishing_both_strong_duality_deg_mec_d_small},
both sides of 
\eqref{eq_strong_duality_equation_for_cL_mec_d_small} are $0$.
\end{proof}

\subsection{Strong Duality for Skyscrapers and All $\cL_\mec d$}

\begin{theorem}
\label{th_strong_duality_for_skyscrapers_and_cL_mec_d}
The skyscraper diagrams $\cS_1$ and $\cS_2$ satisfy strong duality, as
well as all diagrams $\cL_\mec d$.
\end{theorem}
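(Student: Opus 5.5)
The plan is to bootstrap from Theorem~\ref{th_strong_duality_for_cL_mec_d_degree_suff_small} using the short exact sequences
\eqref{eq_skyscraper_S_one_exact_sequence_first},
$$
0\to\cL_{r,\mec d}\to\cL_{r,\mec d+\mec e_j}\to\cS_j\to 0 \qquad (j=1,2),
$$
together with the ``two out of three'' principle of Theorem~\ref{th_duality_commutative_diagram}. That theorem applies here with $\cM=\cM_{W,\mec 0}$, which is flat because each of its values is a free module over the corresponding value of $\cO_r$. It also takes $\cM'=\cM_{W^*_\mec L,\mec K}$ and $\omega=\omega_{W,\mec K}$, and $H^1(\omega)\isom k$ by Theorem~\ref{th_H_one_equals_k_in_rare_case}.

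First, fix $D$ so that Theorem~\ref{th_strong_duality_for_cL_mec_d_degree_suff_small} gives strong duality for every $\cL_{r,\mec d}$ with $\deg(\mec d)\le D$. Choose any $\mec d$ with $\deg(\mec d)\le D-1$. Then both $\cL_{r,\mec d}$ and $\cL_{r,\mec d+\mec e_1}$ satisfy strong duality, and applying the two out of three principle to the sequence for $j=1$ shows that $\cS_1$ satisfies strong duality. The same argument with $j=2$ handles $\cS_2$. Note that $\cS_j$ does not depend on $\mec d$, so a single such $\mec d$ suffices.

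Second, prove strong duality for all $\cL_{r,\mec d}$ by induction on $\deg(\mec d)$. The base case $\deg(\mec d)\le D$ is Theorem~\ref{th_strong_duality_for_cL_mec_d_degree_suff_small}. For the inductive step, take $\mec d$ with $\deg(\mec d)>D$ and write $\mec d=\mec d'+\mec e_1$, so that $\deg(\mec d')=\deg(\mec d)-1$. By induction $\cL_{r,\mec d'}$ satisfies strong duality, and by the first step $\cS_1$ does. The exact sequence then gives strong duality for the middle term $\cL_{r,\mec d}$, again by two out of three.

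The main point needing care is that the base case really covers both $i=0$ and $i=1$. In the proof of Theorem~\ref{th_strong_duality_for_cL_mec_d_degree_suff_small}, one of these cases is a perfect pairing (Theorem~\ref{th_weak_duality_for_cL_mec_d}, transported via Subsubsection~\ref{susu_prior_computations_are_correct}). The other holds because both sides vanish by Lemma~\ref{le_vanishing_both_strong_duality_deg_mec_d_small}. I would also check that the two out of three principle is stated for the maps of Theorem~\ref{th_duality_commutative_diagram}, which are exactly the strong duality maps for this $\cM,\cM'$. A further check is that the long exact sequences in that diagram terminate in $0$ at both ends. This holds because $\Ext^2$ of $\cO_r$-modules into $\cM'$ and $H^2$ both vanish, as the diagram is drawn, so the five-lemma applies in each of the three positions.
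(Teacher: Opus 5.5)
Your proof is the paper's proof. You get $\cS_1,\cS_2$ from a pair $\cL_{r,\mec d}\subset\cL_{r,\mec d+\mec e_j}$ of sufficiently small degree by two out of three, then climb in degree along the $\cS_1$ sequence. Your strong induction on $\deg(\mec d)$ is a repackaging of the paper's walk from $\mec d-a_0\mec e_1$ up to $\mec d$.

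One justification in your last paragraph is wrong, although the step it supports survives.

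It is not true that $\Ext^2_{\cO_r}(\cF,\cM')=0$ for every $\cO_r$-module $\cF$. Write $R_3=\cO_r(B_3)$ and $S_j=\cO_r(A_j)$, and take $\cF=\mathrm{Sky}(B_3,R_3/(v-1))$. This $\cF$ has the projective resolution $0\to\mathrm{CoSky}(A_1,S_1)\oplus\mathrm{CoSky}(A_2,S_2)\to\mathrm{CoSky}(B_3,R_3)\oplus\mathrm{CoSky}(A_1,S_1)\oplus\mathrm{CoSky}(A_2,S_2)\to\mathrm{CoSky}(B_3,R_3)\to\cF\to 0$. Since $\cM'(\rho_{3,1})$ is an isomorphism of $R_3$-modules, this resolution gives $\Ext^2(\cF,\cM')\isom k[x_2^{\pm}]/(x_2^{r}-1)\neq 0$. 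So the terminal zero in the top row of Theorem~\ref{th_duality_commutative_diagram} is not automatic; the paper also draws it without comment.

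You only need that zero in one place:
\begin{enumerate}
\item When you deduce strong duality for $\cS_j$, the five-lemma at the two $\cS_j$ positions uses only the left end of the rows, where exactness is automatic, and interior terms. It never touches the right end.
\item In the induction step the unknown is the middle term $\cL_{r,\mec d}$. Surjectivity of $\Ext^1(\cL_{r,\mec d},\cM')\to H^0(\cM\otimes\cL_{r,\mec d})'$ needs $\Ext^1(\cL_{r,\mec d},\cM')\to\Ext^1(\cL_{r,\mec d-\mec e_1},\cM')$ to be onto. For this it suffices that $\Ext^2(\cS_1,\cM')=0$.
\end{enumerate}

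That vanishing does hold. With $R_1=k[y_1]$, the sequence $0\to\mathrm{CoSky}(B_1,R_1)\xrightarrow{\,y_1\,}\mathrm{CoSky}(B_1,R_1)\to\cS_1\to 0$ is a projective resolution of length one, and similarly for $\cS_2$. With this replacement for your general $\Ext^2$ claim, your argument is complete.
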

\begin{proof}
Consider the short exact sequence:
\begin{equation}\label{eq_skyscraper_S_one_exact_sequence}
0 \to \cL_\mec d \to \cL_{\mec d+\mec e_1} \to \cS_1 \to 0,
\end{equation}
By Theorem~\ref{th_strong_duality_for_cL_mec_d_degree_suff_small},
both $\cL_\mec d$ and $\cL_{\mec d+\mec e_1}$ satisfy strong duality
for $\deg(\mec d)$ sufficiently small.
Taking such a $\mec d$, the ``two out of three principle''
(Theorem~\ref{th_duality_commutative_diagram}) implies
that $\cS_1$ also satisfies strong duality.

Similarly $\cS_2$ satisfies strong duality.

Now consider an arbitrary $\mec d\in\integers^2$.  According to
Theorem~\ref{th_strong_duality_for_cL_mec_d_degree_suff_small}, for
$a_0\in\integers$ sufficiently large we have
$\cL_{\mec d-a_0\mec e_1}$ satisfies strong duality.
So fix such an $a_0$.
Then, considering the exact sequence 
$$
0 \to \cL_{\mec d-a\mec e_1} \to \cL_{\mec d-(a-1)\mec e_1} \to \cS_1 \to 0
$$
with $a=a_0,a_0-1,a_0-2,\ldots$, and repeatedly
applying the ``two out of three''
principle, we see that $\cL_{\mec d-a'\mec e_1}$ 
for $a'=a_0-1,a_0-2,\ldots,0$.
\end{proof}

\begin{remark}
Since the value of 
$\cS_1$ vanishes at all points except at $B_1$, where its value
is $k$, it follows that $H^0(\cS_1)\isom k$ and $H^1(\cS_1)=0$.
Similarly for $\cS_2$.
\end{remark}

\begin{remark}
Any $\cO_r=\cO_{r,k}$-module
can also be regarded as a sheaf of $k$-vector spaces, and
hence a $\underline k$ module.
A general theorem tells us that for any $k$-diagram, $\cF$,
$H^i_{\underline k}(\cF)\isom H^i_{\underline\cO}$,
and hence we can compute $H^i$ as $\underline k$-modules or
$\cO_r$-modules.
However,
$$
\Hom_{\cO_r}(\cF,\cM_{W^*_\mec L}) 
\mbox{\quad and\quad}
\Hom_{\underline k}(\cF,\cM_{W^*_\mec L}) 
$$
are very different; in particular, we will later explain that
the second space infinite dimensional for $\cF=\cL_\mec d$.
Hence it is essential for us to work with $\cO_r$-modules to prove
our strong duality theorems.
\end{remark}

\appendix
\section{The Yoneda Pairing, Skyscrapers, and CoSkyscrapers}
\label{ap_yoneda_pairing_etc}

In this section we address a number of points in homological algebra.
The reader familiar with algebraic geometry will likely find all these
points believable --- if not already known to them --- and can safely
skip the details.

Again, in this section $k$ will be a fixed field, and we will often
suppress $k$, writing, e.g., $\cO_r$ instead of $\cO_{r,k}$.
Also, we sometimes suppress the subscript $r$ in $\cO_r$, or
suppress $\cO$ altogether (e.g., writing $\Ext^i(\cF,\cG)$ instead of
$\Ext^i_{\cO_r}(\cF,\cG)$) if confusion is unlikely.

The results in this section actually hold where $\cO$ is an arbitrary
diagram of rings; however, in 
Subsections~\ref{su_our_first_H_i_cF_definitions_consistent}
and onward,
we insist that $\cO$ is a diagram of $k$-algebras simply because
our definitions of $H^i(\cF)$, of $\Ext$ groups, etc.,
insist on working with $k$-diagrams
(of vector spaces).

\subsection{Background on Ext groups and the Derived Category}
\label{su_background_on_Ext_and_derived_category}

For some of the facts we need about Ext groups, we refer to
\cite{hartshorne}, Sections~III.1 and~III.6.
For facts about the derived category, see
\cite{gelfand} Chapter~III
and \cite{weibel} Chapter~10.

Note that \cite{gelfand} defines $\Ext^i(\cF,\cG)$ as
$\Hom_\cD(\cF[0],\cG[i])$, where $\cD=\cD(\cO)$ is the derived category
of $\cO$-modules and
$\cG[i]$ denotes the complex that is all zeros except for a $\cG$ in
position $-i$
(see Remark~III.5.4(b), page~166 there);
only later does \cite{gelfand} show that this definition
coincides with the usual derived functor definition of $\Ext^i(\cF,\cG)$
(see Subsection~III.6.14, page~194 there).
By contrast, \cite{weibel} defines $\Ext^i(\cF,\cG)$ in terms of derived
functors of $\Ext^i(\cF,\cdot)$ (Definition~2.5.2, page~50),
and shows that this agrees with those of $\Ext^i(\cdot,\cG)$
(Theorem~2.7.6, page~63) (and it follows from the Freyd Full Embedding
Theoerm\footnote{
  See the comments in \cite{hartshorne},
  Section~III.1, page~203, or \cite{kashiwara}, Theorem~9.6.10, page~238.
  Moreover, below we will show that the category
  of $\cO_{r,k}$-modules have enough injectives and projectives.
  }
that the same holds in the category of $\cO'$-modules for any ringed
space $(X,\cO')$, provided that this category has enough projectives
and injectives.  Since the category of $\cO_{r,k}$-modules is equivalent
to such a category, we conclude this holds for $\cO_{r,k}$-modules.
Then \cite{weibel} shows that 
$\Ext^i(\cF,\cG)$ and agrees with the $i$-th hyperext of $\cF[0],\cG[0]$
(Corollary~10.7.5, page~400 there; \cite{weibel} writes $\cF$ for both
$\cF$ and the complex $\cF[0]$).

\subsection{Skyscrapers and Coskyscrapers for $\cO$-modules}

We will need some a projective resolution for $\cO$, and to know that
the category of $\cO$-modules have enough injectives and enough projectives.
This will
involve what we call {\em coskyscraper} $\cO$-modules
which are a sort of dual diagram
to the commonly used {\em skyscraper $\cO$-modules} that one uses in
sheaf theory.

[All statements above are well-known to experts, but it seems difficult
to find this anywhere, at least in our simple context.
Once one realizes that the category of diagrams of $\cO$-modules
is equivalent to sheaves of $\cO'$-modules for a ringed space
$(X,\cO')$, then it is well known that there are enough injectives.]

What we do in this subsection is valid over a general diagram of
rings $\cO$; let us fix notation, setting
\begin{equation}\label{eq_diagram_of_rings_notation}
R_i=\cO(B_i), \ S_j=\cO(A_j),
\ \sigma_{ij}=\cO(\rho_{ij}),
\end{equation} 
and see Figure~\ref{fi_diagram_of_rings_notation}.
\begin{figure}[ht]
$$
\begin{tikzpicture}[scale=0.30]
\tikzmath{
  \y = 4;
  \x = 15;
}
\node (B1) at (0,\y) {$\cO(B_1)=R_1$};
\node (B3) at (0,0) {$\cO(B_3)=R_3$};
\node (B2) at (0,-\y) {$\cO(B_2)=R_2$};
\node (A1) at (\x,\y/2) {$\cO(A_1)=S_1$};
\node (A2) at (\x,-\y/2) {$\cO(A_2)=S_2$};
\draw[->] (B1) -- (A1) node [midway,above] {\Small $\sigma_{11}=\cO(\rho_{11})$};
\draw[->] (B3) -- (A1);
\draw[->] (B2) -- (A2);
\draw[->] (B3) -- (A2);
\end{tikzpicture}
$$
\caption{In this section we work with a general diagram of rings, $\cO$.
We introduce the notation
$R_i=\cO(B_i)$, $S_j=\cO(A_j)$,
and $\sigma_{ij}=\cO(\rho_{ij})$.
}
\label{fi_diagram_of_rings_notation}
\end{figure}

\subsubsection{Coskyscraper Diagrams}

\begin{proposition}\label{pr_coskyscrapers}
Let $\cO$ be a diagram of rings
For $i\in[3]$ and $M$ and $R_i$-module, define the {\em coskyscraper at
$B_i$ of value $M$}, denoted
${\rm CoSky}(B_i,M)$, to be the $\cO$-module depicted below:
$$
\begin{tikzpicture}[scale=0.30]
\tikzmath{
  \y = 3;
  \x = 8;
}
\node (B1) at (0,\y) {$M$};
\node (B3) at (0,0) {$0$};
\node (B2) at (0,-\y) {$0$};
\node (A1) at (\x,\y/2) {$M\otimes_{R_1}S_1$};
\node (A2) at (\x,-\y/2) {$0$};
\draw[->] (B1) -- (A1);
\draw[->] (B3) -- (A1);
\draw[->] (B2) -- (A2);
\draw[->] (B3) -- (A2);
\node at (\x *0.5,-\y*1.8) {${\rm CoSky}(B_1,M)$};
\end{tikzpicture}
\quad
\begin{tikzpicture}[scale=0.30]
\tikzmath{
  \y = 3;
  \x = 8;
}
\node (B1) at (0,\y) {$0$};
\node (B3) at (0,0) {$0$};
\node (B2) at (0,-\y) {$M$};
\node (A1) at (\x,\y/2) {$0$};
\node (A2) at (\x,-\y/2) {$M\otimes_{R_2}S_2$};
\draw[->] (B1) -- (A1);
\draw[->] (B3) -- (A1);
\draw[->] (B2) -- (A2);
\draw[->] (B3) -- (A2);
\node at (\x *0.5,-\y*1.8) {${\rm CoSky}(B_2,M)$};
\end{tikzpicture}
\quad
\begin{tikzpicture}[scale=0.30]
\tikzmath{
  \y = 3;
  \x = 8;
}
\node (B1) at (0,\y) {$0$};
\node (B3) at (0,0) {$M$};
\node (B2) at (0,-\y) {$0$};
\node (A1) at (\x,\y/2) {$M\otimes_{R_1}S_1$};
\node (A2) at (\x,-\y/2) {$M\otimes_{R_2}S_2$};
\draw[->] (B1) -- (A1);
\draw[->] (B3) -- (A1);
\draw[->] (B2) -- (A2);
\draw[->] (B3) -- (A2);
\node at (\x *0.5,-\y*1.8) {${\rm CoSky}(B_3,M)$};
\end{tikzpicture}
$$
Then for any $\cF$ we have an isomorphism
that is functorial in $\cF$:
\begin{equation}
\label{eq_hom_from_coskyscraper_to_diagram_and_hom_on_B_values}
\Hom_\cO\bigl( {\rm CoSky}(B_i,M),\cF\bigr)
\isom
\Hom_{\cO(B_i)}\bigl( M,\cF(B_i)\bigr)
\end{equation} 
which takes $\phi\in \Hom_\cO\bigl( {\rm CoSky}(B_i,M),\cF\bigr)$ to
$\phi(B)$.
Similarly for any $j\in[2]$, an $S_j$-module $M$,  and the diagrams
$$
\begin{tikzpicture}[scale=0.30]
\tikzmath{
  \y = 3;
  \x = 8;
}
\node (B1) at (0,\y) {$0$};
\node (B3) at (0,0) {$0$};
\node (B2) at (0,-\y) {$0$};
\node (A1) at (\x,\y/2) {$M$};
\node (A2) at (\x,-\y/2) {$0$};
\draw[->] (B1) -- (A1);
\draw[->] (B3) -- (A1);
\draw[->] (B2) -- (A2);
\draw[->] (B3) -- (A2);
\node at (\x *0.5,-\y*1.8) {${\rm CoSky}(A_1,M)$};
\end{tikzpicture}
\qquad
\begin{tikzpicture}[scale=0.30]
\tikzmath{
  \y = 3;
  \x = 8;
}
\node (B1) at (0,\y) {$0$};
\node (B3) at (0,0) {$0$};
\node (B2) at (0,-\y) {$0$};
\node (A1) at (\x,\y/2) {$0$};
\node (A2) at (\x,-\y/2) {$M$};
\draw[->] (B1) -- (A1);
\draw[->] (B3) -- (A1);
\draw[->] (B2) -- (A2);
\draw[->] (B3) -- (A2);
\node at (\x *0.5,-\y*1.8) {${\rm CoSky}(A_2,M)$};
\end{tikzpicture}
$$
and the isomorphism
\begin{equation}
\label{eq_hom_from_coskyscraper_to_diagram_and_hom_on_A_values}
\Hom_\cO\bigl( {\rm CoSky}(A_j,M),\cF\bigr)
\isom
\Hom_{\cO(A_j)}\bigl( M,\cF(A_j)\bigr)
\end{equation} 
taking $\phi$ to $\phi(A_j)$.
\end{proposition}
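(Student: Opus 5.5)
The plan is to check directly that a morphism out of a coskyscraper is determined by, and freely determined by, its component at the distinguished point. First, I must pin down the restriction maps of ${\rm CoSky}(B_i,M)$. They are implicit in the figure. For $i=1$, the map $\rho_{11}\from M\to M\otimes_{R_1}S_1$ is $m\mapsto m\otimes 1$, and it is $\sigma_{11}$-semilinear, so ${\rm CoSky}(B_1,M)$ is an $\cO$-module. For $i=3$, both $\rho_{31}$ and $\rho_{32}$ are of the form $m\mapsto m\otimes 1$, where the tensor products are read as $M\otimes_{R_3}S_j$ via $\sigma_{3j}$. All other values are $0$. Given $\psi\in\Hom_{R_i}(M,\cF(B_i))$, I will build $\phi$ as follows:
\begin{enumerate}
\item set $\phi(B_i)=\psi$;
\item at each $A_j$ that receives a restriction from $B_i$, define $\phi(A_j)(m\otimes s)=s\cdot\cF(\rho_{ij})(\psi(m))$;
\item set $\phi$ to be zero at all other points.
\end{enumerate}

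The key steps, in order, are these. First, check that $\phi(A_j)$ is well defined. The map $(m,s)\mapsto s\,\cF(\rho_{ij})\psi(m)$ is $R_i$-balanced, because for $a\in R_i$,
$$\cF(\rho_{ij})(\psi(am))=\sigma_{ij}(a)\,\cF(\rho_{ij})\psi(m),$$
using that $\psi$ is $R_i$-linear and that $\cF(\rho_{ij})$ is semilinear. It is also visibly $S_j$-linear. Second, verify the commutativity squares of Figure~\ref{fi_morphism_of_diagrams}. The squares out of $B_i$ commute by construction, since $m\mapsto m\otimes 1\mapsto \cF(\rho_{ij})\psi(m)$. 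Every other square has source value $0$, and hence commutes trivially. This gives $\phi\in\Hom_\cO$ with $\phi(B_i)=\psi$, so the map $\phi\mapsto\phi(B_i)$ is surjective.

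Third, prove injectivity. Suppose $\phi(B_i)=0$. Then the components of $\phi$ at the points where the coskyscraper vanishes are zero automatically. At $A_j$, commutativity forces $\phi(A_j)(m\otimes 1)=\cF(\rho_{ij})\phi(B_i)(m)=0$. Since $\phi(A_j)$ is $S_j$-linear and the elements $m\otimes 1$ generate $M\otimes S_j$ as an $S_j$-module, it follows that $\phi(A_j)=0$. Functoriality in $\cF$ is immediate, because post-composition with $\cF\to\cF'$ commutes with taking the $B_i$-component. Finally, the $A_j$ case is trivial: ${\rm CoSky}(A_j,M)$ is zero except at $A_j$, and $A_j$ has no outgoing restrictions. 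So every square in the morphism condition has a zero corner on the source side, and any $S_j$-linear map $M\to\cF(A_j)$ gives a morphism.

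I expect no real obstacle here. The only point needing care is the $B_3$ case, because there two tensor products (over $\sigma_{31}$ and $\sigma_{32}$) must be handled simultaneously and the semilinearity used for each. The argument is nonetheless identical for each $j$.
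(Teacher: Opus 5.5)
Your proposal is correct. The paper gives no argument of its own: it leaves the details to the reader, cites another manuscript, and remarks that the result is an instance of the general construction of a left adjoint to evaluation at a point. Your direct check is exactly that omitted verification, namely defining $\phi(A_j)$ through the $R_i$-balanced map $(m,s)\mapsto s\,\cF(\rho_{ij})\psi(m)$ and getting uniqueness from the fact that the elements $m\otimes 1$ generate $M\otimes S_j$ over $S_j$. You were also right to read the $A_j$-values of ${\rm CoSky}(B_3,M)$ as $M\otimes_{R_3}S_j$ via $\sigma_{3j}$: since $M$ is only an $R_3$-module, the displayed $M\otimes_{R_j}S_j$ only makes sense that way.
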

(To understand the meaning of $M\otimes_{R_i} S_j$,
notice that if $M$ is an $R_i$-module, and $\rho_{ij}$ is a restriction
map for some $i\in[3]$ and $j\in[2]$, then $S_j$ is an $R_i$-module via
the map $\sigma_{ij}\from R_i\to S_j$.
Hence one can take the tensor product $M\otimes_{R_i} S_j$,
which is an $S_j$-module with $S_j$ acting on the
second factor.)
\begin{proof}
We leave the details to the reader.
For some details, see
\cite{folinsbee_friedman_two_vertex}, Section~10.1.
\end{proof}
We remark that this proposition follows from the general construction
of adjoints, \cite{sga4.1}, Exp.~I, 5.1; see
\cite{friedman_cohomology}, Section~2.1, specifically the discussion
around equation~(6) (this is done for diagrams of $k$-vector spaces,
and the above proposition
is the generalization to $\cO$-modules for a diagram of
rings, $\cO$).
We also remark that the above proposition is very similar to what one
sees in algebraic geometry.

Here is an immediate consequence of 
Proposition~\ref{pr_coskyscrapers}.

\begin{theorem}\label{th_projectives_from_coskyscrapers}
For some
$P\in X=\{A_1,A_2,B_1,B_2,B_3\}$, let
$M$ be a projective $\cO(P)$-module.
Then ${\rm CoSky}(P,M)$ is a projective $\cO$-module.
\end{theorem}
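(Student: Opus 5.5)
The plan is to derive projectivity directly from the adjunction isomorphism of Proposition~\ref{pr_coskyscrapers}. An $\cO$-module $\cP$ is projective exactly when the functor $\cF\mapsto\Hom_\cO(\cP,\cF)$ is exact, i.e., takes surjections $\cF\to\cG$ of $\cO$-modules to surjections of Hom groups. We take $\cP={\rm CoSky}(P,M)$ and use the isomorphisms \eqref{eq_hom_from_coskyscraper_to_diagram_and_hom_on_B_values} (for $P=B_i$) or \eqref{eq_hom_from_coskyscraper_to_diagram_and_hom_on_A_values} (for $P=A_j$). These identify $\Hom_\cO({\rm CoSky}(P,M),\cF)$ with $\Hom_{\cO(P)}(M,\cF(P))$, naturally in $\cF$.

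First we check that naturality in $\cF$ really means what we need: for a morphism $\psi\from\cF\to\cG$, the square whose top row is $\psi_*$ on $\Hom_\cO({\rm CoSky}(P,M),\cdot)$ and whose bottom row is $\psi(P)_*$ on $\Hom_{\cO(P)}(M,\cdot(P))$ commutes. This is immediate, since the isomorphism sends $\phi$ to its $P$-component, and $(\psi\circ\phi)(P)=\psi(P)\circ\phi(P)$.

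Next, let $\psi\from\cF\to\cG$ be a surjection of $\cO$-modules. Kernels and cokernels are computed value-by-value (as noted in the discussion of morphisms of $k$-diagrams, which applies verbatim to $\cO$-modules). Hence $\psi(P)\from\cF(P)\to\cG(P)$ is a surjection of $\cO(P)$-modules. Since $M$ is a projective $\cO(P)$-module, $\psi(P)_*\from\Hom_{\cO(P)}(M,\cF(P))\to\Hom_{\cO(P)}(M,\cG(P))$ is surjective. Transporting through the commuting square, $\psi_*$ on $\Hom_\cO({\rm CoSky}(P,M),\cdot)$ is surjective. Therefore ${\rm CoSky}(P,M)$ is projective.

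The only step needing any care is confirming that ${\rm CoSky}(P,M)$ is genuinely an $\cO$-module and that the isomorphism of Proposition~\ref{pr_coskyscrapers} is natural. Both are part of that proposition, whose details the paper defers. I therefore expect no real obstacle beyond the value-by-value exactness remark, which the paper already asserts.
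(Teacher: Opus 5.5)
Your proposal is correct and matches the paper's approach. The paper states the theorem only as an immediate consequence of Proposition~\ref{pr_coskyscrapers}. Your argument supplies the omitted details: the natural identification $\Hom_\cO({\rm CoSky}(P,M),\cF)\isom\Hom_{\cO(P)}(M,\cF(P))$, the fact that epimorphisms of $\cO$-modules are surjective value-by-value, and the projectivity of $M$ over $\cO(P)$.
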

It is a standard result that $M$ is a projective $R$-module
iff it is a direct summand of a free $R$-module, i.e., of a
direct sum of copies of $R$.

\subsubsection{Skyscraper Diagrams}
\label{su_skyscraper_diagrams_explained}

Similarly to coskyscraper diagrams, for each 
$P\in X=\{A_1,A_2,B_1,B_2,B_3\}$, if $M$ is an $\cO(P)$, then for
any $\cF$ one has the isomorphism
\begin{equation}\label{eq_hom_to_skyscraper_from_diagram_and_hom_on_P_values}
\Hom_\cO\bigl( \cF,{\rm Sky}(P,M)\bigr)
\isom
\Hom_{\cO(P)}\bigl( \cF(P),M\bigr)
\end{equation} 
where ${\rm Sky}(P,M)$ is the {\em skyscraper diagram at $P$ with value
$M$} which are defined by:
$$
\begin{tikzpicture}[scale=0.30]
\tikzmath{
  \y = 3;
  \x = 8;
}
\node (B1) at (0,\y) {$M$};
\node (B3) at (0,0) {$0$};
\node (B2) at (0,-\y) {$0$};
\node (A1) at (\x,\y/2) {$0$};
\node (A2) at (\x,-\y/2) {$0$};
\draw[->] (B1) -- (A1);
\draw[->] (B3) -- (A1);
\draw[->] (B2) -- (A2);
\draw[->] (B3) -- (A2);
\node at (\x *0.5,-\y*1.8) {${\rm Sky}(B_1,M)$};
\end{tikzpicture}
\qquad
\begin{tikzpicture}[scale=0.30]
\tikzmath{
  \y = 3;
  \x = 8;
}
\node (B1) at (0,\y) {$0$};
\node (B3) at (0,0) {$M$};
\node (B2) at (0,-\y) {$0$};
\node (A1) at (\x,\y/2) {$0$};
\node (A2) at (\x,-\y/2) {$0$};
\draw[->] (B1) -- (A1);
\draw[->] (B3) -- (A1);
\draw[->] (B2) -- (A2);
\draw[->] (B3) -- (A2);
\node at (\x *0.5,-\y*1.8) {${\rm Sky}(B_3,M)$};
\end{tikzpicture}
\qquad
\begin{tikzpicture}[scale=0.30]
\tikzmath{
  \y = 3;
  \x = 8;
}
\node (B1) at (0,\y) {$M$};
\node (B3) at (0,0) {$M$};
\node (B2) at (0,-\y) {$0$};
\node (A1) at (\x,\y/2) {$M$};
\node (A2) at (\x,-\y/2) {$0$};
\draw[->] (B1) -- (A1);
\draw[->] (B3) -- (A1);
\draw[->] (B2) -- (A2);
\draw[->] (B3) -- (A2);
\node at (\x *0.5,-\y*1.8) {${\rm Sky}(A_1,M)$};
\end{tikzpicture}
$$
and similarly for ${\rm Sky}(B_2,M)$ and ${\rm Sky}(A_2,M)$.
Here we note that if $M$ is a module over $\cO(A_j)=S_j$,
and $\rho_{ij}$ is a restriction,
then $M$ is automatically an $R_i$ module via the map
$\sigma_{ij}\from R_i\to S_j$.
Again we leave the proof of 
\eqref{eq_hom_to_skyscraper_from_diagram_and_hom_on_P_values} to the reader, 
and refer the reader to 
\cite{folinsbee_friedman_two_vertex}, Section~10.1, for a diagram
and helpful discussion.

We therefore get the following ``dual theorem'' to
Theorem~\ref{th_projectives_from_coskyscrapers}.

\begin{theorem}\label{th_injectives_from_skyscrapers}
For some
$P\in X=\{A_1,A_2,B_1,B_2,B_3\}$, let
$M$ be an injective $\cO(P)$-module.
Then ${\rm Sky}(P,M)$ is an injective $\cO$-module.
\end{theorem}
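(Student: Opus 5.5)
The plan is to derive the injectivity of ${\rm Sky}(P,M)$ directly from the adjunction \eqref{eq_hom_to_skyscraper_from_diagram_and_hom_on_P_values}, mirroring how Theorem~\ref{th_projectives_from_coskyscrapers} follows from Proposition~\ref{pr_coskyscrapers}. Recall that an $\cO$-module $\cI$ is injective iff the functor $\cF\mapsto\Hom_\cO(\cF,\cI)$ is exact, i.e., takes every injection $\cF_1\to\cF_2$ to a surjection $\Hom_\cO(\cF_2,\cI)\to\Hom_\cO(\cF_1,\cI)$. So I will show this surjectivity for $\cI={\rm Sky}(P,M)$.

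\textbf{Step 1: naturality of the adjunction.} I first make sure that the isomorphism \eqref{eq_hom_to_skyscraper_from_diagram_and_hom_on_P_values}, $\phi\mapsto\phi(P)$, is natural in $\cF$. Because the map is literally evaluation at $P$, this is immediate: for a morphism $\alpha\from\cF_1\to\cF_2$ and $\phi\from\cF_2\to{\rm Sky}(P,M)$ we have $(\phi\circ\alpha)(P)=\phi(P)\circ\alpha(P)$. So the square relating $\alpha^*$ on $\Hom_\cO(\cdot,{\rm Sky}(P,M))$ and $\alpha(P)^*$ on $\Hom_{\cO(P)}(\cdot(P),M)$ commutes.

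\textbf{Step 2: exactness of evaluation.} Next I note that the functor $\cF\mapsto\cF(P)$, from $\cO$-modules to $\cO(P)$-modules, is exact. This holds because kernels, cokernels and exact sequences of $\cO$-modules are computed value by value, as remarked after Definition~\ref{de_morphism_k_diagrams}; the same argument works for $\cO$-modules. Hence an injection $\cF_1\to\cF_2$ gives an injection $\cF_1(P)\to\cF_2(P)$ of $\cO(P)$-modules.

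\textbf{Step 3: conclusion.} Since $M$ is injective over $\cO(P)$, the map $\Hom_{\cO(P)}(\cF_2(P),M)\to\Hom_{\cO(P)}(\cF_1(P),M)$ is surjective. By the natural isomorphism of Step~1 this transports to surjectivity of $\Hom_\cO(\cF_2,{\rm Sky}(P,M))\to\Hom_\cO(\cF_1,{\rm Sky}(P,M))$. Therefore ${\rm Sky}(P,M)$ is injective.

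\textbf{Main obstacle.} The only real content is \eqref{eq_hom_to_skyscraper_from_diagram_and_hom_on_P_values} itself, which the paper leaves to the reader, so I would verify it. Given $f\from\cF(P)\to M$, I define $\phi(P)=f$. For $P=A_j$, I define $\phi(B_i)=f\circ\cF(\rho_{ij})$ at each $B_i$ with $\rho_{ij}$ existing, and zero elsewhere. I then check that these maps are $\cO(B_i)$-linear, using that $M$ is an $R_i$-module via $\sigma_{ij}$, and that they commute with restrictions; the identity restrictions $M\to M$ in ${\rm Sky}(A_j,M)$ force exactly this choice, which gives uniqueness. For $P=B_i$ the target vanishes at every other point, so there is nothing to check.
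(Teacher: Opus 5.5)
Your proof is correct and is essentially the paper's argument: the paper deduces the statement directly from the adjunction \eqref{eq_hom_to_skyscraper_from_diagram_and_hom_on_P_values}, as the dual of the coskyscraper/projective case. Your Steps 1--3 make explicit what that deduction uses, namely the naturality of $\phi\mapsto\phi(P)$ and the value-by-value exactness of evaluation at $P$, and your verification of the adjunction itself, which the paper leaves to the reader, is also correct.
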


\subsection{$\cO$-modules Have Enough Injectives and Projectives}
\label{su_enough_injectives_and_projectives}

For any ring, $R$, the category of $R$-modules has enough injectives
and projectives (see \cite{weibel}, Sections~2.2 and~2.3).

If $\cF$ is any $\cO$-module, let us show that there is an injection
$\cF\to\cI$ where $\cI$ is an injective $\cO$-module.
For each $P\in X=\{A_1,A_2,B_1,B_2,B_3\}$ let
$\cF(P)\to I_P$ be an injection to an injective $\cO(P)$-module, $I_P$.
Then
$$
\cF\to \cI\eqdef \bigoplus_{P\in X} {\rm Sky}(P,I_P)
$$
be the natural map 
using \eqref{eq_hom_to_skyscraper_from_diagram_and_hom_on_P_values}.
The map $\cF\to\cI$ is injective since $\cF\to {\rm Sky}(P,I_P)$ is
injective at $P$.  Since $I_P$ is injective, then 
\eqref{eq_hom_to_skyscraper_from_diagram_and_hom_on_P_values} shows that
${\rm Sky}(P,I_P)$ is an injective $\cO$-module;
hence $\cI$ an injective $\cO$-module.

Hence the category of $\cO$-modules has enough injectives.
Similarly any $\cO$-module, $\cF$, has a surjective map $\cP\to \cF$
where 
$$
\cP = \bigoplus_{P\in X} {\rm CoSky}(P,J_P)
$$
where $J_P\to \cF(P)$ is a surjection and $J_P$ is a projective
$\cO(P)$ module.  Hence $\cP$ is a projective $\cO$-module.
Hence the category of $\cO$ also has enough projectives.

\subsection{A Projective Resolution of $\cO$}
\label{su_a_projective_resolution_of_cO}

In this subsection we build a useful projective resolution of $\cO$.

To do so, for each $i\in[3]$, the identity map
$R_i\to R_i=\cO(B_i)$ determines via
\eqref{eq_hom_from_coskyscraper_to_diagram_and_hom_on_B_values}
a map
$$
{\rm CoSky}(B_i,R_i) \xrightarrow{\phi_i} \cO .
$$
Taking direct sums we get a morphism $d_0=\phi_1\oplus\phi_2\oplus\phi_3$ from
$$
\cP_0 = {\rm CoSky}(B_1,R_1) \oplus {\rm CoSky}(B_2,R_2) \oplus
{\rm CoSky}(B_3,R_3)
\xrightarrow{d_0=\phi_1\oplus\phi_2\oplus\phi_3}\cO,
$$
which we illustrate as
$$
\begin{tikzpicture}[scale=0.30]
\tikzmath{
  \y = 3;
  \x = 8;
}
\node (B1) at (0,\y) {$R_1$};
\node (B3) at (0,0) {$R_3$};
\node (B2) at (0,-\y) {$R_2$};
\node (A1) at (\x,\y/2) {$S_1\oplus S_1$};
\node (A2) at (\x,-\y/2) {$S_2\oplus S_2$};
\draw[->] (B1) -- (A1);
\draw[->] (B3) -- (A1);
\draw[->] (B2) -- (A2);
\draw[->] (B3) -- (A2);
\node (Name) at (\x *0.5,-\y*2.0) {\large$\cP_0$};
\tikzmath{
  \xoffset = 20;
}
\node (oB1) at (\xoffset+0,\y) {$R_1$};
\node (oB3) at (\xoffset+0,0) {$R_2$};
\node (oB2) at (\xoffset+0,-\y) {$R_3$};
\node (oA1) at (\xoffset+\x,\y/2) {$S_1$};
\node (oA2) at (\xoffset+\x,-\y/2) {$S_2$};
\draw[->] (oB1) -- (oA1);
\draw[->] (oB3) -- (oA1);
\draw[->] (oB2) -- (oA2);
\draw[->] (oB3) -- (oA2);
\node (oName) at (\xoffset+\x *0.5,-\y*2.0) {\large$\cO$};
\draw[->] (B1) -- (oB1) [dashed];
\draw[->] (B2) -- (oB2) [dashed];
\draw[->] (B3) -- (oB3) [dashed];
\draw[->] (A1) -- (oA1) [dashed];
\draw[->] (A2) -- (oA2) [dashed];
\draw[->] (Name) -- (oName) [dashed] node [midway,above] {\large$d_0$};
\end{tikzpicture}
$$
(here the morphism $d_0$ and the $d_0(P)$ are illustrated in dashed lines,
and the restriction maps of each of $\cP_0$ and $\cO$ are solid lines).
We therefore see the kernel of $d_0$ is $\cP_1$, given by
the kernels of the individual $d_0(P)$:
$$
\begin{tikzpicture}[scale=0.30]
\tikzmath{
  \y = 3;
  \x = 8;
}
\node (B1) at (0,\y) {$0$};
\node (B3) at (0,0) {$0$};
\node (B2) at (0,-\y) {$0$};
\node (A1) at (\x,\y/2) {$S_1$};
\node (A2) at (\x,-\y/2) {$S_2$};
\draw[->] (B1) -- (A1);
\draw[->] (B3) -- (A1);
\draw[->] (B2) -- (A2);
\draw[->] (B3) -- (A2);
\node (Name) at (\x *0.5,-\y*1.5) {$\cP_1$};
\end{tikzpicture}
$$
Therefore
$$
\cP_1 \isom {\rm CoSky}(A_1,S_1) \oplus {\rm CoSky}(A_2,S_2)
$$
is projective,
and we get the exact sequence
of $\cO$-modules:
\begin{equation}\label{eq_projective_resolution_of_cO}
0\to \cP_1\xrightarrow{d_1}\cP_0\xrightarrow{d_0}\cO\to 0
\end{equation} 
where we may take $d_1(A_1)$ to map $1\in S_1$ to $(1,-1)\in S_1\oplus S_1$
(or to $(s,-s)$ for any $s$ that is a unit in the ring $S_1$),
and similarly we may take $d_1(A_2)$ to map $1\in S_2$ to
$(1,-1)\in S_2\oplus S_2$.

\subsection{Our First $H^i(\cF)$ Definition is Consistent}
\label{su_our_first_H_i_cF_definitions_consistent}

In this subsection we prove that our definition of $H^i(\cF)$
for a $k$-diagram, $\cF$, is actually 
$H^i(\cF)\eqdef\Ext^i_{\cO_r}(\cO_r,\cF)$
as defined by the 
derived category.
% , and that the Yoneda pairing defined in
% Section~\ref{se_duality} is the true Yoneda pairing defined in
% Section~\ref{se_strong_duality}.

For any $\cF$, we may compute $H^i(\cF)=\Ext^i_{\cO_r}(\cO_r,\cF)$
from any projective resolution of $\cO$; taking the resolution
\eqref{eq_projective_resolution_of_cO}, and hence
$H^i(\cF)$ are the homology groups of
\begin{equation}\label{eq_equation_that_gives_tau_from_Hom_cP_zero_cF_etc}
0\to \Hom_\cO(\cP_0,\cF) \xrightarrow{\tau} \Hom_\cO(\cP_1,\cF) \to 0,
\end{equation} 
i.e., the cokernel of $\tau$.
(This is a classical theorem about Ext; see
\cite{weibel}, Theorem~2.7.6, page~63;
to see its equality in the context of the derived category,
see the discussion around \eqref{eq_chain_map_cPs_to_cF_in_position_zero}
below.)

In view of
\eqref{eq_hom_from_coskyscraper_to_diagram_and_hom_on_A_values}
and
\eqref{eq_hom_from_coskyscraper_to_diagram_and_hom_on_B_values}
$\tau$ is canonically isomorphic to the map
$$
\bigoplus_{i=1}^3 \Hom_{R_i}\bigl(R_i,\cF(B_i)\bigr)
\to \bigoplus_{j=1}^2 \Hom_{S_j}\bigl(S_j,\cF(A_j)\bigr)
$$
whose cokernel is that of the map
\begin{equation}\label{eq_sum_of_cF_Bs_to_sum_cF_As}
\bigoplus_{i=1}^3 \cF(B_i)
\to \bigoplus_{j=1}^2 \cF(A_j).
\end{equation} 
In view of the fact that we may take for $j=1,2$ the maps
$$
d_1(A_j) \from S_j \to S_j^2=\cP_0(A_j)\isom\cP_0(B_j)\oplus\cP_0(B_3)
$$
to be the maps
$$
d_1(A_j) 1  = (1,-1)  ,
$$
the map in
\eqref{eq_sum_of_cF_Bs_to_sum_cF_As} can be taken to be the map
$$
\bigoplus_{i=1}^3 \cF(B_i)
\to \bigoplus_{j=1}^2 \cF(A_j)
$$
which is the sum of maps taking, for $j\in[2]$,
$\cF(B_j)$ to $\cF(A_j)$ via the identity map, and
taking $\cF(B_3)$ to $\cF(A_j)$ via minus the identity map.

But this agrees with our definition of $\cF(\partial)$ in
\eqref{eq_formula_for_cF_partial} in Definition~\ref{de_diagram_k_vs}.
Hence the two definitions agree.

\subsection{Our First Definition of the Yoneda Pairing is Consistent}
\label{su_first_definition_of_Yoneda_pairing_is_consistent}

In Subsection~\ref{su_yoneda_pairing_simple_form_i_one_A_equals_cO}
we defined the Yoneda pairing.  Here we show that this agrees with
the Yoneda pairing as defined in
Subsection~\ref{su_yodena_pairing_derived_category}.

So let $\cF,\cG$ be $\cO$-modules, which we view as elements of
the derived category, $\cD(\cO)$, as complexes that live only in
the $0$-place (i.e., we write $\cF,\cG$ for $\cF[0],\cG[0]$\footnote{
  This is the convention in \cite{gelfand}.
  }).
Note that $\Hom_{\cD(\cO)}(\cF,\cG)$ is exactly
the set of morphisms $\Hom(\cF,\cG)$ in place $0$
(see \cite{gelfand}, Proposition~2, Section~III.5, page~164),
although we don't really need this fact.

First, we can compute $\Hom_{\cD(\cO)}(\cO,\cF)$ by taking a 
projective resolution of $\cO$ (see \cite{gelfand}, Theorem~21, page~179,
and Exercise~1, page~183, in
Section~III.5, or \cite{weibel}, Corollary~10.4.7, page~388).
Hence $H^1(\cF)$ is the set of chain maps up to homotopy of:
\begin{equation}\label{eq_chain_map_cPs_to_cF_in_position_zero}
\begin{tikzpicture}[scale=0.4]
\node (A0) at (0,3) {$0$};
\node (A1) at (4,3) {$\cP_1$};
\node (A2) at (8,3) {$\cP_0$};
\node (A3) at (12,3) {$0$};
\draw[->] (A0) -- (A1);
\draw[->] (A1) -- (A2);
\draw[->] (A2) -- (A3);
\node (B0) at (0,0) {$0$};
\node (B1) at (4,0) {$\cF$};
\node (B2) at (8,0) {$0$};
\node (B3) at (12,0) {$0$};
\draw[->] (B0) -- (B1);
\draw[->] (B1) -- (B2);
\draw[->] (B2) -- (B3);
\draw[->] (A0) -- (B0);
\draw[->] (A1) -- (B1) node[midway,left] {$\alpha$};
\draw[->] (A2) -- (B2);
\draw[->] (A3) -- (B3);
\end{tikzpicture}
\end{equation} 
and each such chain map is determined by $\alpha\from\cP_1\to\cF$.
Since the homotopy maps are determined by a map $\cP_0\to\cF$,
we conclude that the set of chain maps are precisely the cokernel
of $\tau$ in
\eqref{eq_equation_that_gives_tau_from_Hom_cP_zero_cF_etc}.

Now if $\phi\from\cF\to\cG$ is any map, then the Yoneda pairing
gives a map in the derived category
\begin{equation}\label{eq_chain_map_cF_to_cG_in_position_zero}
\begin{tikzpicture}[scale=0.4]
\node (A0) at (0,3) {$0$};
\node (A1) at (4,3) {$\cF$};
\node (A2) at (8,3) {$0$};
\node (A3) at (12,3) {$0$};
\draw[->] (A0) -- (A1);
\draw[->] (A1) -- (A2);
\draw[->] (A2) -- (A3);
\node (B0) at (0,0) {$0$};
\node (B1) at (4,0) {$\cG$};
\node (B2) at (8,0) {$0$};
\node (B3) at (12,0) {$0$};
\draw[->] (B0) -- (B1);
\draw[->] (B1) -- (B2);
\draw[->] (B2) -- (B3);
\draw[->] (A0) -- (B0);
\draw[->] (A1) -- (B1) node[midway,left] {$\phi$};
\draw[->] (A2) -- (B2);
\draw[->] (A3) -- (B3);
\end{tikzpicture}
\end{equation} 
Hence the Yoneda pairing composes
\eqref{eq_chain_map_cF_to_cG_in_position_zero} and
\eqref{eq_chain_map_cPs_to_cF_in_position_zero}, which gives the
map
\begin{equation}\label{eq_chain_map_cPs_to_cG_in_position_zero}
\begin{tikzpicture}[scale=0.4]
\node (A0) at (0,3) {$0$};
\node (A1) at (4,3) {$\cP_1$};
\node (A2) at (8,3) {$\cP_0$};
\node (A3) at (12,3) {$0$};
\draw[->] (A0) -- (A1);
\draw[->] (A1) -- (A2);
\draw[->] (A2) -- (A3);
\node (B0) at (0,0) {$0$};
\node (B1) at (4,0) {$\cG$};
\node (B2) at (8,0) {$0$};
\node (B3) at (12,0) {$0$};
\draw[->] (B0) -- (B1);
\draw[->] (B1) -- (B2);
\draw[->] (B2) -- (B3);
\draw[->] (A0) -- (B0);
\draw[->] (A1) -- (B1) node[midway,left] {$\phi\alpha$};
\draw[->] (A2) -- (B2);
\draw[->] (A3) -- (B3);
\end{tikzpicture}
\end{equation} 
Hence the Yoneda pairing has $\phi$ operating by composition as a map
\begin{equation}\label{eq_phi_circ_map_from_Hom_cP_one_cF_to_same_cG}
\phi\circ\cdot \from \Hom(\cP_1,\cF)\to \Hom(\cP_1,\cG)
\end{equation} 
taking $\alpha$ to $\phi\circ\alpha$.
Now we check that the isomorphisms
\eqref{eq_hom_from_coskyscraper_to_diagram_and_hom_on_A_values}
are functorial in $\cF$, and therefore
under the identifications
$$
\Hom(\cP_1,\cF)\isom 
\bigoplus_{j=1}^2 \cF(A_j),
\qquad
\Hom(\cP_1,\cG)\isom 
\bigoplus_{j=1}^2 \cG(A_j),
$$
composition in $\phi$ in \eqref{eq_phi_circ_map_from_Hom_cP_one_cF_to_same_cG}
becomes the map
$$
\bigoplus_{j=1}^2 \cF(A_j)
\to
\bigoplus_{j=1}^2 \cG(A_j)
$$
that is just the sum of 
composition in $\phi(A_1)$ and $\phi(A_2)$.

But this is just how we defined the Yoneda pairing in
Subsection~\ref{su_yoneda_pairing_simple_form_i_one_A_equals_cO}.
Hence this definition is consistent with how we are defining it in
terms of the derived category.

\subsection{The Yoneda Pairing and Ext Functoriality}
\label{su_yoneda_pairing_and_ext_functoriality}

To prove \eqref{eq_adjoint_first}, we will need the following
facts: if $\cF_1,\cF_2,\cG$ are $\cO$-modules, and
$f\from \cF_1\to \cF_2$ is a morphism, then there is
a map
$$
f_* \from \Ext^i(\cG,\cF_1) \to \Ext^i(\cG,\cF_2) ,
$$
and a map
$$
f^* \from \Ext^i(\cF_2,\cG)\to\Ext^i(\cF_1,\cG);
$$
this is the usual functoriality of Ext groups.
We will need to know that $f_*$ agrees with the composition
in the derived category,
\begin{equation}\label{eq_Yoneda_pairing_gives_functoriality_f_lower_star}
\Hom_\cD(\cG[-i],\cF_1) \times \Hom_\cD(\cF_1,\cF_2)
\to \Hom_\cD(\cG[-i],\cF_2);
\end{equation} 
this is true because
\eqref{eq_Yoneda_pairing_gives_functoriality_f_lower_star} is
the functoriality of $\Hom_\cD(\cG[-i],\cF)$ in the variable, $\cF$,
this is proven in \cite{gelfand}, Subsection~III.6.14.
Similarly
$f^*$ agrees with the composition
$$
\Hom_\cD(\cF_1,\cF_2) \times \Hom_\cD(\cF_2,\cG[i]) \to
\Hom_\cD(\cF_1,\cG[i])
$$
(see the last paragraph of \cite{gelfand}, Subsection~III.6.14).

\subsection{The First Adjointness Property}
\label{su_proof_of_first_adjointness_property}

In this subsection we prove
\eqref{eq_adjoint_first}.

So let $f\from \cF_1\to\cF_2$ be a morphism of $\cO$-modules, and let
$\cG$ be another $\cO$-module.  Then for $i=0,1$ we have a map
$$
\Ext^i(\cO,\cF_1)\times\Hom(\cF_1,\cF_2)\times\Ext^{1-i}(\cF_2,\cG)
\to \Ext^1(\cO,\cG)=H^1(\cG)
$$
given by composing the Yoneda pairings which are a composition of
Hom sets in the derived category $\cD(\cO)$:
$$
\Hom(\cO[-i],\cF_1)\times 
\Hom(\cF_1,\cF_2) \times
\Hom(\cF_2,\cG[1-i])
\to \Hom(\cO[-i],\cG[1-i])\isom H^1(\cG).
$$

So let 
$f\from\cF_1\to\cF_2$ be a morphism (which we also view as a 
morphism in $\Hom_\cD(\cF_1,\cF_2)$), and let
$$
\alpha\in \Hom(\cO[-i],\cF_1),
\quad
\gamma \in \Hom(\cF_2,\cG[1-i]).
$$
From the associativity of $\Hom$ in the derived category we have that
\begin{equation}
\label{eq_associativity_of_gamma_f_alpha}
(\gamma\circ f) \circ \alpha
=
\gamma\circ (f\circ \alpha).
\end{equation} 
Then from Subsection~\ref{su_yoneda_pairing_and_ext_functoriality}
we know that $f\circ\alpha=f_*\alpha$ as $f_*$ acts on Ext groups,
and $\gamma\circ f=f^*\gamma$.
Hence \eqref{eq_associativity_of_gamma_f_alpha} implies that in the
Yoneda pairing, we have \eqref{eq_adjoint_first}.

\subsection{Proof of \eqref{eq_adjoint_second}}
\label{su_proof_of_second_adjointness_property}

To verify \eqref{eq_adjoint_second}, we need a similar functoriality.
Let us explain.

Let
$0\to\cF_1\to\cF_2\to\cF_3\to 0$ be an exact sequence.  Then there
corresponds a distinguished triangle in the derived category
$$
\cF_1[0]\to\cF_2[0]\to\cF_3[0]\xrightarrow{\delta} \cF_1[1].
$$
The Yoneda pairing gives a map
$$
\Hom(\cO,\cF_3)\times \Hom(\cF_3,\cF_1[1]) \times \Hom(\cF[1],\cG[1])
$$
by composition which is associative, i.e., for any
$$
\alpha\in\Hom(\cO,\cF_3),
\ \delta\in\Hom(\cF_3,\cF_1[1]),
\ \gamma\in\Hom(\cF[1],\cG[1]),
$$
we have
$$
\gamma\circ(\delta\circ\alpha)
=(\gamma\circ\delta)\circ\alpha .
$$

So it suffices to show that 
\begin{enumerate}
\item
$\delta\circ\alpha=\delta_*\alpha$
is the usual $\delta$-connecting map in the long exact sequence
\begin{equation}\label{eq_usual_connecting_map}
\cdots \to \Ext^0(\cO,\cF_3) \to \Ext^1(\cO,\cF_1) \to \cdots
\end{equation} 
obtained from the short exact sequence
$0\to\cF_1\to\cF_2\to\cF_3\to 0$; and
\item 
similarly for $\gamma\circ\delta=\delta^*\gamma$.
\end{enumerate}
We will prove~(1); (2)~is proven analogously.
Note that~(1) and~(2) likely
follow from the general theory of the long exact
sequence for the Ext groups and that for distinguished triangles (could
these two long exact sequences really be different?); however,
we have not found a place in the literature that states this formally.
Hence we include a proof.
  
In the short exact sequence
$0\to\cF_1\to\cF_2\to\cF_3\to 0$, the map 
$$
\delta\from \cF_3[0]\to\cF_1[1]
$$
is given by the roof 
\begin{equation}\label{eq_the_roof_that_gives_the_delta_map}
\begin{tikzpicture}[scale=0.30]
\node (1) at (0,0) {$0\xrightarrow{\qquad}\cF_3$};
\node (2) at (8,4) {$\cF_1\xrightarrow{\qquad}\cF_2$};
\node (3) at (16,0) {$\cF_1\xrightarrow{\qquad} 0$};
\draw[->](7.5,3) to (0,1);
\draw[->](8.5,3) to (16,1);
\end{tikzpicture}
\end{equation} 
(see \cite{kashiwara}, just before Proposition~13.1.15, page~323,
or \cite{gelfand} Lemma~III.3.3, the map $\delta=\delta(f)$);
the left arrow is a quasi-isomorphism and the right is a morphism.
So given an
$$
\alpha\in\Hom_\cD(\cO,\cF_3),
$$
we take a projective resolution $0\to\cP_1\to\cP_0\to 0$ of $\cO$,
and we may realize $\alpha$ as a map of complexes
$$
\begin{tikzpicture}[scale=0.30]
\node (1) at (0,4) {$\cP_1$};
\node (2) at (4,4) {$\cP_0$};
\node (3) at (0,0) {$0$};
\node (4) at (4,0) {$\cF_3$};
\draw[->] (1) to (2);
\draw[->] (1) -- (3) node[midway,left] {$0$};
\draw[->] (2) -- (4) node[midway,right] {$\alpha$};
\draw[->] (3) to (4);
\end{tikzpicture}
$$
let us show that the image of $\alpha$ in $\cF_1\to 0$ is just the
usual connecting map.  Since $\cP_0$ is projective and $\cF_2\to\cF_3$
is surjective, we have a map
$\beta\from\cP_0\to\cF_2$ with a commutative diagram
$$
\begin{tikzpicture}[scale=0.30]
\node (2) at (4,4) {$\cP_0$};
\node (3) at (0,0) {$\cF_2$};
\node (4) at (4,0) {$\cF_3$};
\draw[->] (2) to (4);
\draw[->] (2) -- (3) node[midway,left] {$\beta\,$};
\draw[->] (3) to (4);
\end{tikzpicture}
$$
which gives a map $\tau$
$$
\begin{tikzpicture}[scale=0.30]
\node (1) at (0,4) {$\cP_1$};
\node (2) at (4,4) {$\cP_0$};
\node (3) at (0,0) {$\cF_2$};
\node (4) at (4,0) {$\cF_3$};
\draw[->] (1) to (2);
\draw[->] (1) -- (3) node[midway,left] {$\tau$};
\draw[->] (2) to (4);
\draw[->] (2) to (3);
\draw[->] (3) to (4);
\end{tikzpicture}
$$
Since $\tau$ followed by $\cF_2\to\cF_3$ is the map
$\cP_1\to\cP_0\to\cF_3$ which is the zero map, the image of $\tau$
is in the kernel of $\cF_2\to\cF_3$, which therefore gives an arrow
$\sigma$
$$
\begin{tikzpicture}[scale=0.30]
\node (1) at (0,4) {$\cP_1$};
\node (2) at (4,4) {$\cP_0$};
\node (0) at (-4,0) {$\cF_1$};
\node (3) at (0,0) {$\cF_2$};
\node (4) at (4,0) {$\cF_3$};
\draw[->] (1) to (2);
\draw[->] (1) -- (3) node[midway,left] {$\tau$};
\draw[->] (1) -- (0) node[midway,left] {$\sigma$};
\draw[->] (2) to (4);
\draw[->] (2) to (3);
\draw[->] (3) to (4);
\draw[->] (0) to (3);
\end{tikzpicture}
$$
We therefore get a diagram
$$
\begin{tikzpicture}[scale=0.30]
\node (1) at (0,0) {$\cP_1\xrightarrow{\qquad}\cP_0$};
\node (2) at (8,4) {$\cP_1\xrightarrow{\qquad}\cP_0$};
\node (3) at (16,0) {$0\xrightarrow{\qquad}\cF_3$};
\node (4) at (24,4) {$\cF_1\xrightarrow{\qquad}\cF_2$};
\node (5) at (16,8) {$\cP_1\xrightarrow{\qquad}\cP_0$};
\draw[->](7.5,3) to (0,1);
\draw[->](8.5,3) to (15.5,1);
\draw[->](15.5,7) to (8,5);
\draw[->](16.5,7) to (24,5);
\draw[->](24,3) to (16.5,1);
\end{tikzpicture}
$$
whose leftward arrows are all quasi-isomorphisms.
We can therefore compose the map $\cP_1\to\cP_0$ to 
$0\to\cF_3$ to $\cF_1\to 0$
(the latter from \eqref{eq_the_roof_that_gives_the_delta_map})
in the derived category
from the diagram:
$$
\begin{tikzpicture}[scale=0.30]
\node (1) at (0,0) {$\cP_1\xrightarrow{\qquad}\cP_0$};
\node (2) at (8,4) {$\cP_1\xrightarrow{\qquad}\cP_0$};
\node (3) at (16,0) {$0\xrightarrow{\qquad}\cF_3$};
\node (4) at (24,4) {$\cF_1\xrightarrow{\qquad}\cF_2$};
\node (5) at (16,8) {$\cP_1\xrightarrow{\qquad}\cP_0$};
\node (6) at (32,0) {$\cF_1\xrightarrow{\qquad}0$};
\draw[->](7.5,3) to (0,1);
\draw[->](8.5,3) to (15.5,1);
\draw[->](15.5,7) to (8,5);
\draw[->](16.5,7) to (24,5);
\draw[->](23.5,3) to (16.5,1);
\draw[->](24.5,3) to (32,1);
\end{tikzpicture}
$$
The composition is therefore the map of complexes, from the
top centre
complex of the above diagram to the lower right complex, i.e., the map:
$$
\begin{tikzpicture}[scale=0.30]
\node (1) at (0,4) {$\cP_1$};
\node (2) at (4,4) {$\cP_0$};
\node (3) at (0,0) {$\cF_1$};
\node (4) at (4,0) {$0$};
\draw[->] (1) to (2);
\draw[->] (1) -- (3) node[midway,left] {$\sigma$};
\draw[->] (2) -- (4) node[midway,right] {$0$};
\draw[->] (3) to (4);
\end{tikzpicture}
$$
Now we have to check that the map $\alpha\mapsto\sigma$
is the usual connecting map 
\eqref{eq_usual_connecting_map} that we obtain from
$$
\begin{tikzpicture}[scale=0.30]
\node (0) at (0,4) {$\Hom(\cP_1,\cF_1)$};
\node (1) at (10,4) {$\Hom(\cP_1,\cF_2)$};
\node (2) at (20,4) {$\Hom(\cP_1,\cF_3)$};
\draw[->] (0) to (1);
\draw[->] (1) to (2);
\node (3) at (0,0) {$\Hom(\cP_0,\cF_1)$};
\node (4) at (10,0) {$\Hom(\cP_0,\cF_2)$};
\node (5) at (20,0) {$\Hom(\cP_0,\cF_3)$};
\draw[->] (3) to (4);
\draw[->] (4) to (5);
\draw[->] (3) to (0);
\draw[->] (4) to (1);
\draw[->] (5) to (2);
\end{tikzpicture}
$$
So consider the usual connecting map (or ``snake lemma'') procedure:
if $\alpha\in\Hom(\cP_0,\cF_3)$ lies in $\Ext^0(\cO,\cF_3)$,
then to $\alpha$ there corresponds and element of $\Hom(\cP_0,\cF_2)$,
and $\beta$ above is precisely one such example.
Then $\beta$ maps precisely to $\tau$ as above in $\Hom(\cP_1,\cF_2)$,
which maps to $0$ in $\Hom(\cP_1,\cF_3)$ and therefore
comes precisely from what we above called $\sigma\in\Hom(\cP_1,\cF_1)$.
Hence the map $\alpha\mapsto\sigma$ is precisely the usual
connecting map or ``snake lemma'' construction.

%    Bibliography styles amsplain or harvard are also acceptable.
\providecommand{\bysame}{\leavevmode\hbox to3em{\hrulefill}\thinspace}
\providecommand{\MR}{\relax\ifhmode\unskip\space\fi MR }
% \MRhref is called by the amsart/book/proc definition of \MR.
\providecommand{\MRhref}[2]{%
  \href{http://www.ams.org/mathscinet-getitem?mr=#1}{#2}
}
\providecommand{\href}[2]{#2}

% \bibliography{bibrefs}

% \bibliography{bibJan16}

\end{document}